\documentclass[10pt]{article}

\usepackage{amsmath}
\usepackage{amsfonts}
\usepackage{amssymb}
\usepackage{framed}
\usepackage{mathtools}
\usepackage{enumerate}
\usepackage{mathrsfs}
\usepackage[hidelinks]{hyperref}
\usepackage{enumitem}
\usepackage{mathrsfs}
\usepackage{scrextend}
\usepackage{amsthm}
\usepackage{bbold}
\usepackage{bbm}
\usepackage{thmtools}
\usepackage{thm-restate}
\usepackage{mathabx}
\usepackage{accents}
\usepackage{cancel}
\usepackage[margin=1.5in]{geometry}
\usepackage{rotating}
\usepackage{tkz-graph}
\usetikzlibrary{calc}

\def\DashedEdge[#1](#2)(#3){%
\tikzstyle{EdgeStyle}=[dashed,#1]
\Edge(#2)(#3)%
}

\usepackage{xcolor}
\hypersetup{
    colorlinks,
    linkcolor={red!50!black},
    citecolor={blue!50!black},
    urlcolor={blue!80!black}
}

\usepackage{tikz-cd}
\usepackage{comment}
\usetikzlibrary{decorations.pathmorphing}

\theoremstyle{plain}
\newtheorem{thm}{Theorem}[section]
\newtheorem{lem}[thm]{Lemma}
\newtheorem{prop}[thm]{Proposition}
\newtheorem{cor}[thm]{Corollary}

\theoremstyle{definition}
\newtheorem{defn}[thm]{Definition}

\newtheorem{notn}[thm]{Notation}

\theoremstyle{remark}
\newtheorem*{rem}{Remark}

\tikzset{
  symbol/.style={
    draw=none,
    every to/.append style={
      edge node={node [sloped, allow upside down, auto=false]{$#1$}}}
  }
}

\newcommand{\Spec}{\operatorname{Spec} }
\newcommand{\Sp}{\operatorname{Sp} }

\newcommand\restr[2]{{
	\left.\kern-\nulldelimiterspace
	#1
	\vphantom{\big|}
	\right|_{#2}
	}}

\newcommand{\ep}{\varepsilon}
\newcommand{\Aut}{\operatorname{Aut}}

\newcommand{\NL}{\textrm{NL}}

\newcommand{\codim}{\textrm{codim}}

\newcommand{\GL}{\textrm{GL}}
\newcommand{\Spa}{\operatorname{Spa}}
\newcommand{\Spf}{\operatorname{Spf}}

\newcommand{\rk}{\operatorname{rk}}
\newcommand{\Pic}{\operatorname{Pic}}
\newcommand{\im}{\operatorname{im}}

\newcommand{\dR}{\operatorname{dR}}

\tikzset{
  trim node/.default=1cm,
  trim node/.style={
    overlay,
    append after command={
      ([xshift={+#1}]\tikzlastnode.north west)
      ([xshift={+-#1}]\tikzlastnode.south east)}},
  down and trim/.default=1cm,
  down and trim/.style={
    yshift=-(\pgfmatrixcurrentcolumn-1)*1.5\baselineskip,
    trim node={#1}},
  downup and trim/.default=1cm,
  downup and trim/.style={
    yshift=iseven(\pgfmatrixcurrentcolumn) ? -1.5\baselineskip : 0pt,
    trim node={#1}},
  -|/.style={to path={-|(\tikztotarget)\tikztonodes}},
  |-/.style={to path={|-(\tikztotarget)\tikztonodes}},
  -| sl/.style={-|, xslant=-1},
  |- sl/.style={|-, xslant= 1},
  center picture/.style={
    trim left=(current bounding box.center),
    trim right=(current bounding box.center)}}
    
\tikzset{
    labl/.style={anchor=south, rotate=90, inner sep=.5mm}
}

\usepackage{amsmath,calligra,mathrsfs}

\usepackage[cal=boondoxo]{mathalfa}

\DeclareMathOperator{\sheafhom}{\mathcal{H \kern -1pt o \kern -2pt m}}
\DeclareMathOperator{\sheafend}{\mathcal{E \kern -1pt n \kern -2pt d}}
\DeclareMathOperator{\sheafaut}{\mathcal{A \kern -1pt u \kern -2pt t}}

\title{Galois Orbit Bounds for Surface Degenerations}
\author{David Urbanik}

\begin{document}

\maketitle

\begin{abstract}
Given a smooth proper family $g : X \to S$ of surfaces over a number field $K \subset \mathbb{C}$, with $S$ an irreducible curve and $\eta \in S$ its generic point, we consider the general problem of constraining the locus $\NL(S) \subset S(\overline{K})$ of points $s$ where the Picard rank $\rk \Pic(X_{s})$ is larger than the generic Picard rank $\rk \Pic(X_{\overline{\eta}})$. Assuming that the local system $\mathbb{V} = R^{2} g_{*} \mathbb{Z}$ admits a non-trivial monodromy logarithm $N$ at infinity, we give a general condition under which certain points of $\NL(S)$ of unexpectedly large Picard rank satisfy a ``Galois-orbit'' height bound. This leads to the following result of Zilber-Pink type:
\begin{quote}
Let $g : X \to S$ be a one-parameter family of polarized K3 surfaces admitting a non-trivial limit mixed Hodge structure and such that $S(\mathbb{C})$ contains a Hodge-generic point. Then the locus in $S(\mathbb{C})$ where the Picard rank jumps by $3$ or more is finite.
\end{quote}
Our arguments include a new technique for ``spreading out'' formal geometry, a study of the rigid geometry of equicharacteristic zero semistable surface degenerations, and use the model-free Hyodo-Kato theory of Colmez-Nizio\l. 
\end{abstract}

\tableofcontents

\subsection{Motivation}

\subsubsection{Heights and Galois Orbits}

Suppose that $g : X \to S$ is a smooth proper family of algebraic varieties over a number field $K \subset \mathbb{C}$, and with $S$ a curve. One often wants to study the points $s \in S(\overline{\mathbb{Q}})$ where the fibre $X_{s}$ admits more algebraic structure than the geometric generic fibre $X_{\overline{\eta}}$; depending on the setting this could mean that $X_{s}$ has in some sense more symmetries, endomorphisms, embedded subvarieties, line bundles, or cohomologically non-trivial self-correspondences. Suppose that $\mathcal{S} \subset S(\overline{\mathbb{Q}})$ is such a set of ``special'' points.

In many cases one expects the cardinality of $\mathcal{S}$ to be finite. Even when one doesn't expect finiteness, one expects the arithmetic complexity of the points $s \in \mathcal{S}$ to grow. A common way of making this expectation precise is through ``Galois-orbit lower bounds''; i.e., an inequality of the shape
\begin{equation}
\label{gengaloisorbitlowbound}
\theta(s) \leq a [K(s) : K]^{b}; \hspace{2em} \forall s \in \mathcal{S}
\end{equation}
where $a, b > 0$ are two real constants, and $\theta$ is a (often logarithmic) Weil height.

Such inequalities (\ref{gengaloisorbitlowbound}) are important for multiple reasons. First, one deduces immediately from the Northcott property that any subset of points $s \in \mathcal{S}$ for which $[K(s) : K]$ is absolutely bounded is finite. This in particular implies that, in situations where $\mathcal{S}$ is infinite, the field extension degrees $[K(s) : K]$ must become arbitrarily large. But perhaps most importantly, inequalities of the shape (\ref{gengaloisorbitlowbound}) are a crucial part of the so-called Pila-Zannier strategy for the Zilber-Pink conjecture, which predicts the precise circumstances under which $\mathcal{S}$ should be finite; the recent papers \cite{zbMATH08109702} \cite{papas2023unlikelyintersectionstorellilocus} \cite{zbMATH07481643} \cite{zbMATH07608391} \cite{zbMATH07931113} \cite{daw2025newcaseszilberpinky13} \cite{zbMATH08109694} all deduce the \emph{finiteness} of appropriate sets $\mathcal{S}$ from inequalities of this shape.

\subsubsection{$G$-functions}

To understand why extra algebraic structure appearing in the fibre $X_{s}$ should constrain the height $\theta(s)$, a natural approach is to analyze periods. In particular, if one fixes a place $v$ of the number field $K$, one can consider $v$-analytic families $\mathbf{P}_{v}(s)$ of period matrices constructed by comparing the $v$-analytic de Rham cohomology with either the Betti cohomology (when $v$ is an infinite place) or a $p$-adic cohomology theory (when $v$ is a finite place above $p$). When $X_{s}$ has extra algebraic structure, the transcendence degree of $\mathbf{P}_{v}(s)$ often drops, and this can constrain the $v$-adic size of $s$. If one can combine such estimates for all places $v$, one can hope to constrain the height $\theta(s)$.

A tool of much recent interest for carrying out such an argument is a $G$-function-based height theorem of Bombieri, coined the ``Hasse principle for $G$-functions'' by Andr\'e in his book \cite{zbMATH00041964}. A system of $G$-functions $\mathbf{G}$ is a tuple of one-variable power series in a common parameter with coefficients defined over a number field $K$, and satisfying some differential and growth conditions. Given a system of $G$-functions $\mathbf{G}$, Bombieri's theorem produces real numbers $a, b > 0$ and inequalities of the form $\theta(s_{1}) \leq a \delta(s_{1})^{b}$, where $s_{1}$ ranges over a subset of $\overline{\mathbb{Q}}$ and $\delta(s_{1})$ is the degree of a certain ``global non-trivial relation'' on the values $\mathbf{G}(s_{1})$ which holds whenever the evaluation of $\mathbf{G}$ at $s_{1}$ makes sense.\footnote{To be more precise, one considers $s_{1} \in \overline{\mathbb{Q}}$ and asks whether there exists a $K$-algebraic polynomial which vanishes on the tuple $\mathbf{G}(s_{1})$ whenever one embeds $K(s_{1})$ into an algebraically closed completion $\mathbb{C}_{v}$ of $K$ corresponding to the place $v$, and for which $s_{1}$ lies strictly inside the $v$-adic radius of convergence of $\mathbf{G}$. All $s_{1}$ for which such a relation of degree $\delta(s_{1})$ exists then satisfy $\theta(s_{1}) \leq a \delta(s_{1})^{b}$.} In his book \cite{zbMATH00041964}, Andr\'e showed for certain degenerating abelian families that one could use Bombieri's theorem to produce inequalities of the form (\ref{gengaloisorbitlowbound}) by constructing global non-trivial relations with $\delta(s_{1})$ proportional to $[K(s_{1}) : K]$. In this case the tuple $\mathbf{G}$ is obtained by expanding a portion of the period matrices $\mathbf{P}_{v}(s)$ around a degeneration point $s_{0}$ in some compactification $\overline{S}$ of $S$; one can show such an expansion is defined by power series with coefficients in $K$ which are independent of $v$.

When carrying out Andr\'e's strategy one needs to be able to produce relations on the $v$-adic evaluations of period functions at each place $v$ of $K$. This process is the least understood at finite places, where one has to study $p$-adic cohomology comparisons in degenerating families, including at small primes where one may not have good integral models. A central objective of this paper is to explain how to produce such relations in the case where $g$ is a degenerating family of surfaces.

\subsubsection{The Role of Rigid Geometry}
\label{roleofrigidgeosec}

It turns out that producing relations on $p$-adic evaluations of $G$-functions invariably involves rigid geometry. We illustrate with a simple example. Suppose that $\overline{g} : \overline{X} \to \overline{S}$ is a semistable family of surfaces over a compactification $\overline{S}$ of $S = \overline{S} \setminus \{ s_{0} \}$, and that $Y = \overline{X}_{s_{0}}$ is a divisor with simple normal crossings and components $Y_{1}, \hdots, Y_{\ell}$. Consider a point $p$ in the intersection of three such components, say $p \in Y_{1} \cap Y_{2} \cap Y_{3}$. Then in formal coordinates, the map of germs $(\overline{X}, p) \to (\overline{S}, s_{0})$ is given by $s \mapsto f_{1} f_{2} f_{3}$, where $f_{i}$ defines $Y_{i}$ near $p$. Let $\mathcal{D}^{\wedge}$ be the formal neighbourhood of $p$ in $\overline{X}$, and $\mathcal{S}^{\wedge}$ the formal neighbourhood of $s_{0}$ in $\overline{S}$. Then from the formal coordinates we obtain an isomorphism $\chi : \Spf K[[x_{1}, x_{2}, x_{3}]] \xrightarrow{\sim} \mathcal{D}^{\wedge}$ which sends $f_{i}$ to $x_{i}$. For each place $v$ of $K$ (finite or infinite), the map $\chi$ admits an extension $\chi^{v} : \Delta^{3,v} \to \overline{X}^{v}$, where $\Delta^{3,v}$ is a $3$-dimensional open $v$-adic disk in $\mathbb{A}^{3,v}$ centred at $0$, and $\chi^{v}$ is an open embedding with image $\mathcal{D}^{v}$. For points $s_{1}$ of $S(K_{v})$ which are $v$-adically close to $s_{0}$, the intersections $X^{v}_{s_{1}} \cap \mathcal{D}^{v}$ contain a subspace $A_{s_{1}}$ which is isomorphic over a finite extension of $K_{v}$ to a product of $v$-adic annuli, and one can consider the maps\footnote{Here we are considering the overconvergent de Rham cohomology when $v$ is finite.}
\begin{equation}
\label{naiveresmaps}
H^{2}_{\textrm{dR}}(X_{s_{1}}) \to H^{2}_{\textrm{dR}}(A_{s_{1}}) \cong K_{v}; \hspace{2em} \omega \mapsto \restr{\omega}{A_{s_{1}}} .
\end{equation}
After fixing de Rham bases for $R^{2} \overline{g}_{*} \Omega^{\bullet}_{\overline{X}/\overline{S}}(\log Y)$ and $R^{2} (\restr{\overline{g}}{\mathcal{D}^{\wedge}})_{*} \Omega^{\bullet}_{\mathcal{D}^{\wedge}/\mathcal{S}^{\wedge}}(\log \mathcal{D}^{\wedge} \cap Y)$, the coordinates of the maps (\ref{naiveresmaps}) in these bases (which make sense in some $v$-adic disk around $s_{0}$) give the values at $s_{1}$ of a system $\mathbf{G}$ of G-functions. Now if $\mathcal{L}$ is a line bundle on $X_{s_{1}}$, and $v$ is a finite place, the restricted line bundle $\restr{\mathcal{L}}{A_{s_{1}}}$ is trivial over a finite extension of $K_{v}$ (in contrast to what happens in complex geometry, the Picard group of a product of closed rigid annuli is trivial). This results in a non-trivial algebraic relation on the $v$-adic evaluation $\mathbf{G}(s_{1})$ defined over a finite extension of $K$.

In general, when one considers a semistable degeneration $\overline{g}$ with simple normal crossing special fibre $Y = \overline{X}_{s_{0}}$, one would like to carry out similar arguments by replacing the point $p$ with some intersection $D$ of components of $Y$, where $D$ has positive dimension. In this case one can still try to ``thicken'' the formal neighbourhood $\mathcal{D}^{\wedge}$ of $D$ in $\overline{X}$ in the $v$-adic directions, but understanding the geometry of the intersections $X^{v}_{s_{1}} \cap \mathcal{D}^{v}$ becomes substantially more complicated. Most of the machinery we develop in this paper is ultimately motivated by controlling this geometry and its cohomology in a way that is in some sense uniform over all finite places $v$ of $K$. Although we will focus here on the case where $\overline{g}$ is a family of surfaces, we expect these techniques to apply when $\overline{g}$ has higher relative dimension as well.

\subsection{Results}
\label{mainappsec}

In what follows we write $g : X \to S$ for a smooth proper family of surfaces over a number field $K$, with $S$ a smooth geometrically connected curve, and write $\overline{g} : \overline{X} \to \overline{S}$ for a semistable extension of $g$ with $\overline{S} \setminus S = \{ s_{0} \}$. We write $\eta$ for the generic point of $S$. We fix a logarithmic Weil height $\theta : \overline{S}(\overline{K}) \to \mathbb{R}_{\geq 0}$. Given a point $s \in S$ we write $\rk \Pic(X_{s})$ for the rank of the associated Neron-Severi group. 

We consider the monodromy representation $\pi_{1}(S, s) \to H^{2}(X_{s}, \mathbb{Q})$ on middle cohomology, write $V := \Pic(X_{\overline{\eta}})^{\perp} \subset H^{2}(X_{s}, \mathbb{Q})$ for the subrepresentation which is the complement of the global Picard lattice, write $Q : V \otimes V \to \mathbb{Q}$ for the restriction of the cup product pairing, and $\Aut(V,Q)$ for the group of polarization-preserving automorphisms of $V$. We let $N$ be the logarithm of the action of monodromy around $s_{0}$; we recall that if $N$ is non-zero, the order $k$ of nilpotency of $N$ is either $2$ or $3$ (cf. \S\ref{specialfibreconstrsec}). 

\begin{thm}
\label{mainsurfaceapp1}
Suppose that $k = 3$ and representation of $\pi_{1}(S,s)$ on $V$ is $\mathbb{Q}$-simple. Then there exists $a, b > 0$ such that
\[ \theta(s) \leq a [K(s) : K]^{b} \]
for all $s \in S(\overline{K})$ such that 
\begin{equation}
\label{rankNineq1}
\rk \Pic(X_{s}) \geq \rk \Pic(X_{\overline{\eta}}) + 2.
\end{equation}
\end{thm}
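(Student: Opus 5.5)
We will follow André's $G$-function strategy and deduce the inequality from Bombieri's Hasse principle. The first step fixes the $G$-function system. We take a local parameter $t$ at $s_{0}$ and work with the logarithmic de Rham bundle $R^{2}\overline{g}_{*}\Omega^{\bullet}_{\overline{X}/\overline{S}}(\log Y)$ restricted to $V$. Since $k=3$, the monodromy weight filtration on the limit has a nontrivial extreme graded piece $\operatorname{Gr}^{W}_{0}$, the image of $N^{2}$. The weight-zero part of the limit is spanned by classes coming from the triple points of $Y$, through the Clemens–Schmid / Steenbrink description in \S\ref{specialfibreconstrsec}. Its dual classes are the "vanishing" cycles invariant under monodromy. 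We define $\mathbf{G}$ to be the coordinates of the pairings of a fixed $K$-basis of the de Rham bundle against these invariant classes. At infinite places these are periods over invariant cycles. At finite places they are the residue maps of (\ref{naiveresmaps}) onto $H^{2}_{\dR}(A_{s_{1}})$ for the annular pieces $A_{s_{1}}$ near the triple points. We then check that the coefficients of this expansion lie in $K$ and are independent of $v$, and that $\mathbf{G}$ is a system of $G$-functions (it satisfies the Gauss–Manin connection and has global nilpotence). This step is standard.

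The second step produces the relations. Let $s_{1}$ satisfy (\ref{rankNineq1}). Then $V_{s_{1}}$ contains a rank-$2$ subspace $L$ of algebraic classes, and each $v$-adic period vector $\mathbf{G}(s_{1})$ pairs trivially against $L$ in de Rham terms. At an archimedean place $v$, an algebraic class is Hodge of type $(1,1)$, and its pairing with the invariant cycle is a rational combination of periods. At a finite place, the restriction of a line bundle to the annular region $A_{s_{1}}$ becomes trivial over a finite extension, so its Chern class dies in $H^{2}_{\dR}(A_{s_{1}})$. In both cases the image of $L$ lies in the kernel of the evaluation row, which gives a linear condition on $\mathbf{G}(s_{1})$. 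We make this condition independent of the unknown $L$ by eliminating $L$: it becomes a determinantal polynomial relation expressing that the $\mathbf{G}(s_{1})$-row annihilates a $2$-dimensional subspace of $\mathbb{Q}$-classes defined over $K(s_{1})$. The line bundles are defined over $K(s_{1})$ up to a bounded extension, so these relations are Galois-conjugate. Taking the product over the Galois conjugates gives a global relation of degree $\delta(s_{1}) \ll [K(s_{1}):K]$.

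The third step is non-triviality, and this is where $\mathbb{Q}$-simplicity and $k=3$ enter. We must show that the relation does not vanish identically on the generic $\mathbf{G}$, that is, that it is not in the ideal of generic relations. By André's theorem on the monodromy/Zariski closure of $G$-function values, the Zariski closure of $\mathbf{G}$ is governed by the monodromy group, which by simplicity is as large as the group fixing $N$ inside $\Aut(V,Q)$. A generic row cannot annihilate a fixed $2$-plane in $V$. This is because $k=3$ forces the relevant row to move in a subvariety of dimension large enough, roughly $\dim V - 2$, and one generic relation already cuts only a single algebraic class. That is why we need a jump of $2$ rather than $1$. Bombieri's theorem then yields $\theta(s_{1}) \le a\,\delta(s_{1})^{b}$, which is the claim. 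Points far from $s_{0}$ at some place are handled in the usual way, by noting that $s_{1}$ outside every disk of convergence already has bounded height contribution at that place.

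The main obstacle is step two at finite places, and especially at small primes. There we must show that $A_{s_{1}}$ and the residue map are defined uniformly in $v$, that the map is computed by the same $K$-power series $\mathbf{G}$, and that it is compatible with the de Rham realization of line bundles. For this we would invoke the spreading-out of the formal neighbourhood of the triple points, the rigid analysis of $X^{v}_{s_{1}}\cap\mathcal{D}^{v}$, and Colmez–Nizioł Hyodo–Kato comparisons. These identify the restriction map with an evaluation of the log-de Rham expansion.
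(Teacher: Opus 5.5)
Your overall architecture matches the paper's. It uses a triple point $p$ detected by an extreme weight piece: the paper uses $W_{4}/W_{3}\neq 0$ with \autoref{weightfildescviarestriction}, which is dual to your $\operatorname{Gr}^{W}_{0}=\im N^{2}$. The $G$-functions come from restriction to a tube around $p$, the finite-place relations from triviality of line bundles on a product of rigid annuli, and the conclusion from a product over Galois conjugates and \autoref{Gheightthm}. But step two fails at the archimedean places, and that is exactly where the hypothesis of a Picard jump of $2$ is used.

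At an infinite relevant place, restriction to the tube is, on Betti cohomology, the map $\gamma^{*}_{s_{1}} : H^{2}(X_{s_{1}},\mathbb{Z}(2)) \to H^{2}(\mathcal{D}_{s_{1}},\mathbb{Z}(2)) \simeq (2\pi i)^{2}\mathbb{Z}$. This is pairing with the integral invariant torus cycle around $p$. An algebraic class is sent to $M(2\pi i)^{2}$ with $M\in\mathbb{Z}$, and nothing forces $M=0$. So a single line bundle only gives $\sum_{i} a_{i}\mathbf{G}_{i}(s_{1})$ equal to an integer multiple of a transcendental constant, which is not a polynomial relation over a number field. Being of type $(1,1)$ does not help here; all you know is that the pairing is integral. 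Your claim that ``in both cases the image of $L$ lies in the kernel of the evaluation row'' is therefore false at infinite places, and the determinantal relation built on it does not exist.

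The correct move uses that $L_{s_{1}}$, the part of $\Pic(X_{s_{1}})$ orthogonal to $\Pic(X_{\overline{\eta}})$, has rank $\geq 2$. Take two independent classes with $\gamma^{*}_{s_{1}}(\mathcal{L}_{i}) = M_{i}(2\pi i)^{2}$. Then $M_{2}\mathcal{L}_{1}-M_{1}\mathcal{L}_{2}$ (or $\mathcal{L}_{1}$ if $M_{1}=M_{2}=0$) is a nonzero class of $L_{s_{1}}$ killed by $\gamma^{*}_{s_{1}}$. Its de Rham coordinates give a linear relation over a number field. At finite places one class already suffices. No elimination of $L$ is needed: the relation may depend on $s_{1}$ and on the place. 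You multiply the linear forms over relevant places and Galois conjugates and bound the degree via \autoref{picardlatticeboundeddeg}.

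It follows that your explanation of the $+2$ by a dimension count in step three is not the actual reason, and that step needs repair anyway. $\mathbb{Q}$-simplicity does not make the monodromy ``as large as the group fixing $N$ inside $\Aut(V,Q)$''; largeness of that kind is the hypothesis of \autoref{mainsurfaceapp2}. You also do not need it. Each factor is a linear form coming from a nonzero class orthogonal to $\Pic(X_{\overline{\eta}})$. Its strong non-triviality, checked at a single complex place, follows from $\mathbb{Q}$-simplicity of $V$ via \cite[Prop. 6.8]{zbMATH08109694}, and products of non-trivial relations remain non-trivial.

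The issue with points far from $s_{0}$ is not that they lie outside the disk of convergence; such places are simply not relevant in \autoref{Gheightthm}. The issue is that one can have $|s(s_{1})|_{v}<1$ with $s_{1}$ in a component of $\{|s|<1\}$ not containing $s_{0}$, where the triple-point tube gives nothing. The paper handles this by passing to the cover of \autoref{daworrcoverlem}. Finally, Hyodo--Kato theory is not needed for this theorem; it enters only in the proof of \autoref{mainsurfaceapp2}.
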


\begin{thm}
\label{mainsurfaceapp2}
Suppose that $k = 2$ and the image of $\pi_{1}(S,s)$ in $\Aut(V,Q)$ is Zariski dense. Then there exists $a, b > 0$ such that
\[ \theta(s) \leq a [K(s) : K]^{b} \]
for all $s \in S(\overline{K})$ such that 
\begin{equation}
\label{rankNineq2}
\rk \Pic(X_{s}) \geq \rk \Pic(X_{\overline{\eta}}) + \dim \im(N) + 1 .
\end{equation}
\end{thm}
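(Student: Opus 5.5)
The plan is to run André's $G$-function strategy through Bombieri's Hasse principle. For a point $s_{1}$ satisfying (\ref{rankNineq2}), we produce, at every place $v$ of $K(s_{1})$ where $s_{1}$ is $v$-adically close to $s_{0}$, a polynomial relation on $\mathbf{G}(s_{1})$. The relation must be global (independent of $v$), non-trivial, and of degree $\delta(s_{1})$ bounded linearly in $[K(s_{1}):K]$. Bombieri's theorem then gives $\theta(s_{1}) \leq a\,\delta(s_{1})^{b}$, hence the claim.

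\textbf{Choice of $\mathbf{G}$.} We take the $G$-functions to be the coordinates of the restriction maps $H^{2}_{\dR}(X_{s_{1}}) \to H^{2}_{\dR}$ of the thickened neighbourhoods $\mathcal{D}^{v}$ of the double curves $D = Y_{i}\cap Y_{j}$ of the special fibre. These are the tubes described in \S\ref{roleofrigidgeosec}, spread out uniformly in $v$. When $k=2$, the image $\im(N)$ of the monodromy logarithm is detected by the cohomology of such tubes. Concretely, the columns of $\mathbf{G}$ in the limit basis adapted to the weight filtration $W_{\bullet}$ of $N$ span, at each place, the map $V_{s_{1}} \to \operatorname{Gr}^{W}$ pairing against $\im(N)$. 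This is a $\dim \im(N)$-dimensional family of ``vanishing-cycle'' functionals.

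\textbf{Local relations.} At a finite place $v$, the key input is the rigid-geometric fact that line bundles restricted to products of closed annuli (and, for double curves, to the relevant tube pieces) become trivial after finite extension. Combined with the Colmez--Nizio\l{} Hyodo--Kato comparison, this shows that the classes of $\Pic(X_{s_{1}})$ lie in the kernel of these functionals. At an archimedean place, the same vanishing follows from the fact that algebraic classes are integral Hodge classes orthogonal to the nearby vanishing cycles up to the monodromy action. Write $P_{s_{1}} = \Pic(X_{s_{1}})\cap V$, which has dimension $r \geq \dim\im(N)+1$. The vanishing shows that the $(\dim\im N)\times r$ matrix of $\mathbf{G}(s_{1})$ evaluated on a basis of $P_{s_{1}}$ is zero. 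We eliminate the unknown basis of $P_{s_{1}}$, which is defined over $K(s_{1})$ but varies with $s_{1}$, by expressing it via de Rham coordinates $\mathbf{c}$ with height controlled by the degree. The condition ``an $r$-dimensional subspace, with $r > \dim\im N$, is killed by the $\dim\im N$ functionals together with the polarization constraints'' then becomes a vanishing of certain minors. These are polynomials in $\mathbf{G}(s_{1})$ with coefficients in $K$, so the relation is global. Taking norms from $K(s_{1})$ to $K$ bounds the degree by $O([K(s_{1}):K])$.

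\textbf{Non-triviality (main obstacle).} We must show that the resulting polynomial does not vanish identically on $\mathbf{G}$ as power series, that is, that it does not lie in the ideal of functional relations of $\mathbf{G}$. By André's theorem, the Zariski closure of the generic period values is governed by the monodromy group. Since the image of $\pi_{1}(S,s)$ is Zariski dense in $\Aut(V,Q)$, the trivial relations are exactly those forced by $Q$. One must check that the locus of $Q$-orthogonal frames in which a $(\dim\im N +1)$-dimensional subspace lies in the kernel of the $\im(N)$ functionals is a proper subvariety. The natural first attempt is a dimension count: generically an isotropy-compatible subspace meets the codimension-$\dim\im N$ condition only in dimension $r-\dim \im N$. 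Hence requiring all $r$ directions to be killed is a proper condition, which is exactly why the bound $+\dim\im(N)+1$ appears in (\ref{rankNineq2}). The delicate part is making this precise at the level of the $G$-function coordinates and the limit mixed Hodge structure, and ensuring the relation holds at all places uniformly. The latter depends on the spreading-out of the formal neighbourhood, which we assume is provided by the earlier sections.
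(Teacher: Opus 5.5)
\textbf{There is a genuine gap, in your local relations.} This step is the crux of the $k=2$ case.

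Around a double curve, the tube fibre $\mathcal{D}^{v}_{s_{1}}$ is not a product of annuli. Via the adelic tube it is an annulus times an affinoid piece of $D$, and its $H^{2}$ is identified with $H^{1}(D)(-1)$. The triviality of line bundles on products of closed annuli, used at triple points in Theorem~\ref{mainsurfaceapp1}, has no analogue here, and the vanishing you claim fails in general.

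Hyodo--Kato theory does not rescue it. Picard classes satisfy $N=0$ and $\varphi=p$, but this only confines their image to the part of $H^{1}_{\textrm{HK}}(\overline{D}_{j})(-1)$ with those properties. That part is non-zero whenever the relevant isogeny factor of $J(\overline{D}_{j})$ has non-trivial toric part at $v$. At complex places, your justification amounts to saying that the new classes lie in $\ker N=(\im N)^{\perp}$, i.e.\ are invariant under local monodromy, which need not hold. As a sanity check: if your vanishing were true, a single new Picard class would already give a non-trivial relation, and the hypothesis $+\dim\im(N)+1$ would play no role.

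\textbf{What the paper proves instead is a rank bound.} Via the Steenbrink spectral sequence it fixes one double curve $\overline{D}_{j}$ and the summand $W\subset H^{1}(\overline{D}_{j})(-1)$ cut out by an isogeny factor of the Jacobian (Lemma~\ref{H1Dsummand}). This summand has dimension $2r\le\dim\im(N)$; this $r$ is the paper's, not yours. $p$-adic weight--monodromy shows that the part of $W$ with $N=0$, $\varphi=p$ has dimension at most $r$. Hence the image of $L_{s_{1}}$ in $\mathfrak{H}^{v}_{D,s_{1}}$ spans at most $r$ dimensions (Lemma~\ref{Picardrankimagedimlem}): half, not zero.

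\textbf{Turning the rank bound into a relation.} A rank bound is not yet a polynomial relation, and this dictates the paper's choice of $\mathbf{G}$. The $G$-functions are the coordinates of $\alpha:\bigwedge^{2r}\mathcal{H}^{\wedge}\to\bigwedge^{2r}\mathfrak{F}^{\wedge}\cong\mathcal{O}_{\mathcal{S}^{\wedge}}$; they are $G$-functions because of the $2r$-fold fibre product. Any $w\in\bigwedge^{2r}L_{s_{1}}$ is then killed at every finite place, since a map of rank $\le r<2r$ has vanishing $2r$-th exterior power. At archimedean places there is no rank bound. One uses instead that $\bigwedge^{2r}\mathfrak{F}_{s_{1}}$ is a line with a $\mathbb{Z}$-structure, so two independent $w,w'\in\bigwedge^{2r}L_{s_{1}}$ give an integral combination in the kernel. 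This is where $\dim L_{s_{1}}\ge 2r+1$, and hence (\ref{rankNineq2}), is used; it is not used in a dimension count for non-triviality.

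\textbf{Non-triviality.} This is a representation-theoretic check fed into Prop.~6.8 of the author's earlier paper. When $2r<\dim V/2$, one needs simplicity of $\bigwedge^{2r}V$ under $\Aut(V,Q)$. When $2r=\dim V/2$, one shows $w$ lies in neither eigenspace of the involution $\tau$ comparing the $Q$-induced and wedge pairings. This holds because the pairing is definite on the Hodge lattice while $w\wedge w=0$.

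\textbf{Degree bound.} This comes from fixing such $w$ over a field of polynomially bounded degree (Lemma~\ref{picardlatticeboundeddeg}, via Silverberg) and multiplying Galois conjugates, not from eliminating the Picard basis. The resulting bound is polynomial, not linear, in $[K(s_{1}):K]$, which is enough for Bombieri's theorem.
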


\noindent The case of nilpotency $3$ could have already been handled by the techniques in our previous paper \cite{zbMATH08109694}; the case of nilpotency $2$ is much more common and requires new ideas. 

As an immediate corollary of the Northcott property, one obtains:
\begin{cor}
In the situation of \autoref{mainsurfaceapp1} (resp. \autoref{mainsurfaceapp2}), for each $d$, there are only finitely many $s \in S(\overline{K})$ with $[K(s) : K] \leq d$ and satisfying (\ref{rankNineq1}) (resp. (\ref{rankNineq2})). \qed
\end{cor}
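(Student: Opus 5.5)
This is a direct consequence of the height bound combined with the Northcott property for the logarithmic Weil height $\theta$ on $\overline{S}(\overline{K})$. Fix $d \geq 1$, and let $\Sigma_{d}$ be the set of $s \in S(\overline{K})$ with $[K(s):K] \leq d$ that satisfy (\ref{rankNineq1}) in the situation of \autoref{mainsurfaceapp1} (resp. (\ref{rankNineq2}) in the situation of \autoref{mainsurfaceapp2}). The plan is to show in two steps that $\Sigma_{d}$ consists of points of bounded degree and bounded height, and then invoke Northcott.

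First, apply \autoref{mainsurfaceapp1} (resp. \autoref{mainsurfaceapp2}). It provides constants $a, b > 0$ such that $\theta(s) \leq a [K(s):K]^{b}$ for every point satisfying the relevant Picard rank inequality. For $s \in \Sigma_{d}$ this gives the uniform bound $\theta(s) \leq a d^{b}$, which does not depend on $s$.

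Second, since $K$ is a number field, $[K(s):\mathbb{Q}] \leq d[K:\mathbb{Q}]$ for all $s \in \Sigma_{d}$. So $\Sigma_{d}$ is contained in the set of points of $\overline{S}(\overline{K})$ of degree at most $d[K:\mathbb{Q}]$ over $\mathbb{Q}$ and logarithmic Weil height at most $a d^{b}$. The Northcott property says this set is finite.

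The one point that needs care is that Northcott applies to the chosen $\theta$. Here $\overline{S}$ is a projective curve over $K$, and $\theta$ is a logarithmic Weil height attached to an ample line bundle; I take this to be the convention in fixing $\theta$. Such a height differs by a bounded amount from the pullback of the standard height under a projective embedding, so Northcott holds for $\theta$. If instead $\theta$ were a height attached to a positive-degree divisor, such as the point $s_{0}$, the same comparison still works, because on a curve any positive-degree divisor is ample. There is no substantive obstacle beyond this check.
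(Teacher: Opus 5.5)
Your proposal is correct and is the paper's own argument: the corollary is stated as an immediate consequence of the Northcott property. You combine the uniform bound $\theta(s) \leq a d^{b}$ from the height theorem with the bound on degree over $\mathbb{Q}$, and your added check that $\theta$ comes from an ample class on the projective curve $\overline{S}$ is a reasonable bit of extra care.
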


Once one fixes a subclass of surfaces of interest, one can obtain results without reference to the degeneration at $s_{0}$ (because such degenerations are classified for varieties in the subclass). For instance:
\begin{cor}
\label{K3cor}
Suppose that $g : X \to S$ is a $K$-algebraic family of K3 surfaces over a geometrically connected curve $S = \overline{S} \setminus \{ s_{0} \}$ admitting:
\begin{itemize}
\item[-] quasi-unipotent monodromy at $s_{0}$ with a monodromy logarithm $N$ of nilpotency order $k > 1$; and
\item[-] a Hodge generic point $s \in S(\mathbb{C})$.
\end{itemize}
Then there exists $a, b > 0$ such that
\[ \theta(s) \leq a [K(s) : K]^{b} \]
for all $s \in S(\overline{K})$ such that 
\begin{equation}
\label{K3rankineq}
\rk \Pic(X_{s}) \geq \rk \Pic(X_{\overline{\eta}}) + 5-k .
\end{equation}
\end{cor}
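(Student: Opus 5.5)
We deduce Corollary \ref{K3cor} from Theorems \ref{mainsurfaceapp1} and \ref{mainsurfaceapp2}. The work is to verify their hypotheses using the structure of K3 degenerations and of the transcendental lattice. We first reduce to the setting of \S\ref{mainappsec}. After a finite base change of $\overline{S}$, ramified at $s_{0}$ (and a finite extension of $K$), the monodromy at $s_{0}$ becomes unipotent. By Kulikov--Persson--Pinkham, again after further base change and birational modification in the central fibre, $g$ admits a semistable (Kulikov) extension $\overline{g} : \overline{X} \to \overline{S}$ over $K$. Such base changes alter $[K(s):K]$ and $\theta$ only by bounded multiplicative factors, so Galois-orbit bounds transfer back. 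Since $N \neq 0$ ($k>1$), the degeneration is of Type II ($k=2$) or Type III ($k=3$).

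Next we analyse $V = \Pic(X_{\overline{\eta}})^{\perp}$. At a Hodge generic point $s$, $V_{\mathbb{Q}}$ is the transcendental lattice $T$ of $X_{s}$. Zarhin's theorem then says that the Mumford--Tate group of $T$ is either $\mathrm{SO}(T,Q)$ or a unitary group $U(T)$ over a CM field $E$ acting on $T$. By André/Deligne, the algebraic monodromy group is a normal subgroup of the derived Mumford--Tate group. It is nontrivial because $N \neq 0$, so it equals that derived group. In the CM case, $N$ would commute with $E$ and preserve the Hodge filtration structure. But the limit MHS on $T$ has $h^{2,0}=1$, and a nilpotent element in $\mathfrak{u}(T)$ which is $E$-linear and nonzero forces a limit weight filtration incompatible with $\dim_{E}$ of the $(2,0)\oplus(0,2)$ part. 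I expect this exclusion of the CM case to be the main obstacle. I would try first the standard fact that for $E$ a CM field, $T^{2,0}$ is an $E$-eigenline, so $E$ acts on $\mathrm{Gr}^{W}_{4}$ and $\mathrm{Gr}^W_0$ of the limit compatibly; this forces $E$ totally real, hence $\mathbb{Q}$ in Zarhin's classification. This excludes the unitary case, so the monodromy is Zariski dense in $\mathrm{SO}(V,Q)$, and hence in $\Aut(V,Q)$ up to finite index. After a further finite étale base change we may assume the image lies in the identity component; then Zariski density in $\Aut(V,Q)$ in the sense needed holds. Zariski density in $\mathrm{SO}$ also implies $\mathbb{Q}$-simplicity of $V$, since $\mathrm{SO}(V,Q)$ acts irreducibly for $\dim V \ge 3$; small dimensions are trivial or excluded by $k>1$.

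Finally we match the rank conditions. For $k=3$, Theorem \ref{mainsurfaceapp1} applies with threshold $+2 = 5-3$. For $k=2$, we need $\dim \im(N) = 2$. In a Type II degeneration, $\im N$ on the limit MHS is $\mathrm{Gr}^W_1 \oplus \mathrm{Gr}^W_2$-valued: $N$ maps $\mathrm{Gr}^W_3 \to \mathrm{Gr}^W_1$ isomorphically, with $N^2=0$. Since $h^{2,0}=1$, $\mathrm{Gr}^W_3$ is a weight-3 Hodge structure of type $(2,1)+(1,2)$ with $h^{2,1}=1$, hence of dimension $2$. So $\dim \im N = 2$ (and $\im N \subset V$ since $N$ kills the algebraic classes), and the threshold $\dim\im N + 1 = 3 = 5-2$ matches. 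Applying the appropriate theorem gives the bound for all $s$ satisfying (\ref{K3rankineq}).
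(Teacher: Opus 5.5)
\textbf{Gap: the monodromy step for $k=2$.} Your exclusion of the unitary case relies on $E$ acting on $\mathrm{Gr}^W_4$ and $\mathrm{Gr}^W_0$ of the limit. For a Type II degeneration $N^2=0$, so $\mathrm{Gr}^W_4=\mathrm{Gr}^W_0=0$ and the argument gives nothing. The only constraint is on $\mathrm{Gr}^W_3$, which is two-dimensional with $h^{2,1}=h^{1,2}=1$. An imaginary quadratic $E$ acting by $\sigma$ on $H^{2,1}$ and by $\bar{\sigma}$ on $H^{1,2}$ is perfectly consistent with that.

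Under your reading of the hypothesis (a point Hodge generic among the fibres of the family, which every family has), the unitary case actually occurs. K3 surfaces with a non-symplectic automorphism of order $3$ are parametrized by arithmetic ball quotients for hermitian forms over $\mathbb{Q}(\zeta_3)$. At a cusp, $\mathrm{Gr}^W_4$ would have to carry an $E$-action while having dimension at most $h^{2,0}=1$, so these degenerations are Type II. The monodromy of a curve through a cusp lies in a unitary group, so the Zariski density required in Theorem~\ref{mainsurfaceapp2} fails.

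There are further slips in the same step:
\begin{enumerate}
\item Zarhin's list also contains totally real $E\neq\mathbb{Q}$, with Mumford--Tate group $\mathrm{Res}_{E/\mathbb{Q}}\mathrm{SO}$, so ``totally real, hence $\mathbb{Q}$'' is not a valid inference.
\item ``A nontrivial normal subgroup of the derived Mumford--Tate group is all of it'' requires $\mathbb{Q}$-simplicity, which fails, for example, for split $\mathrm{SO}(2,2)$.
\end{enumerate}
For $k=3$ your idea does work: there $\mathrm{Gr}^W_4$ is one-dimensional over $\mathbb{Q}$, which forces $E=\mathbb{Q}$ outright.

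\textbf{The missing idea.} Read the hypothesis as intended (cf.\ the remark after the corollary): $s$ is Hodge generic in the moduli space of polarized K3 surfaces. Then the Mumford--Tate group of $V$ at $s$ is already all of $\mathrm{SO}(V,Q)$ up to scalars, with $V$ the primitive cohomology. The Andr\'e--Deligne monodromy theorem makes the connected monodromy group a normal subgroup of the almost simple group $\mathrm{SO}(V,Q)$. It is nontrivial because $N\neq 0$, so it equals $\mathrm{SO}(V,Q)$. This gives both the $\mathbb{Q}$-simplicity needed in Theorem~\ref{mainsurfaceapp1} and the density needed in Theorem~\ref{mainsurfaceapp2}, with no Zarhin case analysis. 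This is exactly the paper's argument.

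\textbf{The rest is sound.} Your computation $\dim\im N=\dim \mathrm{Gr}^W_3=2$ from the limit Hodge numbers is correct: $h^{2,0}=1$, $N^2=0$ and $N\neq 0$ force $h^{2,1}_{\mathrm{lim}}=1$. It is a self-contained substitute for the paper's citation of Schreieder--Soldatenkov. For the reduction, use Mumford's semistable reduction as the paper does, rather than Kulikov--Persson--Pinkham. You never use $K_{\overline{X}}\cong\mathcal{O}$, and Kulikov models are built analytically and need not be projective schemes defined over $K$.
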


\vspace{0.5em}

\noindent We emphasize again that $k$ is either $2$ or $3$, hence correspondingly $5-k$ is either $3$ or $2$.

\begin{rem}
We note that the moduli space of quasi-polarized K3 surfaces with polarization degree $2\ell$ admits toroidal compactifications with boundary of codimension one, and any curve in such a moduli space passing through a Hodge generic point and intersecting the boundary will satisfy the hypotheses of \autoref{K3cor}. 
\end{rem}

\begin{proof}
The assumptions are unchanged after replacing $S$ with a finite base-change $S' \to S$ and $X$ with $X' = X \times_{S} S'$; in particular we may apply a reduction theorem of Mumford \cite[Ch II]{zbMATH03425769} to reduce to the situation where $g$ admits a semistable compactification $\overline{g} : \overline{X} \to \overline{S}$. The monodromy assumptions of \autoref{mainsurfaceapp1} and \autoref{mainsurfaceapp2} follow from the assumption that there exists a Hodge generic point and the Andr\'e-Deligne monodromy theorem \cite[Thm. 1]{Andre1992}. Thus to apply \autoref{mainsurfaceapp1} and \autoref{mainsurfaceapp2} it suffices to prove that $\dim \im(N) \leq 2$ in the case $k = 2$, which follows from \cite[Prop. 4.1]{zbMATH07286305}.
\end{proof}

In this case one can even run the full Pila-Zannier strategy to obtain finiteness results without bounding the degree of the number field.

\begin{thm}
\label{truefinitenessforK3s}
Suppose that $g : X \to S = \overline{S} \setminus \{ s_{0} \}$ is a one-parameter family of polarized K3 surfaces over $\mathbb{C}$ for which:
\begin{itemize}
\item[-] the monodromy around $s_{0}$ is quasi-unipotent and has a monodromy logarithm $N$ with nilpotency order $k > 1$; and
\item[-] a Hodge generic point $s \in S(\mathbb{C})$.
\end{itemize} 
Then the locus
\begin{equation}
\label{truefinitenessset}
 \{ s \in S(\mathbb{C}) : \rk \Pic(X_{s}) \geq \rk \Pic(X_{\overline{\eta}}) + 5-k \} 
\end{equation}
is finite.
\end{thm}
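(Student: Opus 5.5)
We will run the Pila--Zannier strategy, with \autoref{K3cor} supplying the Galois-orbit input. We argue in four steps.

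\textbf{Step 1 (reduction to a number field).} Work over the period domain $D$ of the generic transcendental lattice $V$, an orthogonal-type domain for $\mathrm{SO}(2,n)$ with $n+2 = \dim V$. Let $\phi : S(\mathbb{C}) \to \Gamma\backslash D$ be the period map. The locus (\ref{truefinitenessset}) is the set of $s$ where $\phi(s)$ meets a Noether--Lefschetz sub-locus of codimension at least $5-k$. Since there is a Hodge generic point, $\phi$ is non-constant. The Picard-jump locus is a countable union of algebraic subvarieties, and it cannot contain $S$, so it is a countable set of points. If the family is not defined over $\overline{\mathbb{Q}}$, a standard spreading-out argument applies: either all such points lie in a $\overline{\mathbb{Q}}$-defined isotrivial-in-moduli situation, or one moves the family in its moduli to reduce to $K$ a number field. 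Concretely, the curve's image in the moduli space of polarized K3s is defined over a finitely generated field. If it is not defined over $\overline{\mathbb{Q}}$, the weakly special closure argument (Ax--Schanuel for period maps, Bakker--Tsimerman) shows that the atypical points are finite. Otherwise we descend to a number field $K$, and the jump points are then $\overline{K}$-points, since Noether--Lefschetz loci are algebraic and defined over $\overline{K}$.

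\textbf{Step 2 (counting).} Fix a definable fundamental set $\mathcal{F}$ for $\phi$ near the cusp. This is possible because the monodromy at $s_0$ is quasi-unipotent, so the nilpotent orbit theorem gives $\mathbb{R}_{\mathrm{an},\exp}$-definability (Bakker--Klingler--Tsimerman). Each point $s$ of (\ref{truefinitenessset}) corresponds to a rational subspace $W \subset V$ of dimension at least $5-k$ of Hodge classes. We parametrize these by a definable family indexed by $\mathrm{SO}(V)(\mathbb{R})$-translates, following Habegger--Pila. The Galois conjugates of $s$ are again in the set, because Picard rank is Galois-invariant. Next, height bounds for the lattice $W$ at a point $\phi(\tilde s)$ with $\tilde s \in \mathcal{F}$ must be polynomial in $\theta(s)$ and $[K(s):K]$. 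This follows by a standard argument on heights of integral vectors in $\mathrm{NS}$: quasi-polarization plus Hodge index bound the discriminant, and the position in the fundamental domain is controlled near the cusp by the log-growth of periods. By \autoref{K3cor}, $\theta(s) \leq a[K(s):K]^b$. So the number of rational points of height $\leq T$ on the definable set is at least $c[K(s):K]^{\epsilon}$ with $T$ polynomial in $[K(s):K]$.

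\textbf{Step 3 (Pila--Wilkie and Ax--Schanuel).} For large degree, Pila--Wilkie produces a positive-dimensional semialgebraic family of such $W$ meeting the graph of the lifted period map in a positive-dimensional set. The Ax--Schanuel theorem for variations of Hodge structure (Bakker--Tsimerman) then forces $\phi(S)$ to lie in a proper weakly special subvariety. By Andr\'e--Deligne and the Hodge-generic assumption this is impossible, because the algebraic monodromy is all of $\mathrm{SO}(V)$. A dimension count is needed here: the atypicality inequality is $\dim S + \operatorname{codim} \geq 1 + (5-k) > \dim$ of the ambient special subvariety minus the expected dimension. This count must also be checked in the $k=2$ case, using $\dim\im N \le 2$. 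Hence $[K(s):K]$ is bounded on the set, and the Corollary after \autoref{mainsurfaceapp2} gives finiteness.

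\textbf{Main obstacle.} We expect Step 2's polynomial height bound on the lattice $W$ in terms of $\theta(s)$ to be the hardest part, specifically near the cusp, where the fundamental set is non-compact. We would first try to use the explicit nilpotent-orbit asymptotics (Schmid) to bound the entries of the period matrix at $\tilde s$ by a polynomial in $\log|1/s|_\infty$, which is $\ll \theta(s)$. We would then solve for integral generators of $W$ by Siegel's lemma.
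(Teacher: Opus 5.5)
There is a genuine gap in Step 2, and it is where the arithmetic content of the theorem lies. For Pila--Wilkie to apply, the rational parameter attached to each point $s$ of (\ref{truefinitenessset}) must have height bounded polynomially in $\theta(s)$ and $[K(s):K]$. That parameter is the lattice $W$ of extra N\'eron--Severi classes, or the corresponding element of $\mathrm{SO}(V)(\mathbb{Q})$. Nothing you invoke gives this bound.

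\begin{itemize}
\item[-] The Hodge index theorem only fixes the signature of the intersection form on $\Pic(X_s)$, so $Q$ is definite on $\Pic(X_s)\cap V$. It places no bound on the discriminant.
\item[-] Siegel's lemma does not apply. The condition that $v\in V_{\mathbb{Z}}$ be a Hodge class at $\tilde s$ is $Q(v,\omega(\tilde s))=0$, a linear system with transcendental coefficients.
\item[-] The size of the periods cannot control the answer anyway. By Noether--Lefschetz density, points of $S$ where the Picard rank jumps by one, with Picard lattices of arbitrarily large discriminant, accumulate in a compact part of $\mathcal{F}$ where the period entries are uniformly bounded.
\item[-] Schmid-type estimates near the cusp only compare the Hodge norm at $\tilde s$ with a fixed integral norm. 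That is a quantitative-reduction step, needed but easy. It says nothing about which integral vectors become Hodge classes.
\end{itemize}
The algebraicity of $s$ has to enter through a genuinely arithmetic theorem.

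The missing ingredient is \autoref{MWcor}: $|\operatorname{Disc}(\Pic(X_s))|\le a\max\{\theta(s),[\mathbb{Q}(X_s):\mathbb{Q}]\}^b$. The paper proves it as follows:
\begin{itemize}
\item[-] pass to a Kuga--Satake family $h:A\to T$ with $\mathbb{V}_{\mathbb{Q}}\hookrightarrow p^{*}\mathrm{End}(R^1h_*\mathbb{Q})$;
\item[-] apply the Masser--W\"ustholz estimates (\autoref{MWthm}) to bound $\operatorname{Disc}(\mathrm{End}(A_{p(s)}))$;
\item[-] compare the trace form on $\mathrm{End}(A)_{\mathbb{Q}}$ with the form induced by the polarization;
\item[-] project to the summand $\mathbb{L}$ and pull back along $R\iota$;
\item[-] compare the Faltings height with a Weil height on $T$.
\end{itemize}
Combined with \autoref{K3cor}, this gives $\operatorname{Disc}(\Pic(X_s))=O([K(s):K]^b)$. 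Minkowski then yields a Hodge tensor $u_s=v_s\otimes w_s-w_s\otimes v_s$ that atypically defines $s$, with $Q(u_s,u_s)$ polynomially bounded in $[K(s):K]$. The counting theorem \cite[Thm.~1.10]{urbanik2025complexityatypicalspecialpoints} gives $N_q=O(q^{\epsilon})$, which contradicts the lower bound $N_q\ge d$ from Galois orbits. That theorem packages the Pila--Wilkie/Ax--Schanuel argument of your Step 3 intrinsically in terms of $Q(u,u)$, which sidesteps the non-compact fundamental set. Your Step 3 could plausibly play the same role, but only once a Masser--W\"ustholz-type bound replaces your Step 2.

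Two smaller points.
\begin{itemize}
\item[-] The atypicality count in Step 3 needs nothing from $\dim\im(N)\le 2$. A curve meeting a Noether--Lefschetz locus of codimension $5-k\ge 2$ is atypical for either $k$. The bound on $\dim\im(N)$ is used only inside \autoref{K3cor}.
\item[-] Your appeal in Step 1 to Ax--Schanuel for curves not defined over $\overline{\mathbb{Q}}$ is not an argument as written. The paper instead defers this reduction to \cite[\S8.1]{zbMATH08109694}.
\end{itemize}
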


\subsection{Techniques}

As discussed in \S\ref{roleofrigidgeosec}, our approach crucially relies on rigid geometry in order to study $v$-adic relations on a system $\mathbf{G}$ of G-functions at each finite place $v$ of $K$. In particular given a degenerating family $g : \overline{X} \to \overline{S}$ with special fibre $Y = \overline{X}_{s_{0}}$ at $s_{0}$, one would like to produce certain rigid-analytic varieties $\mathcal{D}^{v}$ which embed into $\overline{X}^{v}$, and which are a ``spreading out'' of some piece of the geometry of $Y$ into the nearby fibres. If $s$ is the uniformizing parameter at $s_{0}$ which is associated to $\mathbf{G}$, it is crucial when doing this that the image of $\mathcal{D}^{v}$ in $\overline{X}^{v}$ makes sense over the component containing $s_{0}$ of the $v$-adic region $|s| < 1$ away from finitely many primes, and over a disk around $s_{0}$ of controlled radius at the remaining finite primes. In particular, one should have some formal object $\mathcal{D}^{\wedge}$ defined over the number field $K$ which interpolates the different $\mathcal{D}^{v}$'s for different $v$, and such that the power series that describe $\mathcal{D}^{\wedge}$ converge in the prescribed regions.

To make all of this precise we will develop a kind of ``adelic geometry'' in \S\ref{adelicgeosec}. This basically means that, given a finite-type $K$-algebra $A$, we study families $\{ \| \cdot \|_{v} \}$ of norms on $A$ that come from a common presentation of $A$. When one replaces $A$ with some $v$-adic completion, the norm $\| \cdot \|_{v}$ extends to a norm on the resulting Tate algebra. One then carries out calculations involving power series with coefficients in $A$ and uses these norms and their properties to characterize how the geometry of the associated rigid varieties behave. We develop our adelic geometry in greater generality than required for studying surface degenerations because we plan future applications of this theory in forthcoming work. The most important consequence of this theory is the following (for definitions and a more detailed statement, see \S\ref{adelictubesec} and \autoref{tubeconstrprop}).

\begin{thm}
\label{mainadelictubethm}
Let $K$ be a number field, $U$ an affine algebraic $K$-variety with \'etale coordinates $\overline{f} = (f_{1}, \hdots, f_{q})$, and let $E = V(f_{1}, \hdots, f_{p})$ for some $p \leq q$. Then $\overline{f}$ induces an isomorphism $\eta : E[p]^{\wedge} \xrightarrow{\sim} \mathcal{E}$ between the formal neighbourhoods of $E$ in $E \times \mathbb{A}^{p}$ and $U$, respectively. Moreover there exists a standard adelic tube $E\langle p \rangle$ and an adelic tube $\mathcal{T}$ associated to $(E, U)$, together with open embeddings $\eta^{v} : E\langle p \rangle^{v} \hookrightarrow \mathcal{T}^{v}$ of adic spaces for each $v \in \Sigma_{K,f}$, whose formal completions are identified with the corresponding rigid-analytification of $\eta$. The images of the maps $\eta^{v}$ contain the $v$-adic analytifications of an adelic tube associated to $(E, U)$ which refines $\mathcal{T}$. 
\end{thm}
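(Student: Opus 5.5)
We will prove \autoref{mainadelictubethm} in three stages: first the purely formal statement, then the construction of the adelic tubes, and finally the comparison of the two at each finite place.

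\emph{Formal stage.} Since $\overline{f} : U \to \mathbb{A}^{q}$ is \'etale, the map $(f_{p+1}, \hdots, f_{q}) : E \to \mathbb{A}^{q-p}$ is \'etale, and so is the map $\phi : U \to E' := \mathbb{A}^{q-p}$. Consider the morphism $E \times \mathbb{A}^{p} \to \mathbb{A}^{q}$, $(e, t) \mapsto (t, f_{p+1}(e), \hdots, f_{q}(e))$, and compare it with $\overline{f}$. Both are \'etale and both restrict to the identity on $E$ (embedded as $E \times \{0\}$, respectively as $V(f_{1}, \hdots, f_{p}) \subset U$). By the infinitesimal lifting property for \'etale morphisms, applied inductively to the thickenings $E \subset E_{n}$ defined by $(f_{1}, \hdots, f_{p})^{n+1}$, the inclusion $E \hookrightarrow U$ lifts uniquely and compatibly to maps $E \times \Spec K[t]/(t)^{n+1} \to U$. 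Each of these maps the coordinates to $(t_{i}, f_{j})$, and their limit gives $\eta : E[p]^{\wedge} \to \mathcal{E}$. Symmetrically, one obtains an inverse by lifting the retraction $U_{n} \to E$. Uniqueness of lifts shows that the two maps are mutually inverse.

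\emph{Adelic stage.} Choose a presentation $A = K[x_{1}, \hdots, x_{m}]/I$ of $\mathcal{O}(U)$ in which the $f_{i}$ appear among the generators, together with an explicit \'etaleness certificate: a presentation in which the Jacobian determinant $J$ of the relations, relative to $\overline{f}$, is invertible with explicit inverse $J^{-1} \in A$. Let $\{\|\cdot\|_{v}\}$ be the associated family of norms from \S\ref{adelicgeosec}. The standard tube $E\langle p \rangle$ will be the tube of $E$ in $E \times \mathbb{A}^{p}$ with radii $r_{v}$ in the $t$-directions, where $r_{v} = 1$ for all but finitely many $v$ and $r_{v}$ is controlled at the remaining places. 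The tube $\mathcal{T}$ is cut out in $U$ by $|f_{i}|_{v} < r_{v}$ for $i \le p$, together with the norm bounds.

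\emph{Comparison stage.} We write $\eta$ as a power series $\sum_{\alpha} a_{\alpha} t^{\alpha}$ with coefficients $a_{\alpha} \in \mathcal{O}(E)^{m}$, which are the images of the coordinates $x_{j}$. The coefficients are determined recursively by Newton's method, and each step involves only $J^{-1}$ and the relations. Hence a Cauchy-type majorant estimate gives $\|a_{\alpha}\|_{v} \le C_{v}^{|\alpha|}$, where $C_{v} = 1$ whenever the norms of all the presentation data are at most $1$, which holds for almost all $v$. This implies convergence on $E\langle p \rangle^{v}$ and yields a morphism $\eta^{v}$ of adic spaces. Its formal completion agrees with the analytification of $\eta$ by construction. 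Moreover $\eta^{v}$ is \'etale and injective, since it is an isomorphism after completion along $E$ and the composite with $\overline{f}$ is injective on the tube by a Hensel contraction argument. It is therefore an open embedding. Finally, applying $v$-adic Hensel's lemma to the equations $\overline{f}(x) = (t, f_{>p}(e))$ for points with $|f_{i}|_{v} < r'_{v} \le r_{v}$, where $r'_{v}$ is smaller by the factor $C_{v}^{-2}$, shows that every point of the refined tube lies in the image of $\eta^{v}$.

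\emph{Main obstacle.} The hard step is obtaining the uniform norm estimate for the Newton iteration in the comparison stage. One has to show simultaneously that $C_{v} = 1$ for almost all $v$ and that there are explicit bounds at the remaining places. This requires the \'etaleness certificate to be integral with respect to a single presentation, so that the same inverse $J^{-1}$ can be used at every place $v$.
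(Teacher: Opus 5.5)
Your formal stage is correct. Arguing by infinitesimal lifting for the two \'etale maps $\overline{f}$ and $E\times\mathbb{A}^{p}\to\mathbb{A}^{q}$ is a cleaner route to the isomorphism $\eta$ than the paper's explicit solution of (\ref{cond1})--(\ref{cond4}) via Fa\`a di Bruno. There is one harmless slip: $(f_{p+1},\dots,f_{q})\colon U\to\mathbb{A}^{q-p}$ is smooth of relative dimension $p$, not \'etale, but you never use this. The analytic part, however, has a genuine gap, and the uniform-in-$v$ norm estimate you single out as the main obstacle is the comparatively easy half.

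\textbf{Convergence over all of $E^{v}$.} The norms $\|\cdot\|_{(E,o),v}$ of \S\ref{polyalgnormsec} only see the affinoid $E^{v}\cap\{|z_{r}|\le 1\}$, since they are quotient norms of a Tate algebra (\autoref{compinj}, \autoref{normextagrees2}). So $\|a_{\alpha}\|_{v}\le C_{v}^{|\alpha|}$ gives convergence only over that affinoid, i.e.\ on something like $\Delta^{v,p}\times\mathfrak{E}^{v}$ as in \autoref{adelictubecontainsprodlem}. A standard adelic tube is a neighbourhood of all of $E^{v}\times\{0\}$, cut out by $|t_{i}^{\alpha_{1}}z_{r}^{\alpha_{2}}|<|\rho|$, and its radius in the $t$-directions must shrink as $|z|\to\infty$. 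Neither a fixed-radius tube over $E^{v}$ nor one restricted by $|z_{r}|\le 1$ is the object in the theorem, and the fixed-radius version genuinely fails.

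For a concrete failure, take $U=D\bigl((1+xy)(1+2xy)\bigr)\subset\mathbb{A}^{2}$ with $f_{1}=x(1+xy)$ and $f_{2}=y$, so $E=\{x=0\}$. Then $\eta(y,t)=(x,y)$ with $x=\sum_{k\ge 0}(-1)^{k}C_{k}\,y^{k}t^{k+1}$, where the $C_{k}$ are the Catalan numbers. Every coefficient has Gauss norm $\le 1$ at every $v$, yet the series converges only where $|ty|_{v}<1$.

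The missing ingredient is control of the \emph{degrees} of the coefficients simultaneously with their norms, uniformly in $v$. Concretely, you need representatives $\widetilde{a}_{\alpha}$ with $\|\widetilde{a}_{\alpha}\|_{v}\le\kappa_{v}c_{v}^{|\alpha|}$ and $\deg\widetilde{a}_{\alpha}\le\alpha'|\alpha|+\beta$; this is power-adelicity (\autoref{adandpowbound}). The paper gets it from \autoref{linsolBprop3}, \autoref{adelicsufftocheck} and the gluing in \autoref{definingpowerseriesadelicprop}, and it is exactly what produces the tube shape $|t_{i}^{\alpha_{1}}z_{r}^{\alpha_{2}}|<|\rho|$. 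The same problem affects your Hensel step for the image: $|J^{-1}|$ is unbounded on $U^{v}$, so the admissible size of $|f_{i}|$ must shrink with $|z|$ rather than by a fixed factor $C_{v}^{-2}$. Within your framework you could repair this by running the Newton recursion on polynomial representatives and tracking degrees. As written, though, the convergence claim on $E\langle p\rangle^{v}$ does not follow.

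\textbf{Injectivity of $\eta^{v}$.} Your argument here is wrong as stated. We have $\overline{f}\circ\eta^{v}(e,t)=(t,f_{p+1}(e),\dots,f_{q}(e))$, and $(f_{p+1},\dots,f_{q})|_{E}$ is merely \'etale, so this composite is not injective in general. For example, on $U=\mathbb{G}_{m}\times\mathbb{A}^{1}$ with coordinates $(w,x)$, $f_{1}=x$ and $f_{2}=w^{2}$, the points $(w,t)$ and $(-w,t)$ have the same image. Nothing you wrote therefore shows that $\eta^{v}$ is an open embedding.

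The paper handles this differently. It extends the formal flattening retraction of \autoref{formalretractexists} to an analytic retraction $\beta^{v}$ on an adelic tube, using the derivative estimates of \autoref{normestprop} (see \autoref{retextendscor}), and sets $\gamma^{v}=(f_{1},\dots,f_{p},\beta^{v})$. It then uses that $\eta^{v}\circ\gamma^{v}$ is the inclusion of $(\gamma^{v})^{-1}(E\langle p\rangle^{v})$, checked after formal completion along $E$. The refined tube $\mathcal{T}'$ in the image is obtained by showing that $(\gamma^{v})^{-1}(E\langle p\rangle^{v})$ contains an adelic tube, again via power-adelicity of the $\delta(z_{i})$. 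A quantitative Hensel argument could replace this, but it would need a lower bound on the separation of points in fibres of $(f_{p+1},\dots,f_{q})|_{E}$. That bound must be allowed to degenerate at infinity on $E^{v}$, which brings you back to the same shrinking-tube and degree control.
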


\vspace{0.5em}

A second input is the model-free Hyodo-Kato cohomology of Colmez-Nizio\l. The point is that after constructing the spaces $\mathcal{D}^{v}$ one wants to understand what effect the subspace $\mathcal{D}^{v}_{s_{1}} := \mathcal{D}^{v} \cap X_{s_{1}}$ has on line bundles $\mathcal{L}$ on $X_{s_{1}}$; for instance, one would like to know if the restriction $\restr{\mathcal{L}}{\mathcal{D}^{v}_{s_{1}}}$ is trivial in de Rham cohomology. To avoid studying the Picard group of $\mathcal{D}^{v}_{s_{1}}$ directly, one can instead use a $p$-adic cohomology theory which compares with the de Rham cohomology and constrain the cohomological incarnation of $\mathcal{L}$ using weights. Since we don't necessarily have good integral models for the family $\overline{g}$, its fibres, and the spaces $\mathcal{D}^{v}$, one would like to use a suitable $p$-adic cohomology theory which is ``model-free''; for this we use Colmez-Nizio\l{}'s Hyodo-Kato cohomology.

\subsection{Conventions}

\subsubsection{Fields and Analytification}
\label{fieldsandanalytificationconventions}

Given a number field $K$ we write $\Sigma_{K,f}$ (resp. $\Sigma_{K,\infty}$) for its set of finite (resp. infinite) places, and set $\Sigma_{K} = \Sigma_{K,f} \sqcup \Sigma_{K,\infty}$. For a place $v$ of $K$ we write $K_{v}$ for the completion of $K$ at $v$. Given a $K$-algebraic variety $X$ and a finite place $v \in \Sigma_{K,f}$, we write $X^{v}$ for the associated adic space over $\Spa(K_{v}, \mathcal{O}_{K_{v}})$, which we will identify with the associated rigid space. Given a place $v \in \Sigma_{K,\infty}$ we write $X^{v}$ for the complex analytic space obtained as the analytification of $X_{K_{v}}$. 

When we deal with a fixed number field $K$, we will always assume it comes with a fixed embedding $K \subset \overline{\mathbb{Q}} \subset \mathbb{C}$. 

\section{Adelic Geometry}
\label{adelicgeosec}

In this section we develop the rudiments of a theory of ``adelic geometry'' which will be important for analyzing the rigid-geometry of degenerating families of algebraic varieties. We develop this theory in greater generality than will be necessary with a view towards future applications.

\subsection{Polynomial Algebra Norms}
\label{polyalgnormsec}

In what follows we let $K$ be a number field, and $v$ a finite place of $K$. Let $\| \sum_{J} a_{J} \overline{x}^{J} \|_{v} := \textrm{max}_{J} |a_{J}|_{v}$ be the usual induced norm on $A := K[x_{1}, \hdots, x_{n}]$, where we use the symbol $J$ to denote a multi-index. Let $I \subset A$ be an ideal such that $\Spec A/I$ is a geometrically irreducible and geometrically reduced $K$-variety passing through the point $0 \in \mathbb{A}^{n}$. We set $X = \Spec A/I$. 

\begin{lem}
\label{compinj}
Let $\hat{A} =  K_{v}\langle x_{1}, \hdots, x_{n} \rangle$ be the standard Tate algebra, and set $\hat{I} = \hat{A} I$. Then the map $A/I \to \hat{A}/\hat{I}$ is injective.
\end{lem}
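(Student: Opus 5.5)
The plan is to reduce to a statement about faithful flatness. I would factor the map as $A/I \to A_{v}/I_{v} \to \hat{A}/\hat{I}$, where $A_{v} := K_{v}[x_{1}, \hdots, x_{n}]$ and $I_{v} = A_{v} I$. The first map is injective because $K \to K_{v}$ is a field extension, hence faithfully flat, so $A/I \to (A/I) \otimes_{K} K_{v} = A_{v}/I_{v}$ is injective. For the second map, recall that the Tate algebra $\hat{A}$ is the completion of $A_{v}$ with respect to the Gauss norm. It is flat over $A_{v}$; this is the standard fact that the affinoid closed unit polydisk is an admissible open of $\mathbb{A}^{n,\mathrm{an}}$, and analytification maps on local rings are flat. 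Hence $\hat{A}/\hat{I} = (A_{v}/I_{v}) \otimes_{A_{v}} \hat{A}$.

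Flatness alone does not give injectivity, because $\hat{A}$ is not faithfully flat over $A_{v}$: maximal ideals of points outside the unit polydisk extend to the unit ideal. So I would argue with associated primes instead. Suppose $a \in A_{v}/I_{v}$ maps to zero in $\hat{A}/\hat{I}$, and let $J = \mathrm{Ann}(a) \subset A_{v}$. Flatness gives $\mathrm{Ann}_{\hat{A}}(a \otimes 1) = J \hat{A}$, so $J\hat{A} = \hat{A}$. This means $V(J)$ contains no point of the closed unit polydisk over $\overline{K_{v}}$, since maximal ideals of $\hat{A}$ correspond to such points.

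The key input is then the hypothesis on $X$. The scheme $X_{K_{v}} = \Spec A_{v}/I_{v}$ is irreducible and reduced, by geometric irreducibility and geometric reducedness. So $A_{v}/I_{v}$ is a domain, and any nonzero $a$ has $J = \mathrm{Ann}(a) = I_{v}$, i.e.\ $J$ is zero in $A_{v}/I_{v}$. Then $V(J) = X_{K_{v}}$, which contains the origin $0$, and $0$ lies in the unit polydisk. This contradicts $J\hat{A} = \hat{A}$, so $a = 0$.

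Put more cleanly, the argument shows $I_{v}\hat{A} \neq \hat{A}$, and since $A_{v}/I_{v}$ is a domain whose only associated prime is $I_{v}$, that prime meets the polydisk. The main point needing care is the flatness of $A_{v} \to \hat{A}$ (or, equivalently, that the annihilator ideal of $a$ extends as expected). If I wished to avoid it, I would instead use Krull's intersection theorem on the local ring at $0$. The completion $A_{v} \to \widehat{(A_{v})}_{\mathfrak{m}_{0}} = K_{v}[[x]]$ factors through $\hat{A}$, because Tate series are formal power series. The map $A_{v}/I_{v} \to K_{v}[[x]]/I_{v}K_{v}[[x]]$ is the completion of the local domain $(A_{v}/I_{v})_{\mathfrak{m}_{0}}$, hence injective. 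Since $\hat{I}$ maps into $I_{v}K_{v}[[x]]$, injectivity into $\hat{A}/\hat{I}$ follows at once. I would actually present this second route, since it needs only irreducibility, reducedness, and the fact that $0 \in X$.
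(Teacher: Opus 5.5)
Your second route, the one you say you would present, is essentially the paper's proof. Both reduce to $K_{v}$ using faithful flatness of $K \to K_{v}$. Both then note that the map from $A_{v}/I_{v}$ to its $\mathfrak{m}_{0}$-adic completion factors through $\hat{A}/\hat{I}$. That map is injective because $A_{v}/I_{v}$ is a domain with $I_{v} \subset \mathfrak{m}_{0}$, by localization followed by Krull's intersection theorem.

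The only difference is in how the factorization is obtained. The paper goes through the rigid local ring $\mathcal{O}_{X^{v},\mathfrak{m}_{0}}$ and the isomorphism of completed local rings. You instead use $\hat{A} \subset K_{v}[[x_{1}, \hdots, x_{n}]]$ together with $\hat{I} \subset I_{v} K_{v}[[x_{1}, \hdots, x_{n}]]$, which does the same job more directly. Your first, flatness-and-annihilator argument is also sound.
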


\begin{proof}
We first claim we may replace $K$ with $K_{v}$: indeed, the base-change $X_{K_{v}} \to X$ is faithfully flat, and hence the map $A/I \to A_{K_{v}} / A_{K_{v}} I$ is injective. 

The ring $\hat{A}/\hat{I}$ can be seen to be ring of global functions on $X^{v} \cap \mathbb{B}$, where $\mathbb{B} \subset \mathbb{A}^{n,v}$ is the closed affinoid ball of radius $1$. Let $\mathfrak{m}_{0} = (x_{1}, \hdots, x_{n})$ be the ideal corresponding to the $K_{v}$-point $0$. Then we obtain a commuting diagram
\begin{equation}
\label{rigidstalkdiag}
\begin{tikzcd}
A/I \arrow[d] \arrow[r] & \mathcal{O}_{X,\mathfrak{m}_{0}} \arrow[d] \arrow[r] & \left[\mathcal{O}_{X,\mathfrak{m}_{0}}\right]^{\wedge}_{\mathfrak{m}_{0}} \arrow[d, "\sim" labl] \\
\widehat{A}/\widehat{I} \arrow[r] & \mathcal{O}_{X^{v},\mathfrak{m}_{0}} \arrow[r] & \left[\mathcal{O}_{X^{v},\mathfrak{m}_{0}}\right]^{\wedge}_{\mathfrak{m}_{0}} ,
\end{tikzcd}
\end{equation}
where the arrows are the natural ones. It is immediate from the construction of the analytification that the map on the right is an isomorphism, so the injectivity of the left arrow reduces to the injectivity of the top row. This follows from the fact that $X$ is (geometrically) reduced and irreducible (for the first arrow) and \cite[\href{https://stacks.math.columbia.edu/tag/00MC}{Lemma 00MC}]{stacks-project} for the second.
\end{proof}

\begin{defn}
\label{normdef}
Given a field $L$ with a non-archimedean absolute value $| \cdot |$ and a $L$-algebra $V$, a non-archimedian norm extending $| \cdot |$ is a map $\| \cdot \| : V \to \mathbb{R}_{\geq 0}$ with the following properties for all $u, w \in V$ and $a \in L$:
\begin{itemize}
\item[(1)] $\| u \| = 0 \iff u = 0$;
\item[(2)] $\| u + w \| \leq \max \{ \| u \|, \| w \|\}$;
\item[(3)] $\| a u \| = |a| \| u \|$; and
\item[(4)] $\| u \cdot w \| \leq \| u \| \| w \|$.
\end{itemize}
\end{defn}

For the Gauss norm $\| \cdot \|_{v}$, one has equality in \autoref{normdef}(4), but this need not hold in general.

\begin{defn}
\label{equivnormdef}
We say two norms $\| \cdot \|_{1}, \| \cdot \|_{2} : V \to \mathbb{R}_{\geq 0}$ are equivalent if there are positive real constants $c, c' > 0$ such that
\[ c \| \cdot \|_{2} \leq  \| \cdot \|_{1} \leq c' \| \cdot \|_{2} \]
identically on $V$. 
\end{defn}

\begin{lem}
\label{normwelldef}
The map $\| \cdot \|_{(X,0),v} : A/I \to \mathbb{R}_{\geq 0}$ given by 
\[ f + I \mapsto \min_{f' \in f + I} \| f' \|_{v} , \]
is a well-defined non-archimedean norm on $A/I$. The same holds for the analogously-defined norm $\| \cdot \|_{(X_{K_{v}},0)} : A_{K_{v}} / I_{K_{v}} \to \mathbb{R}_{\geq 0}$, where $I_{K_{v}} = A_{K_{v}} I$.
\end{lem}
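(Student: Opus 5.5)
The plan is to prove the statement for $A_{K_{v}}/I_{K_{v}}$ first and then deduce the statement for $A/I$. The key input is that the ideal $\hat{I} = \hat{A} I$ of the Tate algebra is closed for the Gauss norm. This is standard: every ideal of a Tate algebra is closed, since $\hat{A}$ is Noetherian and every ideal is finitely generated (BGR). We also use that the Gauss norm $\| \cdot \|_{v}$ on $\hat{A}$ restricts to the given one on $A$ and $A_{K_{v}}$.

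\textbf{Well-definedness.} We must show the infimum $\inf_{f' \in f+I} \| f'\|_{v}$ is attained. The values of $\| \cdot \|_{v}$ on $A$ lie in $|K^{\times}|_{v} \cup \{0\}$, a discrete subset of $\mathbb{R}_{\geq 0}$ away from $0$. So any positive infimum is attained, and the infimum is attained trivially if some $f' \in f+I$ has norm $0$, since then $f' = 0$. The only problematic case is an infimum of $0$ that is not attained. The same discreteness argument works for $K_{v}$, whose value group is also discrete. Properties (2), (3) and (4) of \autoref{normdef} then follow formally. For (2) and (4), take minimizers $f', g'$ for $f$ and $g$; then $f'+g' \in f+g+I$ and $f'g' \in fg+I$, and the Gauss norm is ultrametric and multiplicative. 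For (3), use the bijection $f' \mapsto af'$ from $f+I$ to $af+I$ when $a \neq 0$.

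\textbf{Property (1), the main step.} Suppose the minimum is $0$. Then some $f' \in f+I$ has $\|f'\|_{v} = 0$, so $f' = 0$ and $f \in I$. This is immediate once the minimum is known to be attained. To be safe I would still justify that the quotient seminorm is a norm without relying on discreteness, using the closedness argument. Suppose $f \notin I$ but $f_{n} \in f + I$ with $\|f_{n}\|_{v} \to 0$. Then $f - f_{n} \in I \subset \hat{I}$ converges to $f$ in $\hat{A}$. Since $\hat{I}$ is closed, $f \in \hat{I}$, so the image of $f$ in $\hat{A}/\hat{I}$ is zero. By \autoref{compinj}, $A/I \to \hat{A}/\hat{I}$ is injective, hence $f \in I$, a contradiction.

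For $A_{K_{v}}/I_{K_{v}}$, I would apply the same reasoning, using the injectivity of $A_{K_{v}}/I_{K_{v}} \to \hat{A}/\hat{I}$, which the proof of \autoref{compinj} establishes after its first reduction step. I expect the closedness of $\hat{I}$ together with this injectivity to be the only nontrivial input; everything else is bookkeeping with the ultrametric Gauss norm.
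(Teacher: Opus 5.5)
Your proposal is correct and follows essentially the same route as the paper. Discreteness of $|K^{\times}|_{v}$ reduces attainment of the minimum to excluding an unattained infimum $0$ for $f \notin I$. That case is ruled out by closedness of $\hat{I}$ in the Tate algebra together with the injectivity from \autoref{compinj}, and the axioms then follow by choosing minimizers. The only quibble is expository: the closedness argument is not an optional safety check for property (1), since without it the claim that (1) is immediate once the minimum is attained is circular. It is exactly what completes the well-definedness step you flagged as problematic, so it belongs there.
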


\begin{proof}
It will suffice to prove the statement over $K$, the case over $K_{v}$ being analogous. We first check that the minimum exists. It suffices to consider the case where $f \not\in I$. Since $K$ is a number field the valuation $v$ is discrete, so the only way the minimum could fail to exist is if there is a sequence $\{ f_{i} \}^{\infty}_{i=1}$ with $\| f_{i} \|_{v} \xrightarrow{i \to \infty} 0$ and $f + I = f_{i} + I$ for all $i$; we will show this is not the case. 


 Now $\{ f - f_{i} \}_{i \in I}$ is a sequence of elements in $I$, converging to $f$. Consider the embedding $\iota : A \hookrightarrow \hat{A}$ into the Tate algebra at $0$. Then the Gauss norm on $\hat{A}$ extends $\| \cdot \|_{v}$. Setting $\hat{I} = \hat{A} I$ we then obtain a diagram of exact sequences
\begin{equation}
\label{riganalyticexactseqdiag}
\begin{tikzcd}
 0 \arrow[r] & I \arrow[d] \arrow[r] & A \arrow[r] \arrow[d, "\iota"] & A/I \arrow[d] \arrow[r] & 0 \\
 0 \arrow[r] & \hat{I} \arrow[r] & \hat{A} \arrow[r] & \hat{A}/\hat{I} \arrow[r] & 0 ,
\end{tikzcd}
\end{equation}
and applying \autoref{compinj} to deduce that the right arrow is injective we get that $\iota^{-1}(\hat{I}) = I$. Then applying \cite[I.2.3, Cor. 8]{zbMATH06255263} the ideal $\hat{I}$ is closed, so it follows that $\iota(f - f_{i}) \xrightarrow{i \to \infty} \iota(f) \in \hat{I}$. But then $f = \iota^{-1}(\iota(f)) \in \iota^{-1}(\hat{I}) = I$. 

\vspace{0.5em}

Having shown the minimum exists, (1) and (3) listed in \autoref{normdef} are easily checked. For (2) we choose $x_{u}, x_{w} \in I$ such that $\| u + x_{u} \|_{v}$ and $\| w + x_{w} \|_{v}$ are minimized. Then
\[ \| u + w \|_{(X,0),v} \leq \| u + w + (x_{u} + x_{w}) \|_{v} \leq \textrm{max}\{ \|u + x_{u}\|_{v}, \|w + x_{w}\|_{v} \} = \textrm{max} \{ \|u\|_{(X,0),v}, \|w\|_{(X,0),v} \} . \]
For (4) we choose the same $x_{u}$ and $x_{w}$ and compute that
\begin{align*}
\| u w \|_{(X,0),v} \leq \| (u + x_{u})(w + x_{w}) \|_{v} = \| u \|_{(X,0),v} \| w \|_{(X,0),v} . 
\end{align*}
\end{proof}

\begin{defn}
\label{adelicnormdef}
By an adelic norm we mean a collection $\{ \| \cdot \|_{(X,0),v} \}_{v \in \Sigma_{K,f}}$ of norms as in \autoref{normwelldef} associated to a fixed presentation $X \cong \Spec A/I$. 
\end{defn}

The notation $\| \cdot \|_{(X,0),v}$ suggests a kind of dependence of the norm only on the germ $(X,0)$, but the definition suggests a dependence on the embedding $X \hookrightarrow \mathbb{A}^{n}$. In fact there is a kind of ``adelic'' sense in which the norms $\| \cdot \|_{(X,0),v}$ depend only on the pair $(X,0)$, which will be justified by \autoref{normundercoordchange} below. 

\begin{lem}
\label{normextagrees}
Let $\iota : A/I \hookrightarrow A_{K_{v}}/I_{K_{v}}$ be the obvious embedding. Then $\| \cdot \|_{(X,0),v} = \| \iota(\cdot) \|_{(X_{K_{v}},0)}$ as functions on $A/I$.
\end{lem}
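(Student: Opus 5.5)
Write $A_{K_{v}} = K_{v}[x_{1}, \hdots, x_{n}]$ and $I_{K_{v}} = A_{K_{v}} I$. For $f \in A$ we have $f + I \subset f + I_{K_{v}}$, which gives $\| \iota(f) \|_{(X_{K_{v}},0)} \leq \| f \|_{(X,0),v}$ immediately. The whole task is the reverse inequality: if $f + h$ with $h \in I_{K_{v}}$ has Gauss norm $r$, I must produce $h' \in I$ with $\| f + h' \|_{v} \leq r$. My plan is to approximate $h$ by elements of $I$, exploiting the density of $K$ in $K_{v}$ together with a uniform degree bound.

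First I fix generators $g_{1}, \hdots, g_{m}$ of $I$ over $K$; these also generate $I_{K_{v}}$. Write $h = \sum_{j} c_{j} g_{j}$ with $c_{j} \in A_{K_{v}}$. All the $c_{j}$ are polynomials, so only finitely many coefficients (elements of $K_{v}$) occur in total. Since $K$ is dense in $K_{v}$, I approximate each such coefficient by an element of $K$, obtaining polynomials $c_{j}' \in A$ whose supports are contained in those of the $c_{j}$ and with $\| c_{j} - c_{j}' \|_{v} < \epsilon$. I then set $h' = \sum_{j} c_{j}' g_{j} \in I$.

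Because the Gauss norm is multiplicative and ultrametric, $\| h - h' \|_{v} \leq \epsilon \max_{j} \| g_{j} \|_{v}$. Choosing $\epsilon$ small enough, this is at most $r$ (the case $f \in I$ is trivial, so I may assume $r > 0$). Then
\[ \| f + h' \|_{v} \leq \max \{ \| f + h \|_{v}, \| h - h' \|_{v} \} \leq r, \]
which gives $\| f \|_{(X,0),v} \leq r$. Taking $h$ to be a minimizer, which exists by \autoref{normwelldef} applied over $K_{v}$, the two norms agree.

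I don't expect any step to be a real obstacle. The one point to handle carefully is that the minimum over $K_{v}$ is attained, and that is exactly what \autoref{normwelldef} provides. One could also avoid minimizers altogether by taking an infimum and letting $\epsilon \to 0$; this works because only finitely many polynomial coefficients appear. Note that no Tate-algebra closure argument is needed, since everything happens at the level of polynomials.
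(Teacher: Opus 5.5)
Your proof is correct and follows the paper's argument. The easy inequality comes from $I \subset I_{K_{v}}$; for the reverse you start from a minimizer over $K_{v}$ (supplied by \autoref{normwelldef}) and use density of $K$ in $K_{v}$ to approximate the $K_{v}$-coefficients of the multiplier polynomials by elements of $K$. Your write-up is slightly cleaner in two places: expanding $h = \sum_{j} c_{j} g_{j}$ over generators handles a non-principal $I$, where the paper writes $\hat{x} = \hat{h} x$, and the ultrametric inequality replaces the paper's appeal to discreteness of the valuation.
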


\begin{proof}
We may assume that $f \not\in I$ and let $\hat{x} \in I_{K_{v}}$ be such that $\| f \|_{(X_{K_{v}},0)} = \| f + \hat{x} \|_{v}$. Write $\hat{x} = \hat{h} x$ for $x \in I$ and $\hat{h} \in A_{K_{v}}$. Since $K$ is dense in $K_{v}$, it follows that $A$ is dense in $A_{K_{v}}$. Thus there is a sequence $\{ h_{i} \}_{i=1}^{\infty}$ of elements of $A$ converging to $\hat{h}$. Then letting $x_{i} = h_{i} x$, we have that
\[ \| x_{i} - \hat{x} \|_{v} = \| (h_{i} - \hat{h}) x \|_{v} = \| h_{i} - \hat{h} \|_{v} \| x \|_{v} \to 0 , \]
and so $f + x_{i} \to f + \hat{x}$ and thus $\| f + x_{i} \|_{v} \to \| f + \hat{x} \|_{v}$. Since the valuation is discrete and $f \not\in I$ (so $f + \hat{x} \neq 0$) the limit is obtained for some $i$. This shows that $\| \iota(\cdot) \|_{(X_{K_{v}},0)} \leq \| \cdot \|_{(X, 0),v}$; the other inequality is immediate from the definitions.
\end{proof}

\begin{lem}
\label{normextagrees2}
Let $\iota : A_{K_{v}}/I_{K_{v}} \hookrightarrow \hat{A}/\hat{I}$ be the obvious embedding, where $\hat{A}$ is the Tate algebra at zero and $\hat{I} = \hat{A} I$. Then $\| \cdot \|_{(X,0),v} = \| \iota(\cdot) \|_{(X_{K_{v}},0)}$ as functions on $A_{K_{v}}/I_{K_{v}}$.
\end{lem}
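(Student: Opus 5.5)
The plan is to compare the two quotient (residue) norms directly, one inequality at a time, using the density of polynomials in the Tate algebra; this mirrors the proof of \autoref{normextagrees}. Here the right-hand side is read as the residue norm on $\hat{A}/\hat{I}$, namely $\inf_{\hat{x} \in \hat{I}} \| \iota(f) + \hat{x} \|_{v}$, where $\| \cdot \|_{v}$ is the Gauss norm. The left-hand side is the minimum of the Gauss norm over $f + I_{K_{v}}$, which exists by \autoref{normwelldef}.

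\textbf{The easy inequality.} Since $I_{K_{v}} \subset \hat{I}$ and the Gauss norm on $\hat{A}$ extends $\| \cdot \|_{v}$ on $A_{K_{v}}$, every representative of $f$ in $A_{K_{v}}$ is also a representative of $\iota(f)$ in $\hat{A}$. Hence the Tate residue norm of $\iota(f)$ is at most $\| f \|_{(X_{K_{v}},0)}$.

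\textbf{The reverse inequality.} We may assume $f \notin I_{K_{v}}$. By \autoref{compinj} (applied over $K_{v}$, as in its proof), $\iota$ is injective, so $\iota(f) \notin \hat{I}$.

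1. First we show the infimum is attained. Every value $\| \iota(f) + \hat{x} \|_{v}$ is nonzero and lies in the discrete group $|K_{v}^{\times}|$, which is discrete away from $0$. So an infimum over such values is attained, unless it were $0$. It cannot be $0$: $\hat{I}$ is closed by \cite[I.2.3, Cor. 8]{zbMATH06255263}, so a minimizing sequence tending to $0$ would force $\iota(f) \in \hat{I}$, as in \autoref{normwelldef}. Fix $\hat{x} \in \hat{I}$ attaining the infimum, and set $r = \| \iota(f) + \hat{x} \|_{v} > 0$.

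2. Next we approximate $\hat{x}$ by an element of $I_{K_{v}}$. Since $A$ is Noetherian, $I = (x_{1}, \dots, x_{m})$ is finitely generated. Write $\hat{x} = \sum_{j} \hat{h}_{j} x_{j}$ with $\hat{h}_{j} \in \hat{A}$. Let $h_{j} \in A_{K_{v}}$ be truncations of $\hat{h}_{j}$ of high enough degree that $\| \hat{h}_{j} - h_{j} \|_{v} \cdot \| x_{j} \|_{v} < r$ for all $j$; this is possible because the coefficients of an element of the Tate algebra tend to $0$. Then $x' := \sum_{j} h_{j} x_{j} \in I_{K_{v}}$ satisfies $\| x' - \hat{x} \|_{v} < r$, by multiplicativity of the Gauss norm and the ultrametric inequality.

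3. Finally, by the ultrametric inequality, $\| f + x' \|_{v} = \| \iota(f) + \hat{x} \|_{v} = r$, since the perturbation has norm strictly less than $r$. Therefore $\| f \|_{(X_{K_{v}},0)} \leq r$, which gives equality.

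The main point requiring care is step 1. Attainment of the infimum, and in particular its positivity, needs both the injectivity from \autoref{compinj} and the closedness of $\hat{I}$. Once these are in place, the approximation in step 2 is routine.
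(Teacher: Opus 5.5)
Your proposal is correct and follows the same route as the paper, which proves this by repeating the argument of \autoref{normextagrees}: take an optimal $\hat{x} \in \hat{I}$, approximate it by an element of $I_{K_{v}}$ using the density of $A_{K_{v}}$ in $\hat{A}$, and use discreteness and the ultrametric inequality to see that the same norm is already achieved by a polynomial representative. Your version is slightly more careful than the paper's on two points: you justify that the residue norm on $\hat{A}/\hat{I}$ is actually attained, and you write $\hat{x}$ as $\sum_{j} \hat{h}_{j} x_{j}$ over a finite generating set of $I$, rather than as a single product $\hat{h} x$ as the paper does.
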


\begin{proof}
The proof is completely analogous to \autoref{normextagrees} using the density of $A_{K_{v}}$ in $\hat{A}$. 
\end{proof}

\begin{lem}
\label{oneforallbut}
For each $f + I \in A/I$ with $f \not\in I$ we have that $\| f \|_{(X,0),v} = 1$ for all but finitely many $v$.
\end{lem}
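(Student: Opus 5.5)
The plan is to prove the two inequalities $\| f \|_{(X,0),v} \leq 1$ and $\| f \|_{(X,0),v} \geq 1$ separately, each for all but finitely many $v$. The upper bound is immediate. $f$ has finitely many nonzero coefficients in $K$, and each of these is a $v$-adic unit for all but finitely many $v$. Hence $\| f \|_{v} = 1$ for almost all $v$, and so $\| f \|_{(X,0),v} \leq \| f \|_{v} = 1$.

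For the lower bound I will spread everything out over $R := \mathcal{O}_{K}[1/M]$ for a suitable integer $M$.
\begin{enumerate}
\item Choose generators $g_{1}, \hdots, g_{r}$ of $I$ and enlarge $M$ so that the $g_{i}$ and $f$ lie in $R[x_{1}, \hdots, x_{n}]$. Set $I_{R} = (g_{1}, \hdots, g_{r}) \subset R[\overline{x}]$ and $\mathcal{X} = \Spec R[\overline{x}]/I_{R}$.
\item By generic flatness, after enlarging $M$, the ring $R[\overline{x}]/I_{R}$ is flat, hence torsion-free, over $R$. This gives $I \cap R[\overline{x}] = I_{R}$: if $x \in I \cap R[\overline{x}]$, then some nonzero $c \in R$ satisfies $cx \in I_{R}$, and torsion-freeness forces $x \in I_{R}$. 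The same holds after localizing at any $v \nmid M$, i.e.\ $I \cap \mathcal{O}_{v}[\overline{x}] = I_{R} \mathcal{O}_{v}[\overline{x}]$, by flatness of localization.
\item Consider the closed subscheme $V(f) \cap \mathcal{X}$. Its generic fibre $V(f) \cap X$ is a proper closed subset of the geometrically integral $X$, since $f \notin I$ and $A/I$ is a domain. So it has dimension $< \dim X$. By the standard constructibility and fibre-dimension results, after enlarging $M$, for every $v \nmid M$ the fibre $\mathcal{X}_{k_{v}}$ is geometrically integral of dimension $\dim X$, while $(V(f) \cap \mathcal{X})_{k_{v}}$ has dimension $< \dim X$. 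Hence the reduction $\bar f$ is nonzero in $k_{v}[\overline{x}]/\bar I_{R}$.
\end{enumerate}

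Now suppose $v \nmid M$ and $\| f \|_{(X,0),v} < 1$. By \autoref{normwelldef} the minimum is attained, so there is $x \in I$ with $\| f + x \|_{v} < 1$. Since $f$ is $v$-integral, $x = (f + x) - f$ is $v$-integral too, so by step 2 $x \in I_{R}\mathcal{O}_{v}[\overline{x}]$. Reducing modulo the maximal ideal of $\mathcal{O}_{v}$ gives $\bar f = -\bar x \in \bar I_{R}$. This contradicts step 3, so $\| f \|_{(X,0),v} \geq 1$ for all $v \nmid M$, which proves the lemma.

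The main obstacle is step 3. The point is to ensure that the spread-out fibres stay integral of the right dimension and that $V(f)$ does not swallow a whole special fibre for infinitely many $v$. I expect to cite EGA IV (9.7.7 for geometric integrality of fibres and the fibre-dimension semicontinuity/constructibility statements) rather than prove it by hand.
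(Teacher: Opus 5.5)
Your proposal is correct and takes essentially the same route as the paper. Both arguments reduce to showing that $\bar f$ is not in the reduction of the integral ideal at almost every prime, and both obtain this by comparing the fibres of $\mathcal{X}$ and $V(f)\cap\mathcal{X}$ over the ring of integers using the standard constructibility results for fibre dimension and irreducibility. The only difference is cosmetic: the paper works directly with the saturated ideal $I \cap \mathcal{O}_K[\overline{x}]$, whose quotient is automatically torsion-free and hence flat over a Dedekind ring, whereas you use generators plus generic flatness to identify $I \cap R[\overline{x}]$ with $I_R$.
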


\begin{proof}
By clearing denominators, we may reduce to considering $f \in \mathcal{O}_{K}[x_{1}, \hdots, x_{n}] =: \mathcal{A}$. For each prime $\mathfrak{p}$ of $\mathcal{O}_{K}$ write $\mathcal{O}_{K,(\mathfrak{p})}$ for the stalk (not completion) at $\mathfrak{p}$, $_{(\mathfrak{p})}A$ for the ring $\mathcal{O}_{K,(\mathfrak{p})}[x_{1}, \hdots, x_{n}]$, $\mathcal{I}_{(\mathfrak{p})}$ for the intersection $I \cap {_{(\mathfrak{p})}A}$, and $\mathcal{I}$ for the intersection $I \cap \mathcal{A}$. Then if $v$ is a valuation corresponding to $\mathfrak{p}$, we may observe that a minimal representative $f + x_{\mathfrak{p}}$ for $f$ can always be chosen such that $x_{\mathfrak{p}} \in \mathcal{I}_{(\mathfrak{p})}$. 

Now
\[ \| f \|_{(X,0),v} < 1 \iff \| f + x_{\mathfrak{p}} \|_{v} < 1 \iff f = -x_{\mathfrak{p}} \, \textrm{(mod } {_{(\mathfrak{p})}A} \, \mathfrak{p} \textrm{)} , \]
so it suffices to show that $f$ reduces modulo $_{(\mathfrak{p})}A \, \mathfrak{p}$ to the reduction of $\mathcal{I}_{(\mathfrak{p})}$ for at most finitely may $\mathfrak{p}$. 

The reduction of $\mathcal{I}_{(p)}$ modulo $_{(\mathfrak{p})}A \, \mathfrak{p}$ is the same as the reduction of $\mathcal{I}$ modulo $\mathfrak{p}$, as one can see by scaling coefficients. Therefore, if $\mathfrak{p}$ is a prime with this property, then the fibre of $\Spec \mathcal{A}/(f) \to \mathcal{O}_{K}$ above $\mathfrak{p}$ contains the fibre of $\Spec \mathcal{A}/\mathcal{I} \to \mathcal{O}_{K}$ above $\mathfrak{p}$. By assumption this does not happen for the generic fibre (we assumed $f \not\in I$), so it can only happen for finitely many fibres as a consequence of \cite[\href{https://stacks.math.columbia.edu/tag/05F7}{Lemma 05F7}]{stacks-project} (consider the fibres of $\Spec \mathcal{A}/(f) \cap \Spec \mathcal{A}/\mathcal{I}$, and use geometric irreducibility of $\Spec A/I$ together with \cite[\href{https://stacks.math.columbia.edu/tag/0559}{Lemma 0559}]{stacks-project}).
\end{proof}

\begin{lem}
\label{bcinvariance}
Let $K'/K$ be a finite extension and $I' \subset K'[x_{1}, \hdots, x_{n}] =: A'$ the ideal generated by $I$. Set $X' = \Spec A'/I'$. Then for all pairs $(v', v)$, with $v$ (resp. $v'$) a place of $K$ (resp. $K'$) and $v' | v$, we have
\[ \| f \|_{(X,0),v} \leq c_{v} \| f \|_{(X',0),v'} \]
for all $f \in A/I$, where $c_{v}$ is a positive real constant depending only on $v$. For all but finitely many $v$ we may take $c_{v} = 1$, so in particular the norms $\| \cdot \|_{(X,0),v}$ and $\| \cdot \|_{(X',0),v'}$ agree on $A/I$ for all but finitely many $(v', v)$. 
\end{lem}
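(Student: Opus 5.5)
First note the easy inequality direction: $\| f \|_{(X',0),v'} \leq \| f \|_{(X,0),v}$ holds trivially, since $I \subset I'$ and the Gauss norm of $K'_{v'}$ restricts to that of $K_{v}$ on polynomials with coefficients in $K$. The real content is the reverse inequality with a constant. The plan is to reduce to the completed situation using \autoref{normextagrees}: both sides may be computed in $A_{K_{v}}/I_{K_{v}}$ and $A_{K'_{v'}}/I_{K'_{v'}}$ respectively. Then fix a basis $e_{1} = 1, e_{2}, \hdots, e_{d}$ of $K'_{v'}$ over $K_{v}$ with the property that the $K_{v}$-linear projection $\pi_{1} : K'_{v'} \to K_{v}$ onto the $e_{1}$-coordinate satisfies $|\pi_{1}(a)|_{v} \leq c_{v} |a|_{v'}$. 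Such a bound holds with some $c_{v}$ because all norms on a finite-dimensional $K_{v}$-vector space are equivalent.

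Apply $\pi_{1}$ coefficientwise to polynomials, obtaining a $K_{v}$-linear map $\pi_{1} : A_{K'_{v'}} \to A_{K_{v}}$ with $\|\pi_{1}(h)\|_{v} \leq c_{v} \|h\|_{v'}$. The key observation is that $\pi_{1}(I_{K'_{v'}}) \subset I_{K_{v}}$. Indeed, an element of $I_{K'_{v'}}$ is $\sum_{i} e_{i} y_{i}$ with $y_{i} \in I_{K_{v}}$, because $I_{K'_{v'}} = K'_{v'} \otimes_{K_{v}} I_{K_{v}}$ by flatness, and $\pi_{1}$ of such an element is $y_{1}$.

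Now, given $f \in A$, choose a minimizer $f + x'$ with $x' \in I_{K'_{v'}}$, which exists by \autoref{normwelldef}. Then $\pi_{1}(f + x') = f + \pi_{1}(x') \in f + I_{K_{v}}$, so
\[ \| f \|_{(X_{K_{v}},0)} \leq \| \pi_{1}(f+x') \|_{v} \leq c_{v} \| f \|_{(X'_{K'_{v'}},0)} . \]
Combining this with \autoref{normextagrees} on both sides gives the inequality. Since there are only finitely many $v'$ above each $v$, we may take $c_{v}$ to be the maximum over these.

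For $c_{v} = 1$ at almost all $v$, restrict to $v$ unramified in $K'$ and not dividing the index of $\mathcal{O}_{K}[\alpha]$ in $\mathcal{O}_{K'}$, or more simply to $v$ at which $\mathcal{O}_{K'} \otimes \mathcal{O}_{K_{v}}$ is free with a basis containing $1$. At such $v$, $\mathcal{O}_{K'_{v'}}$ is a direct summand of $\mathcal{O}_{K'}\otimes\mathcal{O}_{K_{v}}$, and it is free over $\mathcal{O}_{K_{v}}$ with a basis $1 = e_{1}, \hdots, e_{d}$ whose reductions are linearly independent over the residue field. This works because $v'$ is unramified, so the residue extension has degree $d$, and any lift of a residue basis containing $1$ gives an orthonormal basis. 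Orthonormality then gives $|\sum a_{i} e_{i}|_{v'} = \max |a_{i}|_{v}$, hence $|\pi_{1}(a)| \leq |a|$, so $c_{v} = 1$. Together with the trivial direction, this yields equality for almost all $(v',v)$.

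The main obstacle is the identification $I_{K'_{v'}} = \bigoplus_{i} e_{i} I_{K_{v}}$, which is what lets the projection preserve the ideal. Flatness of $K'_{v'}/K_{v}$ handles it, so I expect this step to be short. The only delicate point is normalizing $|\cdot|_{v'}$ to extend $|\cdot|_{v}$, which the paper's conventions should guarantee.
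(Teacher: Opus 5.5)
Your argument is correct, but it takes a different route from the paper. The paper reduces to $K'/K$ Galois and takes a minimizer $f + x_{1}$ with $x_{1} \in I'$. It then replaces $x_{1}$ by the average $(x_{1} + \cdots + x_{d})/d$ of its Galois conjugates, which is defined over $K$ and hence lies in $I$. The ultrametric inequality gives $c_{v} = |1/d|_{v}$, which is $1$ for $v \nmid d$.

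You instead pass to completions via \autoref{normextagrees} and apply a bounded $K_{v}$-linear retraction $\pi_{1} : K'_{v'} \to K_{v}$ coefficientwise. The key fact is that every element of $A_{K'_{v'}} I$ has the form $\sum_{i} e_{i} y_{i}$ with $y_{i} \in I_{K_{v}}$. This needs no flatness: just expand each multiplier in the basis $e_{i}$. The passage to completions is essential for your method, since a $K$-linear projection $K' \to K$ is not $v'$-adically bounded when $v$ splits in $K'$.

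In exchange, your route needs no Galois closure and no descent step. It also avoids a point the averaging argument passes over quickly. A global $\sigma \in \mathrm{Gal}(K'/K)$ carries $\| \cdot \|_{v'}$ to the norm at a conjugate place. So $\| f + \sigma(x_{1}) \|_{v'} = \| f + x_{1} \|_{v'}$ is only guaranteed for $\sigma$ in the decomposition group of $v'$, and the averaging should really be done over that group inside $K'_{v'}$.

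Your method also proves more than stated. For any finite extension $K'_{v'}/K_{v}$ with ramification index $e$, let $\varpi$ be a uniformizer of $K'_{v'}$ and let $u_{1} = 1, u_{2}, \hdots$ lift a basis of the residue extension. Then the elements $u_{i} \varpi^{j}$ with $0 \leq j < e$ form an orthogonal $K_{v}$-basis. Hence the coordinate of $1$ is bounded by $|a|_{v'}$, and you may take $c_{v} = 1$ at every place. Your unramified argument is the case $e = 1$. The conditions you impose on the index of $\mathcal{O}_{K}[\alpha]$ and on freeness of $\mathcal{O}_{K'} \otimes \mathcal{O}_{K_{v}}$ are not actually used.
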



\begin{proof}
We may reduce to the case where $K'/K$ is Galois with $d = [K' : K]$. Suppose that $x_{1} \in I'$ is chosen such that $\| f + x_{1} \|_{v'}$ is minimized, and let $x_{1}, \hdots, x_{d}$ be a list of its Galois conjugates, where each Galois conjugate appears the same number of times. Then since the Galois action preserves the norm, we have $\| f + x_{i} \|_{v'} = \| f + x_{1} \|_{v'}$ for all $1 \leq i \leq d$. Then
\begin{align*}
\| f + (x_{1} + \cdots + x_{d})/d \|_{v} &= |1/d|_{v'} \left\| \sum_{i=1}^{d} (f + x_{i}) \right\|_{v'} \\
&\leq |1/d|_{v} \textrm{max}_{1 \leq i \leq d} \{ \| f + x_{i}\|_{v'} \} =  |1/d|_{v'} \| f + x_{1} \|_{v'} .
\end{align*}
Using that $(x_{1} + \cdots + x_{d})/d$ is defined over $K$ we conclude that $\| f \|_{(X,0),v} \leq |1/d|_{v} \| f \|_{(X',0),v'}$.
\end{proof}


\begin{prop}
\label{norminvunderrestric}
Fix $f \in A \setminus I$ with $f(0) \neq 0$. Consider the ideal $I' \subset A' := K[x_{1}, \hdots, x_{n+1}]$ given by $I' = A' I + (x'_{n+1} f - 1)$ where $x'_{n+1} = x_{n+1} + 1/f(0)$. Set $X' = \Spec A'/I'$. Then for all but finitely many places $v$ one has
\begin{equation}
\label{restnormagreeswithoriginal}
\| g \|_{(X,0),v} = \| g \|_{(\Spec A'/A'I, 0), v} \leq \| g \|_{(X',0),v}
\end{equation}
for all $g \in A/I$. In particular the norms $\| \cdot  \|_{(X,0),v}$ and $\| \iota( \cdot ) \|_{(X',0),v}$ agree on $A/I$ for all but finitely many $v$, where $\iota : A/I \hookrightarrow A'/I'$ is the natural embedding.
\end{prop}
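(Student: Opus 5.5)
The plan is to prove the two relations in (\ref{restnormagreeswithoriginal}) separately. Write $A'I = I[x_{n+1}]$ for the extended ideal, so that $A'/A'I = (A/I)[x_{n+1}]$.

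\emph{The equality.} I expect this to hold for every $v$. For $g \in A$, any representative of $g$ modulo $A'I$ has the form $g + \sum_{k} h_{k} x_{n+1}^{k}$ with $h_{k} \in I$. Its Gauss norm is at least the Gauss norm of its $x_{n+1}$-degree-zero part $g + h_{0}$, and this is at least $\| g \|_{(X,0),v}$. The reverse inequality is trivial, since $I \subset A'I$.

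\emph{The inequality: setting up a contraction.} Since $A'I \subset I'$, the quotient norm gives $\| g\|_{(X',0),v} \leq \| g \|_{(\Spec A'/A'I,0),v}$ for free. The content of the proposition is therefore the reverse bound $\| g \|_{(X,0),v} \leq \| g \|_{(X',0),v}$. I will obtain it by constructing, for all but finitely many $v$, a normed ring $(R, \|\cdot\|_{R})$ together with:
\begin{itemize}
\item an isometric embedding $(A/I, \|\cdot\|_{(X,0),v}) \hookrightarrow R$;
\item a ring map $\psi : A' \to R$ extending it, killing $I'$, and contractive for the Gauss norm.
\end{itemize}
Given this, for any $h \in I'$ we get $\| g\|_{(X,0),v} = \|\psi(g+h)\|_{R} \leq \|g+h\|_{v}$, and minimizing over $h$ gives the claim. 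The natural candidate is $R = (A/I)_{f}$ with $\psi(x_{n+1}) = 1/f - 1/f(0)$. This kills $x'_{n+1}f - 1$ and $I$.

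\emph{The inequality: choosing the lattice.} Let $\mathcal{A}_{v} = \mathcal{O}_{K_v}[x_{1},\dots,x_{n}]$ and let $B_{0}$ be the image of $\mathcal{A}_{v}$ in $A_{K_v}/I_{K_v}$. By \autoref{normextagrees}, $B_{0}$ is exactly the unit ball of $\|\cdot\|_{(X,0),v}$, and the norm is the gauge $\|r\| = \min\{|c|_{v} : r \in cB_{0}\}$. Set $\|r\|_{R} := \inf\{ |c|_{v} : r \in c\,(B_{0})_{f}\}$.
\begin{itemize}
\item \emph{Contractivity.} For this, $\psi$ must send $\mathcal{O}_{K_v}[x_{1},\dots,x_{n+1}]$ into $(B_{0})_{f}$. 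This holds as soon as $f$ has $v$-integral coefficients and $f(0)$ is a $v$-unit, which excludes only finitely many $v$.
\item \emph{Isometry.} We need $(B_{0})_{f} \cap \varpi^{-m}B_{0} = \varpi^{-m}B_{0}$ for every $m$, i.e. the image of $f$ should be a non-zero-divisor in $B_{0}/\varpi B_{0}$.
\end{itemize}

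\emph{The main obstacle: the spreading-out step.} Spread out as in the proof of \autoref{oneforallbut}: take the $\mathcal{O}_{K}$-model $\Spec \mathcal{A}/\mathcal{I}$ with $\mathcal{I} = I \cap \mathcal{A}$. Its generic fibre is geometrically integral, so for all but finitely many $\mathfrak{p}$ the fibre $\Spec \mathcal{A}/(\mathcal{I} + \mathfrak{p}\mathcal{A})$ is integral (using the Stacks results cited there on the locus of geometrically irreducible and reduced fibres). As in that proof, $B_{0}/\varpi B_{0}$ is the coordinate ring of this fibre over the residue field of $v$. Moreover $f$ does not vanish identically on the fibre for almost all $\mathfrak{p}$: since $f(0) \neq 0$, \autoref{oneforallbut} applied to $f$ gives $\|f\|_{(X,0),v} = 1$, so $f$ does not reduce into the ideal. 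In an integral domain a nonzero element is a non-zero-divisor, which gives the isometry. Discarding the finitely many bad $v$ then completes the argument.

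The delicate point is making the identification of $B_{0}/\varpi B_{0}$ with the reduction of the $\mathcal{O}_K$-model precise uniformly in $v$. This is where I would reuse the saturation argument ("scaling coefficients") from the proof of \autoref{oneforallbut}.
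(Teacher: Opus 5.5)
Your proof is correct. The equality step is the same as the paper's, but you prove the inequality by a genuinely different route.

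\textbf{The paper's route.} The paper works with a minimal representative $\widetilde g$ of $g+I'$. After spreading out, it proves the saturation statement $(A'\mathcal{I}+A't)\cap\mathcal{A}'=\mathcal{A}'\mathcal{I}+\mathcal{A}'t$ (and its localized form). This lets it write $\widetilde g=g+ay+b\,(x'_{n+1}f-1)$ with $v$-integral $a,b$. It then compares coefficients of powers of $x_{n+1}$. Assuming $\|\widetilde g\|_v<1$, it shows inductively that every coefficient $b_i$ of $b$ has $\|b_i\|_v=1$, which contradicts $b$ being a polynomial.

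\textbf{Your route.} You factor through the localization instead. Since $\psi$ is contractive and kills $I'$, everything reduces to the lattice identity $(B_0)_f\cap(A_{K_v}/I_{K_v})=B_0$. In fact $\psi$ maps $\mathcal{O}_{K_v}[x_1,\dots,x_{n+1}]$ onto $(B_0)_f$, because $1/f(0)\in\mathcal{O}_{K_v}$. So your $\|\cdot\|_R$ is exactly $\|\cdot\|_{(X',0),v}$ transported along $A'/I'\cong(A/I)_f$. Your argument therefore shows the proposition amounts precisely to that identity.

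\textbf{What each buys.} Your route needs no saturation result for $I'$ and no coefficient bookkeeping. Both proofs rest on the same input: $f$ is a non-zero-divisor on the reduction of the flat model at almost all primes. Your justification of this (integral fibres for almost all $\mathfrak p$, plus $f\neq0$ on them) is more explicit than the paper's one-line assertion.

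\textbf{Small corrections.}
\begin{enumerate}
\item The isometry condition should read $(B_0)_f\cap\varpi^{-m}B_0=B_0$ for $m\geq0$. As written ($=\varpi^{-m}B_0$), it would force $\varpi^{-m}\in(B_0)_f$, which is the opposite of what you want. Deducing the correct version from the non-zero-divisor property also uses that $B_0$ is $\varpi$-torsion-free, which is automatic.
\item The step you flag as delicate is immediate. $\mathcal{A}/\mathcal{I}\subset A/I$ is torsion-free, hence flat over the Dedekind ring $\mathcal{O}_K$. So $(\mathcal{A}/\mathcal{I})\otimes_{\mathcal{O}_K}\mathcal{O}_{K_v}$ is torsion-free, injects into $A_{K_v}/I_{K_v}$, and has image $B_0$. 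Hence $B_0/\varpi B_0\cong(\mathcal{A}/\mathcal{I})\otimes k(\mathfrak p)$ for every $\mathfrak p$. Alternatively, work over $\mathcal{O}_{K,(\mathfrak p)}$ with $c\in K^\times$ and avoid completions and \autoref{normextagrees} altogether.
\item You do not need \autoref{oneforallbut} to see $f\neq0$ on the special fibre. Since $0\in X$, elements of $\mathcal{I}$ have no constant term, so the origin reduces to a point of that fibre. There $f$ takes the value $f(0)\bmod\mathfrak p\neq0$, because $f(0)$ is a $v$-unit.
\end{enumerate}
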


The purpose of using the function $x'_{n+1}$ instead of $x_{n+1}$ in the statement of \autoref{norminvunderrestric} to construct an inverse for $f$ is to ensure that $0 \in \mathbb{A}^{n}$ is a point of $\Spec A'/I'$. We start by showing the equality in the statement of \autoref{norminvunderrestric}.

\begin{lem}
For all places $v$ of $K$ we have $\| g \|_{(X,0),v} = \| g \|_{(\Spec A'/A'I, 0),v}$ for all $g \in A$.
\end{lem}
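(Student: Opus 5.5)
The plan is to compare the two quotient norms directly. On the left, $\|g\|_{(X,0),v}$ is the minimum of $\|g+x\|_{v}$ over $x \in I \subset A$. On the right, it is the minimum over $x' \in A'I$, where $A' = A[x_{n+1}]$.

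The inequality $\|g\|_{(\Spec A'/A'I,0),v} \leq \|g\|_{(X,0),v}$ is immediate. Indeed $I \subset A'I$, and the Gauss norm on $A'$ restricts to the Gauss norm on $A$. So any representative $g+x$ with $x \in I$ is also a representative in $A'/A'I$ with the same norm.

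For the reverse inequality I will use the grading of $A'$ by powers of $x_{n+1}$. Take $x' \in A'I$ achieving the minimum, as guaranteed by \autoref{normwelldef}. Write it as $x' = \sum_{k} x_{n+1}^{k} h_{k}$ with $h_{k} \in A$.

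The key observation is that each $h_{k}$ lies in $I$. Write $x' = \sum_{j} a_{j} i_{j}$ with $a_{j} \in A'$ and $i_{j} \in I$. Expanding $a_{j} = \sum_{k} x_{n+1}^{k} a_{j,k}$ with $a_{j,k} \in A$ gives $h_{k} = \sum_{j} a_{j,k} i_{j} \in I$. Equivalently, $A'I = I[x_{n+1}]$, since $A'$ is free over $A$ with basis the monomials $x_{n+1}^{k}$.

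Now $g \in A$ sits in degree $0$ in $x_{n+1}$. The Gauss norm of an element of $A'$ is the maximum over all monomials, so it is at least the maximum over the monomials of $x_{n+1}$-degree $0$. Hence
\[ \|g + x'\|_{v} \geq \|g + h_{0}\|_{v} \geq \|g\|_{(X,0),v}, \]
because $h_{0} \in I$. This gives the reverse inequality and proves the equality for every finite place $v$.

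There is essentially no obstacle here. The only point requiring care is the identification $A'I = I[x_{n+1}]$, that is, that extending an ideal along the free extension $A \to A[x_{n+1}]$ gives polynomials with coefficients in $I$; this is the routine expansion just described.
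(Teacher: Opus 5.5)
Your proposal is correct and takes essentially the same approach as the paper: split off the $x_{n+1}$-degree-zero part of $g + x'$, observe that it lies in $g + I$, and use that the Gauss norm is a maximum over monomials. The only difference is that you justify $A'I = I[x_{n+1}]$, so your argument covers a general sum $\sum_j a_j i_j$, whereas the paper writes $x' = (a+a')y$ as a single product; in that respect your version is slightly more careful.
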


\begin{proof}
The inequality $\| g \|_{(X,0),v} \geq \| g \|_{(\Spec A'/A'I, 0),v}$ may be seen directly from the definitions, so we prove the reverse inequality. Suppose we have $x' \in A' I$ which we write as $x' = (a + a') y$, where $a \in A$, $a' \in x_{n+1} A'$, and $y \in I$. Then we have
\[ \| g + x' \|_{v} = \| (g + a y) + a' y \|_{v} = \textrm{max} \{ \| g + a y \|_{v}, \| a' \|_{v} \| y \|_{v} \} , \]
where the equality comes from the fact $x_{n+1} | a'$, so $g + a y$ and $a' y$ share no monomials in common. But $\| g + ay \|_{v} \geq \| g \|_{(X,0),v}$ by definition, so it follows that $\| g + x' \|_{v} \geq \| g \|_{(X,0),v}$. Now take $x'$ to be a minimal representative for $g + A' I'$.
\end{proof}

\begin{proof}[Proof of (\ref{norminvunderrestric}):]
We start by introducing some notation. We let $S$ be a finite set of finite primes of $K$. We set $\mathcal{A} = \mathcal{A}_{S} = \mathcal{O}_{K,S}[x_{1}, \hdots, x_{n}]$, $\mathcal{A}' = \mathcal{A}'_{S} = \mathcal{O}_{K,S}[x_{1}, \hdots, x_{n}, x_{n+1}]$ and $\mathcal{I} = I \cap \mathcal{A}$. We also set $\mathcal{A}'_{(v)} = \mathcal{O}_{K,(\mathfrak{p}_{v})}[x_{1}, \hdots, x_{n}, x_{n+1}]$, where $\mathfrak{p}_{v}$ is the prime corresponding to $v$ and $\mathcal{O}_{K,(\mathfrak{p}_{v})}$ is the localization.

After possibly increasing $S$ we may assume $\| f \|_{v} = \| x'_{n+1} \| = \| x'_{n+1} f - 1 \|_{v} = 1$, which we do throughout. 

\begin{lem}
After possibly increasing $S$, for every $r \in A'$ there exists a scalar multiple $\lambda r$ with $\lambda \in K$ such that $\lambda r \in \mathcal{A}'$ and $\| \lambda r \|_{v} = 1$ for all $v \not\in S$.
\end{lem}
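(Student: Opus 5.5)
The plan is to reduce the statement to the finiteness of the class group of $K$. The enlargement of $S$ will be made once and for all, independently of $r$. Since $\mathrm{Cl}(\mathcal{O}_{K})$ is finite, we can choose finitely many prime ideals $\mathfrak{q}_{1}, \hdots, \mathfrak{q}_{m}$ whose classes generate it, and add the corresponding places to $S$. Then every fractional ideal of $\mathcal{O}_{K,S}$ is principal. The reason is that the class group of $\mathcal{O}_{K,S}$ is the quotient of $\mathrm{Cl}(\mathcal{O}_{K})$ by the subgroup generated by the classes of the primes in $S$, and this quotient is trivial. So $\mathcal{O}_{K,S}$ is a PID.

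Now fix $r \in A'$. If $r = 0$ there is nothing to prove: any $\lambda$ gives $\lambda r \in \mathcal{A}'$, and the normalization condition is only meaningful for $r \neq 0$, which is the case used later. So assume $r \neq 0$ and write $r = \sum_{J} a_{J} \overline{x}^{J}$, with finitely many nonzero $a_{J} \in K$. Let $\mathfrak{c} \subset K$ be the fractional $\mathcal{O}_{K,S}$-ideal generated by the $a_{J}$; this is the content ideal of $r$. By the choice of $S$ we have $\mathfrak{c} = c\,\mathcal{O}_{K,S}$ for some $c \in K^{\times}$, and we set $\lambda = c^{-1}$.

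The coefficients of $\lambda r$ are then the $a_{J}/c$. These lie in $\mathcal{O}_{K,S}$ and generate the unit ideal there. Hence $\lambda r \in \mathcal{A}'$. For each finite $v \notin S$ with prime $\mathfrak{p}_{v}$, every coefficient satisfies $|a_{J}/c|_{v} \leq 1$. Moreover, not all of them lie in $\mathfrak{p}_{v}\mathcal{O}_{K,S}$, since otherwise the ideal they generate would be contained in $\mathfrak{p}_{v}$. Therefore $\max_{J} |a_{J}/c|_{v} = 1$, which says exactly that the Gauss norm satisfies $\| \lambda r \|_{v} = 1$ for all $v \notin S$.

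The only point needing care is the uniformity: $S$ must not depend on $r$. This is exactly why we pass to a ring of $S$-integers with trivial class group, rather than choosing primes adapted to each $r$ individually. The enlargement is compatible with the earlier normalization $\| f \|_{v} = \| x'_{n+1} \|_{v} = \| x'_{n+1} f - 1 \|_{v} = 1$, since enlarging $S$ only removes conditions. With that in place the argument is a direct computation, and I do not expect any further obstacle.
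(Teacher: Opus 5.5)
Your proof is correct and follows the same route as the paper: enlarge $S$ once so that $\mathcal{O}_{K,S}$ is a PID, then divide $r$ by a generator of the ideal generated by its coefficients. The paper phrases this prime by prime, choosing $(\lambda)$ to have exponent $-e_{\mathfrak{p}}$ at each $\mathfrak{p} \notin S$, where $e_{\mathfrak{p}}$ is the minimal exponent among the coefficients; that is exactly your content ideal $\mathfrak{c} = (\lambda)^{-1}$.
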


\begin{proof}
The class group of $\mathcal{O}_{K}$ is finitely generated, so we may choose $S$ large enough so that $\mathcal{O}_{K,S}$ is a principal ideal domain, hence a unique factorization domain. Then for each prime $\mathfrak{p} \not\in S$ there is a coefficient $r_{\mathfrak{p}}$ of $r$ for which the exponent $e_{\mathfrak{p}}$ of $\mathfrak{p}$ in the factorization of $(r_{\mathfrak{p}})$ is as small as possible. We then choose $\lambda$ so that the factorization of $(\lambda)$ has this prime occuring with exponent $- e_{\mathfrak{p}}$, which is possible by the triviality of the class group.
\end{proof}

\begin{lem}
Let $t = x'_{n+1} f - 1$. Then after possibly increasing $S$, we have 
\[ (A' \mathcal{I} + A' t) \cap \mathcal{A}' = \mathcal{A}' \mathcal{I} + \mathcal{A}' t . \]
And for each $v \not\in S$ that
\[ (A' \mathcal{I} + A' t) \cap \mathcal{A}'_{(v)} = \mathcal{A}'_{(v)} \mathcal{I} + \mathcal{A}'_{(v)} t . \]
\end{lem}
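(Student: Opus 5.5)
The plan is to reduce both equalities to a torsion-freeness statement and then obtain that torsion-freeness from generic flatness. Set $J_{S} := \mathcal{A}'_{S}\mathcal{I} + \mathcal{A}'_{S} t \subset \mathcal{A}'_{S}$. The inclusion $J_{S} \subset (A'\mathcal{I} + A't) \cap \mathcal{A}'_{S}$ is clear, so only the reverse inclusion needs proof.

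First I record how the objects behave under enlarging $S$. Replacing $S$ by $S' \supset S$ replaces $\mathcal{O}_{K,S}$ by a localization $T^{-1}\mathcal{O}_{K,S}$. Then $\mathcal{I}_{S'} = I \cap \mathcal{A}_{S'} = T^{-1}\mathcal{I}_{S}$: clear the denominators of an element of $\mathcal{I}_{S'}$ by an element of $T$. It follows that $J_{S'} = T^{-1}J_{S}$ and $\mathcal{A}'_{S'}/J_{S'} = T^{-1}(\mathcal{A}'_{S}/J_{S})$. Next, since $I$ is generated by finitely many elements which can be scaled into $\mathcal{A}$, we have $K \otimes_{\mathcal{O}_{K,S}} J_{S} = A'\mathcal{I} + A't$. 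Consequently every $y \in (A'\mathcal{I} + A't) \cap \mathcal{A}'_{S}$ satisfies $\lambda y \in J_{S}$ for some nonzero $\lambda \in \mathcal{O}_{K}$. So the first equality holds as soon as $M_{S} := \mathcal{A}'_{S}/J_{S}$ is torsion-free as an $\mathcal{O}_{K,S}$-module.

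To get torsion-freeness I apply generic flatness (\cite[\href{https://stacks.math.columbia.edu/tag/051R}{Lemma 051R}]{stacks-project}). Here $\mathcal{O}_{K,S}$ is a Noetherian domain and $M_{S}$ is a finitely generated $\mathcal{O}_{K,S}$-algebra. Hence there is a nonzero $h \in \mathcal{O}_{K,S}$ such that $(M_{S})_{h}$ is free, in particular flat, over $(\mathcal{O}_{K,S})_{h}$. I then enlarge $S$ by the finitely many primes dividing $h$, so that $(\mathcal{O}_{K,S})_{h} = \mathcal{O}_{K,S'}$ and $(M_{S})_{h} = M_{S'}$. A flat module over a domain is torsion-free, so $M_{S'}$ is torsion-free, which proves the first equality for $S'$. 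Increasing $S$ further preserves flatness, since localization preserves it, so this is compatible with the earlier enlargements of $S$ (making $\mathcal{O}_{K,S}$ a PID and normalizing the norms of $f$, $x'_{n+1}$ and $t$).

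For the second equality, fix $v \notin S$. The ring $\mathcal{A}'_{(v)}$ is the localization of $\mathcal{A}'_{S}$ at the multiplicative set $\mathcal{O}_{K,S} \setminus \mathfrak{p}_{v}$. Therefore $\mathcal{A}'_{(v)}\mathcal{I} + \mathcal{A}'_{(v)}t$ is the corresponding localization of $J_{S}$, and its quotient $(M_{S})_{(\mathfrak{p}_{v})}$ is flat, hence torsion-free, over $\mathcal{O}_{K,(\mathfrak{p}_{v})}$. Now take $y \in (A'\mathcal{I} + A't) \cap \mathcal{A}'_{(v)}$. As above, $\lambda y$ lies in the localized ideal for some nonzero $\lambda \in \mathcal{O}_{K}$, and torsion-freeness gives the claim. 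A single $S$ works for every $v \notin S$ simultaneously, since flatness of $M_{S}$ over $\mathcal{O}_{K,S}$ was established once and for all.

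I expect no serious obstacle. The only point requiring care is the bookkeeping when $S$ is enlarged: $\mathcal{I}$ depends on $S$, and one must check that it simply localizes, so that the earlier normalizations and the flatness produced here remain valid together.
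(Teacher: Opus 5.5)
Your proof is correct, but it reaches the key fact by a different route from the paper.

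The paper proves a general claim. For any ideal $H=(h_1,\dots,h_\ell)\subset A'$ with $\Spec A'/H$ integral (here $A'/I'\cong (A/I)_f$), it uses the diagram of exact sequences to reduce $H\cap\mathcal{A}'=\mathcal{H}$ to injectivity of $\mathcal{A}'/\mathcal{H}\to A'/H$. It then enlarges $S$ until $\Spec\mathcal{A}'/\mathcal{H}$ is integral, and concludes from dominance of the generic fibre (Stacks, Lemma 0CC1). The local statement is dismissed as analogous after localizing at $v$.

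You instead observe that this injectivity is exactly $\mathcal{O}_{K,S}$-torsion-freeness of $M_S$, and you obtain it directly from generic flatness.

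The two arguments are close in substance. An $\mathcal{O}_{K,S}$-algebra with nonzero generic fibre is integral only if it is torsion-free, and a torsion-free one embeds in its generic fibre $A'/H$, so here the two properties coincide. Thus the paper's step ``increase $S$ so that $\Spec\mathcal{A}'/\mathcal{H}$ is integral'' is itself the spreading-out statement you actually prove.

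Your version has three advantages:
\begin{itemize}
\item It never uses that $A'/I'$ is a domain, so it works for an arbitrary ideal.
\item It makes explicit that $\mathcal{I}=I\cap\mathcal{A}_S$ depends on $S$ and changes only by localization when $S$ grows. The paper leaves this implicit, and it is what makes the successive enlargements of $S$ compatible.
\item It gives the second equality uniformly for all $v\notin S$ by localizing a single flat module over $\mathcal{O}_{K,S}$, rather than by an unspecified analogous argument.
\end{itemize}

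The paper's formulation is more geometric, and it quotes the dominance lemma as a black box instead of generic flatness. Both ultimately rest on a Noetherian finiteness input that lets finitely many primes absorb all the torsion.
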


\begin{proof}
We begin with the first statement. The basic claim is that if $H = (h_{1}, \hdots, h_{\ell}) \subset A'$ is any ideal for which $\Spec A'/H$ is an integral scheme, then for $S$ large enough one has both $h_{i} \in \mathcal{A}'$ and that $H \cap \mathcal{A}' = \sum_{i=1}^{\ell} \mathcal{A}' h_{i} =: \mathcal{H}$. It is clear we can increase $S$ so that the $h_{i}$ lie in $\mathcal{A}'$. Then we have a diagram of exact sequences
\begin{equation}
\label{spreadidealseq}
\begin{tikzcd}
 0 \arrow[r] & \mathcal{H} \arrow[d] \arrow[r] & \mathcal{A}' \arrow[r] \arrow[d, "\iota"] & \mathcal{A}'/\mathcal{H} \arrow[d] \arrow[r] & 0 \\
 0 \arrow[r] & H \arrow[r] & A' \arrow[r] & A'/H \arrow[r] & 0 ,
\end{tikzcd}
\end{equation}
so the claim reduces to the injectivity of the arrow $\mathcal{A}'/\mathcal{H} \to A'/H$. Increasing $S$ so that $\Spec \mathcal{A}'/\mathcal{H}$ is integral (i.e., irreducible and reduced), the map $\Spec A'/H \to \Spec \mathcal{A}'/\mathcal{H}$ is dominant. The result then follows from \cite[\href{https://stacks.math.columbia.edu/tag/0CC1}{Lemma 0CC1}]{stacks-project}.

The second statement is now argued analogously, by taking (\ref{spreadidealseq}) and localizing at $v$.
\end{proof}

Using the first lemma and the scale-invariance of the norms $\| \cdot \|_{(\Spec A'/A'I, 0),v}$ and $\| \cdot \|_{(X',0),v}$, it now suffices to prove the statement for those $g \in \mathcal{A}$ with $\| g \|_{v} = 1$. Let $\widetilde{g} = \widetilde{g}_{v}$ be a minimal representative for $g + I'$ with respect to $\| \cdot \|_{(X',0), v}$. Necessarily $\| \widetilde{g} \|_{v} \leq 1$ so $\widetilde{g} \in \mathcal{A}'_{(v)}$. Using the second lemma may write $\widetilde{g}$ as 
\[ \widetilde{g} = g + \underbrace{\left( \sum_{i \geq 0} a_{i} x^{i}_{n+1} \right)}_{a} y + \underbrace{\left( \sum_{i \geq 0} b_{i} x^{i}_{n+1} \right)}_{b} ((x_{n+1} + 1/f(0)) f - 1) \]
where $a_{i}, b_{i} \in \mathcal{A}_{(v)}$ for each $i$, and $y \in \mathcal{I}$.

By grouping coefficients of $x^{i}_{n+1}$, we observe that
\begin{align}
\| \widetilde{g} \|_{v} &= \textrm{max} \begin{cases} \| g + a_{0} y + b_{0} (f/f(0) - 1) \|_{v} \\ \textrm{max}_{i \geq 1} \{ \| a_{i} x^{i}_{n+1} y + b_{i-1} x^{i-1}_{n+1} x_{n+1} f + b_{i} x^{i}_{n+1} (f /f(0) - 1) \|_{v} \} \end{cases}  \\
\label{gtildexp}
 &= \textrm{max} \{ \| g + a_{0} y + b_{0} (f/f(0) - 1) \|_{v}, \textrm{max}_{i \geq 1} \{ \| a_{i} y + b_{i-1} f + b_{i} (f/f(0) - 1) \|_{v} \} \} 
\end{align}
\noindent Since $f$ is not a zero divisor in $A/I$ (because $A/I$ is integral), we may increase $S$ to assume it is also not a zero divisor in $\mathcal{A}_{(v)}/(\mathcal{I} + \mathfrak{p}_{(v)})$. Now \emph{suppose for contradiction} that $\| \widetilde{g} \|_{v} < \| g \|_{(X,0),v} = 1$.

\begin{lem}
$\| b_{i} \|_{v} = 1$ for all $i \geq 0$.
\end{lem}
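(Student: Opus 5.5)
The plan is to reduce everything modulo $\mathfrak{p}_{v}$ and argue by induction on $i$, using that $f$ is not a zero divisor modulo $\mathcal{I} + \mathfrak{p}_{v}$. Since $a_{i}, b_{i} \in \mathcal{A}_{(v)}$, we already have $\| b_{i} \|_{v} \leq 1$. It therefore suffices to show that the reduction $\bar{b}_{i}$ of $b_{i}$ is nonzero in $\mathcal{A}_{(v)}/\mathfrak{p}_{v}\mathcal{A}_{(v)}$. I will in fact prove the stronger statement that $\bar{b}_{i}$ is nonzero in the quotient
\[ R := \mathcal{A}_{(v)}/(\mathcal{A}_{(v)}\mathcal{I} + \mathfrak{p}_{v}\mathcal{A}_{(v)}) . \]
After enlarging $S$, I will assume $|f(0)|_{v} = 1$, so that $f/f(0) - 1$ has coefficients in $\mathcal{O}_{K,(\mathfrak{p}_{v})}$ and makes sense in $R$.

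\textbf{Step 1: the congruences.} The contradiction hypothesis $\| \widetilde{g} \|_{v} < 1$ means that every term in the maximum (\ref{gtildexp}) has norm $< 1$. Each term has coefficients in $\mathcal{O}_{K,(\mathfrak{p}_{v})}$, so each vanishes modulo $\mathfrak{p}_{v}$. Since $y \in \mathcal{I}$, we have $\bar{y} = 0$ in $R$, and we obtain in $R$:
\[ \bar{g} = -\bar{b}_{0}\,(\bar{f}/f(0) - 1), \qquad \bar{b}_{i-1}\bar{f} = -\bar{b}_{i}\,(\bar{f}/f(0) - 1) \quad (i \geq 1). \]

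\textbf{Step 2: $\bar{g} \neq 0$ in $R$.} Suppose instead that $g \equiv z \pmod{\mathfrak{p}_{v}}$ for some $z \in \mathcal{A}_{(v)}\mathcal{I}$. Then $g - z$ is a representative of $g + I$ with $\| g - z \|_{v} < 1$, contradicting $\| g \|_{(X,0),v} = 1$. Hence $\bar{g} \neq 0$ in $R$.

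\textbf{Step 3: induction.} For the base case, the first congruence together with $\bar{g} \neq 0$ forces $\bar{b}_{0} \neq 0$ in $R$. For the inductive step, assume $\bar{b}_{i-1} \neq 0$ in $R$. By the choice of $S$, $\bar{f}$ is not a zero divisor in $R$, so $\bar{b}_{i-1}\bar{f} \neq 0$. The second congruence then gives $\bar{b}_{i} \neq 0$ in $R$. A fortiori $\bar{b}_{i} \neq 0$ modulo $\mathfrak{p}_{v}$, so $\| b_{i} \|_{v} = 1$.

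The only genuinely delicate point is Step 2. There one must check that membership of $g$ in $I$ modulo $\mathfrak{p}_{v}$ can be witnessed by an element of $\mathcal{A}_{(v)}\mathcal{I}$, rather than just by some element of $I$ with possibly non-integral coefficients. This is handled by the same reasoning as in the previous lemma: $I \cap \mathcal{A}_{(v)} = \mathcal{A}_{(v)}\mathcal{I}$ after enlarging $S$, and any representative of norm $< 1$ lies in $\mathcal{A}_{(v)}$.

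The lemma feeds the final contradiction as follows. Since $b$ is a polynomial in $x_{n+1}$, only finitely many $b_{i}$ are nonzero. This is incompatible with $\| b_{i} \|_{v} = 1$ for all $i \geq 0$, which completes the proof that $\| \widetilde{g} \|_{v} \geq 1$.
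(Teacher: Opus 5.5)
Your proof is correct and follows the paper's route: reduce the $x_{n+1}$-coefficients of $\widetilde{g}$ modulo $\mathfrak{p}_v$, induct on $i$, and use that $f$ is a non-zero-divisor modulo $\mathcal{I}+\mathfrak{p}_v$. The one difference is worth keeping, because it closes a gap in the paper's argument. You carry the stronger hypothesis $\bar b_{i-1}\neq 0$ in $R$. The paper's inductive step only has $\|b_{i-1}\|_v=1$, i.e.\ $b_{i-1}\notin\mathfrak{p}_v\mathcal{A}_{(v)}$, and that alone does not let $\bar b_{i-1}\bar f=0$ in $R$ contradict the non-zero-divisor property. Also, your Step 2 needs only the trivial inclusion $\mathcal{A}_{(v)}\mathcal{I}\subset I$, not the ideal-intersection lemma you flag as the delicate point.
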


\begin{proof}
We observe that our assumption $\| \widetilde{g} \|_{v} < 1$ implies that each of the quantities $\| ? \|_{v}$ appearing (\ref{gtildexp}) also have magnitude less than $1$. 

Now for the base case $i = 0$ we observe that $\| g + a_{0} y \| \geq \| g \|_{(X,0),v} = 1$, so in order to have $\| g + a_{0} y + b_{0} (f/f(0) - 1) \|_{v} < 1$ we must also have $\| b_{0} (f/f(0) - 1) \|_{v} \geq 1$. Our choice of $S$ ensures $\| (f/f(0) - 1) \|_{v} \leq 1$ so this implies $\| b_{0} \|_{v} \geq 1$. But $b_{0}$ is integral, so $\| b_{0} \|_{v} = 1$.

For the inductive case we reduce $a_{i} y + b_{i-1} f + b_{i} (f/f(0) - 1)$ modulo $\mathfrak{p}_{v}$. If $\| b_{i} \|_{v} < 1$ then the fact that $\| b_{i-1} \|_{v} = 1$ and $y \in \mathcal{I}$ implies that $f$ is a zero-divisor modulo $\mathcal{I} + \mathfrak{p}_{(v)}$. We chose $S$ so that this is not true, hence $\| b_{i} (f/f(0) - 1) \|_{v} = 1$, which implies $\| b_{i} \|_{v} = 1$ as before.
\end{proof}

But $b$ is a polynomial, so $b_{i} = 0$ for $i \gg 0$. This is our contradiction.
\end{proof}

\begin{lem}
\label{getfracreplem}
Let $A = K[x_{1}, \hdots, x_{n}]$, $A' = K[x_{1}, \hdots, x_{n+1}]$, and consider $f \in A$ with $f(0) \neq 0$. Set $x'_{n+1} = x_{n+1} + 1/f(0)$. Choose $a' \in A'$. Then there exists $a \in A$ and an integer $m$ such that $a' = a/f^{m}$ in $A_{f} \cong A'/(x'_{n+1} f - 1)$ and 
\[ \deg a \leq m \deg f + \deg a', \hspace{2em} m = \deg_{x_{n+1}} a' \hspace{2em} \textrm{and} \hspace{2em} \| a \|_{v} \leq c^{m}_{v} \| a' \|_{v} , \]
where $c_{v}$ is a constant independent of $a$ and equal to $1$ for all but finitely many $v$.
\end{lem}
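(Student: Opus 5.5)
The plan is to substitute $x_{n+1} = x'_{n+1} - 1/f(0)$ and then clear denominators monomial by monomial. In $A_{f} \cong A'/(x'_{n+1} f - 1)$ the class of $x'_{n+1}$ is $1/f$. Hence the class of $x_{n+1}$ is
\[ x_{n+1} = \frac{1}{f} - \frac{1}{f(0)} = \frac{f(0) - f}{f(0) f} . \]
Write $a' = \sum_{i=0}^{m} a'_{i} x_{n+1}^{i}$ with $a'_{i} \in A$ and $m = \deg_{x_{n+1}} a'$. Then in $A_{f}$ we get
\[ a' = \sum_{i=0}^{m} a'_{i} \frac{(f(0) - f)^{i}}{f(0)^{i} f^{i}} = \frac{1}{f^{m}} \sum_{i=0}^{m} a'_{i} \, f(0)^{-i} (f(0) - f)^{i} f^{m-i} . \]
So I set $a := \sum_{i} a'_{i} f(0)^{-i} (f(0)-f)^{i} f^{m-i} \in A$, which gives $a' = a/f^{m}$ with the required $m$.

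For the degree bound, the $i$-th term has degree at most $\deg a'_{i} + i \deg f + (m-i)\deg f = \deg a'_{i} + m \deg f$. Since $\deg a'_{i} \leq \deg a' - i \leq \deg a'$, this gives $\deg a \leq m \deg f + \deg a'$.

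For the norm bound I use that the Gauss norm on $A$ is multiplicative and ultrametric. This gives
\[ \| a \|_{v} \leq \max_{i} \| a'_{i} \|_{v} \, |f(0)|_{v}^{-i} \, \| f(0) - f \|_{v}^{i} \, \| f \|_{v}^{m-i} . \]
Put $c_{v} := \max\{1, \| f \|_{v}, |f(0)|_{v}^{-1} \| f(0) - f \|_{v}\}$. Each factor is then at most $c_{v}^{m}$, and $\| a'_{i} \|_{v} \leq \| a' \|_{v}$ because the $a'_{i}$ are just groupings of the monomials of $a'$. Hence $\| a \|_{v} \leq c_{v}^{m} \| a' \|_{v}$, and $c_{v}$ depends only on $f$, not on $a'$.

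It remains to check that $c_{v} = 1$ for almost all $v$. The polynomial $f$ has finitely many coefficients in $K$, and $f(0) \neq 0$. So for all but finitely many $v$ every coefficient of $f$ is $v$-integral and $|f(0)|_{v} = 1$. For such $v$ we get $\| f \|_{v} \leq 1$ and $\| f(0) - f \|_{v} \leq 1$, so $c_{v} = 1$.

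There is no genuine obstacle. The only bookkeeping point is the degree bound, where one must keep track of the $x_{n+1}$-degree of each $a'_{i}$ rather than bounding $\deg a'_{i}$ naively. Otherwise the argument is a direct computation.
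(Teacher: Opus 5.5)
Your proof is correct and is essentially the paper's argument. The only difference is that the paper expands $a'$ in powers of $x'_{n+1}$, which gives $a=\sum_i f^{m-i}a'_i$ directly, and then bounds $\|a'_i\|_v$ by comparing the Gauss norms in the coordinates $x_{n+1}$ and $x'_{n+1}$. You expand in $x_{n+1}$ and absorb that comparison into the explicit factors $f(0)^{-i}(f(0)-f)^i$, and you also check the degree bound, which the paper leaves implicit.
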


\begin{proof}
We may consider the unique decomposition $a' = \sum_{i=0}^{m} a'_{i} (x'_{n+1})^{i}$ with $a'_{i} \in A$, so that setting $a = \sum_{i} f^{m-i} a'_{i}$ we have $a' = a / f^{m}$ in $A'/(x'_{n+1} f - 1)$. Then $\| a \|_{v} \leq \max_{i} \| a'_{i} f^{m-i} \|_{v}$. 

Now write $\| \cdot \|'_{v}$ for the Gauss norm on $A'$ but with respect to the presentation $A' = K[x_{1}, \hdots, x_{n}, x'_{n+1}]$. Then there exists a constant $c_{v}$ such that for all $b \in A'$ we have $\| b \|'_{v} \leq c^{m(b)}_{v} \| b \|_{v}$ where $m(b)$ is the $x_{n+1}$ (or equivalently $x'_{n+1}$) degree of $b$. It follows then that 
\[ \| a'_{i} \|_{v} = \| a'_{i} \|'_{v} \leq \| a' \|'_{v} \leq c^{m}_{v} \| a' \|_{v} \]
hence we obtain the result after replacing $c_{v}$ with $c_{v} \max\{ 1, \| f \|_{v} \}$. 
\end{proof}

\begin{lem}
\label{normundercoordchange}
Let $B = K[y_{1}, \hdots, y_{m}]$, and suppose $\varphi : B/J \xrightarrow{\sim} A/I$ is an isomorphism preserving the point $0$. Write $Y = \Spec B/J$. Then for each $v$ there exists a positive constant $c_{v}$ such that $\| \varphi(f) \|_{(X,0),v} \leq c_{v} \| f \|_{(Y,0),v}$ for all $f \in B/J$. For all but finitely many $v$ one can take $c_{v} = 1$, so in particular the norms $\| \cdot \|_{(Y,0),v}$ and $\| \varphi(\cdot) \|_{(X,0),v}$ agree on $B/J$ for all but finitely many $v$.
\end{lem}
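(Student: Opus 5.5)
The isomorphism $\varphi$ is given by polynomials: lift $\varphi(\bar y_j)$ to $p_j \in A = K[x_1,\hdots,x_n]$, and since $\varphi$ preserves $0$ we may take $p_j(0) = 0$. Given $f \in B/J$, choose a representative $F \in B$ with $\|F\|_v = \|f\|_{(Y,0),v}$; this exists by \autoref{normwelldef}. Then $\varphi(f)$ is represented by $F(p_1,\hdots,p_m) \in A$. The task is to compare Gauss norms under this substitution.

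Write $F = \sum_J a_J \overline{y}^J$. Then $\|F(p_1,\hdots,p_m)\|_v \le \max_J |a_J|_v \prod_j \|p_j\|_v^{J_j}$ by properties (2) and (4) of \autoref{normdef}, where multiplicativity holds with equality for the Gauss norm. If we set $C_v = \max\{1, \max_j \|p_j\|_v\}$, this bound reads $\max_J |a_J|_v C_v^{|J|}$. The trouble is that this bound depends on $\deg F$, so it is not uniform.

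We therefore split into two cases.

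\emph{Case $C_v = 1$.} This holds for all but finitely many $v$, since each $p_j$ has finitely many coefficients, each of which is a $v$-adic unit or integer away from finitely many places. In this case we get $\|\varphi(f)\|_{(X,0),v} \le \|F(p)\|_v \le \|F\|_v = \|f\|_{(Y,0),v}$, so $c_v = 1$ works.

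\emph{Case $C_v > 1$.} For the finitely many remaining places, we must get a uniform constant $c_v$, and this is the main obstacle. The natural idea is a rescaling argument. Choose $\pi \in K$ with $|\pi|_v$ small, and consider the coordinate change $x_i \mapsto \pi^{k} x_i$ on $A$. Because the $p_j$ vanish at $0$, they have no constant term, so under this change each $p_j$ acquires Gauss norm at most $1$ for suitable $k$. The Gauss norm on $A$ and the rescaled Gauss norm are comparable only up to powers depending on degree, so this alone is not quite enough.

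Instead, I would argue by continuity on Tate algebras. Using \autoref{normextagrees} and \autoref{normextagrees2}, the norm $\|\cdot\|_{(X,0),v}$ is the restriction of the quotient (residue) norm on the affinoid algebra $\hat A/\hat I$, and similarly for $Y$. Both quotient norms are Banach norms on affinoid algebras, and the map $\varphi$ extends to the rigid spaces only near $0$. So I would pass to a small closed polydisc: use the rescaled presentations, noting that by \autoref{compinj} and the closedness argument of \autoref{normwelldef} the map $A/I \to \hat A_\pi/\hat I_\pi$ into the rescaled Tate algebra remains injective. Then any $K_v$-algebra homomorphism between affinoid algebras is automatically bounded, which gives $\|\varphi(f)\| \le c\|f\|$ for these affinoid norms.

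Finally, one compares the rescaled quotient norm with the original norm. The substep I expect to require the most care is showing that $\|\cdot\|_{(X,0),v}$ is dominated by, or equivalent to, the norm from the smaller disc on $A/I$. I would try the following: the inclusion of affinoid algebras from the unit disc to the smaller disc is bounded, and both algebras are reduced Banach algebras; comparing supremum norms, where each quotient norm is bounded by a constant times the supremum norm, should give the needed inequality on the image of $A/I$.
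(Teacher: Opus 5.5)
Your argument at the places where $C_v = 1$ is correct and is the paper's argument. To conclude that the two norms actually \emph{agree} there, you should also run it for $\varphi^{-1}$, lifting $\varphi^{-1}(\bar x_i)$ to polynomials of Gauss norm $\le 1$ for almost all $v$, as the paper does.

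The gap is in the last step at the remaining places. After rescaling you do get $\|\varphi(f)\|_{X,\pi} \le \|f\|_{(Y,0),v}$, where $\|\cdot\|_{X,\pi}$ is the residue norm for the shrunken polydisc. Once the rescaled $p_j$ have Gauss norm $\le 1$, this is just your $C_v = 1$ computation, so automatic continuity is not even needed. But to return to $\|\cdot\|_{(X,0),v}$ you need $\|h\|_{(X,0),v} \le c\,\|h\|_{X,\pi}$ on $A/I$. Restricting from the unit polydisc to a smaller one (equivalently, comparing supremum norms) only yields the reverse inequality $\|h\|_{X,\pi} \le \|h\|_{(X,0),v}$. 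The inequality you need is false: for $X = \mathbb{A}^{1}$ and $h = x^{N}$ it reads $1 \le c\,|\pi|_v^{kN}$ for all $N$.

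No other route will close this, because the fixed-$v$ part of the statement is itself false. Let $p$ be the rational prime below $v$, and take $A = K[x]$, $B = K[y]$, $I = J = 0$ and $\varphi(y) = x/p$. This is an isomorphism preserving $0$, but $\|\varphi(y^{k})\|_{(X,0),v} = |p|_v^{-k}$ while $\|y^{k}\|_{(Y,0),v} = 1$. So no constant $c_v$ exists, and the two norms are not even topologically equivalent.

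The paper's proof at a fixed place has the same hole. It replaces $\varphi$ by an induced homomorphism between the unit-polydisc affinoid algebras and then appeals to automatic continuity. Such a homomorphism exists only if the $\varphi(\bar y_j)$ have supremum norm $\le 1$ on $X^{v} \cap \mathbb{B}$. That is exactly what fails in this example, and it is exactly the obstruction you tried to rescale away.

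What survives at the finitely many bad places is the degree-dependent bound $\|\varphi(f)\|_{(X,0),v} \le C_v^{\deg F}\|F\|_v$ for any representative $F$ of $f$, which you already wrote down. This bound is compatible with notions that carry linear degree control on representatives, such as $\alpha$-adelicity. It is not the uniform $c_v$ the lemma asserts, so the correct statement is only the one for all but finitely many $v$.
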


\begin{proof}
We start with the argument for all but finitely many places. Lift the map $\varphi$ to a map $\widetilde{\varphi} : B \to A$ represented by polynomials $g_{1}, \hdots, g_{m}$. Then if $f$ is represented by $\widetilde{f}$ such that $\| f \|_{(Y,0),v} = \| \widetilde{f} \|_{v}$, then we may observe that $\varphi(f)$ is represented by $\widetilde{f} \circ (g_{1}, \hdots, g_{m})$ and 
\[ \| \varphi(f) \|_{(X,0),v} \leq \| \widetilde{f} \circ (g_{1}, \hdots, g_{m}) \|_{v} \leq \| \widetilde{f} \|_{v} \underbrace{\| g_{1} \|_{v}^{b_{1}} \cdots \| g_{m} \|_{v}^{b_{m}}}_{c_{v}} \]
for some positive integer exponents $b_{1}, \hdots, b_{m}$ depending on $\widetilde{f}$. Away from finitely many places we may assume $\| g_{i} \|_{v} = 1$ for each $i$, so $c_{v} = 1$. Applying the same argument in reverse gives the result for $v$ away from a finite set.

Now we consider the case for fixed $v$. In this case we let $\hat{A} = K_{v} \langle x_{1}, \hdots, x_{n} \rangle$ and $\hat{B} = K_{v} \langle y_{1}, \hdots, y_{m} \rangle$ be the associated Tate algebras and set $\hat{I} = \hat{A} I$ and $\hat{J} = \hat{B} J$. Then using the embeddings $A/I \hookrightarrow \hat{A} / \hat{I}$ and $B/J \hookrightarrow \hat{B}/\hat{J}$ provided by (\ref{compinj}) we may use (\autoref{normextagrees}) and (\autoref{normextagrees2}) to reduce to the same problem for $\hat{\varphi} : \hat{A}/\hat{I} \to \hat{B}/\hat{J}$. This then follows by combining \cite[2.1.8. Cor. 4]{zbMATH03857279} with \cite[\S6.1.3]{zbMATH03857279}.
\end{proof}

\autoref{normundercoordchange} explains why, in a certain sense, one can think of the norms $\| \cdot \|_{(X,0),v}$ as depending only on the germ $(X,0)$ and not the embedding $X \hookrightarrow \mathbb{A}^{n}$. In particular, any two choices of embeddings will give the same norms for large enough places, and result in topologically equivalent norms at the other places.

\begin{lem}
\label{normsmallerthanreponsomeopen}
Let $\Spec A/I = \bigcup_{i=1}^{n} D(h_{i})$ be a finite open cover such that $0 \in \bigcap_{i} D(h_{i})$, and write $\{ \| \cdot \|_{i,v} \}$ for a collection of adelic norms associated to the $D(h_{i})$. Then there exists constants $c_{v}$ such that 
\[ \| \cdot \|_{(X,0),v} \leq c_{v} \textrm{max}_{i} \{ \| \cdot \|_{i,v} \} \]
with $c_{v} = 1$ for all but finitely many $v$ and the inequality is as functions on $A$.
\end{lem}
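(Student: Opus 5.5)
The plan is to handle the two kinds of places separately. For all but finitely many $v$ the bound will follow directly from \autoref{norminvunderrestric}. For each of the finitely many remaining $v$ I will use that the residue norm of a reduced affinoid algebra is equivalent to its supremum norm, and that the affinoid pieces coming from the $D(h_{i})$ cover $X^{v} \cap \mathbb{B}$.

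\emph{Reduction to a standard presentation.} By \autoref{normundercoordchange}, replacing the given presentation of $D(h_{i})$ by any other presentation preserving $0$ changes $\| \cdot \|_{i,v}$ by a constant at each $v$, and not at all for all but finitely many $v$. So I may assume $D(h_{i}) = \Spec A'_{i}/I'_{i}$ with $A'_{i} = K[x_{1}, \hdots, x_{n+1}]$ and $I'_{i} = A'_{i} I + (x'_{n+1} h_{i} - 1)$, where $x'_{n+1} = \lambda x_{n+1} + 1/h_{i}(0)$ for a nonzero $\lambda \in K$ to be chosen below. For each fixed $\lambda$ this is a presentation of $D(h_{i})$ preserving $0$, and $\lambda$ only needs to be adjusted at finitely many places. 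With $\lambda = 1$, \autoref{norminvunderrestric} (whose hypotheses hold since $h_{i}(0) \neq 0$ and $X$ is integral) gives $\| g \|_{(X,0),v} \leq \| g \|_{i,v}$ for all but finitely many $v$, for each $i$. Since there are finitely many $i$, $c_{v} = 1$ works outside a finite set. In fact a single $i$ suffices there.

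\emph{The finitely many bad places.} Fix such a $v$. By \autoref{normextagrees} and \autoref{normextagrees2}, $\| \cdot \|_{(X,0),v}$ is the restriction of the residue norm on the affinoid algebra $\hat{A}/\hat{I}$, and $\| \cdot \|_{i,v}$ is the restriction of the residue norm on $\hat{A}'_{i}/\hat{I}'_{i}$. The latter is the affinoid algebra of the rational subdomain
\[ U_{i} = \{ x \in X^{v} \cap \mathbb{B} : |1/h_{i}(x) - 1/h_{i}(0)| \leq |\lambda| \} . \]
The affinoid $\hat{A}/\hat{I}$ is reduced, since $X$ is geometrically reduced and analytification preserves reducedness. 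So by the standard comparison \cite[\S6.2.4]{zbMATH03857279} its residue norm is bounded by $C_{v}$ times the supremum norm over $X^{v} \cap \mathbb{B}$. The supremum norm over $U_{i}$ is in turn bounded by the residue norm on $\hat{A}'_{i}/\hat{I}'_{i}$. Hence, provided $X^{v} \cap \mathbb{B} = \bigcup_{i} U_{i}$, we get
\[ \| g \|_{(X,0),v} \leq C_{v} \sup_{X^{v} \cap \mathbb{B}} |g| \leq C_{v} \max_{i} \sup_{U_{i}} |g| \leq C_{v} \max_{i} \| g \|_{i,v} . \]

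\emph{The covering, which is the main obstacle.} With $\lambda = 1$ the covering can fail at a bad $v$, because $|h_{i}(x)|$ may be small for every $i$. To fix this, write $1 = \sum_{i} a_{i} h_{i}$ in $A/I$, which is possible since the $D(h_{i})$ cover $X$. On $\mathbb{B}$ the $a_{i}$ are bounded by $\| a_{i} \|_{v}$, so $\max_{i} |h_{i}(x)| \geq \delta_{v} := (\max_{i} \| a_{i} \|_{v})^{-1} > 0$ for every $x \in X^{v} \cap \mathbb{B}$. Now choose $\lambda \in K$ with $|\lambda|_{v} \geq \delta_{v}^{-1} + \max_{i} |1/h_{i}(0)|_{v}$. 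For an index $i$ with $|h_{i}(x)| \geq \delta_{v}$ we then have $|1/h_{i}(x) - 1/h_{i}(0)| \leq |\lambda|_{v}$, so $x \in U_{i}$ and the $U_{i}$ cover. Changing $\lambda$ at these finitely many places costs only a constant, by \autoref{normundercoordchange}, and this constant is absorbed into $c_{v}$. The points I expect to need the most care are checking that the rescaled presentation really is an isomorphism preserving $0$, and identifying $\hat{A}'_{i}/\hat{I}'_{i}$ with the rational subdomain $U_{i}$.
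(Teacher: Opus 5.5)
Your skeleton is the paper's: \autoref{norminvunderrestric} at almost all places, and at a fixed $v$ the passage to Tate algebras, the residue-versus-supremum comparison of \cite[\S6.2.4]{zbMATH03857279}, and a covering. You are right to distrust the covering, which the paper simply asserts. Your chain $\| g \|_{(X,0),v} \leq C_{v} \sup |g| \leq C_{v} \max_{i} \| g \|^{(\lambda)}_{i,v}$ for the $\lambda$-rescaled presentations is sound.

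\textbf{The gap.} It lies in the last step, where you return to the given norms $\| \cdot \|_{i,v}$ ``at the cost of a constant'' via \autoref{normundercoordchange}. The fixed-$v$ half of that lemma is false. For $\varphi : K[y] \to K[x]$, $y \mapsto x/p$, one has $\| \varphi(y^{n}) \|_{(X,0),v} = |p|_{v}^{-n}$ while $\| y^{n} \|_{(Y,0),v} = 1$ for $v \mid p$. Moreover, you need exactly the failing direction. Substituting $x_{n+1}/\lambda$ for $x_{n+1}$ in a minimal representative shows that, for $|\lambda|_{v} \geq 1$, the $\lambda$-norm dominates the $\lambda = 1$ norm on $A/I$: a larger domain gives a larger norm. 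You instead need $\| g \|^{(\lambda)}_{i,v} \leq C \| g \|_{i,v}$.

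\textbf{The statement itself fails.} No argument can close this gap, because the statement is false for arbitrary presentations, including the one of \autoref{norminvunderrestric}. Take $X = \mathbb{A}^{1} = \Spec K[x]$, $h_{1} = p + x$, $h_{2} = p - x$, so that $h_{1} + h_{2} = 2p$ and $0 \in D(h_{1}) \cap D(h_{2})$. Modulo $(x_{2} + 1/p)(p + x) - 1$ one has $x \equiv -p^{2} x_{2} + p^{3} x_{2}^{2} + p^{2} x x_{2}^{2}$, and symmetrically for $h_{2}$. Hence $\| x^{n} \|_{i,v} \leq |p|_{v}^{2n}$ for $v \mid p$: both $U_{i}$ are the disc $|x| \leq |p|^{2}$ and do not cover $|x| \leq 1$. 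On the other hand $\| x^{n} \|_{(X,0),v} = 1$, so no constant $c_{v}$ works.

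\textbf{What your argument does prove.} It proves the version in which the presentations of the $D(h_{i})$ are chosen. Fix a single $\lambda \in K^{\times}$, e.g.\ $\lambda = 1/N$ for a suitable integer $N$, such that $|\lambda|_{w} \geq 1$ for all $w$, and $|\lambda|_{v} \geq \delta_{v}^{-1} + \max_{i} |1/h_{i}(0)|_{v}$ at the finitely many places excluded by \autoref{norminvunderrestric}. Use that one presentation at every place; choosing $\lambda$ place by place, as you do, does not define an adelic norm. Then:
\begin{itemize}
\item places with $|\lambda|_{w} = 1$ are literally the $\lambda = 1$ case;
\item places with $|\lambda|_{w} > 1$ only get larger norms;
\item the excluded places are handled by your covering argument, with no appeal to \autoref{normundercoordchange} at a fixed place.
\end{itemize}
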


\begin{proof}
The claim for all but finitely many places is handled by \autoref{norminvunderrestric}, so it suffices to handle the claim at a fixed $v$. As usual this reduces to proving the same statement for the corresponding Tate algebra $\hat{A}/\hat{I}$ and the Tate algebras $B_{i} := (\hat{A}/\hat{I})\langle X \rangle/(X h_{i} - 1)$. By \cite[\S6.2.4]{zbMATH03857279} and the fact that $\Spec A$ is assumed geometrically irreducible and geometrically reduced we may replace all the norms involved with the associated supremum norms. The result is then immediate since $\Sp B_{i}$ is a cover of $\Sp (\hat{A}/\hat{I})$. 
\end{proof}

\begin{lem}
\label{norminvanyrest}
Suppose that $X' := \Spec B \subset \Spec A/I$ is any affine open subvariety containing $0$, and fix an adelic norm $\| \cdot \|_{(X', 0),v}$ associated to some presentation of $B$. Then if $\iota : A/I \to B$ is the natural map, then the norms $\| \cdot \|_{(X,0),v}$ and $\| \iota(\cdot) \|_{(X',0),v}$ on $A/I$ agree away from finitely many places of $K$.
\end{lem}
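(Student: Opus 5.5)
We prove the two inequalities separately, each away from finitely many places, by reducing to the cases already treated: principal open subsets via \autoref{norminvunderrestric}, and change of presentation via \autoref{normundercoordchange}.

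\textbf{The easy inequality $\| \iota(\cdot) \|_{(X',0),v} \leq \| \cdot \|_{(X,0),v}$.} Since $X'$ is open in $X$ and contains $0$, choose $h \in A$ with $h(0) \neq 0$ and $0 \in D(h) \subset X'$. Then $D(h)$ is a principal open subset of $X'$ as well: writing $\bar h = \iota(h)$, we have $D(h) = \Spec B_{\bar h}$. Present $D(h)$ in two ways:
\begin{itemize}
\item[(a)] as $\Spec A'/I'$ with $I' = A'I + (x'_{n+1}h - 1)$, exactly as in \autoref{norminvunderrestric};
\item[(b)] by adjoining to the fixed presentation $B = K[y_{1},\hdots,y_{m}]/J$ a variable $y'_{m+1}$ and the relation $y'_{m+1}\tilde h - 1$, where $\tilde h$ is a lift of $\bar h$.
\end{itemize}
By \autoref{normundercoordchange}, the norms attached to (a) and (b) agree away from finitely many places; the isomorphism preserves $0$ because both presentations are centred there. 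Applying \autoref{norminvunderrestric} to $X$ and $h$ shows that $\| \cdot \|_{(X,0),v}$ agrees on $A/I$ with the norm from (a). Applying it to $X'$ and $\tilde h$ shows that $\| \cdot \|_{(X',0),v}$ agrees on $B$ with the norm from (b). Restricting to $A/I$ and chaining these agreements gives $\| \iota(f) \|_{(X',0),v} = \| f \|_{(X,0),v}$ for all but finitely many $v$.

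This looks like it already proves the lemma, but it needs care. \autoref{norminvunderrestric} as stated gives agreement of $\| \cdot \|_{(X,0),v}$ with $\| \iota(\cdot) \|_{(X',0),v}$ for the \emph{principal} open, not merely an inequality. So the only real content is compatibility through the chain $A/I \to B \to B_{\bar h}$. The point to watch is that \autoref{norminvunderrestric} for $X'$ is applied to elements of $B$, and elements of $A/I$ are among them, so the restrictions are consistent. The hypotheses of \autoref{norminvunderrestric} require the affine variety to be geometrically integral; this holds for $X'$ because it is a nonempty open subset of the geometrically integral $X$.

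\textbf{Main obstacle.} The subtle step is ensuring that the finitely many exceptional places do not depend on $f$. Every lemma invoked gives equality of norms for \emph{all} $f$ simultaneously outside a finite set depending only on the presentations and on $h$, not on $f$. We will check this dependence for each use of \autoref{normundercoordchange}, where the exceptional set is controlled by the Gauss norms of the finitely many polynomials defining $\varphi$ and its inverse. The same check applies to \autoref{norminvunderrestric}, where the exceptional set $S$ is chosen depending only on $I$ and $h$. Since a finite union of such finite sets is finite, the lemma follows.
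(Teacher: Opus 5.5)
Your proof is correct. It uses the same two ingredients as the paper: pass to a principal open $D(h) \subset X'$ containing $0$, then invoke \autoref{norminvunderrestric} and \autoref{normundercoordchange}. Where you differ is in how $X'$ is tied to $D(h)$.

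\textbf{The paper's route.} It applies \autoref{norminvunderrestric} only once, to $X \supset D(h)$, and handles $X'$ by a sandwich. It chooses presentations $A \subset A' \subset A''$ of $X'$ and $D(h)$ that extend the given presentation of $X$. The trivial monotonicity of quotient norms, when the polynomial ring and the ideal are enlarged, then gives
\[ \|\cdot\|_{(X,0),v} \geq \|\iota(\cdot)\|_{(X',0),v} \geq \|j(\cdot)\|_{(D(h),0),v} . \]
The outer terms agree away from finitely many places, so the middle one is squeezed. \autoref{normundercoordchange} is then used to pass from these auxiliary presentations to the fixed ones.

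\textbf{Your route.} You apply \autoref{norminvunderrestric} a second time, to $X'$ with $\tilde h$ in the fixed presentation of $B$. You then glue the two resulting presentations of $D(h)$ with a single use of \autoref{normundercoordchange}. This avoids building a presentation of $X'$ compatible with that of $X$. The cost is that you must verify the hypotheses of \autoref{norminvunderrestric} for $X'$. You do this correctly: $X'$ is geometrically integral, $\tilde h \notin J$, and $\tilde h(0) = h(0) \neq 0$. The paper only needs the elementary monotonicity inequality on the $X'$ side. Your remark that all exceptional sets depend only on the presentations and on $h$, not on $f$, is also right; the paper's argument uses this implicitly as well.

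\textbf{A presentational point.} Your announced plan to ``prove the two inequalities separately'' is vestigial. Your chain of equalities
\[ \|f\|_{(X,0),v} = \|f\|_{(a),v} = \|f\|_{(b),v} = \|\iota(f)\|_{(X',0),v} \]
already gives both directions at once. There is no second inequality left to prove, so the heading ``the easy inequality'' should be dropped.
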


\begin{proof}
Choose $f \in A \setminus I$ such that $D(f) \subset X'$ and $0 \in D(f)$, and write $j$ for the map $A/I \to \mathcal{O}(D(f))$. We have from \autoref{norminvunderrestric} that $\| \cdot \|_{(X,0),v} = \| j( \cdot ) \|_{(D(f),0),v}$ away from finitely many places of $K$. On the other hand we may choose presentations $X' = \Spec A'/I'$ and $D(f) = \Spec A''/I''$ where $A \subset A' \subset A''$ is a sequence of polynomial rings over $K$ and see that
\[ \| \cdot \|_{(X,0),v} \geq \| \iota(\cdot) \|_{(X',0),v} \geq \| j(\cdot) \|_{(D(f),0),v} \]
if the adelic norms are constructed with respect to these presentations. Using the independence of the presentation from \autoref{normundercoordchange} of adelic norms away from finitely many places the result follows.
\end{proof}

\begin{lem}
\label{mulinverse}
Let $f \in (A/I)[y_{1}, \hdots, y_{m}]$ be a polynomial whose constant term is a unit. Then it has a multiplicative inverse $f^{-1}$ in $(A/I)[[[y_{1}, \hdots, y_{m}]]$ whose $y^{o_{1}}_{1} \cdots y^{o_{m}}_{m}$ coefficient is bounded by $\kappa_{v} c^{o}_{v}$ in the norm $\| \cdot \|_{(X,0),v}$, where $\kappa_{v}, c_{v} > 0$ are constants depending on $v$ and $o = o_{1} + \cdots + o_{m}$. For all but finitely many $v$ we may take $c_{v} = 1$ and $\kappa_{v} = 1$.
\end{lem}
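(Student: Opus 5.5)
Write $f = u - h$ with $u = f(0)$, the constant term, assumed to be a unit of $A/I$, and $h \in (A/I)[y_{1}, \hdots, y_{m}]$ having no constant term. The inverse is then the geometric series
\[ f^{-1} = u^{-1} \sum_{k \geq 0} (u^{-1} h)^{k}, \]
which is well defined in $(A/I)[[y_{1}, \hdots, y_{m}]]$. Indeed, every monomial of $h$ has total degree at least $1$, so $(u^{-1}h)^{k}$ contributes only to monomials of total degree $\geq k$, and the coefficient of each fixed monomial is a finite sum. The whole proof therefore reduces to bounding the coefficients of this series using properties (2) and (4) of \autoref{normdef} for the norm $\| \cdot \|_{(X,0),v}$ supplied by \autoref{normwelldef}.

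Let $D = \max\{ \|u^{-1}\|_{(X,0),v}, 1\}$ and $H = \max\{\max_{J} \|h_{J}\|_{(X,0),v}, 1\}$, where the $h_{J}$ are the finitely many coefficients of $h$. Fix a multi-index $(o_{1}, \hdots, o_{m})$ with total degree $o$. Its coefficient in $(u^{-1}h)^{k}$ is a finite sum of products of $k$ factors of the form $u^{-1} h_{J_{i}}$, and such terms only occur for $k \leq o$. By submultiplicativity (4) each product has norm at most $(DH)^{k}$, and by the ultrametric inequality (2) so does the sum, with no combinatorial loss. Adding the outer factor $u^{-1}$, the coefficient of $y^{o_{1}}_{1} \cdots y^{o_{m}}_{m}$ in $f^{-1}$ has norm at most $D \cdot (DH)^{o}$. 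We may therefore take $\kappa_{v} = D$ and $c_{v} = DH$.

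The remaining step is to show $D = H = 1$ for all but finitely many $v$. The elements $h_{J}$ are finitely many nonzero elements of $A/I$ (zero coefficients may be discarded), so \autoref{oneforallbut} gives $\|h_{J}\|_{(X,0),v} = 1$ for almost all $v$. Likewise $u^{-1} \in A/I$ is nonzero, so $\|u^{-1}\|_{(X,0),v} = 1$ for almost all $v$. Hence $\kappa_{v} = c_{v} = 1$ outside a finite set of places.

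I expect no real obstacle. The only point to check is that the ultrametric inequality absorbs the number of terms, since multinomial counts would otherwise introduce extra factors. The lack of multiplicativity in the norm is harmless because only the upper bound (4) is used.
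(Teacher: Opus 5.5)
Your proof is correct and is essentially the paper's argument. The paper solves $ff^{-1}=1$ through the recurrence $b_M = -a_0^{-1}\sum_{J+L=M,\,J\neq 0} a_J b_L$ and inducts using the ultrametric inequality and submultiplicativity, and your geometric series is just the closed form of that recurrence, leading to the same kind of constant $c_v \approx \|a_0^{-1}\|_{(X,0),v}\max_J\|a_J\|_{(X,0),v}$; your explicit appeal to \autoref{oneforallbut} for the almost-all-places claim supplies a step the paper leaves implicit.
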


\begin{rem}
Note that the situation where $A/I = K$ and $\| \cdot \|_{(X,0),v} = | \cdot |_{v}$ is a special case of \autoref{mulinverse}.
\end{rem}

\begin{proof}
Write $f = \sum_{J} a_{J} y^{J}$ using multi-index notation. By writing $f^{-1} = \sum_{L} b_{L} y^{L}$ and using that $f f^{-1} = 1$, we obtain recurrence formulas 
\[ b_{M} = \frac{-1}{a_{0}} \sum_{\substack{J+L=M \\ J \neq 0}} a_{J} b_{L} \]
which determine the coefficients of the inverse. Arguing inductively and using the non-Archimedean triangle inequality one can see that $\| b_{M} \|_{(X,0),v} \leq \kappa_{v} (\| f \|_{(X,0),v} \| 1/a_{0} \|_{(X,0),v})^{|M|}$ for some constant $\kappa_{v}$, which gives the result with $c_{v} = \| f \|_{(X,0),v} \| 1/a_{0} \|_{(X,0),v}$.
\end{proof}

Finally, we introduce some terminology for working with power series with coefficients in a polynomial ring which will be useful later.

\begin{defn}
\label{adandpowbound}
Let $P \in (A/I)[[y_{1}, \hdots, y_{m}]]$ be a power series, set $X = \Spec A/I$, which we assume is geometrically irreducible and geometrically reduced. We say that the coefficients of $P$ are
\begin{itemize}
\item[-] adelic (w.r.t. $\overline{y} = (y_{1}, \hdots, y_{m})$), if there exists positive real constants $(\kappa_{v}, c_{v})$, both equal to $1$ for all but finitely many $v$, such that the $y_{1}^{o_{1}} \cdots y_{m}^{o_{m}}$-coefficient $P_{o}$ is bounded in the norm $\| \cdot \|_{(X,0),v}$ by $\kappa_{v} c^{o}_{v}$, where $o = \sum_{i} o_{i}$;
\item[-] $c_{v}$-adelic, if it is adelic for particular constants $\{ c_{v} \}_{v \in \Sigma_{K,f}}$ that we specify;
\item[-] $\alpha$-adelic (w.r.t. $\overline{y} = (y_{1}, \hdots, y_{m})$), if in addition there exists integers $(\alpha, \beta)$ such that, for each $v$, each $P_{o}$ admits a representative $\widetilde{P}_{o} \in A$ with $\| \widetilde{P}_{o} \|_{v} \leq \kappa_{v} c^{o}_{v}$ and $\deg \widetilde{P}_{o} \leq o \alpha + \beta$; and
\item[-] $(c_{v}, \alpha)$-adelic, analogously.
\end{itemize}
We also say that $P$ is power-adelic if it is $\alpha$-adelic for some $\alpha$ which we do not specify.
\end{defn}
\noindent We note that the property of being adelic does not depend on the presentation of the ring $A/I$ as a consequence of \autoref{normundercoordchange}. 

\begin{lem}
\label{adelicsufftocheck}
Suppose that $P \in (A/I)[[y_{1}, \hdots, y_{m}]]$ is a power series whose coefficients have size $\leq 1$ in the norm $\| \cdot \|_{(X,0),v}$ for all but finitely many $v$, and which satisfies the following property:
\begin{quote}
There exist positive real constants $\{ (\kappa'_{v}, c'_{v}) \}_{v \in \Sigma_{K,f}}$ and integers $(\alpha, \beta)$ such that, for each $v$ and each $\mathbf{o} = (o_{1}, \hdots, o_{m})$ with $o = \sum_{i} o_{i}$, the coefficient $P_{\mathbf{o}}$ of $y_{1}^{o_{1}} \cdots y_{m}^{o_{m}}$ admits a representative $\widetilde{P}_{\mathbf{o}}$ with both $\| \widetilde{P}_{\mathbf{o}} \|_{v} \leq \kappa'_{v} c'^{o}_{v}$ and $\deg \widetilde{P}_{\mathbf{o}} \leq o \alpha + \beta$.
\end{quote}
Then $P$ is $\alpha$-adelic.
\end{lem}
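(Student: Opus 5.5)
The proof amounts to showing that the quantities required by \autoref{adandpowbound} can be taken with $\kappa_{v} = c_{v} = 1$ for all but finitely many $v$. The hypothesis already supplies, for every $v$, representatives $\widetilde{P}_{\mathbf{o}}$ with $\| \widetilde{P}_{\mathbf{o}} \|_{v} \leq \kappa'_{v} c'^{o}_{v}$ and $\deg \widetilde{P}_{\mathbf{o}} \leq o\alpha + \beta$. The only thing lacking is that $(\kappa'_{v}, c'_{v})$ need not equal $1$ at almost all places. At the finitely many exceptional places we keep the given constants. At the remaining places we must find degree-bounded representatives whose Gauss norm is at most $1$. The first condition, $\| P_{\mathbf{o}} \|_{(X,0),v} \leq 1$, gives only the existence of some representative of Gauss norm $\leq 1$, with no degree control. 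So the task is to produce a representative with both properties at once.

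\textbf{Step 1 (finite set of bad places).} Choose a finite set $S$ of places so that $\mathcal{O}_{K,S}$ is a PID (as in the lemmas in the proof of \autoref{norminvunderrestric}). Enlarge $S$ so that $I$ is generated by finitely many $h_{1}, \hdots, h_{r} \in \mathcal{A} = \mathcal{O}_{K,S}[x_{1}, \hdots, x_{n}]$ with $\mathcal{I} = I \cap \mathcal{A} = \sum \mathcal{A} h_{j}$ and $\Spec \mathcal{A}/\mathcal{I}$ integral and flat over $\mathcal{O}_{K,S}$. These are the spreading-out facts already used in the proofs of \autoref{oneforallbut} and \autoref{norminvunderrestric}. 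Also put into $S$ every place where the given $P$ fails the bound $\leq 1$.

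\textbf{Step 2 (degree-bounded minimal representatives).} For $v \notin S$, we want a statement of the following form: if $g \in A/I$ has $\| g \|_{(X,0),v} \leq 1$ and has some representative of degree $\leq D$, then it has a representative of degree $\leq D + \gamma$ and Gauss norm $\leq 1$, where $\gamma$ is independent of $v$ and $D$. My approach is to fix, once and for all, a Gröbner basis of $\mathcal{I}$ over $\mathcal{O}_{K,S}$ for a degree-compatible monomial order. Enlarging $S$, I will arrange that the leading coefficients are units and that its reduction modulo each $\mathfrak{p}_{v}$, $v \notin S$, is a Gröbner basis of $\mathcal{I} \otimes \mathbb{F}_{v}$, which holds generically by flatness. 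The normal form $\mathrm{NF}(g)$ has degree $\leq \deg g$ for any representative $g$. Moreover, reducing an integral representative gives an integral result, because all division coefficients are $v$-integral. Two representatives have the same normal form. So applying $\mathrm{NF}$ to the integral representative of norm $\leq 1$ (degree uncontrolled) and to $\widetilde{P}_{\mathbf{o}}$ (degree $\leq o\alpha+\beta$) yields a single polynomial that has norm $\leq 1$ and degree $\leq o\alpha + \beta$.

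\textbf{Step 3.} Define $\widetilde{P}_{\mathbf{o}}$ to be this normal form for $v \notin S$, and the given representative for $v \in S$. This verifies \autoref{adandpowbound} with $\kappa_{v}=c_{v}=1$ outside $S$.

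The main obstacle is the uniform-in-$v$ Gröbner compatibility in Step 2, which requires that reduction mod $\mathfrak{p}_{v}$ commutes with the Gröbner basis for almost all $v$. I would prove this generic-freeness statement using the fact that the S-polynomial reductions certifying the Gröbner property involve only finitely many denominators. If that fails, my fallback is a Hermann-type effective bound on ideal membership, uniform over $\mathcal{O}_{K,S}$.
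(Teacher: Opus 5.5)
Your proposal is correct and is essentially the paper's argument. For almost all $v$, the normal form of a coset with respect to a Gröbner basis of $I$ for a degree-compatible order, with $v$-integral coefficients and unit leading coefficients, is simultaneously norm-minimal and degree-minimal. The step you flag as the main obstacle, that the basis reduces modulo $\mathfrak{p}_{v}$ to a Gröbner basis of the special fibre, is not needed: uniqueness of normal forms is a statement over $K$, and preserving integrality only uses $v$-integral coefficients and unit leading coefficients, which holds after discarding finitely many places.
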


Note that in the statement of the lemma, we do not require that all but finitely many $c'_{v}$ or $\kappa'_{v}$ are equal to $1$. The substance of the statement is that for verifying $P$ is $\alpha$-adelic, one only has to check that the representatives $\widetilde{P}_{o}$ can be chosen to simultaneously satisfy the norm and degree condition for an arbitrarily large finite set of places; for all sufficiently large places one can check the norm and degree conditions separately, and one will always be able to find an appropriate representative satisfying both. \autoref{adelicsufftocheck} thus follows from:

\begin{lem}
For all but finitely many finite places $v$ of $K$, each coset $g + I \in A/I$ admits a representative $\widetilde{g} \in A$ such that both $\| g \|_{(X,0),v} = \| \widetilde{g} \|_{v}$ and $\widetilde{g}$ has degree as small as possible for an element of $g + I$.
\end{lem}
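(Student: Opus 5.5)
We will prove this with Gröbner bases over a spread-out integral model, combined with a reduction-mod-$\mathfrak{p}$ argument. Fix a graded monomial order on $A = K[x_1,\dots,x_n]$, i.e. one that refines total degree. Let $G = \{g_1,\dots,g_r\}$ be a reduced Gröbner basis of $I$. Every coset $g + I$ then has a unique normal form $\mathrm{NF}(g)$, obtained by division by $G$. Because the order is graded, $\mathrm{NF}(g)$ has minimal degree among elements of $g+I$: any element of $g+I$ of degree $d$ reduces to $\mathrm{NF}(g)$ without raising the degree. We will show that for almost all $v$ the normal form is also norm-minimizing, that is, $\|\mathrm{NF}(g)\|_v = \|g\|_{(X,0),v}$. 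Taking $\widetilde g = \mathrm{NF}(g)$ then proves the lemma.

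The first step is spreading out. Choose a finite set $S$ of places such that the $g_i$ lie in $\mathcal{A} = \mathcal{O}_{K,S}[x_1,\dots,x_n]$, each $g_i$ is monic in its leading monomial, and $G$ is a Gröbner basis of $\mathcal{I} = I\cap\mathcal{A}$ over $\mathcal{O}_{K,S}$. We also want $G$ to remain a Gröbner basis after reduction modulo every $\mathfrak p\notin S$. This is achieved as follows. The Buchberger S-pair reductions of $G$ over $K$ involve only finitely many coefficients, so after inverting their denominators, all S-polynomials reduce to $0$ over $\mathcal O_{K,S}$, and hence also modulo each $\mathfrak p$. We then need the ideal of $\mathcal{A}$ generated by $G$ to be all of $\mathcal I$, which is the content of the ideal-intersection lemma proved inside the proof of \autoref{norminvunderrestric}. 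We use the localized version: $I\cap\mathcal A_{(v)} = \mathcal A_{(v)}G$ for $v\notin S$. Note that integrality of $\Spec A/I$ is exactly what makes that lemma applicable.

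The key step is the following. For $v\notin S$, suppose $h\in\mathcal A_{(v)}$ has $\|h\|_v\le 1$. Then division by the monic $g_i$ keeps everything integral, so $\|\mathrm{NF}(h)\|_v \le \|h\|_v$. Now let $g\in A$ and scale it so that $\|g\|_{(X,0),v}=1$; this is harmless by the scale-invariance argument used before. Pick a minimizer $h\in g+I$ with $\|h\|_v = 1$. Then $\mathrm{NF}(g)=\mathrm{NF}(h)$ has norm $\le 1$, and by minimality it has norm exactly $1$. Since the valuation is discrete and scaling by powers of a uniformizer commutes with $\mathrm{NF}$, the general case follows.

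The main obstacle is the first step: we need a single finite set $S$ that works for all $g$ simultaneously. In particular, division by $G$ must never introduce denominators at $v\notin S$, and the reduction $\bar G$ must be a Gröbner basis of the reduction of $\mathcal I_{(v)}$. The integrality claim is actually automatic once the leading coefficients are units at $v$, because division only subtracts integral multiples of monic polynomials. So the real content is the compatibility $I\cap\mathcal A_{(v)} = \mathcal A_{(v)}G$, and we will handle it via \cite[\href{https://stacks.math.columbia.edu/tag/0CC1}{Lemma 0CC1}]{stacks-project} exactly as in the earlier lemma.
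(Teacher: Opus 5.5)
Your proof is correct and follows essentially the paper's argument: take a reduced Gr\"obner basis for a degree-compatible order, note that its elements are monic with $v$-integral coefficients for almost all $v$, so the division steps never increase $\|\cdot\|_v$, and use uniqueness of the normal form to reduce a norm-minimizing representative. The spreading-out step you single out as the main obstacle is not needed. Your key step only uses uniqueness of the normal form over $K$ together with monicity and $v$-integrality of the $g_i$. In any case, $I \cap \mathcal{A}_{(v)} = \mathcal{A}_{(v)} G$ follows directly from dividing by the monic integral $g_i$, with no need for Lemma 0CC1 or integrality of $\Spec A/I$.
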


\begin{proof}
Let $b_{1}, \hdots, b_{\ell}$ be a reduced Gr\"obner basis (see \cite{zbMATH00217454} for background) for $I$, obtained with respect to a monomial ordering which first orders by the (total) degree of the monomial and then lexicographically with respect to the variables (e.g., the degrevlex order). Then every coset $g + I$ has a unique reduced representative with respect to this basis (called its normal form) which is obtained by \emph{lead reduction}: one constructs a sequence
\[ g =: g_{0}, g_{1}, g_{2}, \hdots, g_{k}, g_{k+1}, \hdots, g_{\infty} \]
where $g_{i+1}$ is obtained from $g_{i}$ as $g_{i+1} = g_{i} - m_{i} b_{j(i)}$ for some $b_{j(i)}$, where $m_{i}$ is a monimal chosen so that $\textrm{LT}(g_{i}) = \textrm{LT}(m_{i} b_{j(i)})$, and where $\textrm{LT}$ denotes ``leading term'' with respect to the monomial order. This process eventually terminates when no applicable $m_{i} b_{j(i)}$ can be constructed and gives a unique result $g_{\infty}$. By construction, one has
\begin{itemize}
\item[-] $\deg g_{\infty} \leq \deg g$; and
\item[-] $\| g_{i+1} \|_{v} \leq \textrm{max} \{ \| g_{i} \|_{v}, \| m_{i} \|_{v} \| b_{j(i)} \|_{v} \}$ for all $i$.
\end{itemize}
Away from finitely many places $v$ of $K$ one can assume all coefficients of all the $b_{?}$ have valuation $1$, and in this case $\| m_{i} \|_{v} \leq \| g_{i} \|_{v}$ by construction (the only coefficient of $m_{i}$ agrees in valuation with a coefficient of $g_{i}$). Thus up to ignoring finitely many places of $K$, Gr\"obner basis reduction produces a representative for $g + I$ which is simultaneously minimal for both $\| \cdot \|_{v}$ and its degree.
\end{proof}

\subsection{Adelic Tubes}
\label{adelictubesec}

A useful piece of language for discussing where collections $K$-algebraic power series converge will be the notion of an ``adelic tube''.

\begin{defn}
A smooth pair $(Y, M)$ is a smooth pure-dimensional affine $K$-algebraic variety $M$ with a smooth pure-dimensional closed subvariety $Y \subset M$ defined which is a complete intersection in $M$. A morphism between smooth pairs $(Y', M')$ and $(Y, M)$ is a map $M' \to M$ which induces a map $Y' \to Y$. 
\end{defn}

The theory of rigid spaces $X$ over a non-archimedean local field $L$ admits many different formulations, include those due to Tate, Raynaud, Berkovich and Huber. Although we use Huber's adic spaces in this paper, all the rigid spaces we consider can in principle be understood from any of the other three perspectives. In particular, for us it will often be more convenient to specify an adic space by specifying an associated analytic space in the sense of Berkovich. For instance if we consider the affine line $(\Spec L[x])^{\textrm{an}}$, the open neighbourhoods defined by ``$|x| < 1$'' in Berkovich's and Huber's theories differ. To emphasize we mean to use the neighbourhood determined by Berkovich's theory, we will use the phrase ``Berkovich neighbourhood'', as in ``the Berkovich neighbourhood defined by $|x| < 1$''. 

\begin{defn}
\label{adelictubedef}
An adelic tube associated to a smooth pair $(Y, M)$ is a collection $\mathcal{T}(\iota, \rho, \alpha, \overline{y}) = \{ \mathcal{T}(\iota, \rho, \alpha, \overline{y})^{v} \}_{v \in \Sigma_{K,f}}$ of Berkovich neighbourhoods of $M^{v}$ defined by the inequalities 
\begin{equation}
\label{tubedefeqs}
|y^{\alpha_{1}}_{i} z_{r}^{\alpha_{2}}| < |\rho| \hspace{3em}    1 \leq r \leq \ell, \hspace{1em} 1 \leq i \leq p , \hspace{1em} \alpha_{1} + \alpha_{2} \leq \alpha , \hspace{1em} \alpha_{1} \geq 1, \hspace{1em} \alpha_{2} \geq 0 .
\end{equation}
where
\begin{itemize}
\item[-] $Y$ is the vanishing locus of $\overline{y} = (y_{1}, \hdots, y_{p})$ and $\codim_{M} Y = p$;
\item[-] $z_{1}, \hdots, z_{\ell}$ are the standard coordinates associated to a closed embedding $\iota : M \hookrightarrow \mathbb{A}^{\ell}$;
\item[-] $\alpha \geq 2$ is a positive integer, and the $\alpha_{1}$ and $\alpha_{2}$ range over all integers satisfying the stated conditions; and
\item[-] $\rho \in K^{\times}$ is a non-zero constant such that $|\rho|_{v} \leq 1$ for all finite places $v$ of $K$.
\end{itemize}
\end{defn}
\noindent For example, when $\alpha = 2$ and $p = 1$, the definition reduces to the collection of Berkovich neighbourhoods defined by the inequalities $|y| < |\rho|$ and $|y z_{r}| < |\rho|$ for $1 \leq r \leq \ell$. We note that all functions involved in the inequalities are $K$-algebraic; i.e., are elements of the coordinate ring of $M$.

\begin{defn}
\label{tuberefdef}
A refinement of $\mathcal{T}(\iota, \rho, \alpha, \overline{y})$ is an adelic tube $\mathcal{T}(\iota, \rho', \alpha', \overline{y})$ such that $\alpha \leq \alpha'$ and $|\rho'|_{v} \leq |\rho|_{v}$ for all places $v$. Note that this implies that
\[ \mathcal{T}(\iota, \rho', \alpha', \overline{y})^{v} \subset \mathcal{T}(\iota, \rho, \alpha, \overline{y})^{v} \]
for all $v$. 
\end{defn}

\begin{lem}
\label{adelictubereflem}
Given any two adelic tubes $\mathcal{T} = \mathcal{T}(\iota, \rho, \alpha, \overline{y})$ and $\mathcal{T}' = \mathcal{T}(\iota', \rho', \alpha', \overline{y}')$ associated to the same $(Y,M)$, there exists a refinement $\mathcal{T}'' = \mathcal{T}(\iota', \rho'', \alpha'', \overline{y}')$ of $\mathcal{T}'$ such that $\mathcal{T}''^{v} \subset \mathcal{T}^{v}$ for all $v$.
\end{lem}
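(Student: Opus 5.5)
We will construct $\mathcal{T}''$ by keeping the data $(\iota', \overline{y}')$ and choosing $\alpha''$ large and $\rho''$ small. To fix notation, write $z_{1}, \hdots, z_{\ell}$ for the coordinates of $\iota$, $z'_{1}, \hdots, z'_{\ell'}$ for those of $\iota'$, and $\overline{y} = (y_{1}, \hdots, y_{p})$, $\overline{y}' = (y'_{1}, \hdots, y'_{p})$. Both tuples cut out the complete intersection $Y$ of codimension $p$ in $M$.

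\textbf{Step 1: express the old coordinates in terms of the new ones.} Since $\mathcal{O}(M)$ is generated by the $z'_{s}$, each $z_{r}$ is a polynomial $Z_{r}(z')$ with coefficients in $K$. The $y_{i}$ vanish on $Y = V(\overline{y}')$, and the ideal of $Y$ is $(\overline{y}')$ because $Y$ is smooth and a complete intersection, so the ideal $(\overline{y}')$ is radical. Hence we can write $y_{i} = \sum_{j} h_{ij}(z') \, y'_{j}$ with each $h_{ij}$ a polynomial in the $z'$. Let $d$ bound the total degrees of all the $Z_{r}$ and $h_{ij}$.

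\textbf{Step 2: bound each defining monomial of $\mathcal{T}$.} A defining monomial of $\mathcal{T}$ has the form $y_{i}^{\alpha_{1}} z_{r}^{\alpha_{2}}$ with $\alpha_{1} \geq 1$ and $\alpha_{1} + \alpha_{2} \leq \alpha$. Substituting the expressions from Step 1 and expanding gives a $K$-linear combination of monomials of the form
\[ y'_{j_{1}} \cdots y'_{j_{\alpha_{1}}} \cdot (z')^{J}, \qquad |J| \leq \alpha d . \]
Each such monomial contains at least one factor $y'_{j}$. Splitting it as a product, its absolute value is bounded by a product of terms $|y'_{j} (z'_{s})^{e}|$, together with the remaining $y'$ factors, each of which is at most $|\rho''| \leq 1$. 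Put differently, in the region defined by the inequalities (\ref{tubedefeqs}) for $\mathcal{T}(\iota', \rho'', \alpha'', \overline{y}')$ with $\alpha'' \geq \alpha d + 1$, every such monomial has absolute value $< |\rho''|_{v}$. One subtlety is that the variables $z'$ alone are not bounded on $\mathcal{T}''$. This is why the whole power $(z')^{J}$ must stay attached to a single $y'_{j}$: we group it as $y'_{j_{1}} (z')^{J}$, using that the tube conditions allow products $y'_{i} z'_{r_{1}} \cdots z'_{r_{k}}$. These involve different $r$'s, but we control them either by arranging the inequality for mixed monomials via the maximum, or more simply by enlarging $\alpha''$ and applying $|y' z'^{J}| \leq \max_{r} |y'|^{1-|J|}\cdots$.

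I expect this step to be the main obstacle: the definition only bounds pure powers $y_{i}^{\alpha_{1}} z_{r}^{\alpha_{2}}$, not mixed products $y_{i} z_{r_{1}} z_{r_{2}}$. The fix I would try first is the bound
\[ |y z_{1}^{a} z_{2}^{b}| = |y^{a} z_{1}^{a}|^{1/(a+b)} \, |y^{b} z_{2}^{b}|^{1/(a+b)} \cdot |y|^{1 - 1} , \]
more precisely $|y z_{1}^{a} z_{2}^{b}|^{a+b} = |y^{a+b} z_{1}^{a(a+b)} \cdots|$. I would apply a weighted-AM–GM style multiplicative estimate, namely $|y|^{n} |z^{J}|^{n} \leq \prod_{r} |y^{n} z_{r}^{n}|^{j_{r}}$ with $n = |J|$, which reduces a mixed product to the pure powers $|y^{n} z_{r}^{n}|$. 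This costs taking $\alpha'' \geq 2\alpha d$.

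\textbf{Step 3: control the coefficients and choose $\rho''$.} The coefficients in the expansion are elements of $K$ of bounded $v$-adic size. For all but finitely many $v$ they satisfy $|c|_{v} \leq 1$. At the finitely many remaining places, the factor $C_{v} = \max |c|_{v}$ must be absorbed. To do so, choose $\rho'' = \rho \rho' \lambda$, where $\lambda \in \mathcal{O}_{K}$ is nonzero and divisible enough at the bad primes that $C_{v} |\lambda|_{v} \leq 1$ there. Then $|\rho''|_{v} \leq 1$, $|\rho''|_{v} \leq |\rho'|_{v}$, and for each defining monomial of $\mathcal{T}$ the estimate gives $|y_{i}^{\alpha_{1}} z_{r}^{\alpha_{2}}| < C_{v} |\rho''|_{v} \leq |\rho|_{v}$ on $\mathcal{T}''^{v}$. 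This yields $\mathcal{T}''^{v} \subset \mathcal{T}^{v}$ for all $v$.
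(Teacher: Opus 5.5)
Your route is the paper's: write $z_r = g_r(\overline{z}')$ and $y_i = \sum_w h_{iw} y'_w$, expand $y_i^{\alpha_1} z_r^{\alpha_2}$, bound it by a constant $c_v$ (equal to $1$ for almost all $v$) times the largest monomial, then take $\alpha''$ large and $\rho''$ small at the finitely many bad places. Your Step 3 is fine. Also take $\alpha'' \geq \alpha'$, so that $\mathcal{T}''$ really refines $\mathcal{T}'$.

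The problem is in Step 2, where you expected it: neither estimate you write for mixed monomials is correct.
\begin{itemize}
\item Your first display is not an identity. Its right-hand side equals $|y|\,|z_1|^{a/(a+b)}|z_2|^{b/(a+b)}$, not $|y|\,|z_1|^{a}|z_2|^{b}$.
\item In the inequality $|y|^{n}|z^{J}|^{n} \leq \prod_r |y^{n} z_r^{n}|^{j_r}$, the right-hand side is $|y|^{n^2}|z^J|^n$. So the inequality is equivalent to $|y|^n \leq |y|^{n^2}$. For $n \geq 2$ this fails exactly in the regime $|y| < |\rho''| \leq 1$ that you need. Putting $y^n$ into every factor over-counts $y$.
\end{itemize}

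The correct estimate keeps a single first power of $y'$ and attaches the whole $z'$-degree to it. This is your remark about ``arranging the inequality via the maximum'', made precise. By multiplicativity of $|\cdot|$ at a point of $M^v$, $|y' z'^{J}|^{n} = \prod_{r'} |y' z'^{n}_{r'}|^{j_{r'}}$ with $n = |J|$. More simply, $|z'^{J}| \leq (\max_{r'} |z'_{r'}|)^{|J|}$.

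Hence every monomial in your expansion satisfies $|y'_{j_1}\cdots y'_{j_{\alpha_1}} z'^{J}| \leq \max_{w,r'} |y'^{\alpha_1}_{w} z'^{|J|}_{r'}|$. Each term on the right is one of the defining inequalities of $\mathcal{T}''$ once $\alpha'' \geq \alpha(d+1)$. Alternatively, bound the extra $y'$-factors by $|\rho''| \leq 1$; then $\alpha'' \geq \alpha d + 1$ suffices.

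This is the bound $|y_i^{\alpha_1} z_r^{\alpha_2}| \leq \max c_v |y'^{\alpha_1}_w z'^{\tau(\alpha_1+\alpha_2)}_{r'}|$ that the paper uses with $\alpha'' = 2\tau_0\alpha$. With it in place of your displayed inequalities, your argument goes through.

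A minor point in Step 1: $y_i \in (\overline{y}')$ holds because $\overline{y}'$ cuts out $Y$ scheme-theoretically by definition. Smoothness of $Y$ alone would not make $(\overline{y}')$ radical.
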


\begin{proof}[Proof of (\ref{adelictubereflem}):]
It suffices to produce inequalities of the required type defining some $\mathcal{T}''$ which formally imply the inequalities defining $\mathcal{T}$. We write $\iota : M \hookrightarrow \mathbb{A}^{\ell}$ and $\iota' : M \hookrightarrow \mathbb{A}^{\ell'}$ for the two closed embeddings, where the associated affine spaces have coordinates $\overline{z} = (z_{1}, \hdots, z_{\ell})$ and $\overline{z}' = (z'_{1}, \hdots, z'_{\ell'})$, respectively. We can lift the identity map $M \to M$ to a map $m : \mathbb{A}^{\ell} \to \mathbb{A}^{\ell'}$. Then there exists:
\begin{itemize}
\item[-] polynomials $h_{iw}$ in the coordinates $\overline{z}'$ such that $y_{i} = \sum_{w} h_{iw} y'_{w}$ on $M$; and
\item[-] polynomials $g_{i}$ in the coordinates $\overline{z}'$ such that $z_{i} = g_{i}(z'_{1}, \hdots, z'_{\ell'})$. 
\end{itemize}
Applying the non-archimedean triangle inequality we find that, for each pair of non-negative integers $(\alpha_{1}, \alpha_{2})$ and $\alpha_{1} \geq 1$:
\begin{align*}
|y_{i}^{\alpha_{1}} z_{r}^{\alpha_{2}}| \leq \textrm{max}_{w, r', \tau \leq \tau_{0}}  \{ c_{v} |y'^{\alpha_{1}}_{w} z'^{\tau (\alpha_{1} + \alpha_{2})}_{r'}| \} 
\end{align*}
where $c_{v}$ and $\tau_{0}$ are some fixed constants, where $c_{v}$ depending on $v$ and equal to $1$ for all but finitely many $v$. Now we choose $\rho''$ such that $|\rho''|_{v} \ll |\rho|_{v}$ at all places $v$ of $K$ where $c_{v} \neq 1$, and $|\rho''|_{v} \leq |\rho|_{v}$ otherwise. We take $\alpha'' = 2 \tau_{0} \alpha$. Then inside $\mathcal{T}''$ and for each $(w, r')$, one sees from the inequalities defining $\mathcal{T}''$ that each term $c_{v} |y'^{\alpha_{1}}_{w} z'^{\tau (\alpha_{1} + \alpha_{2})}_{r'}|$ is $< |\rho|$ after choosing $|\rho''|$ sufficiently small. Ranging over all applicable $(\alpha_{1}, \alpha_{2})$ and shrinking $\rho''$ as necessary gives the result.
\end{proof}
We will use the term ``adelic neighbourhood'' to informally refer to a collection of Berkovich neighbourhoods indexed by $v \in \Sigma_{K,f}$ and where the functions involved in the inequalities all come from the coordinate ring of some $K$-algebraic variety. An adelic tube is an example of an adelic neighbourhood.

\vspace{0.5em}

\begin{defn}
\label{balltypenbhd}
Let $U$ be an affine algebraic $K$-variety. An adelic ball of $U$ is an adelic neighbourhood defined by the inequalities $\{ |z_{i}| \leq 1 \}_{1 \leq i \leq \ell}$, where $z_{1}, \hdots, z_{\ell}$ are the standard coordinates associated to some closed embedding $\iota : U \hookrightarrow \mathbb{A}^{\ell}$. 
\end{defn}

\begin{lem}
\label{chooseadelicballcontainingcohom}
Let $E$ be a smooth geometrically connected affine algebraic variety over $K$. Then there exists an adelic ball $\mathfrak{E}$ of $E$ such that the natural maps $H^{i}_{\textrm{dR}}(E^{v}) \xrightarrow{\sim} H^{i}_{\textrm{dR}}(\mathfrak{E}^{v})$ of overconvergent de Rham cohomology groups are, for each $i \geq 0$:
\begin{itemize}
\item[-] injective for all $v \in \Sigma_{K,f}$; and
\item[-] isomorphisms for all but finitely many $v \in \Sigma_{K,f}$.
\end{itemize}
\end{lem}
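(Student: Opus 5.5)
The plan is to spread out a smooth integral model of $E$ over $\mathcal{O}_{K,S}$, take the adelic ball to be the tube of its reduction, and compare cohomologies through known comparison theorems. Concretely, I would choose a closed embedding $\iota : E \hookrightarrow \mathbb{A}^{\ell}$ with defining ideal generated by polynomials in $\mathcal{O}_{K}[z_{1}, \hdots, z_{\ell}]$. After inverting finitely many primes, the scheme-theoretic closure $\mathcal{E}$ of $E$ in $\mathbb{A}^{\ell}_{\mathcal{O}_{K,S}}$ is then smooth with geometrically connected fibres. For $v \notin S$ the adelic ball $\mathfrak{E}^{v} = \{ |z_{i}| \leq 1 \}$ is the generic fibre of the $p$-adic completion of $\mathcal{E}_{\mathcal{O}_{K_{v}}}$, and $E^{v}$ is a strict neighbourhood datum for it.

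For all but finitely many $v$ I would argue as follows. Berthelot's comparison identifies the overconvergent (Monsky--Washnitzer) cohomology of $\mathfrak{E}^{v}$ with the rigid cohomology of the special fibre $\mathcal{E}_{k_{v}}$. Baldassarri--Chiarellotto's comparison identifies the rigid cohomology of the generic fibre with the algebraic de Rham cohomology $H^{i}_{\dR}(E_{K_{v}}) = H^{i}_{\dR}(E^{v})$. To get the specialization isomorphism uniformly in $v$ I would use a good compactification: after enlarging $S$, choose a smooth projective $\overline{\mathcal{E}}$ over $\mathcal{O}_{K,S}$ with relative normal crossings boundary $\mathcal{D}$ and $\mathcal{E} = \overline{\mathcal{E}} \setminus \mathcal{D}$, using Hironaka on the generic fibre and spreading out. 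Then log-crystalline cohomology of $(\overline{\mathcal{E}}, \mathcal{D})_{k_{v}}$ computes both sides. Concretely, $H^{i}_{\dR}(E_{K_{v}}) = H^{i}(\overline{E}, \Omega^{\bullet}(\log D))$ is compared with the rigid cohomology of $\mathcal{E}_{k_{v}}$ via the Shiho/Baldassarri--Chiarellotto log comparison. Composing these identifications with the restriction map $E^{v} \supset \mathfrak{E}^{v}$ gives the isomorphism.

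For the finitely many remaining $v$, I only need injectivity. Here I would note that the restriction $H^{i}_{\dR}(E^{v}) \to H^{i}_{\dR}(\mathfrak{E}^{v})$ from the whole analytification to an overconvergent affinoid factors through cohomology of larger balls $\{ |z_{i}| \leq r \}$. For $r$ large these balls exhaust $E^{v}$, and the cohomology of $E^{v}$ is the inverse limit (for smooth affine $E$, rigid $=$ algebraic de Rham, Kiehl/GAGA-type). So the kernel on any given class can be killed by shrinking.

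This suggests the trick I would use. The statement says ``there exists'' an adelic ball, so I may rescale the embedding by choosing $z_{i}' = \pi z_{i}$ with $\pi \in \mathcal{O}_{K}$ divisible only by the bad primes. This enlarges the ball at bad $v$ and leaves it unchanged elsewhere. Since $H^{i}_{\dR}(E_{K_{v}})$ is finite dimensional and the balls form an exhausting increasing family, the kernels stabilize to zero for a large enough ball. That last step is the main obstacle. It needs the injectivity of $H^{i}_{\dR}(E^{v}) \to \varprojlim_{r} H^{i}_{\dR}(\text{ball}_{r})$ together with Mittag-Leffler/finite-dimensionality to pass from the limit to a single $r$. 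I would first try to deduce it from the Grosse-Kl\"onne finiteness of overconvergent de Rham cohomology of dagger affinoids and the fact that $E^{v}$ is a Stein exhaustion, so $H^{i}_{\dR}(E^{v}) = \varprojlim$ with finite-dimensional terms.
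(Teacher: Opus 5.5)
Your proposal is correct and follows the paper's proof. At almost all places the paper also spreads out a good compactification over $\mathcal{O}_{K}[1/N]$ and identifies the overconvergent cohomology of the ball with Monsky--Washnitzer cohomology of the reduction, which it compares with $H^{i}_{\dR}(E^{v})$ via Kedlaya's comparison theorem; at the finitely many remaining places it rescales the embedding and uses Grosse-Kl\"onne's description of $H^{i}_{\dR}(E^{v})$ as $\varprojlim_{r}$ of the cohomology of an exhausting family of balls.

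The step you flag as the main obstacle is routine. The kernels of $H^{i}_{\dR}(E^{v}) \to H^{i}_{\dR}(\textrm{ball}_{r})$ form a decreasing chain of subspaces of a finite-dimensional space with zero intersection, so they vanish once $r$ is large.
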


\begin{proof}
Fix a closed embedding $\iota : E \hookrightarrow \mathbb{A}^{\ell}$. For each $\rho \in K^{\times}$, let $\varphi_{\rho} : \mathbb{A}^{\ell} \to \mathbb{A}^{\ell}$ be the ``multiplication-by-$\rho$'' map. For each $\rho$ we set $\iota_{\rho} = \varphi_{\rho} \circ \iota$. We then choose a sequence $\rho_{j} \in K^{\times}$ such that $|\rho_{j}|_{v} \to \infty$ as $j \to \infty$ for each $v$. Set $\iota_{j} = \iota_{\rho_{j}}$, and let $\mathfrak{E}_{j}$ denote the associated adelic ball. It follows from the proof of \cite[Cor. 3.2]{zbMATH02133479} that $H^{i}_{\textrm{dR}}(E^{v}) = \varprojlim_{j} H^{i}_{\textrm{dR}}(\mathfrak{E}^{v}_{j})$. This implies the injectivity of $H^{i}_{\textrm{dR}}(E^{v}) \to H^{i}_{\textrm{dR}}(\mathfrak{E}^{v}_{j})$ for $j$ large enough (depending on $v$).

It then suffices to prove that, for each $j$, we have $H^{i}_{\textrm{dR}}(E^{v}) = H^{i}_{\textrm{dR}}(\mathfrak{E}^{v}_{j})$ for all but finitely many $v$; after this is shown one can adjust the sequence $\rho_{j}$ so that $|\rho_{j}| = 1$ for $j$ sufficiently large, and then take the adelic ball defined by $\iota_{j}$ for $j$ large enough. Since $\mathfrak{E}^{v}_{j} = \mathfrak{E}^{v}_{1}$ for all but finitely many $v$, we may take $j = 1$. Now by a result of Hironaka, we can find a smooth compactification $\overline{E}$ of $E$ such that $Y := \overline{E} \setminus E$ is a simple normal crossing divisor. We may spread everything out over some number ring $\mathcal{O}_{K}[1/N]$ such that the components of $Y$ and all schemes obtained by intersecting these components are smooth and proper over $\mathcal{O}_{K}[1/N]$. Let $\mathcal{E}$ be the associated integral model of $E$ over $\mathcal{O}_{K}[1/N]$. Then the overconvergent de Rham cohomology of $\mathfrak{E}^{v}$ is identified with the Monski-Washnitzer cohomology (cf. \cite{kedlayabook}) of the reduction fo $\mathcal{E}$ at the prime corresponding to $v$, and that this agrees with the de Rham cohomology of $E^{v}$ is then \cite[Thm. 13.1.14]{kedlayabook}.
\end{proof}

\begin{defn}
\label{adelicdiskdef}
Let $\rho \in K^{\times}$. The open (resp. closed) adelic disk of radius $|\rho|$ is the collection $\{ \Delta^{v} \}_{v \in \Sigma_{K,f}}$ of Berkovich neighbourhoods in $\mathbb{A}^{1,v}_{K}$ defined by $|x| < |\rho|$ (resp. $|x| \leq |\rho|$), where $x$ is the standard coordiante of $\mathbb{A}^1_{K}$. 
\end{defn}

\begin{defn}
\label{stdadelictubedef}
A standard adelic tube $Y\langle p \rangle$ is an adelic tube $\mathcal{T}(\iota, \rho, \alpha, \overline{x})$ associated to $(Y, \mathbb{A}^{p} \times Y)$ with $\overline{x} = (x_{1}, \hdots, x_{p})$ the coordinate functions of $\mathbb{A}^{p}$. 
\end{defn}

\begin{notn}
For an algebraic $K$-variety $Y$, we write $Y[p]^{\wedge}$ for the formal scheme obtained as the formal neighbourhood of $Y$ in $\mathbb{A}^{p} \times Y$.
\end{notn}

\begin{lem}
\label{adelictubecontainsprodlem}
Let $Y \langle p \rangle$ be a standard adelic tube and let $\mathfrak{Y}$ be an adelic ball of $Y$. Then there exists some $\rho \in K^{\times}$ such that we have $\Delta^{v,p}_{\rho} \times \mathfrak{Y}^{v} \subset Y \langle p \rangle^{v}$ for all $v \in \Sigma_{K,f}$, where $\Delta_{\rho}$ denotes the open adelic disk of radius $|\rho|$, and the product decomposition is compatible with the one on $\mathbb{A}^{p} \times Y$.  
\end{lem}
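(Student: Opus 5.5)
The plan is to compare coordinates directly, much as in the proof of \autoref{adelictubereflem}. First fix notation. The standard adelic tube is $Y\langle p \rangle = \mathcal{T}(\iota, \rho_{0}, \alpha, \overline{x})$ for some closed embedding $\iota : \mathbb{A}^{p} \times Y \hookrightarrow \mathbb{A}^{\ell}$ with coordinates $z_{1}, \hdots, z_{\ell}$. The adelic ball $\mathfrak{Y}$ is cut out by $|w_{j}| \leq 1$, where $w_{1}, \hdots, w_{m}$ are the coordinates of a closed embedding $Y \hookrightarrow \mathbb{A}^{m}$. The functions $x_{1}, \hdots, x_{p}, w_{1}, \hdots, w_{m}$ together give a closed embedding of $\mathbb{A}^{p} \times Y$ into $\mathbb{A}^{p+m}$. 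We can therefore write each $z_{r}$ as a polynomial $g_{r}(\overline{x}, \overline{w})$ with coefficients in $K$.

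Next, bound each $z_{r}$ on the product. Let $C_{v} = \max_{r} \| g_{r} \|_{v}$, the maximum of the Gauss norms of the coefficients. For all but finitely many $v$ we have $C_{v} \leq 1$. If $|x_{i}| < 1$ and $|w_{j}| \leq 1$, the non-archimedean triangle inequality gives $|z_{r}| \leq C_{v}$ at every point of $\Delta^{v,p}_{1} \times \mathfrak{Y}^{v}$. It is convenient to evaluate these inequalities on Berkovich points, namely as multiplicative seminorms. For such a seminorm the bound above is immediate.

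Then choose $\rho$. Set $C'_{v} = \max\{1, C_{v}\}$. On $\Delta^{v,p}_{\rho} \times \mathfrak{Y}^{v}$, with $|\rho|_{v} \leq 1$, each defining inequality of the tube is bounded by
\[ |x_{i}^{\alpha_{1}} z_{r}^{\alpha_{2}}| < |\rho|_{v}^{\alpha_{1}} C_{v}'^{\alpha_{2}} \leq |\rho|_{v} \, C_{v}'^{\alpha} , \]
using $\alpha_{1} \geq 1$ and $\alpha_{2} \leq \alpha$. The inequality is strict because $|x_{i}| < |\rho|_{v}$ and $\alpha_{1} \geq 1$. So it suffices to have $|\rho|_{v} \, C_{v}'^{\alpha} \leq |\rho_{0}|_{v}$ for every finite $v$.

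Let $S$ be the finite set of places where $C_{v} > 1$, together with those where $|\rho_{0}|_{v} < 1$. For $v \notin S$ any $\rho$ with $|\rho|_{v} \leq 1$ works. For $v \in S$ we need $|\rho|_{v} \leq |\rho_{0}|_{v} C_{v}'^{-\alpha}$. A suitable $\rho$ exists: take $\rho$ to be a product of a large power of $\rho_{0}$ with high powers of elements of $\mathcal{O}_{K}$ lying in the primes of $S$, for instance a large integer power of a nonzero element of $\prod_{v \in S} \mathfrak{p}_{v}$. Any nonzero element of $\mathcal{O}_{K}$ has $|\cdot|_{v} \leq 1$ everywhere, so this $\rho$ satisfies the integrality condition, and its powers make $|\rho|_{v}$ arbitrarily small at the finitely many $v \in S$.

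Finally, compatibility of the product decomposition holds because both sides are taken inside $(\mathbb{A}^{p} \times Y)^{v}$ using the given coordinates $\overline{x}$ and the embedding of $Y$. The only mild obstacle is making sure the inequalities are read on Berkovich points rather than classical points, which the multiplicative-seminorm formulation handles.
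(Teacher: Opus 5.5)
Your argument is correct and is essentially the paper's. The paper first applies the refinement lemma (\autoref{adelictubereflem}) to replace the tube's embedding by the product embedding $\mathrm{id}_{\mathbb{A}^{p}} \times (\text{embedding of } Y \text{ defining } \mathfrak{Y})$, then reads off that $|w_{j}| \leq 1$ and $|x_{i}| < |\rho|$ imply the tube inequalities for $|\rho|$ small; you inline that refinement step by writing each $z_{r}$ as a polynomial in $(\overline{x}, \overline{w})$ and applying the ultrametric inequality directly, which also makes explicit the strictness, the requirement $|\rho|_{v} \leq 1$, and the choice of $\rho$ at the finitely many bad places.
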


\begin{proof}
Let $\iota : Y \hookrightarrow \mathbb{A}^{\ell}$ be the closed embedding associated to $\mathfrak{Y}$, and let $\iota' : \mathbb{A}^{p} \times Y \hookrightarrow \mathbb{A}^{m}$ be the closed embedding associated to $Y \langle p \rangle$. Using the refinement lemma \autoref{adelictubereflem}, we may assume by possibly shrinking the adelic tube $Y \langle p \rangle$ that the embedding $\iota'$ is of the form $\mathbb{A}^{p} \times Y \to \mathbb{A}^{p} \times \mathbb{A}^{\ell}$, where the second component is just $\iota$. If we now inspect the inequalities (\ref{tubedefeqs}) defining the adelic tube, it is clear that the inequalities $|z_{r}| \leq 1$ and $|y_{i}| < |\rho'|$ imply the inequalities defining $Y \langle p \rangle$ for $|\rho'|$ small enough, hence the result.
\end{proof}

\begin{defn}
\label{adelictubemorphism}
Let $(Y, M)$ and $(Y', M')$ be smooth pairs with associated adelic tubes $\mathcal{T}$ and $\mathcal{T}'$. A morphism of adelic tubes is a map $\eta : \mathcal{Y} \to \mathcal{Y}'$ of formal schemes, with $\mathcal{Y}$ (resp. $\mathcal{Y}'$) the formal neighbourhood of $Y$ (resp. $Y'$) in $M$ (resp. $M'$), and a rigid-analytic map $\eta^{v} : \mathcal{T}^{v} \to \mathcal{T}^{v}$ for each $v \in \Sigma_{K,f}$ such that $\eta$ agrees with $\eta^{v}$ under the appropriate rigid-analytification and formal completion functors.
\end{defn}

\begin{lem}
\label{adelictubezarcover}
Suppose that $h_{1}, \hdots, h_{t}$ are functions on $M$ such that $M = \bigcup_{p} (M \setminus V(h_{p}))$. Let $f_{1}, \hdots, f_{j}$ be functions defining $Y$ inside $M$ as a complete intersection. Let $\iota_{p} : M \setminus V(h_{p}) \hookrightarrow \mathbb{A}^{\ell+1}$ be the embedding defined by $z_{\ell+1} h_{p} = 1$. Then up to replacing $\mathcal{T} = \mathcal{T}(\iota, \alpha, \rho, \overline{f})^{v}$ with a refinement, there exists $\rho'_{i}$ such that 
\[ \mathcal{T}(\iota, \alpha, \rho, \overline{f})^{v} \subset \bigcup_{p} \mathcal{T}(\iota_{p}, \alpha', \rho'_{p}, \overline{f})^{v} \]
for all $v$.
\end{lem}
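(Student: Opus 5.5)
Here is how I would prove \autoref{adelictubezarcover}. The approach is to use a partition of unity for the $h_{p}$ and then a pigeonhole argument on $|h_{p}|$ at each point of $\mathcal{T}^{v}$.

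\textbf{Partition of unity.} Since the $D(h_{p})$ cover $M$, there are functions $g_{1}, \hdots, g_{t} \in \mathcal{O}(M)$ with $\sum_{p} g_{p} h_{p} = 1$. I write each $g_{p}$ and each $h_{p}$ as a polynomial in the coordinates $z_{1}, \hdots, z_{\ell}$ of $\iota$. Let $d$ bound their degrees. Let $c_{v}$ bound their Gauss norms; then $c_{v} = 1$ for all but finitely many $v$.

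\textbf{Choice of $p$ at each point.} Fix a point $x \in \mathcal{T}^{v}$. First refine $\mathcal{T}$ using \autoref{adelictubereflem}, so that $\alpha$ is large compared with $d$ and $|\rho|_{v}$ is small at the finitely many bad places. The defining inequalities with $\alpha_{1} = 1$ then control $|f_{i} z_{r}^{\alpha_{2}}|$ for $\alpha_{2} \leq \alpha - 1$. Since $\max_{r}|z_{r}(x)|$ is unbounded on $\mathcal{T}^{v}$, I work with $R := \max\{1, |z_{r}(x)|\}$. The ultrametric inequality applied to $1 = \sum_{p} g_{p} h_{p}$ gives some $p$ with $|g_{p}(x) h_{p}(x)| \geq 1$. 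Since $|g_{p}(x)| \leq c_{v} R^{d}$, this yields $|h_{p}(x)| \geq c_{v}^{-1} R^{-d}$, that is, $|z_{\ell+1}(x)| = |h_{p}(x)|^{-1} \leq c_{v} R^{d}$.

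\textbf{Verifying the inequalities for $\iota_{p}$.} I must show $|f_{i}^{\beta_{1}} z_{r}^{\beta_{2}}| < |\rho'_{p}|$ for all $r \leq \ell+1$ and $\beta_{1} + \beta_{2} \leq \alpha'$. For $r \leq \ell$ these are among the original inequalities once $\alpha' \leq \alpha$. For $r = \ell+1$, I bound
\[ |f_{i}^{\beta_{1}} z_{\ell+1}^{\beta_{2}}| \leq c_{v}^{\beta_{2}} |f_{i}|^{\beta_{1}} R^{d\beta_{2}}. \]
If $R = |z_{s}(x)|$, the right-hand side is dominated by $c_{v}^{\beta_{2}} |f_{i}^{\beta_{1}} z_{s}^{d\beta_{2}}|$. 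To make this legitimate I take the original tube with $\alpha \geq d\alpha'$; this is possible after refinement, by keeping $\alpha'$ fixed (say $\alpha' = 2$) and enlarging $\alpha$. If $R = 1$, I use $|f_{i}| < |\rho| \leq 1$ directly. Finally I choose $\rho'_{p} = \rho'$ with $|\rho'|_{v} \geq c_{v}^{\alpha'} |\rho|_{v}$ at the finitely many bad $v$ and $\rho' = \rho$ elsewhere; after shrinking $\rho$ this can still be arranged with $|\rho'|_{v} \leq 1$.

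\textbf{Main obstacle.} The difficulty is doing the bookkeeping uniformly in $v$. The constants $c_{v}$ must be absorbed at finitely many places by the choice of $\rho$ versus $\rho'$, and the degree blowup $d$ must be absorbed by enlarging $\alpha$ relative to $\alpha'$. A second point to check is that these inequalities are pointwise on Berkovich points, so the ultrametric inequality used to pick $p$ holds for the seminorm at $x$, and the union of the $\mathcal{T}(\iota_{p}, \ldots)^{v}$ covers as claimed.
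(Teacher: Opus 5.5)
Your proposal is correct and follows essentially the same route as the paper. Both arguments use a partition of unity $\sum_p g_p h_p = 1$ and choose $p$ pointwise so that $|g_p h_p| \geq 1$, which gives $|z_{\ell+1}| = |h_p|^{-1} \leq |g_p|$; they then absorb the polynomial growth of $g_p$ in the $z_r$ by enlarging $\alpha$, and absorb the finitely many bad constants $c_v$ by shrinking $\rho$. You make explicit the bookkeeping that the paper compresses into ``refining $\mathcal{T}$ if necessary''. The cleanest phrasing of your last step is to take $\rho'_p$ equal to the original $\rho$ and multiply $\rho$ by a nonzero element of $\mathcal{O}_K$ that is small at the bad places.
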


\begin{rem}
We note that the statement for the other inclusion 
\[ \bigcup_{p} \mathcal{T}(\iota_{p}, \alpha, \rho, \overline{f})^{v} \subset \mathcal{T}(\iota, \alpha, \rho, \overline{f})^{v} \]
is obvious.
\end{rem}

\begin{proof}
The equality $Y = \bigcup_{p} (Y \setminus V(h_{p}))$ implies that there exist functions $r_{p}$ on $M$ such that $\sum_{p} r_{p} h_{p} = 1$. 

To show that $\mathcal{T}^{v}$ lies inside the union it suffices to show that whenever the inequalities defining $\mathcal{T}^{v}$ hold we also have, for some choice of $p$, for all $i$ and applicable $(\alpha_{1}, \alpha_{2})$, that $|f^{\alpha_{1}}_{i}| < |\rho'_{p} h_{p}^{\alpha_{2}}|$ holds. Because $\sum_{p} r_{p} h_{p} = 1$ there is some $p$ such that $|r_{p} h_{p}| \geq 1$, which also implies that $|r^{\alpha_{2}}_{p} h^{\alpha_{2}}_{p}| \geq 1$. Multiplying through by $|f^{\alpha_{1}}_{i}|$ we obtain $|f^{\alpha_{1}}_{i} r^{\alpha_{2}}_{p} h^{\alpha_{2}}_{p}| \geq |f^{\alpha_{1}}_{i}|$. Refining $\mathcal{T}$ if necessary and using that the $r_{p}$ are functions of the $z_{1}, \hdots, z_{\ell}$ one has $|f^{\alpha_{1}}_{i} r^{\alpha_{2}}_{p}| < |\rho'_{p}|$ for some appropriate $\rho'_{p}$. The result follows.
\end{proof}

\subsection{Parameterizing Analytic Tubes}
\label{paramadelictubesec}

We suppose that $E \subset U \subset \mathbb{A}^{m}$ are inclusions of closed irreducible smooth varieties defined over the number field $K$, with $E$ a complete intersection inside $U$. We will write $\mathbb{A}^{m} = \Spec K[z_{1}, \hdots, z_{m}]$, and suppose that $p = \dim U - \dim E$. The embeddings $U \hookrightarrow \mathbb{A}^{m}$ and $E \hookrightarrow \mathbb{A}^{m}$ will also sometimes be denoted by $\iota$. We denote the origin of $\mathbb{A}^{m}$ by $o$, and suppose that this point is a point of $E(K)$. For each place $v$ of $K$, We let $u_{1}, \hdots, u_{p}$ be the standard coordinates on a second affine space $\mathbb{A}^{p}$.

In addition, we suppose we have functions $f_{1}, \hdots, f_{p}$ on $U$ which define $E$ and suppose they extend to \'etale coordinates $f_{1}, \hdots, f_{p}, f_{p+1}, \hdots, f_{q}$ for $U$. Let $\mathcal{E}$ be the formal neighbourhood of $E$ in $U$. 

\subsubsection{Flat Deformations from $E$} 

The \'etale coordinates $f_{1}, \hdots, f_{q}$ determine a collection of partial derivative operators $\partial_{1}, \hdots, \partial_{q}$ on $U$. For a multi-index $\mathbf{J} \in \{ 1, \hdots, q \}^{|\mathbf{J}|}$ we write $\partial_{\mathbf{J}}$ for the induced differential operator. Let $\mathcal{B} \subset \mathcal{O}_{\mathcal{E}}$ be the subalgebra defined by the property that $\partial_{\ell} \mathcal{B} = 0$ for all $\ell \in \{ 1, \hdots, p \}$.

\begin{defn}
Given a function $a$ on $U$ or its analytifications $U^{v}$ for $v$ a place of $K$, we say that $a$ is $p$-flat if $\partial_{\ell} a = 0$ for $1 \leq \ell \leq p$.
\end{defn}

\begin{lem}
\label{formalretractexists}
The natural map $\mathcal{B} \to \mathcal{O}_{E}$ is an isomorphism of sheaves of $K$-algebras. Its inverse induces a map $\mathcal{E} \to E$ of formal schemes which is the identity on the level of topological spaces.
\end{lem}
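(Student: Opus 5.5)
The plan is to work locally on affine opens of $U$ and prove the stronger statement that, writing $I = (f_{1}, \hdots, f_{p})$ and $\mathcal{O}_{\mathcal{E}} = \varprojlim_{n} \mathcal{O}_{U}/I^{n}$, Taylor expansion in the flat directions identifies $\mathcal{O}_{\mathcal{E}}$ with $\mathcal{B}[[f_{1}, \hdots, f_{p}]]$. The isomorphism $\mathcal{B} \xrightarrow{\sim} \mathcal{O}_{E}$ is then the constant-term map. All constructions commute with localization, so they glue to a statement about sheaves.

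\textbf{Step 1: the derivations.} Since $f_{1}, \hdots, f_{q}$ are \'etale coordinates, $\Omega^{1}_{U}$ is free on $df_{1}, \hdots, df_{q}$. We take $\partial_{1}, \hdots, \partial_{q}$ to be the dual basis of derivations, so that $\partial_{i} f_{j} = \delta_{ij}$. The commutator $[\partial_{i}, \partial_{j}]$ is again a derivation, and it kills every $f_{k}$. A derivation killing all $df_{k}$ is zero, so the $\partial_{i}$ commute. Each $\partial_{i}$ maps $I^{n}$ into $I^{n-1}$, so it is $I$-adically continuous and extends to $\mathcal{O}_{\mathcal{E}}$. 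This shows that $\mathcal{B}$ is well defined and is a $K$-subalgebra.

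\textbf{Step 2: injectivity.} Suppose $b \in \mathcal{B}$ has zero image in $\mathcal{O}_{E}$, so $b \in I\mathcal{O}_{\mathcal{E}}$. We show by induction that $b \in I^{n}$ for all $n$. Because $E$ is a complete intersection cut out by $f_{1}, \hdots, f_{p}$, the sequence is regular, and $I^{n}/I^{n+1}$ is free over $\mathcal{O}_{E}$ on the monomials $f^{J}$ with $|J| = n$.

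Suppose $b \equiv \sum_{|J|=n} f^{J} c_{J} \pmod{I^{n+1}}$. Applying $\partial_{\ell}$ for $\ell \leq p$ gives
\[ 0 = \partial_{\ell} b \equiv \sum_{J} J_{\ell} f^{J - e_{\ell}} c_{J} \pmod{I^{n}}. \]
By freeness, and since we are in characteristic zero so that $J_{\ell}$ is invertible, every $c_{J}$ lies in $I$. Hence $b \in I^{n+1}$. Finally $\bigcap_{n} I^{n}\mathcal{O}_{\mathcal{E}} = 0$ by completeness, so $b = 0$.

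\textbf{Step 3: surjectivity.} Given a local section $a$ of $\mathcal{O}_{E}$, choose any lift $\tilde{a}$ to $\mathcal{O}_{\mathcal{E}}$. Define
\[ b = \sum_{J \in \mathbb{N}^{p}} \frac{(-1)^{|J|}}{J!} f^{J} \partial^{J} \tilde{a}, \]
where $\partial^{J} = \partial_{1}^{J_{1}} \cdots \partial_{p}^{J_{p}}$. This series converges $I$-adically, and its constant term is $\tilde{a}$, so $b$ restricts to $a$ on $E$.

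Applying $\partial_{\ell}$ with the Leibniz rule and $\partial_{\ell} f_{j} = \delta_{\ell j}$, the terms telescope, using commutativity from Step 1. Written out, this is the identity $\partial_{\ell} \exp(-\sum_{j} f_{j}\partial_{j}) = 0$ applied to $\tilde{a}$. Hence $b \in \mathcal{B}$.

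\textbf{Step 4: the retraction.} The inverse $\mathcal{O}_{E} \xrightarrow{\sim} \mathcal{B} \hookrightarrow \mathcal{O}_{\mathcal{E}}$ is a map of sheaves of $K$-algebras on the common underlying space $|E| = |\mathcal{E}|$. It is continuous for the discrete-versus-$I$-adic topologies, so it defines a morphism of formal schemes $\mathcal{E} \to E$ whose underlying map on topological spaces is the identity. Moreover, composing with the closed immersion $E \hookrightarrow \mathcal{E}$ gives the identity on $E$.

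I expect the main obstacle to be Step 1 together with the sheaf-theoretic bookkeeping. Specifically, one must check that the derivations dual to the \'etale coordinates are globally defined on $U$, commute, and extend continuously to the completion, and that the local Taylor construction is independent of the chosen lift. Independence follows from injectivity (Step 2), since two flat lifts of the same section of $\mathcal{O}_{E}$ differ by a flat element vanishing on $E$. That independence is what makes the local isomorphisms glue.
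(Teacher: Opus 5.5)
Your proof is correct. The surjectivity step is exactly the paper's: the paper uses the same operator $\delta(a)=\sum_{\mathbf{J}}\prod_{i}(-f_i)^{J_i}\prod_i \partial_i^{J_i}/J_i!\,a$ and simply asserts that $\delta(a)$ is flat and has the same image in $\mathcal{O}_E$.

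Where you differ is injectivity. The paper argues pointwise. To see that a flat section $a$ of $(f_1,\ldots,f_p)\mathcal{O}_{\mathcal{E}}$ vanishes, it passes to the formal stalk at a point $o\in E$. The \'etale coordinates identify that stalk with a power series ring in $f_1,\ldots,f_p,f_{p+1}-f_{p+1}(o),\ldots,f_q-f_q(o)$. Flatness then means $a$ has no monomials involving $f_1,\ldots,f_p$, which is incompatible with lying in $(f_1,\ldots,f_p)$ unless $a=0$.

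You instead argue globally on the associated graded ring. Regularity of $f_1,\ldots,f_p$ makes $I^n/I^{n+1}$ free over $\mathcal{O}_E$ on the degree-$n$ monomials. Differentiating the leading form shows that the $I$-adic order of a nonzero flat element of $I$ would have to increase, and $I$-adic separatedness finishes the argument.

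The paper's route is shorter. However, it tacitly uses that sections of $\mathcal{O}_{\mathcal{E}}$ are detected in the completed stalks at points of $E$, together with the power-series description of those stalks; as written, that description presupposes $o$ is $K$-rational. Your route needs only the regular sequence, $\partial_\ell f_j=\delta_{\ell j}$, and characteristic zero. It also makes visible exactly where characteristic zero enters, namely in inverting $J_\ell$. Your Step 1 additionally supplies the commutativity of the $\partial_i$, which the paper uses silently in the formula for $\delta$.

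One small correction to your Step 3 gloss. The operator is not $\exp(-\sum_j f_j\partial_j)$: that operator rescales $f_1$ to $e^{-1}f_1$, so $\partial_1$ of the output is nonzero, and it does not even converge $I$-adically over $K$. What you want is the normal-ordered exponential, with all the $f$'s written to the left of all the $\partial$'s. Your actual telescoping computation (Leibniz rule plus reindexing $J=J'+e_\ell$, using commutativity) is correct, so this affects only the heuristic description, not the proof.
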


\begin{proof}
The ideal $\mathcal{I}$ defining $\mathcal{O}_{E}$ inside $\mathcal{O}_{\mathcal{E}}$ is generated by $f_{1}, \hdots, f_{p}$: we show that this implies that $\mathcal{I} \cap \mathcal{B} = 0$. Given a fixed $a \in \mathcal{I}$, to check that it does not lie in $\mathcal{B}$ it suffices to check this is true after restricting to the formal stalk at some point $o \in E$. Because $f_{1}, \hdots, f_{q}$ define \'etale coordinates, one obtains a description of the formal stalk $\widehat{\mathcal{O}}_{\mathcal{E}, o}$ as formal power series ring in $f_{1}, \hdots, f_{p}, f_{p+1} - f_{p+1}(o), \hdots, f_{q} - f_{q}(o)$. The condition that $a$ lies in $\mathcal{B}$ means that it does not have any terms that involve the variables $f_{1}, \hdots, f_{p}$. But then obviously it cannot lie in the ideal $\mathcal{I}$ generated by $f_{1}, \hdots, f_{p}$ unless it is zero. Hence $\mathcal{I} \cap \mathcal{B} = 0$, and the map $\mathcal{B} \to \mathcal{O}_{E}$ is injective.

Let us now also check that it is surjective. For this we consider an operator $\delta : \mathcal{O}_{\mathcal{E}} \to \mathcal{B}$ defined as follows:
\begin{align}
\label{flatteningofa}
\delta(a) = \sum_{\mathbf{J} \in \mathbb{Z}^{j}_{\geq 0}} \left(\prod_{i=1}^{p} (-f_{i})^{J_{i}} \right) \left( \prod_{i=1}^{p} \frac{(\partial_{i})^{J_{i}}}{J_{i}!} \right) a ,
\end{align}
We note the infinite sum on the right is well-defined since $\mathcal{O}_{\mathcal{E}}$ is $I$-adically complete, with $I = (f_{1}, \hdots, f_{p})$. One checks from the formula that $\delta(\mathcal{O}_{\mathcal{E}}) \subset \mathcal{B}$ and that both $a$ and $\delta(a)$ have the same image in $\mathcal{O}_{E}$. Thus, since $\mathcal{O}_{\mathcal{E}} \to \mathcal{O}_{E}$ is surjective, so is $\mathcal{B} \to \mathcal{O}_{E}$. 

That $\mathcal{O}_{E} \xrightarrow{\sim} \mathcal{B} \to \mathcal{O}_{\mathcal{E}}$ induces a map of formal schemes is clear from the definitions. 
\end{proof}

\begin{prop}
\label{normestprop}
Let $\{ \| \cdot \|_{(U, o),v} \}_{v \in K}$ be an adelic norm associated to the fixed $K$-point $o \in U(K)$. (From now on we write $\| \cdot \|_{o,v} = \| \cdot \|_{(U,o),v}$.) Then for each $i$, there exists a constant $c_{i,v}$ such that $\| \frac{1}{k!} (\partial_{i})^{k} u \|_{o,v} \leq c^{k}_{i,v} \| u \|_{o,v}$ for all $u \in \mathcal{O}(U)$ and $k \geq 1$. For all but finitely many $v$ we may take $c_{i,v} = 1$. If instead we choose some $o \in U(K_{v})$, the inequality $\| \frac{1}{k!} (\partial_{i})^{k} u \| \leq c^{k} \| u \|$ holds for any norm  $\| \cdot \|$ associated to any affinoid neighbourhood of $o$ in $U^{v}$ and some $c$ depending on $\| \cdot \|$. 

Furthermore, if we consider the natural differential-operator preserving map
\[ \gamma :  \mathcal{O}(U) \to K[[y_{1}, \hdots, y_{q}]] \]
induced by expanding elements of $\mathcal{O}(U)$ in terms of their formal power series at $o$, then for all but finitely many places $v$ one has $\| \gamma(u) \|_{v} = \| u \|_{o,v}$, where the function $\| \cdot \|_{v}$ on $K[[y_{1}, \hdots, y_{q}]]$ is the ``Gauss norm'' (the maximum $v$-adic size of a coefficient, or $\infty$ if it doesn't exist).
\end{prop}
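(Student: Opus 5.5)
Fix an étale presentation: since $f_1,\dots,f_q$ are étale coordinates on $U$, the derivations $\partial_i$ are $\mathcal{O}(U)$-linear combinations of the ambient derivations. Concretely, write $U = \Spec K[z_1,\dots,z_m]/I$. The Jacobian matrix $(\partial f_j/\partial z_r)$ restricted to the tangent space is invertible over $\mathcal{O}(U)$. Hence there are elements $h_{ir}\in \mathcal{O}(U)$ with $\partial_i = \sum_r h_{ir}\,\partial/\partial z_r$ on $\mathcal{O}(U)$, well defined modulo $I$ because $\partial_i$ kills $I$. This needs a chosen lift $D_i = \sum_r \widetilde h_{ir}\,\partial/\partial z_r$ on $A=K[\overline z]$ with $D_i(I)\subset I$. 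I will arrange this by enlarging the presentation, e.g. passing to a principal open $D(\det)$ via \autoref{norminvanyrest} and \autoref{norminvunderrestric}, so that such a lift exists. Those lemmas change the adelic norm only at finitely many places, and by \autoref{normundercoordchange} only up to constants at the remaining ones.

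Next I bound divided powers of $D_i$ on $A$. The key computation is that $\frac{1}{k!}D^k$, for $D=\sum_r \widetilde h_r\partial_{z_r}$, can be written as a sum of terms (products of the $\widetilde h_r$ and their derivatives) $\cdot \frac{1}{J!}\partial_{\overline z}^{J}$. The coefficients are integral polynomial expressions in the coefficients of the $\widetilde h_r$. The point is that the divided-power operators $\frac{1}{J!}\partial^J_{\overline z}$ have Gauss-norm $\le 1$ on $A$ at every finite place. I will check the integrality with the standard identity for divided powers of a vector field, i.e. the Hasse–Schmidt property: $\exp(tD)$ is the ring automorphism $\overline z\mapsto \overline z(t)$ solving the flow. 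Its Taylor coefficients $\frac{1}{k!}D^k z_r$ have $v$-adic size $\le c_v^k$, with $c_v=1$ whenever all $\widetilde h_r$ are $v$-integral and $|k!|$-type denominators cancel. The clean way is to compute this flow in the étale coordinate itself: $\frac{1}{k!}\partial_i^k$ is the $t^k$ coefficient of $a(f_1,\dots,f_i+t,\dots)$. Since $z_r$ is an étale-algebraic function of $\overline f$, its Taylor expansion has adelic coefficients by the same recursion argument as \autoref{mulinverse}, via the implicit function theorem with unit Jacobian at good places. Then $\frac1{k!}\partial_i^k(u)$ for $u=\widetilde u(\overline z)$ is the $t^k$ coefficient of $\widetilde u(\overline z(t))$, whose norm is at most $\|\widetilde u\|_v c_{i,v}^k$ by the ultrametric inequality. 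Taking $\widetilde u$ minimal gives the bound in $\|\cdot\|_{o,v}$. For the version at a $K_v$-point with an affinoid norm, I will run the same argument on the affinoid algebra, using Tate algebra Gauss norms and \cite[6.1.3]{zbMATH03857279} for equivalence of norms.

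For the second part, note that $\gamma(u)$ has $y^{J}$-coefficient $\frac{1}{J!}\partial^J u(o)$. Evaluation at $o$ is bounded by $\|\cdot\|_{o,v}$, since it is a $K$-point at the origin: for a minimal representative, $|\widetilde u(0)|_v\le\|\widetilde u\|_v$. So the first part gives $\|\gamma(u)\|_v\le\|u\|_{o,v}$ at almost all places. For the reverse inequality I use the inverse expansion: $z_r=\gamma^{-1}$-type power series in $\overline y$ with integral coefficients at good places. A Gröbner-type/flatness argument is needed here, similar to \autoref{oneforallbut}. If $u$ is scaled so $\|u\|_{o,v}=1$ and $\gamma(u)\equiv 0 \bmod \mathfrak p_v$, then the reduction of $u$ vanishes in the completed local ring of the reduction of an integral model at $o$. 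By injectivity into the completion of the reduced local ring (integral model smooth and irreducible at good primes), the reduction is zero in $\mathcal A/(\mathcal I+\mathfrak p)$, contradicting $\|u\|_{o,v}=1$. I expect the main obstacle to be making the bound on $\frac{1}{k!}\partial_i^k$ uniform with $c_{i,v}=1$ at almost all places. Factorials in the denominators must not leak, and the lift $D_i$ must preserve $I$. I would handle this by working entirely through the Taylor-flow interpretation rather than iterating $D_i$ directly.
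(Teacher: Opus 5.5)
Your final route, through the flow rather than the integrality claim in your second paragraph, is correct but differs from the paper's. The paper proves the isometry $\|\gamma(u)\|_{v}=\|u\|_{o,v}$ first. It passes to a standard \'etale presentation, spreads out over $\mathcal{O}_{K,S}$, identifies the completed model with $\mathcal{O}_{K,S}[[x_{1},\hdots,x_{q}]]$, and shows the expansion map stays injective modulo $\mathfrak{p}_{v}$ (\autoref{keyinjlem}). The derivative bound at good places is then read off in the power series ring, where $\frac{1}{k!}\partial_{y_{i}}^{k}$ is visibly integral. At the finitely many bad places the paper simply iterates an ambient lift of $\partial_{i}$ and absorbs $|1/k!|_{v}\leq p^{k/(p-1)}$ into $c_{i,v}$.

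You instead use that $\exp(t\partial_{i})$ is a ring homomorphism $\mathcal{O}(U)\to\mathcal{O}(U)[[t]]$. Submultiplicativity then reduces everything to the finitely many flow coefficients $\frac{1}{k!}\partial_{i}^{k}z_{r}$, which an implicit-function recursion controls at all places at once (the one-variable analogue of \autoref{solconstr} and \autoref{Asoltauone}). The inequality $\|\gamma(u)\|_{v}\leq\|u\|_{o,v}$ follows by evaluating at the origin. Krull intersection is needed only for the reverse inequality, and there your argument is the paper's $d=1$ case. So your route gets the derivative estimate without the completion identification, while the paper makes the isometry primary and the derivative estimate a one-line corollary.

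Three points need fixing.

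\textbf{(i)} Drop the claim that $\frac{1}{k!}D^{k}$ is an integral combination of the $\frac{1}{J!}\partial_{\overline{z}}^{J}$. It fails in general: for $D=z\,\partial_{z}$ one gets $\frac{1}{k!}D^{k}z=z/k!$. Integrality comes only from $\partial_{i}$ being a coordinate field of an \'etale chart, which is exactly what the flow argument uses.

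\textbf{(ii)} Do not pass to $D(\det)$. Any lift $\sum_{r}\widetilde{h}_{ir}\partial_{z_{r}}$ of a derivation of $A/I$ already maps $I$ into $I$: for $g\in I$ its image mod $I$ is $\partial_{i}(g+I)=0$. Localizing is also harmful. At the excluded places the restricted norm need not be equivalent to $\|\cdot\|_{o,v}$, and \autoref{normundercoordchange} only concerns isomorphisms, not open immersions. For example, with $A=K[x]$, $f=p+x$ and $v\mid p$, the affinoid becomes the disc $|x|\leq|p|_{v}^{2}$.

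\textbf{(iii)} ``Unit Jacobian'' has no literal meaning when $U$ is not a complete intersection in the ambient affine space. You can make this step precise in either of two ways.
\begin{itemize}
\item Use formal \'etaleness of the Zariski-closure model $\mathcal{U}_{(v)}\to\mathbb{A}^{q}_{\mathcal{O}_{K,(v)}}$ at good places. The resulting lift $\mathcal{O}(\mathcal{U}_{(v)})\to\mathcal{O}(\mathcal{U}_{(v)})[[t]]$ must coincide with $\exp(t\partial_{i})$, and $\mathcal{O}(\mathcal{U}_{(v)})$ is exactly the unit ball of $\|\cdot\|_{o,v}$.
\item Use a global left inverse over $\mathcal{O}(U)$ of the Jacobian of $(g_{1},\hdots,g_{k},\widetilde{f}_{1},\hdots,\widetilde{f}_{q})$, where the $g_{j}$ generate $I$ and the $\widetilde{f}_{l}$ lift the $f_{l}$. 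It exists because the transpose surjects onto the free module $\Omega_{\mathbb{A}}|_{U}$: this follows from the conormal sequence, since the $df_{l}$ generate $\Omega_{U}$.
\end{itemize}
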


\begin{proof}
We start with the claim at all but finitely many places --- we will handle the finitely many remaining places separately. We may start by passing to the situation where the ideal $(f_{1}, \hdots, f_{q})$ defines the point $o \in U(K)$; because $f_{1}, \hdots, f_{q}$ give \'etale coordinates, this is true in a sufficiently small Zariski open neighbourhood of $o$ in $U$, and we may apply \autoref{norminvanyrest} to pass to such a neighbourhood. We may moreover assume, again using \autoref{norminvanyrest} and \cite[\href{https://stacks.math.columbia.edu/tag/02GT}{Lemma 02GT}]{stacks-project}, that the map $U \to \mathbb{A}^{q}$ induced by $f_{1}, \hdots, f_{q}$ is standard \'etale, so there exists a factorization
\begin{equation}
\label{stdetfac}
\underbrace{K[x_{1}, \hdots, x_{q}]}_{=: R_{q}} \to \underbrace{K[x_{1}, \hdots, x_{q}, x_{q+1}]}_{=: R_{q+1}} \to (R_{q+1})_{h} \to (R_{q+1})_{h}/(g) = \mathcal{O}(U)
\end{equation}
for some polynomials $h, g \in R_{q+1}$ such that $g$ is monic with $g'$ invertible in $(R_{q+1})_{h}/(g)$. 

By construction the ideal $\mathfrak{m}$ in $\mathcal{O}(U)$ generated by $(f_{1}, \hdots, f_{q})$ is maximal corresponding to $o$, and its inverse image under the $K$-algebra map $R_{q+1} \to \mathcal{O}(U)$ is a maximal ideal $\mathfrak{m}_{q+1}$ corresponding to a $K$-point of $\mathbb{A}^{q+1}$ whose first $q$ coordinates are $0$. After replacing $x_{q+1}$ with $x_{q+1} - a$ for some $a \in K$, one therefore obtains the same sequence but where $\mathfrak{m}_{q+1} = (x_{1}, \hdots, x_{q+1})$. 

Choosing an integral model $\mathcal{U}$ for $U$ we can spread out the sequence (\ref{stdetfac}) to a sequence of the same type defined over $\mathcal{O}_{K,S} \subset K$, where $S$ is a finite set of finite primes. We write $\mathcal{R}_{q}$ (resp. $\mathcal{R}_{q+1}$) for the polynomial rings over $\mathcal{O}_{K,S}$ obtained after spreading out. We may then complete each term in the spread-out sequence with respect to the pullback of the ideal $\mathfrak{m} = (f_{1}, \hdots, f_{q})$ to the corresponding term. The result is a sequence
\begin{equation}
\label{stdetfac2}
\underbrace{\mathcal{O}_{K,S}[[x_{1}, \hdots, x_{q}]]}_{= \mathcal{R}^{\wedge}_{q}} \to \underbrace{\mathcal{O}_{K,S}[[x_{1}, \hdots, x_{q}, x_{q+1}]]}_{= \mathcal{R}^{\wedge}_{q+1}} \to [(\mathcal{R}_{q+1})_{h}]^{\wedge} \to [(\mathcal{R}_{q+1})_{h}/(g)]^{\wedge} = \mathcal{O}(\mathcal{U})^{\wedge} .
\end{equation}

We then make the following observations:
\begin{itemize}
\item[-] By construction the polynomial $h$ does not vanish at $0$ (the image of $o$), hence by \autoref{mulinverse} the polynomial $h$ has a multiplicative inverse in $\mathcal{R}^{\wedge}_{q+1}$ after possibly increasing $S$. Thus we obtain a canonical map $(\mathcal{R}_{q+1})_{h} \to \mathcal{R}^{\wedge}_{q+1}$ whose completion is an inverse for $\mathcal{R}^{\wedge}_{q+1} \to [(\mathcal{R}_{q+1})_{h}]^{\wedge}$.
\item[-] If we consider the exact sequence
\[ 0 \to (g) \to (\mathcal{R}_{q+1})_{h} \to (\mathcal{R}_{q+1})_{h}/(g) \to 0 \]
of finite $(\mathcal{R}_{q+1})_{h}$-modules, then via \cite[\href{https://stacks.math.columbia.edu/tag/00MA}{Lemma 00MA}]{stacks-project} its completion is also short exact. In particular we may make the identification 
\[ [(\mathcal{R}_{q+1})_{h}]^{\wedge}/(g) = [(\mathcal{R}_{q+1})_{h}/(g)]^{\wedge} = \mathcal{O}(\mathcal{U})^{\wedge} . \]
But via the just-constructed isomorphism this gives $\mathcal{R}^{\wedge}_{q+1} / (g) = \mathcal{O}(\mathcal{U})^{\wedge}$. Since $g$ is monic, $\mathcal{O}(\mathcal{U})^{\wedge}$ is an integral extension of $\mathcal{R}^{\wedge}_{q}$, but $\mathcal{R}^{\wedge}_{q}$ is integrally closed after possibly increasing $S$ (it is a ufd as soon as $\mathcal{O}_{K,S}$ is a pid \cite[Thm 3.2]{zbMATH03187700}), hence the sequence (\ref{stdetfac2}) collapses and gives an isomorphism $\mathcal{R}^{\wedge}_{q} = \mathcal{O}(\mathcal{U})^{\wedge}$ which sends $x_{i}$ to $f_{i}$.
\end{itemize}

Using the fact that $\mathcal{O}(\mathcal{U})$ is integral (in fact, regular) we may observe that $\mathcal{O}(\mathcal{U}) \to \mathcal{O}(\mathcal{U})^{\wedge}$ is injective (using the Krull intersection theorem for Notherian integral rings). Hence by composition we obtain an injective map
\[ \textrm{res}: \mathcal{O}(\mathcal{U}) \to \mathcal{O}_{K,S}[[x_{1}, \hdots, x_{q}]] \]
which sends a function to its power series expansion in terms of $f_{1}, \hdots, f_{q}$. 

\begin{lem}
\label{keyinjlem}
For every prime $\mathfrak{p}_{(v)}$ of $\mathcal{O}_{K,S}$ and every integer $d$, the map $\textrm{res}^{d}_{(v)}$ obtained from $\textrm{res}$ by base-change along $\mathcal{O}_{K,S} \xrightarrow{c^{d}_{(v)}} \mathcal{O}_{K,(v)} \xrightarrow{e^{d}_{(v)}} \mathcal{O}_{K,(v)}/\mathfrak{p}^{d}_{(v)}$ is injective.
\end{lem}

\begin{proof}
After the first base-change along $c^{d}_{(v)}$ the map $\textrm{res}$ is indentified with
\[ \mathcal{O}(\mathcal{U}_{(v)}) \to \mathcal{O}_{K,(v)}[[x_{1}, \hdots, x_{q}]] \]
where $\mathcal{U}_{(v)}$ is the obvious base-change. The flatness of $c^{d}_{(v)}$ ensures that this map continues to be injective.

Applying the second base-change, we obtain a map 
\begin{equation}
\label{sndbc}
\textrm{res}^{d}_{(v)} = \xi^{d}_{(v)} : \mathcal{O}(\mathcal{U}_{(v)})/\mathfrak{p}^{d}_{(v)} \to \mathcal{O}_{K,S}/\mathfrak{p}^{d}_{(v)}[[x_{1}, \hdots, x_{q}]] .
\end{equation}
By construction, this map factors through an isomorphism 
\[ \varprojlim_{k} \mathcal{O}(\mathcal{U})/(\mathfrak{m}^{k} + \mathfrak{p}^{d}_{(v)}) \simeq \mathcal{O}_{K,(v)}/\mathfrak{p}^{d}_{(v)}[[x_{1}, \hdots, x_{q}]] \]
hence the kernel of $\textrm{res}^{d}_{(v)}$ is the intersection $\bigcap_{k} (\mathfrak{m}^{k} + \mathfrak{p}^{d}_{(v)}) = \bigcap_{k} (\mathfrak{m} + \mathfrak{p}^{d}_{(v)})^{k}$ inside $\mathcal{O}(\mathcal{U}_{(v)})/\mathfrak{p}^{d}_{(v)}$. When $d = 1$ the ring $\mathcal{O}(\mathcal{U}_{(v)})/\mathfrak{p}^{d}_{(v)}$ is integral (taking $S$ large enough), so by the Krull intersection theorem for integral Noetherian rings it follows that $\textrm{res}^{d}_{(v)}$ is injective for $d = 1$. 

We now argue by induction on $d$. The ideal $\mathfrak{p}_{(v)}$ is principal generated by some uniformizer $y_{(v)} \in \mathcal{O}_{K,(v)}$, hence the powers $\mathfrak{p}^{d}_{(v)}$ are also principal generated by $y^{d}_{(v)}$. Suppose that $\xi^{d}_{(v)}$ is injective, and consider some element $a \in \mathcal{O}(\mathcal{U}_{(v)})$ mapping into $\mathfrak{p}^{d+1}_{(v)}[[x_{1}, \hdots, x_{q}]]$ under $\xi_{(v)} := \textrm{res}$. Then, by induction, necessarily $a \in \mathfrak{p}^{d}_{(v)} \mathcal{O}(\mathcal{U}_{(v)})$ by injectivity of $\xi^{d}_{(v)}$, so write $a = y^{d}_{(v)} a'$. Then $\xi_{(v)}(a) = y^{d}_{(v)} \xi_{(v)}(a') \in \mathfrak{p}^{d+1}_{(v)}$, so because $\mathcal{O}_{K,(v)}[[x_{1}, \hdots, x_{q}]]$ is a ufd this means that $\xi_{(v)}(a') \in \mathfrak{p}_{(v)}$ and hence $a' \in \mathfrak{p}_{(v)}$ by the injectivity of $\xi^{1}_{(v)}$.
\end{proof}

\paragraph{Relation to Derivatives:} Now consider the fixed embedding $U \cong \Spec A/I \hookrightarrow \mathbb{A}^{n}$ associated to the adelic norm, where $A = K[x_{1}, \hdots, x_{n}]$. The operator $\partial_{i}$ may be viewed as a section of the tangent sheaf $T U$, which is naturally a subsheaf of the sheaf $\mathcal{O}(U) \otimes_{\mathcal{O}(\mathbb{A}^{n})} T \mathbb{A}^{n}$. This allows us to express each $\partial_{i}$ in the form
\begin{equation}
\label{ambientpartialexp}
\partial_{i} = \sum_{j=1}^{n} a_{ij} \partial^{\textrm{std}}_{j} 
\end{equation}
where $\partial^{\textrm{std}}_{j}$ are the standard partial derivative operators associated to $\mathbb{A}^{n}$ and $a_{ij} \in A/I$. That $\partial_{i}$ is a partial derivative operator on $U$ implies in particular that $\partial_{i}(I) \subset I$. 

By spreading out, we may identify the model $\mathcal{U}$  discussed above with the Zariski closure of $U$ inside $\Spec \mathcal{O}_{K,S}[x_{1}, \hdots, x_{n}]$, increasing $S$ if necessary. The partial derivative operators extend to the coordinate rings of $\mathcal{U}$ and $\mathcal{U}_{(v)}$, again increasing $S$ if necessary, and the power series ring $\mathcal{O}_{K,S}[[y_{1}, \hdots, y_{q}]]$ admits the obvious partial derivative operators which are compatible with the maps $\xi^{d}_{(v)}$ constructed above. 

\paragraph{Deduction of Norm Estimates: }All of the above has resulted in the following: there is, for each $d$ and $v \not\in S$, a differential-operator preserving map 
\[ \gamma^{d}_{(v)} : \mathcal{O}(\mathcal{U}_{(v)}) \to (\mathcal{O}_{K,(v)}/\mathfrak{p}^{d}_{(v)})[[x_{1}, \hdots, x_{q}]] \]
whose kernel is exactly $\mathfrak{p}^{d}_{(v)} \mathcal{O}(\mathcal{U}_{(v)})$. The map $\gamma$ in the statement of the proposition is just the scalar extension to $K$ of $\textrm{res}$. The claim about the ``Gauss norm'' may be reduced (by scaling) to the case where $u \in \mathcal{O}(\mathcal{U}_{(v)})$ where it follows from the injectivity of $\gamma^{1}_{(v)}$. 

Again by scaling, the norm inequality reduces to considering $u \in \mathcal{O}(\mathcal{U}_{(v)})$ with $\| u \|_{o,v} = 1$. In  the case under consideration where we want to take $c_{i,v} = 1$, the claim is simply that the number of powers of $p_{v}$, where $p_{v} \in \mathbb{Z}$ is the prime below $v$, which divide $(\partial_{i})^{k} u$ is at least the number $e_{k,p}$ which divides $k!$. Via $\gamma$, this computation can be carried out in the formal power series ring where it follows from the fact that $k!$ always divides $n (n-1) \cdots (n-k+1)$ for any $n$.

\paragraph{Finitely Many Remaining Places:} For the finitely many remaining places, we note that from (\ref{ambientpartialexp}) the claim that $\| (\partial_{i})^{k} u \|_{o,v} \leq c^{k}_{i,v} \| u \|_{o,v}$ can be obtained with $c_{i,v} = \textrm{max}_{j} \| a_{ij} \|_{v}$ by taking $\widetilde{u}$ to be a minimal representative for $u$ and repeatedly differentiating. The number of powers of $p$ dividing $k!$ is at most $k/(p-1)$, so one obtains the result after increasing $c_{i,v}$. The argument in the affinoid case is analogous.
\end{proof}

\vspace{0.5em}

In what follows we write $\overline{f} = (f_{1}, \hdots, f_{p})$ (i.e., using only the first $p$ coordinates), in contrast to notation elsewhere.

\begin{cor}
\label{retextendscor}
The map $\delta$ from (\ref{flatteningofa}) extends to define a retraction $\beta : \mathcal{T}(\iota, \alpha, \rho, \overline{f}) \to E$ for some choice of $\alpha$ and $\rho$. More precisely, for each $v$ there is a map $\beta^{v} : \mathcal{T}(\iota, \alpha, \rho, \overline{f})^{v} \to E^{v}$ which when restricted to the formal neighbourhood of $E^{v}$ inside $U^{v}$ agrees with the $v$-adic analytification of the map $\mathcal{E} \to E$ from \autoref{formalretractexists}. 
\end{cor}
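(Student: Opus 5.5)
The plan is to show that the series defining $\delta$ converges on a suitable adelic tube, uniformly in the sense of the adelic norms. The retraction $\beta$ will be defined on coordinate functions. Fix the closed embedding $\iota : U \hookrightarrow \mathbb{A}^{m}$ with coordinates $z_{1}, \hdots, z_{m}$. Since $E \subset U \subset \mathbb{A}^{m}$, the map $\mathcal{E} \to E$ of \autoref{formalretractexists} is determined by the elements $\delta(z_{r}) \in \mathcal{B} \cong \mathcal{O}_{E}$ for $1 \leq r \leq m$. We therefore need to show that each series
\[ \delta(z_{r}) = \sum_{\mathbf{J}} \Big(\prod_{i=1}^{p} (-f_{i})^{J_{i}}\Big) \Big(\prod_{i=1}^{p} \frac{\partial_{i}^{J_{i}}}{J_{i}!}\Big) z_{r} \]
converges on $\mathcal{T}(\iota, \alpha, \rho, \overline{f})^{v}$ for every $v$, for a suitable choice of $(\alpha, \rho)$ independent of $v$. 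We will then check that the resulting analytic map lands in $E^{v}$ and agrees with the formal map after completion.

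\textbf{Bounding the terms.} First I would apply \autoref{normestprop} iteratively. It gives
\[ \Big\| \prod_{i} \tfrac{1}{J_{i}!}\partial_{i}^{J_{i}} z_{r} \Big\|_{o,v} \leq C_{v}^{|\mathbf{J}|}\, \| z_{r} \|_{o,v}, \]
with $C_{v} = 1$ for all but finitely many $v$. This is not yet enough, because a bound on the adelic norm controls coefficients but not the size of the function on the tube: the sup of a polynomial on $\mathcal{T}^{v}$ grows with its degree in the $z$'s. So I would also track degrees. Writing $\partial_{i} = \sum_{j} a_{ij} \partial^{\mathrm{std}}_{j}$ as in (\ref{ambientpartialexp}), each application of $\partial_{i}$ to a representative increases the degree by at most $d_{0} := \max \deg a_{ij}$. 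Hence the $\mathbf{J}$-term has a representative $P_{\mathbf{J}}$ with $\deg P_{\mathbf{J}} \leq d_{0}|\mathbf{J}| + 1$. Its Gauss norm is at most $C_{v}^{|\mathbf{J}|}$, after applying \autoref{adelicsufftocheck}'s lemma at almost all places and the Gröbner argument there. At the finitely many remaining places we do the same with constants, still with a linear degree bound. In short, the coefficient series is $(C_{v}, d_{0})$-adelic in the sense of \autoref{adandpowbound}.

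\textbf{Convergence on the tube.} The $\mathbf{J}$-term is bounded by $\max |f_{i}|^{|\mathbf{J}|} \cdot C_{v}^{|\mathbf{J}|} \cdot \max(1, |z|)^{d_{0}|\mathbf{J}|+1}$, where $|z| = \max_{r} |z_{r}|$. Choose $\alpha \geq d_{0}+1$. On $\mathcal{T}^{v}$ the defining inequalities (\ref{tubedefeqs}) give $|f_{i} z_{r}^{\alpha_{2}}| < |\rho|$ for $\alpha_{2} \leq \alpha-1$. Distributing the factor $|z|^{d_{0}|\mathbf{J}|}$ across the $|\mathbf{J}|$ factors of $f$, each product $f_{i} z^{d_{0}}$ has size $< |\rho|$. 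The term is then bounded by $(C_{v}|\rho|_{v})^{|\mathbf{J}|} \max(1,|z|)$. Now pick $\rho \in K^{\times}$ with $|\rho|_{v} \leq 1$ everywhere and $|\rho|_{v} < C_{v}^{-1}$ at the finitely many $v$ with $C_{v} > 1$. Then the terms tend to zero uniformly on each affinoid piece $\{|z| \leq R\}$ of the Berkovich open $\mathcal{T}^{v}$. I expect this degree/size balancing, uniform in $v$, to be the main obstacle. It is exactly why the tube uses the mixed inequalities in $y^{\alpha_{1}}z^{\alpha_{2}}$ rather than just $|f_{i}| < |\rho|$.

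\textbf{Conclusion.} The convergent series define analytic functions $\beta^{v *} z_{r}$ on $\mathcal{T}^{v}$, and hence a map $\mathcal{T}^{v} \to \mathbb{A}^{m,v}$. It factors through $E^{v}$ because for each generator $h$ of the ideal of $E$ in $\mathbb{A}^{m}$ we have $h(\delta(z_{1}), \hdots, \delta(z_{m})) = \delta(h) = 0$. Here we use that $\delta$ is a ring homomorphism, namely the inverse of $\mathcal{B} \cong \mathcal{O}_{E}$ composed with the projection. This identity holds formally and passes to the analytic setting by the identity principle on the connected formal neighbourhood, or directly since the series identities hold termwise. Agreement with the analytified formal map on the formal neighbourhood of $E^{v}$ is immediate, since the convergent series restrict to the formal ones.
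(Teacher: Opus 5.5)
Your proposal is correct and follows essentially the paper's route: coefficient bounds from \autoref{normestprop}, linear degree growth read off from (\ref{ambientpartialexp}) and upgraded to power-adelicity via \autoref{adelicsufftocheck}, then a choice of $\alpha$ and $\rho$ so that the tube inequalities (\ref{tubedefeqs}) dominate each term. The one difference is that you check explicitly that the $\delta(z_r)$ satisfy the equations of $E$, whereas the paper appeals to the universal property of the affinoid algebras of the $\mathbb{E}_r$; of your two justifications, use the ``termwise'' one (the Leibniz rule for $\partial^{\mathbf{J}}/\mathbf{J}!$ makes the analytic $\delta$ multiplicative, and $\delta(f_i) = f_i - f_i = 0$), since the identity-principle route would need every component of $\mathcal{T}^v$ to meet $E^v$, and minor correction: $\{|z| \leq R\} \cap \mathcal{T}^v$ is not affinoid, and where $C_v|\rho|_v = 1$ you get uniform convergence only on affinoid subdomains where the strict tube inequalities hold with some $r' < |\rho|_v$, which is all that is needed on the Berkovich open.
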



\begin{proof}
Recall that we have functions $z_{1}, \hdots, z_{m}$ on the affine space $\mathbb{A}^{m}$ which, using $\iota$, we may restrict to functions on $E$. We consider the formal functions $\delta(z_{i})$, which we view as power series in $f_{1}, \hdots, f_{p}$ with coefficients in the coordinate ring $\mathcal{O}(U)$. Then from the formula (\ref{flatteningofa}) as well as the norm estimates in \autoref{normestprop}, the $\mathbf{J}$-coefficient of the formal function $\delta(z_{i})$ has norm of size at most $c_{v}^{|\mathbf{J}|}$, where $c_{v}$ is a constant depending on $v$ with $c_{v} = 1$ for all but finitely many $v$. It follows that we can find $\rho \in K$ so that, as power series in $f_{1}/\rho, \hdots, f_{p}/\rho$, the formal functions $\delta(z_{i})$ have integral coefficients for all $v$.

The above has shown (in a strong form) that the coefficients of the power series $\delta(z_{i})$ are adelic. To show it is $\alpha$-adelic for some $\alpha$ (i.e., power-adelic) it suffices to check the condition in \autoref{adelicsufftocheck}, and this follows from writing out the partial derivative operators in the explicit form (\ref{ambientpartialexp}) and observing that applying the operator $\partial_{i}$ increases the degree of any representative by at most the degree of some $a_{ij}$. Taking a constant $\alpha$ as in \autoref{adandpowbound}, it follows that the power series $\delta(z_{i})$ converge on some adelic neighbourhood defined by the conditions
\begin{equation}
\label{tubedefeqs2}
|(f_{i} / \rho)^{\alpha_{1}} z_{r}^{\alpha_{2}}| < |1| \hspace{3em}    1 \leq r \leq \ell, \hspace{1em} 1 \leq i \leq p , \hspace{1em} \alpha_{1} + \alpha_{2} \leq \alpha , \hspace{1em} \alpha_{1} \geq 1, \hspace{1em} \alpha_{2} \geq 0 .
\end{equation}
After adjusting $\rho$ we see that this implies inequalities of the form (\ref{tubedefeqs}), so the $\delta(z_{i})$ converge on some adelic tube $\mathcal{T}(\iota, \alpha, \rho, \overline{f})$.

To obtain the rigid-analytic map we now take presentations $E^{v} = \varinjlim_{r} \mathbb{E}_{r}$ and $U^{v} = \varinjlim_{r} \mathbb{U}_{r}$ by affinoid neighbourhoods $\mathbb{E}_{r}$ and $\mathbb{U}_{r}$ obtained by intersecting with closed affinoid balls $\mathbb{B}_{r} \subset \mathbb{A}^{n,v}$ at zero, and then consider the maps
\[ \mathbb{U}_{r} \cap \mathcal{T}(\iota, \alpha, \rho, \overline{f})^{v} \to \mathbb{E}_{r} \]
which on the level of structure sheaves send $z_{i} \mapsto \delta(z_{i})$. The universal properties of the Tate algebras $\mathbb{E}_{r}$ inside the category of Banach $K_{v}$-algebras, together with our estimates for the $\delta(z_{i})$ above, ensure this is well-defined.
\end{proof}

\subsubsection{Generalities on Power Series Equations}
\label{powerseriesgensec}

Our next series of constructions will require solving explicit power series equations. The key input is the multivariate Fa\`a di Bruno formula \cite{zbMATH00870188}. The form in which we will need it is as follows. 

\begin{thm}(\cite{zbMATH00870188})
\label{faadibrunothm}
Let $R$ be a $\mathbb{Q}$-algebra, and let $A = (A_{1}, \hdots, A_{\sigma})$ (resp. $B$) be formal power series (resp. a polynomial) with coefficients in $R$, where $A_{i} \in R[[u_{1}, \hdots, u_{\nu}]]$ for $1 \leq i \leq \sigma$ and $B \in R[[z_{1}, \hdots, z_{\sigma}]]$. View partial derivatives of elements of $R[[u_{1}, \hdots, u_{\nu}]]$ and $R[z_{1}, \hdots, z_{\sigma}]$ in the obvious way, i.e., with $\partial_{i}(R) = 0$, $\partial_{i}(u_{i'}) = \delta_{ii'}$ and $\partial_{i}(z_{i'}) = \delta_{ii'}$. 

Suppose additionally one of the following three conditions holds:
\begin{itemize}
\item[(I)] both $B$ and the composition $B \circ A$ are polynomials (they lie in $R[z_{1}, \hdots, z_{\sigma}]$ and $R[u_{1}, \hdots u_{\nu}]$, respectively); 
\item[(II)] we have $A(0) = 0$ and $B \circ A$ is a polynomial, where by $?(0)$ with $?$ a formal power series we mean the constant term; or
\item[(III)] both $A(0) = 0$ and $B(0) = 0$.
\end{itemize}
Then with $C = B \circ A$ we have
\begin{equation}
\label{faadibruno}
 (\partial_{\boldsymbol{J}} C)(B(A(0))) = \sum_{\substack{1 \leq |\boldsymbol{\lambda}| \leq |\boldsymbol{J}|}} (\partial_{\boldsymbol{\lambda}} B)(A(0)) \sum_{s=1}^{|\boldsymbol{J}|} \sum_{\mathcal{C}_{s}(\boldsymbol{J}, \boldsymbol{\lambda})} \boldsymbol{J}! \prod_{a = 1}^{s} \frac{[\boldsymbol{A}_{\boldsymbol{\ell}_{a}}(0)]^{\boldsymbol{k}_{a}}}{(\boldsymbol{k}_{a}!)[\boldsymbol{\ell}_{a}!]^{|\boldsymbol{k}_{a}|}} .
 \end{equation}
In this expression, we have used the following notation:
\begin{itemize}
\item[-] the vectors $\boldsymbol{\lambda}$ and $\boldsymbol{k}_{a}$ come from $\mathbb{Z}^{\sigma}_{\geq 0}$ and the vectors $\boldsymbol{J}$ and $\boldsymbol{\ell}_{a}$ come from $\mathbb{Z}^{\nu}_{\geq 0}$;
\item[-] for any vector $\boldsymbol{t} = (t_{1}, \hdots, t_{r}) \in (\mathbb{Z}_{\geq 0})^{r}$ we have $|\boldsymbol{t}| = t_{1} + \cdots + t_{r}$;
\item[-] we have
\begin{equation}
\label{Csdef}
\mathcal{C}_{s}(\boldsymbol{J}, \boldsymbol{\lambda}) = \left\{ (\boldsymbol{k}_{1}, \hdots, \boldsymbol{k}_{s} ; \boldsymbol{\ell}_{1}, \hdots, \boldsymbol{\ell}_{s}) : \substack{|\boldsymbol{k}_{i}| > 0 , \hspace{1em} \boldsymbol{0}  \prec \boldsymbol{\ell}_{1} \boldsymbol \prec \cdots \prec \boldsymbol{\ell}_{s} \\ \sum_{i=1}^{s} \boldsymbol{k}_{i} = \boldsymbol{\lambda} \textrm{ and } \sum_{i=1}^{s} |\boldsymbol{k}_{i}| \boldsymbol{\ell}_{i} = \boldsymbol{J} } \right\} , 
\end{equation}
\item[-] for vectors $\boldsymbol{t} = (t_{1}, \hdots, t_{r})$ and $\boldsymbol{t}' = (t'_{1}, \hdots, t'_{r})$, the symbol $\boldsymbol{t} \prec \boldsymbol{t}'$ means that one of the following conditions holds:
\begin{itemize}
\item[(i)] $|\boldsymbol{t}| < |\boldsymbol{t}'|$;
\item[(ii)] $|\boldsymbol{t}| = |\boldsymbol{t}'|$ and $t_{1} < t'_{1}$; or
\item[(iii)] $|\boldsymbol{t}| = |\boldsymbol{t}'|$, $t_{1} = t'_{1}, \hdots, t_{k} = t'_{k}$ and $t_{k+1} < t'_{k+1}$ for some $1 \leq k < r$;
\end{itemize}
\item[-] the notation $\boldsymbol{A}_{\boldsymbol{\ell}}$ for $\boldsymbol{\ell} = (\ell_{1}, \hdots, \ell_{\nu})$ means $(\partial_{\boldsymbol{\ell}} A_{1}, \hdots, \partial_{\boldsymbol{\ell}} A_{\nu})$; and
\item[-] for a vector $\boldsymbol{t} = (t_{1}, \hdots, t_{r})$, we have $\boldsymbol{t}! = t_{1}! \cdots t_{r}!$.
\end{itemize}
\end{thm}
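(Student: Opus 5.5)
The formula is a universal polynomial identity in the Taylor coefficients of $A$ and $B$. My plan is to reduce all three cases to case (III), and then prove (III) by expanding the composition directly and counting coefficients. I read the left-hand side of (\ref{faadibruno}) as the $\boldsymbol{J}$-th derivative of $C$ evaluated at $u = 0$, that is, $\boldsymbol{J}!$ times the $u^{\boldsymbol{J}}$-coefficient of $C$. Since $R$ is a $\mathbb{Q}$-algebra, all factorials are invertible, so it suffices to compare Taylor coefficients, and partial derivatives at $0$ are $\boldsymbol{J}!$ times coefficients. The case $\boldsymbol{J} = 0$ is degenerate (the sum is empty), so I assume $|\boldsymbol{J}| \geq 1$ throughout.

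\emph{Reductions.} In case (I), $B$ is a polynomial, so I write $B(z) = B'(z - A(0))$. Here $B'$ is the polynomial whose Taylor coefficients at $0$ are $(\partial_{\boldsymbol{\lambda}} B)(A(0))/\boldsymbol{\lambda}!$; this re-expansion is finite, so it makes sense over any $\mathbb{Q}$-algebra. I also set $A' = A - A(0)$. Then $C = B' \circ A'$ with $A'(0) = 0$, and the derivatives $\boldsymbol{A}'_{\boldsymbol{\ell}}(0)$ agree with $\boldsymbol{A}_{\boldsymbol{\ell}}(0)$ for $|\boldsymbol{\ell}| \geq 1$, which are the only ones appearing in (\ref{faadibruno}). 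Subtracting the constant $B'(0)$ changes neither side for $|\boldsymbol{J}| \geq 1$, so case (I) reduces to case (III). In case (II), $A(0) = 0$, so $B \circ A$ is a well-defined formal series, and subtracting $B(0)$ again reduces to (III). The hypothesis that $C$ is a polynomial plays no role in the formula itself; it only matters for how the composition is interpreted later in the paper.

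\emph{Case (III).} Here $A(0) = 0$, so
\[ C = \sum_{\boldsymbol{\lambda}} \frac{(\partial_{\boldsymbol{\lambda}} B)(0)}{\boldsymbol{\lambda}!} \prod_{i=1}^{\sigma} A_{i}^{\lambda_{i}} , \]
and only terms with $1 \leq |\boldsymbol{\lambda}| \leq |\boldsymbol{J}|$ contribute to the $u^{\boldsymbol{J}}$-coefficient. For each factor $A_i$ I substitute $A_{i} = \sum_{|\boldsymbol{\ell}| \geq 1} (\partial_{\boldsymbol{\ell}} A_{i})(0)\, u^{\boldsymbol{\ell}}/\boldsymbol{\ell}!$ and expand the product of the $|\boldsymbol{\lambda}|$ factors. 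Each resulting monomial is determined by a choice of a multi-index $\boldsymbol{\ell}$ for each of the $|\boldsymbol{\lambda}|$ factors. I then group these choices by the set of distinct $\boldsymbol{\ell}$'s that occur, listed in $\prec$-increasing order as $\boldsymbol{\ell}_{1} \prec \cdots \prec \boldsymbol{\ell}_{s}$. For each $a$, I record the vector $\boldsymbol{k}_{a} \in \mathbb{Z}^{\sigma}_{\geq 0}$ whose $i$-th entry counts how many copies of $A_i$ were assigned $\boldsymbol{\ell}_{a}$. The constraints are then exactly that $\sum_{a} \boldsymbol{k}_{a} = \boldsymbol{\lambda}$ and $\sum_{a} |\boldsymbol{k}_{a}| \boldsymbol{\ell}_{a} = \boldsymbol{J}$, and the ordering makes this decomposition unique, so the data range over exactly $\mathcal{C}_{s}(\boldsymbol{J}, \boldsymbol{\lambda})$ from (\ref{Csdef}).

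The number of assignments giving the same data is a product of multinomial coefficients, namely $\prod_{i} \lambda_{i}! / \prod_{a} k_{a,i}!$, which equals $\boldsymbol{\lambda}!/\prod_{a} \boldsymbol{k}_{a}!$. Multiplying by the coefficient factors $\prod_{a} [\boldsymbol{A}_{\boldsymbol{\ell}_{a}}(0)]^{\boldsymbol{k}_{a}} / [\boldsymbol{\ell}_{a}!]^{|\boldsymbol{k}_{a}|}$ and then by $\boldsymbol{J}!$ gives (\ref{faadibruno}): the $\boldsymbol{\lambda}!$ from the count cancels the $1/\boldsymbol{\lambda}!$ in the Taylor coefficient of $B$.

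The main obstacle is this bookkeeping step. One has to check that the ordered tuples of distinct $\boldsymbol{\ell}$'s correspond bijectively to the elements of $\mathcal{C}_{s}$, and in particular that the strict ordering $\prec$ with $\boldsymbol{0} \prec \boldsymbol{\ell}_{1}$ removes all overcounting. One also has to check that the multinomial count comes out as $\boldsymbol{\lambda}!/\prod_{a}\boldsymbol{k}_{a}!$ and not something involving the $\boldsymbol{\ell}$'s. If this becomes messy, the fallback is to prove the identity over $\mathbb{Q}$ with indeterminate coefficients, where it is a polynomial identity in finitely many variables. One can then cite \cite{zbMATH00870188} for real-analytic functions and conclude by specializing the indeterminates, since the identity transfers to any $\mathbb{Q}$-algebra $R$.
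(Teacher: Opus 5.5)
Your argument is correct, but it does not follow the paper's route. The paper gives no proof of its own: it quotes the Constantine--Savits formula for smooth functions and asserts that ``it makes no difference'' whether $A$, $B$, $C$ are genuine functions or merely formal. Conditions (I)--(III) serve there only to guarantee that $B \circ A$ makes sense.

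You instead prove the formal identity directly over an arbitrary $\mathbb{Q}$-algebra, in three steps. First, you re-centre $B$ at $A(0)$; this is a finite Taylor expansion, legitimate because $B$ is a polynomial in case (I). Second, you discard constants to land in case (III). Third, you extract the $u^{\boldsymbol{J}}$-coefficient of $\sum_{\boldsymbol{\lambda}} \frac{(\partial_{\boldsymbol{\lambda}} B)(0)}{\boldsymbol{\lambda}!} \prod_{i} A_{i}^{\lambda_{i}}$. Grouping assignments by the $\prec$-ordered set of distinct $\boldsymbol{\ell}$'s gives a bijection with $\bigsqcup_{s} \mathcal{C}_{s}(\boldsymbol{J}, \boldsymbol{\lambda})$, and the multinomial count $\boldsymbol{\lambda}!/\prod_{a} \boldsymbol{k}_{a}!$ cancels the $1/\boldsymbol{\lambda}!$.

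The paper's citation is short and rests on a published result, but it leaves the transfer from real smooth functions to formal series over $R$ implicit. Your fallback is exactly what that transfer needs: truncate, treat the finitely many relevant coefficients as indeterminates over $\mathbb{Q}$, use the real case, and specialize. Your direct count gives a self-contained proof that needs no analytic input at all.

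Your approach also clarifies three points the statement leaves garbled:
\begin{itemize}
\item[(a)] The left side must be read as $(\partial_{\boldsymbol{J}} C)(0) = \boldsymbol{J}!\,[u^{\boldsymbol{J}}]C$.
\item[(b)] The polynomiality of $C$ in (I) and (II) plays no role in the identity.
\item[(c)] The restriction $|\boldsymbol{J}| \geq 1$ is genuinely needed, not just convenient. For $\boldsymbol{J} = 0$ the left side is $B(A(0))$, which need not vanish in cases (I) and (II), while the right side is an empty sum.
\end{itemize}
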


The authors in \cite{zbMATH00870188} work in the context where $A$ and $B$ are tuples of smooth functions and do discuss convergence, but it ultimately makes no difference in applying this formula whether the objects $A, B$ and $C$ are genuine functions or merely formal, provided the composition makes sense. The conditions (I), (II) and (III) are simply three different scenarios where the compositions make sense: in each case there is no issue computing the power series $C = B \circ A$, and making sense of the expressions $?(A(0))$, $?(B(A(0))$, etc. Note also that if given fixed $B$ and $C$ as in either (I), (II) or (III) we construct power series $A = (A_{1}, \hdots, A_{\sigma}) \in R[[u_{1}, \hdots, u_{v}]]^{\sigma}$ for which (\ref{faadibruno}) holds for all $\mathbf{J}$, then necessarily $C = B \circ A$. 

\paragraph{Solving for $A$:} We now suppose that we have a collection of functions $B_{1}, \hdots, B_{c}, C_{1}, \hdots, C_{c}$ such that $C_{r} = B_{r} \circ A$ for all $1 \leq r \leq c$. If $B_{r}$ is a polynomial we set $\deg B_{r}$ to be its degree, and otherwise set $\deg B_{r} = \infty$. We set $A(0) = \boldsymbol{e}$. Then fixing an index $r$ and multi-index $\mathbf{J}$ we obtain from (\ref{Csdef}) the equation

\makebox[\displaywidth]{\parbox{0.10\textwidth}{
\begin{tabular}{cc}
\parbox{13cm}{\begin{align*}
\sum_{i=1}^{\sigma} (\partial_{i} B_{r})(\boldsymbol{e}) \frac{A_{i,\boldsymbol{J}}(0)}{\boldsymbol{J}!} & =  \frac{(\partial_{\boldsymbol{J}} C_{r})(B(\boldsymbol{e}))}{\mathbf{J}!} \\ 
& - \underbrace{\sum_{2 \leq |\boldsymbol{\lambda}| \leq \min\{ \deg B_{r}, |\mathbf{J}| \}} (\partial_{\boldsymbol{\lambda}} B_{r})(\boldsymbol{e}) \sum_{s=1}^{|\boldsymbol{J}|} \sum_{\mathcal{C}_{s}(\boldsymbol{J},\boldsymbol{\lambda})} \left( \prod_{a=1}^{s} \frac{[\boldsymbol{A}_{\boldsymbol{\ell}_{a}}(0)]^{\boldsymbol{k}_{a}}}{(\boldsymbol{k}_{a}!) [\boldsymbol{\ell}_{a}!]^{|\boldsymbol{k}_{a}|}} \right)}_{P_{\boldsymbol{J},r}} .
\end{align*}} & 
\vbox{\begin{equation} \label{lineqforA} \end{equation}}
\end{tabular}
}}

Now let $\mathbf{B}$ be the $c \times \sigma$ matrix with entries $(\partial_{i} B_{r})(\boldsymbol{e}) \in R$; note this matrix is independent of $\mathbf{J}$. We make a few simple observations.

\begin{prop}
\label{linsolBprop}
If $\mathbf{B}$ admits an invertible $\sigma \times \sigma$ minor $\mathbf{M}$, then $A$ is uniquely determined by the equations $C_{r} = B_{r} \circ A$, where $r$ ranges over the subset $\mathcal{M} \subset \{ 1, \hdots, c \}$ which indexes the rows of $\mathbf{M}$. Moreover one has the equation
\begin{equation}
\label{linsyssol}
\frac{1}{\mathbf{J}!} \begin{bmatrix} A_{1,\mathbf{J}}(0) \\ \vdots \\ A_{\sigma,\mathbf{J}}(0) \end{bmatrix} = \mathbf{M}^{-1} \left( \frac{1}{\mathbf{J}!} \begin{bmatrix} (\partial_{\boldsymbol{J}} C_{i_{1}})(B(\boldsymbol{e})) \\ \vdots \\ (\partial_{\boldsymbol{J}} C_{i_{\sigma}})(B(\boldsymbol{e})) \end{bmatrix} - \begin{bmatrix} P_{\boldsymbol{J}, i_{1}} \\ \vdots \\ P_{\boldsymbol{J}, i_{\sigma}} \end{bmatrix} \right) , 
\end{equation}
where $i_{1} < \cdots < i_{\sigma}$ and $\mathcal{M} = \{ i_{1}, \hdots, i_{\sigma} \}$. 
\end{prop}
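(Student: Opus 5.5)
The plan is to read off \autoref{linsolBprop} directly from the Fa\`a di Bruno identity (\ref{lineqforA}). We argue by strong induction on $|\mathbf{J}|$ that the Taylor coefficients $A_{i,\mathbf{J}}(0)/\mathbf{J}!$, for $1 \leq i \leq \sigma$, are determined by the equations $C_{r} = B_{r} \circ A$ with $r \in \mathcal{M}$, and that they are given by (\ref{linsyssol}). Since $A(0) = \boldsymbol{e}$ is fixed, and a formal power series is determined by its Taylor coefficients (here we use that $R$ is a $\mathbb{Q}$-algebra, so division by $\mathbf{J}!$ makes sense), this gives both assertions.

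\textbf{Step 1: dependence of $P_{\mathbf{J},r}$.} Fix $r \in \mathcal{M}$ and $\mathbf{J} \neq 0$. We check that $P_{\mathbf{J},r}$ involves only $A_{i,\boldsymbol{\ell}}(0)$ with $|\boldsymbol{\ell}| < |\mathbf{J}|$. In $P_{\mathbf{J},r}$ the sum runs over $|\boldsymbol{\lambda}| \geq 2$. For a tuple in $\mathcal{C}_{s}(\mathbf{J},\boldsymbol{\lambda})$ we have $\sum_{a} |\boldsymbol{k}_{a}|\,\boldsymbol{\ell}_{a} = \mathbf{J}$ and $\sum_{a} |\boldsymbol{k}_{a}| = |\boldsymbol{\lambda}| \geq 2$, with every $\boldsymbol{\ell}_{a} \succ \boldsymbol{0}$. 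Hence $|\boldsymbol{\ell}_{a}| = |\mathbf{J}|$ can only happen if $s = 1$ and $|\boldsymbol{k}_{1}| = 1$, which contradicts $|\boldsymbol{\lambda}| \geq 2$. So every $|\boldsymbol{\ell}_{a}| < |\mathbf{J}|$, as claimed.

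The remaining terms are:
\begin{itemize}
\item the $|\boldsymbol{\lambda}| = 1$ contribution, which has $s = 1$, $\boldsymbol{k}_{1} = e_{i}$, $\boldsymbol{\ell}_{1} = \mathbf{J}$, and is exactly the left-hand side $\sum_{i} (\partial_{i} B_{r})(\boldsymbol{e})\, A_{i,\mathbf{J}}(0)/\mathbf{J}!$;
\item the term $\partial_{\mathbf{J}} C_{r}$ evaluated at the base point, which is given data.
\end{itemize}

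\textbf{Step 2: solving the system.} Stacking the equations (\ref{lineqforA}) over $r \in \mathcal{M} = \{i_{1} < \cdots < i_{\sigma}\}$ gives the linear system
\[ \mathbf{M}\, \frac{1}{\mathbf{J}!}\bigl(A_{1,\mathbf{J}}(0), \hdots, A_{\sigma,\mathbf{J}}(0)\bigr)^{T} = \frac{1}{\mathbf{J}!}\bigl(\partial_{\mathbf{J}} C_{i_{j}}\bigr)_{j} - \bigl(P_{\mathbf{J},i_{j}}\bigr)_{j}. \]
Since $\mathbf{M}$ is invertible over $R$, we multiply by $\mathbf{M}^{-1}$ to obtain (\ref{linsyssol}). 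By Step 1 and the inductive hypothesis, the right-hand side is already determined. This completes the induction, with base case $A(0) = \boldsymbol{e}$.

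\textbf{Main obstacle.} The only delicate point is justifying that (\ref{faadibruno}) is available in this setting, namely that one of the hypotheses (I)--(III) of \autoref{faadibrunothm} holds. If not, we may need to translate so that $A(0) = 0$ by recentering $B_{r}$ at $\boldsymbol{e}$. We also need to confirm the bookkeeping of the $\prec$-ordering in Step 1, which I expect to be routine. For uniqueness, note that any $A'$ satisfying the same equations with the same constant term must satisfy the same recursion, and therefore $A' = A$.
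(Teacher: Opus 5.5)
Your proposal is correct and is essentially the argument the paper intends. The paper states the proposition as an immediate observation from (\ref{lineqforA}): stack the equations over $r \in \mathcal{M}$, invert $\mathbf{M}$, and induct on $|\mathbf{J}|$, using the fact (which you verify explicitly and the paper leaves implicit) that $P_{\mathbf{J},r}$ involves only coefficients of order $<|\mathbf{J}|$. Your worry about hypotheses (I)--(III) is moot, since the standing assumption of that paragraph is that the compositions $C_{r} = B_{r} \circ A$ make sense; in the application all the $B_{r}$ and $C_{r}$ are polynomials, so (I) applies.
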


\begin{prop}
\label{linsolBprop3}
Suppose in the situation of \autoref{linsolBprop} all of the $B_{r}$ and $C_{r}$ are polynomials. Then there exists an integer $\alpha$ such that the $A_{i,\mathbf{J}}(0)$ admit representatives satisfying $\deg A_{i,\mathbf{J}}(0) \leq |\mathbf{J}| \alpha$ for all $\mathbf{J}$.
\end{prop}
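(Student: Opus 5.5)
The argument is an induction on $|\mathbf{J}|$ using the explicit formula (\ref{linsyssol}) of \autoref{linsolBprop}. Here $R = A/I$ is a coordinate ring, and ``representative'' means a lift to the polynomial ring $A$ whose degree we control.

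First we fix representatives for the finitely many objects that do not depend on $\mathbf{J}$. The entries of $\mathbf{M}$ are the elements $(\partial_{i} B_{r})(\boldsymbol{e}) \in R$, and $\mathbf{M}^{-1} = \det(\mathbf{M})^{-1}\operatorname{adj}(\mathbf{M})$ also has entries in $R$, since $\mathbf{M}$ is invertible over $R$. We choose representatives for all entries of $\mathbf{M}^{-1}$ and for all $(\partial_{\boldsymbol{\lambda}} B_{r})(\boldsymbol{e})$. Because each $B_{r}$ is a polynomial, only finitely many $\boldsymbol{\lambda}$ with $|\boldsymbol{\lambda}| \le \deg B_{r}$ occur. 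Let $D_{0}$ be the maximum degree of these finitely many representatives.

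Next we bound the degree of $C_{r}$'s contribution. Since $C_{r}$ is a polynomial in the $u$'s with coefficients in $R$, the term $(\partial_{\mathbf{J}} C_{r})(\cdot)/\mathbf{J}!$ is, up to the evaluation point, just a coefficient of $C_{r}$. The point $B(\boldsymbol{e})$ appearing in (\ref{faadibruno}) is really $0$ in the $u$-variables, so this term is the $u^{\mathbf{J}}$-coefficient of $C_{r}$. There are finitely many such coefficients, and all vanish for $|\mathbf{J}| > \deg C_{r}$, so their representatives have degree bounded by some constant $D_{1}$.

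Now we set $D = \max(D_{0}, D_{1}, 1)$ and $\alpha = 2D$ (adjusting at the end if necessary), and claim by induction that $\deg A_{i,\mathbf{J}}(0)/\mathbf{J}! \le \alpha|\mathbf{J}| - D$ for all $|\mathbf{J}| \ge 1$. Dividing by $\mathbf{J}!$ does not change the degree because $R$ is a $\mathbb{Q}$-algebra, and the same goes for the factors $\boldsymbol{\ell}_{a}!$, which are rational. The main point is the term $P_{\mathbf{J},r}$. Each of its summands is a coefficient $(\partial_{\boldsymbol{\lambda}} B_{r})(\boldsymbol{e})$, of degree at most $D$, times a product $\prod_{a}[\boldsymbol{A}_{\boldsymbol{\ell}_{a}}(0)]^{\boldsymbol{k}_{a}}$. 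This product has $|\boldsymbol{\lambda}| \ge 2$ factors, and each factor is some $A_{i,\boldsymbol{\ell}_{a}}(0)$ with $|\boldsymbol{\ell}_{a}| < |\mathbf{J}|$. The strictness comes from $|\boldsymbol{\lambda}| \ge 2$ together with $\sum_{a} |\boldsymbol{k}_{a}|\boldsymbol{\ell}_{a} = \mathbf{J}$ and $\boldsymbol{\ell}_{a} \succ 0$. Taking representatives to be products of representatives, the induction hypothesis bounds the degree of the product by
\[ \sum_{a} |\boldsymbol{k}_{a}|(\alpha|\boldsymbol{\ell}_{a}| - D) = \alpha|\mathbf{J}| - |\boldsymbol{\lambda}| D \le \alpha|\mathbf{J}| - 2D . \]
Adding the degree $D$ of the coefficient, each summand of $P_{\mathbf{J},r}$ has degree at most $\alpha|\mathbf{J}| - D$. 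We then subtract from the $C$-term, which has degree at most $D_{1} \le D$. Finally we multiply by $\mathbf{M}^{-1}$, which costs at most a further $D_{0}$. This gives degree at most $\alpha|\mathbf{J}| - D + D$.

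The obstacle is this last extra additive loss of $D$ from $\mathbf{M}^{-1}$ at every step, which would compound over the induction. The fix is to build the slack into the hypothesis. We run the induction with the bound $\alpha|\mathbf{J}| - 2D$ and $\alpha = 3D$, proving $\deg \le \alpha|\mathbf{J}| - 2D$. The recursion then gives the product bound $\alpha|\mathbf{J}| - 4D$, hence a summand bound of $\alpha|\mathbf{J}| - 3D$. Since $|\mathbf{J}| \ge 1$, the $C$-term satisfies $D \le \alpha|\mathbf{J}| - 2D$, so after subtracting it we are at most $\alpha|\mathbf{J}| - 2D$ (in the worst case coming from the $C$-term). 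Multiplying by $\mathbf{M}^{-1}$ then gives $\alpha|\mathbf{J}| - D$. This is one $D$ short, so we simply take $\alpha$ larger, say $\alpha = 5D$ with hypothesis $\alpha|\mathbf{J}| - 2D$, and recheck the bookkeeping; the base case $|\mathbf{J}| = 1$ has $P = 0$. This yields $\deg A_{i,\mathbf{J}}(0) \le \alpha|\mathbf{J}|$, which is the statement of \autoref{linsolBprop3}.
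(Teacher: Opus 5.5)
Your argument is correct and follows the same route as the paper's: you induct on $|\mathbf{J}|$ via (\ref{linsyssol}), using that the $\mathbf{J}$-independent inputs (the entries of $\mathbf{M}^{-1}$, the finitely many nonzero $(\partial_{\boldsymbol{\lambda}} B_{r})(\boldsymbol{e})$, and the coefficients of the $C_{r}$) have representatives of bounded degree, and that every factor in $P_{\mathbf{J},r}$ has $|\boldsymbol{\ell}_{a}| < |\mathbf{J}|$. Carrying slack $-2D$ and using $|\boldsymbol{\lambda}| \geq 2$ to absorb the additive cost of $\mathbf{M}^{-1}$ works once $\alpha \geq 4D$, so your final $\alpha = 5D$ is fine even though the intermediate $\alpha = 3D$ bookkeeping is loose; this makes explicit what the paper's ``increases the degree by at most some uniform constant'' leaves implicit.
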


\begin{proof}
The fact that the $B_{r}$ and $C_{r}$ are polynomials imply that the coefficients $(\partial_{\mathbf{J}} C_{r})(B(\mathbf{e})) / \mathbf{J}!$, $(\partial_{\boldsymbol{\lambda}} B_{r})(\mathbf{e})$ and the elements of $\mathbf{M}^{-1}$ take only finitely many possible values. Moreover, the subset of $\mathcal{C}_{s}(\mathbf{J}, \boldsymbol{\lambda})$ for which at least one of the $\partial_{\boldsymbol{\lambda}} B_{r}$ is non-zero has uniformly bounded size, independent of $\mathbf{J}$. The result then follows by inducting on the equation (\ref{lineqforA}) using that multiplying by the entries of $\mathbf{M}^{-1}$, the coefficients $(\partial_{\mathbf{J}} C_{r})(B(\mathbf{e})) / \mathbf{J}!$, or the coefficients $(\partial_{\boldsymbol{\lambda}} B_{r})(\mathbf{e})$, increases the degree by at most some uniform constant.
\end{proof}

\begin{cor}
\label{radiusconvestcor}
Suppose that all of $B_{1}, \hdots, B_{c}, C_{1}, \hdots, C_{c}$ are polynomials, and $R$ is equipped with a non-archimdean norm $\| \cdot \|$ extending a non-archimedean norm $L : | \cdot | \to \mathbb{R}$, where $L$ is a non-archimedean field. Then there exists $\tau$ such that $\| \partial_{\mathbf{J}} A_{i} / \mathbf{J}! \| / \tau^{|\mathbf{J}|+1} \leq 1$ for all $\mathbf{J},i$. 
\end{cor}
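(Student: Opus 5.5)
We prove the bound by strong induction on $|\mathbf{J}|$, using the explicit recursion (\ref{linsyssol}) from \autoref{linsolBprop}. Write $a_{\mathbf{J}} := \max_{i} \| A_{i,\mathbf{J}}(0)/\mathbf{J}! \|$. These are exactly the Taylor coefficients $\partial_{\mathbf{J}} A_{i}/\mathbf{J}!$ at $0$, so the goal is a bound of the form $a_{\mathbf{J}} \leq \tau^{|\mathbf{J}|+1}$. The first step is to collect finitely many constants, all available because every $B_{r}$ and $C_{r}$ is a polynomial:
\begin{itemize}
\item[-] $m := \max(1, \text{max norm of an entry of } \mathbf{M}^{-1})$;
\item[-] $\gamma := \max(1, \max_{r,\mathbf{J}} \| (\partial_{\mathbf{J}} C_{r})(B(\mathbf{e}))/\mathbf{J}! \|)$;
\item[-] $\beta := \max(1, \max_{r,\boldsymbol{\lambda}} \| (\partial_{\boldsymbol{\lambda}} B_{r})(\mathbf{e})\|)$.
\end{itemize}
The quantity in $\gamma$ is a Taylor coefficient of a polynomial, so only finitely many values are nonzero. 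For $\beta$, only $|\boldsymbol{\lambda}| \leq \deg B_{r}$ contributes, so $\beta$ is finite. We may take $a_{\mathbf{0}}$ to be bounded by $\tau$ once $\tau \geq \max(1,\|\mathbf{e}\|)$.

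The key point is to rewrite the Fa\`a di Bruno term $P_{\mathbf{J},r}$ in terms of normalized coefficients. Write each factor in the product as
\[ \frac{[\boldsymbol{A}_{\boldsymbol{\ell}_{a}}(0)]^{\boldsymbol{k}_{a}}}{\boldsymbol{k}_{a}!\,[\boldsymbol{\ell}_{a}!]^{|\boldsymbol{k}_{a}|}} = \frac{1}{\boldsymbol{k}_{a}!}\prod_{i} \left(\frac{A_{i,\boldsymbol{\ell}_{a}}(0)}{\boldsymbol{\ell}_{a}!}\right)^{k_{a,i}} . \]
The multinomial factor $1/\boldsymbol{k}_{a}!$ has $|\boldsymbol{k}_{a}| \leq |\boldsymbol{\lambda}| \leq \deg B_{r}$, so it takes finitely many values. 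Its $|\cdot|$ is therefore bounded by a constant $\kappa$, and for residue characteristic larger than $\max_{r}\deg B_{r}$ we have $\kappa = 1$. This is the crucial reason the composite $\boldsymbol{J}!$ factors cancel without producing $p$-adically large factorials in $|\mathbf{J}|$.

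Each $\boldsymbol{\ell}_{a}$ is nonzero, and because $|\boldsymbol{\lambda}| \geq 2$ we have $|\boldsymbol{\ell}_{a}| < |\mathbf{J}|$. The induction hypothesis then gives
\[ \prod_{a} a_{\boldsymbol{\ell}_{a}}^{|\boldsymbol{k}_{a}|} \leq \tau^{\sum_{a} |\boldsymbol{k}_{a}|(|\boldsymbol{\ell}_{a}|+1)} = \tau^{|\mathbf{J}| + |\boldsymbol{\lambda}|} . \]
This alone is too large by a factor $\tau^{|\boldsymbol{\lambda}|-1}$, which is the main obstacle. To handle it, I would instead prove the stronger statement $a_{\mathbf{J}} \leq \tau^{2|\mathbf{J}|-1}$ for $|\mathbf{J}| \geq 1$, in the standard majorant style. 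With the hypothesis $a_{\boldsymbol{\ell}} \leq \tau^{2|\boldsymbol{\ell}|-1}$, the product becomes
\[ \tau^{\sum_{a} |\boldsymbol{k}_{a}|(2|\boldsymbol{\ell}_{a}|-1)} = \tau^{2|\mathbf{J}| - |\boldsymbol{\lambda}|} \leq \tau^{2|\mathbf{J}|-2} , \]
using $|\boldsymbol{\lambda}| \geq 2$. Combining this with the non-archimedean triangle inequality (no counting of terms is needed) gives
\[ a_{\mathbf{J}} \leq m \max(\gamma, \beta\kappa\,\tau^{2|\mathbf{J}|-2}) . \]
This is at most $\tau^{2|\mathbf{J}|-1}$ as soon as $\tau \geq m\max(\gamma,\beta\kappa)$. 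The base case $|\mathbf{J}| = 1$ has no $P$-term, so it is covered by the same choice of $\tau$.

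Finally, $\tau^{2|\mathbf{J}|-1} \leq (\tau^{2})^{|\mathbf{J}|+1}$, so replacing $\tau$ by $\max(\tau^{2}, \|\mathbf{e}\|, 1)$ gives $\| \partial_{\mathbf{J}} A_{i}/\mathbf{J}! \| \leq \tau^{|\mathbf{J}|+1}$ for all $\mathbf{J}$ and $i$, as required. The same constants also show $\tau = 1$ works at all but finitely many places when $R$ carries adelic norms. That refinement is not needed for the statement, but it is how the result would feed into the later adelic tube constructions.
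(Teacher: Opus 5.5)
Your proof is correct and follows the same route as the paper. The paper also bounds the finitely many quantities coming from $\mathbf{M}^{-1}$, the Taylor coefficients of the $C_{r}$, and the terms $(\partial_{\boldsymbol{\lambda}} B_{r})(\mathbf{e})/\mathbf{k}_{a}!$ by $\tau$, and then inducts on (\ref{linsyssol}).

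Your strengthened hypothesis $\tau^{2|\mathbf{J}|-1}$ makes explicit a point the paper's ``one then easily proves the result by inducting'' passes over. Taken literally, the hypothesis $\tau^{|\mathbf{J}|+1}$ does not close, because the $|\boldsymbol{\lambda}| \geq 2$ terms contribute $\tau^{|\mathbf{J}|+|\boldsymbol{\lambda}|}$.

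One small fix: $\kappa$ should bound the whole product $\prod_{a} 1/\mathbf{k}_{a}!$, not a single factor. This is harmless, since $\prod_{a} \mathbf{k}_{a}!$ divides $\boldsymbol{\lambda}!$.
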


\begin{proof}
As before, the fact that the $B_{r}$ and $C_{r}$ are polynomials imply that the coefficients $(\partial_{\mathbf{J}} C_{r})(B(e)) / \mathbf{J}!$, $(\partial_{\boldsymbol{\lambda}} B_{r})(\mathbf{e}) / \mathbf{k}_{a}!$ and the elements of $\mathbf{M}^{-1}$ take only finitely many possible values. In particular we can make $\tau$ large enough so that the size of these elements of $R$ is $\ll \tau$. One then easily proves the result by inducting on (\ref{linsyssol}).
\end{proof}

\begin{cor}
\label{Asoltauone}
Suppose that in the situation of \autoref{radiusconvestcor} we have $R = A/I$ as in \S\ref{polyalgnormsec}, and $\| \cdot \| = \| \cdot \|_{(X,0), v}$ the associated family of adelic norms. Then for all but finitely many $v$ we can take $\tau = 1$.
\end{cor}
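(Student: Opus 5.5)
We run the same induction on (\ref{linsyssol}) as in the proof of \autoref{radiusconvestcor}, now tracking the constant $v$ by $v$ for the norms $\| \cdot \|_{(X,0),v}$ on $R = A/I$. The goal is to show that for all $v$ outside a finite set $S$, every quantity entering the recursion has norm at most $1$; the recursion then gives $\| A_{i,\mathbf{J}}(0)/\mathbf{J}! \|_{(X,0),v} \leq 1$ for all $\mathbf{J}$, which is the case $\tau = 1$.

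\textbf{Step 1: the finitely many inputs.} Only finitely many elements of $R$ enter (\ref{linsyssol}):
\begin{itemize}
\item[-] the Taylor coefficients $(\partial_{\mathbf{J}} C_{r})(B(\mathbf{e}))/\mathbf{J}!$ and $(\partial_{\boldsymbol{\lambda}} B_{r})(\mathbf{e})/\boldsymbol{\lambda}!$ of the polynomials $B_{r}, C_{r}$, re-expanded around the relevant base points;
\item[-] the components of $\mathbf{e}$;
\item[-] the entries of $\mathbf{M}^{-1}$, which lie in $R$ since $\mathbf{M}$ is invertible, so $\det \mathbf{M}$ is a unit of $R$.
\end{itemize}
Each nonzero element of $R$ has norm $1$ for all but finitely many $v$, by \autoref{oneforallbut}. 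Taking $S$ to be the union of the exceptional sets, every one of these inputs has norm $\leq 1$ for $v \notin S$. Taylor coefficients of a polynomial with coefficients in $R$, taken at a point with coordinates in $R$, are again integer-coefficient polynomial expressions in these data, so they have norm $\leq 1$ whenever the data do. By the non-archimedean inequality and \autoref{normdef}(4), any such expression inherits the bound.

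\textbf{Step 2: normalizing the recursion.} Rewrite (\ref{lineqforA}) in terms of the normalized coefficients $a_{i,\boldsymbol{\ell}} := A_{i,\boldsymbol{\ell}}(0)/\boldsymbol{\ell}!$. The quantity to control is $P_{\mathbf{J},r}$. Putting the factorials into the Taylor-coefficient form, $P_{\mathbf{J},r}$ becomes the degree-$\mathbf{J}$ coefficient of
\[
\sum_{|\boldsymbol{\lambda}| \geq 2} \frac{(\partial_{\boldsymbol{\lambda}} B_{r})(\mathbf{e})}{\boldsymbol{\lambda}!} \, (A - \mathbf{e})^{\boldsymbol{\lambda}} .
\]
This is the ordinary Taylor composition, which is what the Fa\`a di Bruno sum (\ref{faadibruno}) encodes once the factorials are absorbed. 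Its degree-$\mathbf{J}$ coefficient is a sum of products of the $a_{i,\boldsymbol{\ell}}$ with $|\boldsymbol{\ell}| < |\mathbf{J}|$, since $|\boldsymbol{\lambda}| \geq 2$ and every factor has degree at least $1$. The coefficients in this sum are the Taylor coefficients of $B_{r}$ and nonnegative integers coming from expanding the product.

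\textbf{Step 3: induction.} Induct on $|\mathbf{J}|$, assuming $\| a_{i,\boldsymbol{\ell}} \| \leq 1$ for all $|\boldsymbol{\ell}| < |\mathbf{J}|$. By Step 2, $\| P_{\mathbf{J},r} \| \leq 1$. The $C$-term has norm $\leq 1$ by Step 1. Multiplying by $\mathbf{M}^{-1}$ in (\ref{linsyssol}) preserves the bound, so $\| a_{i,\mathbf{J}} \| \leq 1$. The base case $|\mathbf{J}| = 1$ has $P_{\mathbf{J},r} = 0$ and follows the same way.

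\textbf{Main obstacle.} The delicate point is Step 2: checking that the combinatorial factors $\mathbf{J}!/(\boldsymbol{k}_{a}!\,[\boldsymbol{\ell}_{a}!]^{|\boldsymbol{k}_{a}|})$ in (\ref{faadibruno}) combine into integers once everything is expressed through normalized coefficients. If they did not, powers of $p$ would appear in the denominators and the bound would fail at every place. The plan is to avoid the explicit formula altogether: compare the formal identity $C_{r} = B_{r} \circ A$ coefficient by coefficient as a Taylor composition, which involves only integer multinomial coefficients. By the remark after \autoref{faadibrunothm}, this determines the same $A$.
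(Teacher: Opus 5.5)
Your proposal is correct and is essentially the paper's proof: use \autoref{oneforallbut} to bound the finitely many inputs to (\ref{linsyssol}) by $1$ outside a finite set of places, then induct on $|\mathbf{J}|$. Your Step 2 rewrites $P_{\mathbf{J},r}$ as a Taylor composition, so that only $(\partial_{\boldsymbol{\lambda}} B_{r})(\mathbf{e})/\boldsymbol{\lambda}!$ and integer coefficients appear (equivalently, $\boldsymbol{\lambda}!/\prod_{a} \boldsymbol{k}_{a}!$ is an integer); this, together with your explicit inclusion of $\mathbf{e}$ and the entries of $\mathbf{M}^{-1}$, fills in details the paper's two-line argument leaves implicit.
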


\begin{proof}
Using \autoref{oneforallbut}, for all but finitely many $v$, we can assume that the entries  $(\partial_{\mathbf{J}} C_{r})(B(e)) / \mathbf{J}!$, $(\partial_{\boldsymbol{\lambda}} B_{r})(\mathbf{e}) / \mathbf{k}_{a}!$ have size $\leq 1$. Again one inducts on (\ref{linsyssol}). 
\end{proof}

\subsubsection{Local Construction of Tubes}

\begin{prop}
\label{tubeconstrprop}
~  
\begin{itemize}
\item[(A)] There exists a unique isomorphism $\eta$ of formal schemes
\[ \Spf \varprojlim_{r} \mathcal{O}_{\mathbb{A}^{p} \times E} / (u_{1}, \hdots, u_{p})^{r} \xrightarrow{\sim} \mathcal{E} \]
inducing the identity map on $E$, sending $f_{i}$ to $u_{i}$ for $1 \leq i \leq p$, and sending $f_{p+1}, \hdots, f_{q}$ to the images of the restrictions $\restr{f_{p+1}}{E}, \hdots, \restr{f_{q}}{E}$, viewed inside the coordinate ring of the left-hand side via the map $\mathcal{O}_{E} \to \varprojlim_{r} \mathcal{O}_{\mathbb{A}^{p} \times E} / (u_{1}, \hdots, u_{p})^{r}$.
\end{itemize}
We note that the statement of (A) is local, hence it holds, via gluing, even if one does not assume $U$ or $E$ is affine.
\begin{itemize}

\item[(B)] For each finite place $v$ of $K$, the map $\eta$ extends to an open analytic embedding $\eta^{v} : E\langle p \rangle^{v} \to U^{v}$ where $E\langle p \rangle$ is some standard adelic tube.

\item[(C)] Fix an adelic tube $\mathcal{T}$ associated to $(U, E)$. Then after possibly refining $E\langle p \rangle$, the maps $\eta^{v}$ in (B) factor through $\mathcal{T}^{v} \subset U^{v}$. Moreover there is a refinement $\mathcal{T}'$ of $\mathcal{T}$ such that the image of $E\langle p \rangle^{v}$ contains $\mathcal{T}'^{v}$ for all $v$.
\end{itemize}
\end{prop}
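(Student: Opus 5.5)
The plan is to build $\eta$ as the inverse of the explicit map $\psi := (\beta, f_{1}, \hdots, f_{p}) : \mathcal{E} \to E[p]^{\wedge}$, where $\beta$ is the formal retraction of \autoref{formalretractexists}. The analytic statements (B) and (C) then come from estimating the power series involved uniformly in $v$, using the Fa\`a di Bruno machinery of \S\ref{powerseriesgensec}.

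\emph{Step 1 (part (A)).} By \autoref{formalretractexists} we have $\mathcal{O}_{E} \cong \mathcal{B} \subset \mathcal{O}_{\mathcal{E}}$. Hence $u_{i} \mapsto f_{i}$ ($i \leq p$), together with $\mathcal{O}_{E} \xrightarrow{\sim} \mathcal{B}$, defines a continuous map $\varprojlim_{r} \mathcal{O}_{\mathbb{A}^{p} \times E}/(u)^{r} \to \mathcal{O}_{\mathcal{E}}$, because $\mathcal{O}_{\mathcal{E}}$ is $(f_{1}, \hdots, f_{p})$-adically complete. To see it is an isomorphism, I check on completed stalks at a point $o \in E$. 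On the right, the \'etale coordinates identify $\widehat{\mathcal{O}}_{\mathcal{E},o}$ with $K[[f_{1}, \hdots, f_{p}, f_{p+1}-f_{p+1}(o), \hdots, f_{q} - f_{q}(o)]]$. On the left the completed stalk is $K[[u_{1}, \hdots, u_{p}]]$ over $\widehat{\mathcal{O}}_{E,o} = K[[\restr{f_{j}}{E} - f_{j}(o)]]_{j>p}$. The map sends these variables to one another: for $j > p$ the flat lift $\delta(f_{j})$ differs from $f_{j}$ by an element of $(f_{1}, \hdots, f_{p})$, so the induced map on graded pieces is the identity and the map is an isomorphism.

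For uniqueness, note that any $\eta$ with the stated properties makes $(f_{1}, \hdots, f_{q}) \circ \eta$ a prescribed map to $\mathbb{A}^{q}$ lifting a fixed map on $E$. Since $U \to \mathbb{A}^{q}$ is \'etale, the infinitesimal lifting property along the nilpotent thickenings $E \subset V(u)^{r}$ forces uniqueness at each level $r$, and hence in the limit.

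\emph{Step 2 (part (B)).} First I work Zariski-locally on $E$. By \autoref{adelictubezarcover} and gluing, it suffices to treat opens $D(h) \subset E$ on which a suitable $q \times q$ minor of the Jacobian becomes a unit. Write $\iota : U \hookrightarrow \mathbb{A}^{m}$ with defining polynomials $G_{1}, \hdots, G_{s}$, and set $R = \mathcal{O}(D(h))$ with its adelic norm. The coordinates of $\eta$ are power series $A_{i} = z_{i} \circ \eta \in R[[u_{1}, \hdots, u_{p}]]$ with $A(0) = (\restr{z_{i}}{E})$. They are characterized by the polynomial equations
\[ G_{k} \circ A = 0, \qquad f_{i} \circ A = u_{i} \ (i \leq p), \qquad f_{j} \circ A = \restr{f_{j}}{E} \ (j > p), \]
which is case (I) of \autoref{faadibrunothm}. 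The matrix $\mathbf{B}$ of \autoref{linsolBprop} is the Jacobian of $(G, f)$ at $\boldsymbol{e}$, and \'etaleness gives the invertible minor $\mathbf{M}$ over $R$. Then:
\begin{itemize}
\item \autoref{linsolBprop3} gives the degree bound $\deg \leq |\mathbf{J}| \alpha$;
\item \autoref{radiusconvestcor} and \autoref{Asoltauone} give $\| A_{i,\mathbf{J}}/\mathbf{J}! \|_{v} \leq \tau_{v}^{|\mathbf{J}|+1}$, with $\tau_{v} = 1$ for almost all $v$.
\end{itemize}
So the $A_{i}$ are power-adelic in the sense of \autoref{adandpowbound}, and by \autoref{adelicsufftocheck} and the argument of \autoref{retextendscor} they converge on a standard adelic tube $E\langle p\rangle$. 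This defines $\eta^{v}$. To see that $\eta^{v}$ is an open embedding, compare with $\psi^{v}$, which exists on an adelic tube $\mathcal{T}$ by \autoref{retextendscor}. Both composites $\psi^{v} \circ \eta^{v}$ and $\eta^{v} \circ \psi^{v}$ are formally the identity along $E$, so they are the identity wherever defined. Here I use that each connected component of a tube meets the zero section or $E$, and apply the identity principle for analytic functions that vanish to infinite order there.

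\emph{Step 3 (part (C)).} Along $\eta^{v}$ we have $|f_{i} \circ \eta| = |u_{i}|$, and $|z_{r} \circ \eta^{v}|$ is bounded by the power-adelic estimates above. Shrinking $\rho$ and enlarging $\alpha$ for $E\langle p \rangle$, as in the proof of \autoref{adelictubereflem}, therefore forces the inequalities (\ref{tubedefeqs}) defining $\mathcal{T}$ on the image. Conversely, the estimates for $\beta$ in \autoref{retextendscor}, together with $u_{i} = f_{i}$, show that $\psi^{v}$ maps a refinement $\mathcal{T}'^{v}$ into $E\langle p\rangle^{v}$. Then $\mathcal{T}'^{v} = \eta^{v}(\psi^{v}(\mathcal{T}'^{v}))$ lies in the image.

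The main obstacle I expect is in Steps 2 and 3: the identity-principle argument that the formal identities $\psi \circ \eta = \mathrm{id}$ and $\eta \circ \psi = \mathrm{id}$ propagate to the whole tubes. This needs a connectedness statement, namely that every component of a (refined) tube meets its core; I would try to prove this by retracting along $\beta$ and the radial scaling in the $u$-directions. Keeping the radii uniform across the finitely many bad places after the Zariski-local gluing is routine, via \autoref{adelictubezarcover}.
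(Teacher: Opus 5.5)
Your proposal is correct in outline. For (B) and (C) it is essentially the paper's argument:
\begin{itemize}
\item the coordinates of $\eta$ are the unique Fa\`a di Bruno solution of \autoref{solconstr} on charts $D(h_{i})$ (the paper adjoins $z_{m+1}$ with $z_{m+1}h_{i}=1$ to stay in case (I));
\item convergence on a standard tube comes from \autoref{solconstr}(2)--(3);
\item the open-embedding claim and the refinement $\mathcal{T}'$ both come from comparing with $\gamma^{v}=(\beta^{v},f_{1},\hdots,f_{p})$ of \autoref{retextendscor}.
\end{itemize}
You differ in (A). The paper gets existence and uniqueness of $\eta$ from the recursion (the invertible minor in \autoref{linsolBprop}) and glues over the $D(h_{i})$. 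It leaves the isomorphism property implicit; it only resurfaces later as ``$\gamma$ is formally inverse to $\eta$''. You instead set $\eta=\psi^{-1}$ via \autoref{formalretractexists}, check bijectivity on graded pieces, and get uniqueness from formal \'etaleness of $(f_{1},\hdots,f_{q})$. This is global, needs no gluing, and actually proves the isomorphism. You still need the recursion for the estimates, and you correctly get it by identifying $z_{i}\circ\eta$ with the unique solution.

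One correction: for $j>p$ you have $\partial_{\ell}f_{j}=0$ for $\ell\le p$, so $\delta(f_{j})=f_{j}$ exactly. This exact equality, not a congruence modulo $(f_{1},\hdots,f_{p})$, is what makes $\psi^{-1}$ send $f_{j}$ to $\restr{f_{j}}{E}$, as (A) demands. You are also right to insist on both composites. The paper only records $\eta^{v}\circ\gamma^{v}=$ inclusion, which alone does not make $\eta^{v}$ an open embedding on all of $E\langle p\rangle^{v}$.

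On the step you flag, the paper just asserts it (``evidently \ldots by formally completing along $E$''), but your proposed connectivity argument only works on one side. Radial scaling $u\mapsto tu$ with $|t|\le 1$ preserves $E\langle p\rangle^{v}$ (for a product embedding of $\mathbb{A}^{p}\times E$), so the identity principle gives $\psi^{v}\circ\eta^{v}=\mathrm{id}$. For $\mathcal{T}'^{v}\subset\operatorname{im}\eta^{v}$, however, you need $\eta^{v}\circ\psi^{v}=\mathrm{id}$ on $\mathcal{T}'^{v}$. The only way to scale points of $U^{v}$ is through $\eta^{v}$ itself, so that route is circular.

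A connectivity-free fix goes through the Taylor formula. By the uniqueness in (A), $\eta^{\sharp}(a)=\sum_{J}u^{J}\,\restr{(\partial^{J}a/J!)}{E}$; Leibniz makes this a ring map, and $\partial_{\ell}f_{k}=\delta_{\ell k}$. Using $\beta^{v*}(\restr{b}{E})=\delta(b)$ (multiplicativity of $\delta$, again by rearrangement),
\[ z_{r}\circ\eta^{v}\circ\psi^{v}=\sum_{J}f^{J}\,\delta\Big(\frac{\partial^{J}z_{r}}{J!}\Big)=\sum_{M}\frac{\partial^{M}z_{r}}{M!}\sum_{J+K=M}\binom{M}{J}f^{J}(-f)^{K}=z_{r}, \]
and the same computation gives $\delta(z_{r})\circ\eta^{v}=\restr{z_{r}}{E}$. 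The binomial coefficients are integers, so the terms of these double series are bounded by those of $\delta(z_{r})$ (resp.\ $A_{r}$). Hence they converge absolutely wherever the series you already control converge, and the rearrangements hold pointwise. This yields both composites on their natural domains with no identity principle, and in particular justifies your ``wherever defined''.
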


We suppose that $U \subset \mathbb{A}^{m}$ is defined as the vanishing locus of functions $g_{1}, \hdots, g_{k} \in K[z_{1}, \hdots, z_{m}]$. With respect to such a presentation, the map of \autoref{tubeconstrprop} uniquely determines a collection of $m$ power series $G_{1}, \hdots, G_{m} \in \varprojlim_{r} \mathcal{O}_{\mathbb{A}^{p} \times E} / (u_{1}, \hdots, u_{p})^{r}$. We will also prove:

\begin{prop}
\label{definingpowerseriesadelicprop}
The power series $G_{1}, \hdots, G_{m}$ are power-adelic.
\end{prop}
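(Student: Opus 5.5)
The plan is to realize the power series $G_{1}, \hdots, G_{m}$ as the unique solution $A$ of a system of the form $C_{r} = B_{r} \circ A$, and then read off power-adelicity from \autoref{linsolBprop}, \autoref{linsolBprop3}, \autoref{Asoltauone} and \autoref{adelicsufftocheck}. Throughout, I take the coefficient ring to be $R = \mathcal{O}(E) = A/I$ for a presentation of $E$ through the origin $o$, and the variables to be $u_{1}, \hdots, u_{p}$. The components $G = (G_{1}, \hdots, G_{m})$ are the pullbacks $\eta^{*}z_{j}$.

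\textbf{Step 1: setting up the system.} The pullbacks are characterized by two sets of equations, with $\sigma = m$ and $\nu = p$:
\begin{itemize}
\item[-] $g_{s}(G) = 0$ for $1 \leq s \leq k$, which says that $G$ lands in $U$;
\item[-] $f_{i}(G) = u_{i}$ for $i \leq p$, and $f_{i}(G) = \restr{f_{i}}{E}$ for $i > p$, where I lift the $f_{i}$ to polynomials in $z_{1}, \hdots, z_{m}$.
\end{itemize}
In this system the $B_{r}$ are the polynomials $g_{s}$ and $f_{i}$, and the $C_{r}$ are the polynomials $0$, $u_{i}$ and $\restr{f_{i}}{E}$. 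The constant term is $A(0) = \boldsymbol{e} = \iota|_{E}$, namely the tuple $(z_{1}, \hdots, z_{m})$ restricted to $E$. We are therefore in case (I) of \autoref{faadibrunothm}.

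\textbf{Step 2: the invertible minor.} The matrix $\mathbf{B}$ is the Jacobian of $(g_{1}, \hdots, g_{k}, f_{1}, \hdots, f_{q})$ with respect to $z$, evaluated along $E$. Because $U$ is smooth and the $f_{i}$ are étale coordinates, this matrix has rank $m$ at every point of $E$. Globally on $E$, however, an $m \times m$ minor need only generate the unit ideal rather than be a unit. I expect this to be the main obstacle, and I would handle it in three moves.
\begin{itemize}
\item[-] Cover $E$ by finitely many principal opens $D(h_{t})$, each containing $o$ (after translating), on which some fixed minor $\mathbf{M}_{t}$ is invertible. A minor that is nonzero at $o$ is invertible on a neighbourhood of $o$; for other components of the cover I would recentre at a nearby point.
\item[-] Apply \autoref{linsolBprop}, \autoref{linsolBprop3} and \autoref{Asoltauone} over $R_{t} = \mathcal{O}(D(h_{t}))$, presented as in \autoref{norminvunderrestric}. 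Uniqueness in \autoref{linsolBprop} shows that all these local solutions restrict from the same $G$.
\item[-] Return to $\mathcal{O}(E)$ using \autoref{normsmallerthanreponsomeopen}, which bounds $\| \cdot \|_{(E,o),v}$ by $c_{v}\max_{t} \| \cdot \|_{t,v}$ with $c_{v} = 1$ for almost all $v$.
\end{itemize}
The degree bounds must also be transported back. Using \autoref{getfracreplem}, a representative of degree $\leq |\mathbf{J}|\alpha_{t}$ in $R_{t}$ can be written as $a/h_{t}^{N}$ with $N$ linear in $|\mathbf{J}|$. Gluing these across the cover via a partition-of-unity identity $\sum r_{t} h_{t}^{N} = 1$ with controlled degrees is the delicate part. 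I would instead try first to avoid the cover altogether by adjoining $1/\det \mathbf{M}$ globally only when $\det\mathbf{M}$ is a unit on $E$.

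\textbf{Step 3: concluding.} Given the invertible minor, \autoref{radiusconvestcor} gives, for each $v$, bounds $\| G_{j,\mathbf{J}} \| \leq \tau_{v}^{|\mathbf{J}|+1}$, and \autoref{Asoltauone} gives $\tau_{v} = 1$ for almost all $v$. The proof of \autoref{linsolBprop3} inducts on (\ref{linsyssol}). Carried out for a fixed finite set of places, the same induction produces representatives that simultaneously satisfy the norm bounds and the degree bound $\leq |\mathbf{J}|\alpha + \beta$: each step multiplies by finitely many fixed elements chosen with representatives minimal for both norm and degree. \autoref{adelicsufftocheck} then upgrades these bounds to $\alpha$-adelic. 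Hence each $G_{j}$ is power-adelic.
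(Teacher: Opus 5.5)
Your architecture is the paper's. You solve $C_r = B_r \circ A$ on a Zariski cover via \autoref{linsolBprop}, \autoref{linsolBprop3} and \autoref{Asoltauone}, identify the local solutions with restrictions of $G$ by uniqueness, get the adelic bound from \autoref{normsmallerthanreponsomeopen}, and finish with \autoref{getfracreplem} and \autoref{adelicsufftocheck}. The paper's charts are instead the $D(h_i)$ on which $U$ is cut out by exactly $\tau$ of the $g$'s; adding $z_{m+1}$ as an unknown with equation $z_{m+1}h_i - 1 = 0$ makes $\mathbf{B}$ square, so its determinant is nowhere zero on $\widetilde{E}$ and hence a unit. Your minors play the same role.

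\textbf{The gap.} It is the step you call delicate and then leave open. You never produce, in $\mathcal{O}(E)$ itself, representatives of the coefficients of $G$ whose degree is linear and whose Gauss norm is exponential in $|\mathbf{J}|$. Your fallback of inverting a single $\det \mathbf{M}$ globally only applies when some minor happens to be a unit on all of $E$, and nothing guarantees that. Without this input, \autoref{adelicsufftocheck} cannot be applied at the finitely many bad places, so you only get that the $G_j$ are adelic, not power-adelic.

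\textbf{How to close it.} The partition of unity you mention is easy to control. Suppose \autoref{getfracreplem} gives $g_{\mathbf{o}} = a_t/h_t^{N_t}$ on $D(h_t) \cap E$. Since $E$ is integral and $h_t \neq 0$ on it, $h_t^{N_t} g_{\mathbf{o}} = a_t$ already holds in $\mathcal{O}(E)$. Fix $r_t$ with $\sum_t r_t h_t = 1$ on $E$, let $T$ be the number of charts and $N = \max_t N_t$, and expand
\[ 1 = \Bigl( \sum_{t} r_t h_t \Bigr)^{T(N-1)+1} = \sum_t R_t h_t^{N} . \]
By pigeonhole, every multinomial term contains some $h_t^{N}$. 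Since multinomial coefficients are integers, $\deg R_t = O(N)$ and $\| R_t \|_v \leq c_v^{O(N)}$, with $c_v = 1$ for almost all $v$. Then $\sum_t R_t h_t^{N-N_t} a_t$ is a representative of $g_{\mathbf{o}}$ whose degree is linear and whose norm is exponential in $|\mathbf{o}|$ at every place. Together with the almost-everywhere bound you already have from \autoref{normsmallerthanreponsomeopen}, this is exactly the input \autoref{adelicsufftocheck} needs. Unlike the paper's single-chart step, which divides $e_{i,\mathbf{o}}$ by $h_i^{n_{i,\mathbf{o}}}$, it needs no division in $\mathcal{O}(E)$.

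\textbf{A smaller point.} \autoref{norminvunderrestric} and \autoref{normsmallerthanreponsomeopen} require every chart to contain $o$, and so does \autoref{getfracreplem} through $f(0) \neq 0$. ``Recentring at a nearby point'' therefore does not fit their hypotheses. Instead, replace the rows of $\mathbf{B}$ by generic $K$-linear combinations. For each $x \in E$ this gives a square polynomial system whose Jacobian is invertible at both $x$ and $o$, and uniqueness in \autoref{linsolBprop} still identifies its solution with $G$.
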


\vspace{0.5em}

 We begin by fixing functions $h_{1}, \hdots, h_{\mu} \in K[z_{1}, \hdots, z_{m}]$ with the following property: the standard open sets $D(h_{1}), \hdots, D(h_{\mu})$ give an open cover of $\mathbb{A}^{m}$, and on each $D(h_{i})$ there exists a subset $\{ g_{i_{1}}, \hdots, g_{i_{p}} \}$ of $\{ g_{1}, \hdots, g_{k} \}$ of size $\tau = \codim_{\mathbb{A}^{m}} U$ such that $D(h_{i}) \cap U$ is the vanishing locus of $\{ g_{i_{1}}, \hdots, g_{i_{\tau}} \}$. That this can be achieved is a consequence of the fact that $U$ is smooth, hence a local complete intersection. We will construct the maps $\eta$ and $\eta^{v}_{o}$ locally after restricting to $D(h_{i})$, and obtain the general result by gluing. 

We fix embeddings $\iota_{i} : D(h_{i}) \hookrightarrow \mathbb{A}^{m+1}$, constructed in the obvious way: we add a variable $z_{m+1}$ and the define the image by $z_{m+1} h_{i} = 1$. Write $\widetilde{E}$ (resp. $\widetilde{U}$) for the image of $D(h_{i}) \cap E$ (resp. $D(h_{i}) \cap U$) under this embedding. We also write $\widetilde{\mathcal{E}}$ for $\mathcal{E} \cap D(h_{i})$, again viewed via its image under $\iota_{i}$. We will fix such an $i$ for now, allowing us to suppress the index, which won't be relevant until later. For ease of notation, we will temporarily write $g_{1}, \hdots, g_{\tau}$ for $g_{i_{1}}, \hdots, g_{i_{\tau}}$ and temporarily set $g_{\tau+1} = z_{m+1} h_{i} - 1$.

\vspace{0.5em}

We will construct a sequence $A_{1}, \hdots, A_{m+1} \in \varprojlim_{r} \mathcal{O}_{\mathbb{A}^{p} \times \widetilde{E}} / (u_{1}, \hdots, u_{p})^{r}$ of elements which give a map to $\mathbb{A}^{m+1}$ which factors through $\widetilde{\mathcal{E}}$. This will define our map $\eta$ on the formal completion of $\widetilde{E}$ inside $\mathbb{A}^{p} \times \widetilde{E}$. We write
\[ A_{i}(u_{1}, \hdots, u_{r}) = \sum_{\mathbf{J}} A_{i,\mathbf{J}} \overline{u}^{\mathbf{J}} , \] 
where $A_{i,\mathbf{J}} \in \Gamma(\widetilde{E}, \mathcal{O}_{\widetilde{E}})$, the sum ranges over all appropriate compositions of integers, and we use multi-index notation to exponentiate the multi-vector $\overline{u} = (u_{1}, \hdots, u_{r})$. Let $\widetilde{e}_{1}, \hdots, \widetilde{e}_{m+1} \in \mathcal{O}_{\widetilde{E}}$ be the functions corresponding to the embedding $\widetilde{E} \hookrightarrow \mathbb{A}^{m+1}$. We will consider formal functions $A_{1}, \hdots, A_{m+1}$ which satisfy the following conditions:
\begin{align}
\label{cond1}
g_{r} (A_{1}, \hdots, A_{m+1}) &= 0 \hspace{4em} &1 \leq r \leq \tau+1 \\
\label{cond2}
f_{t} (A_{1}, \hdots, A_{m+1}) &= u_{t} \hspace{4em} &1 \leq t \leq p \\
\label{cond3}
f_{t} (A_{1}, \hdots, A_{m+1}) &= \restr{f_{t}}{\widetilde{E}} \in \varprojlim_{r} \mathcal{O}_{\mathbb{A}^{p} \times \widetilde{E}} / (u_{1}, \hdots, u_{p})^{r} \hspace{4em} &p+1 \leq t \leq q \\
\label{cond4}
A_{i,\boldsymbol{0}} &= \widetilde{e}_{i} \hspace{4em} &1 \leq i \leq m+1 .
\end{align}
The condition (\ref{cond1}) is just the claim that the map 
\[ \alpha : K[z_{1}, \hdots, z_{m+1}] \to \varprojlim_{r} \mathcal{O}_{\mathbb{A}^{p} \times \widetilde{E}} / (u_{1}, \hdots, u_{p})^{r} \]
induced by $A_{1}, \hdots, A_{m+1}$ factors through $B := K[z_{1}, \hdots, z_{m+1}]/(g_{1}, \hdots, g_{\tau+1})$, and hence defines a map $\Spf \varprojlim_{r} \mathcal{O}_{\mathbb{A}^{p} \times \widetilde{E}} / (u_{1}, \hdots, u_{p})^{r} \to \widetilde{U}$. Given this, condition (\ref{cond2}) makes sense, since different lifts of the $f_{t}$ to functions on $\mathbb{A}^{m}$ will differ by elements in the ideal $(g_{1}, \hdots, g_{k+1})$. One likewise makes sense of (\ref{cond3}) as the condition that viewing $f_{t}(A_{1}, \hdots, A_{m})$ as a power series with coefficients in $\mathcal{O}_{\widetilde{E}}$ yields only a constant term whose value is the image of $f_{t}$ in $\mathcal{O}_{\widetilde{E}}$. Finally (\ref{cond4}) ensures that $\{ 0 \} \times \widetilde{E}$ maps identically onto $\widetilde{E}$.

Let $\beta_{r}$ be the induced map $B \to \mathcal{O}_{\mathbb{A}^{p} \times E} / (u_{1}, \hdots, u_{p})^{r}$. Condition (\ref{cond2}) implies that $(f_{1}, \hdots, f_{p})^{r}$ is in the kernel of $\beta_{r}$ for each $r$, which then implies $\alpha$ must also factor through $\Gamma(\widetilde{\mathcal{E}}, \mathcal{O}_{\widetilde{\mathcal{E}}}) = \varprojlim_{r} B/(f_{1}, \hdots, f_{p})^{r}$.  So solving these equations gives a well-defined map 
\[ \eta : \Spf \varprojlim_{r} \mathcal{O}_{\mathbb{A}^{p} \times \widetilde{E}} / (u_{1}, \hdots, u_{p})^{r} \to \widetilde{\mathcal{E}} . \]
It is clear from our discussion and the definitions that such a map would satisfy the properties listed in \autoref{tubeconstrprop}(A) with $E = \widetilde{E}$, except possibly uniqueness.

\begin{prop}
\label{solconstr}
~
\begin{itemize}
\item[(1)] There exists a unique solution $(A_{1}, \hdots, A_{m+1})$ satisfying (\ref{cond1}), (\ref{cond2}), (\ref{cond3}) and (\ref{cond4}). 

\item[(2)] Write $\| \cdot \|_{o,v} = \| \cdot \|_{(\widetilde{E}, o),v}$ for an adelic norm (resp. a norm) as in \S\ref{polyalgnormsec} associated to the point $o \in \widetilde{E}(K)$ (resp. $o \in \widetilde{E}(K_{v})$). In the adelic case, for all but finitely many non-archimedean places $v$ of $K$ the coefficients of the $A_{i}$ have norm at most $1$ in the norm $\| \cdot \|_{o,v}$. In the case where $v$ is fixed, the norm is bounded by $\kappa^{|\mathbf{J}| + 1}$, where $\kappa > 0$ is some fixed constant depending on $v$ and $o$ and $|\mathbf{J}|$ is the order of the coefficient indexed by the multivector $\mathbf{J}$.

\item[(3)] There exists an integer $\alpha$ such that, for each $i$, the $\overline{u}^{\overline{a}}$ coefficient of $A_{i}$ admits a polynomial representative under the embedding $\iota$ with degree at most $|a| \alpha + \beta$ for some constant $\beta$.
\end{itemize}
\end{prop}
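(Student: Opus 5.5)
The plan is to reduce all three parts to the power-series machinery of \S\ref{powerseriesgensec}: treat the conditions (\ref{cond1})--(\ref{cond4}) as a system $C_{r} = B_{r} \circ A$ over the ring $R = \Gamma(\widetilde{E}, \mathcal{O}_{\widetilde{E}})$, with $\nu = p$ variables $u_{1}, \hdots, u_{p}$ and $\sigma = m+1$ unknowns. Concretely, I would take the $B_{r}$ to be the polynomials $g_{1}, \hdots, g_{\tau+1}$, together with lifts of $f_{1}, \hdots, f_{q}$ to $K[z_{1}, \hdots, z_{m+1}]$. The matching $C_{r}$ are $0$, $u_{t}$, and the constants $\restr{f_{t}}{\widetilde{E}}$, all of which are polynomials in $u$ with coefficients in $R$. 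Condition (\ref{cond4}) fixes $A(0) = \boldsymbol{e} = (\widetilde{e}_{1}, \hdots, \widetilde{e}_{m+1})$, and for $\mathbf{J} = 0$ the system holds automatically because $\widetilde{E} \subset \widetilde{U}$ is cut out by $f_{1}, \hdots, f_{p}$. We are therefore in case (I) of \autoref{faadibrunothm}, so (\ref{lineqforA}) applies order by order in $\mathbf{J}$.

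The key step is to verify the hypothesis of \autoref{linsolBprop}: the $c \times (m+1)$ matrix $\mathbf{B}$, with entries $(\partial_{i} B_{r})(\boldsymbol{e})$ in $R$, must have an $(m+1) \times (m+1)$ minor that is invertible in $R$, not merely nonzero at each point. The rows are $dg_{1}, \hdots, dg_{\tau}, d(z_{m+1}h - 1), df_{1}, \hdots, df_{q}$ restricted to $\widetilde{E}$. We have $\tau + q = m$, since $q = \dim U$ and $\tau = \codim U$. The $dg$'s cut out $T\widetilde{U}$ because $U$ is smooth, the $df_{t}$ form a basis of $T^{*}\widetilde{U}$ because the $f_{t}$ are étale coordinates, and the row $d(z_{m+1}h-1)$ has $z_{m+1}$-entry $h$, which is a unit on $D(h)$. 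It follows that this square matrix has nowhere-vanishing determinant on $\widetilde{E}$, hence its determinant is a unit of $R$. \autoref{linsolBprop} then gives existence and uniqueness of $A$, which proves (1). One subtlety here is that the lifts of $f_{t}$ are only defined modulo $(g)$. This does not matter, because the $g$-equations already hold.

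Part (3) is then \autoref{linsolBprop3}. For the constant $\beta$, I would note that $\mathbf{M}^{-1}$ and the finitely many coefficients have representatives of bounded degree, so an affine bound $|\mathbf{J}|\alpha + \beta$ follows by the same induction. For part (2) at a fixed $v$ (or a fixed $o \in \widetilde{E}(K_{v})$ with an affinoid norm), \autoref{radiusconvestcor} gives the bound $\tau^{|\mathbf{J}|+1}$ directly. For all but finitely many $v$ in the adelic case, \autoref{Asoltauone} gives the bound $1$.

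The main obstacle is in this last application. It needs the finitely many inputs to have norm at most $1$ for almost all $v$: the entries of $\mathbf{M}^{-1}$, the Taylor coefficients $(\partial_{\boldsymbol{\lambda}}B_{r})(\boldsymbol{e})/\boldsymbol{\lambda}!$, and the $C_{r}$-coefficients. Since $\det \mathbf{M}$ is a unit of $R$, its inverse is a fixed element of $R$, and \autoref{oneforallbut} applies to it and to each of the finitely many other elements. The division by $\boldsymbol{k}_{a}!$ inside (\ref{lineqforA}) is harmless. Because the $B_{r}$ are polynomials, $(\partial_{\boldsymbol{\lambda}}B_{r})/\boldsymbol{\lambda}!$ has integral coefficients, and the multinomial structure of Faà di Bruno's formula lets the factorials be absorbed into the normalized coefficients $A_{i,\mathbf{J}} = \partial_{\mathbf{J}}A_{i}/\mathbf{J}!$. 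I would check this absorption carefully by rewriting (\ref{lineqforA}) as a direct coefficient comparison in $R[[u]]$, avoiding factorials altogether. The induction on $|\mathbf{J}|$ then closes by the ultrametric inequality.
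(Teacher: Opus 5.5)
Your proposal is correct and follows the paper's own proof. It uses the same $B_r$ and $C_r$ over $R=\Gamma(\widetilde{E},\mathcal{O}_{\widetilde{E}})$ and gets invertibility of the square matrix $\mathbf{B}$ from the independence of the $dg_t$ and $d\widetilde{f}_t$ along $\widetilde{E}$, then concludes with \autoref{linsolBprop}, \autoref{linsolBprop3}, \autoref{radiusconvestcor} and \autoref{Asoltauone}. Your extra checks are details the paper leaves implicit: that $\det\mathbf{M}$ is a unit of $R$, that the entries of $\mathbf{M}^{-1}$ are controlled by \autoref{oneforallbut}, and that the factorials in (\ref{lineqforA}) are absorbed by multinomial integrality.
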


\begin{proof}[Proof of (\ref{solconstr}):] ~

\vspace{0.5em}

\noindent Set $\sigma = m+1$ and $\nu = p$, and work with power series and polynomials having coefficients in $R = \Gamma(\widetilde{E}, \mathcal{O}_{\widetilde{E}})$. Note that elements of $K[z_{1}, \hdots, z_{m+1}]$ may also be interpreted as having coefficients in $R$ by using the embedding $K \hookrightarrow R$. Viewing these polynomials this way is merely a formal device used to assist in the computations, and should not be taken to have a geometric meaning. We write $\widetilde{e}_{i}$ for image of $z_{i}$ in $R$.

Fix lifts $\widetilde{f}_{1}, \hdots, \widetilde{f}_{q}$ of the functions $f_{1}, \hdots, f_{q}$ to $K[z_{1}, \hdots, z_{m+1}]$, and let $\overline{f}_{1}, \hdots, \overline{f}_{q}$ be the images of $f_{1}, \hdots, f_{q}$ in $R = \Gamma(\widetilde{E}, \mathcal{O}_{\widetilde{E}})$. Set $\{ B_{1}, \hdots, B_{c} \} = \{ g_{1}, \hdots, g_{\tau+1}, \widetilde{f}_{1}, \hdots, \widetilde{f}_{p}, \widetilde{f}_{p+1}, \hdots, \widetilde{f}_{q} \}$ and $\{ C_{1}, \hdots, C_{c} \} = \{ 0, \hdots, 0, u_{1}, \hdots, u_{p}, \overline{f}_{p+1}, \hdots, \overline{f}_{q} \}$. We regard the coefficients of the $B_{i}$ inside $R$, and similarly regard $C_{1}, \hdots, C_{c}$ as polynomials in $u_{1}, \hdots, u_{p}$ with coefficients in $R$. Let $\widetilde{\boldsymbol{e}} = (\widetilde{e}_{1}, \hdots, \widetilde{e}_{m+1})$, and set $A(0) = \widetilde{\boldsymbol{e}}$. 

Now each of the conditions (\ref{cond1}), (\ref{cond2}), (\ref{cond3}) is a condition of the form $C_{r} = B_{r} \circ A$, so we are in the setup of \S\ref{powerseriesgensec}. Now recall that $\widetilde{U} = D(h_{i}) \cap U$ was constructed so that $g_{1}, \hdots, g_{\tau}$ with $\tau = \codim_{\mathbb{A}^{m}} U$ succeed to define $\widetilde{U}$ inside $D(h_{i})$. We then observe that $(\tau+1) + q = m+1$, and the differentials
\[ dg_{t}, \hspace{2em} 1 \leq t \leq \tau+1, \hspace{3em} d\widetilde{f}_{t} \hspace{2em} 1 \leq t \leq q \]
are all independent along $\widetilde{U}$, and therefore along $\widetilde{E}$: indeed, $g_{1}, \hdots, g_{\tau+1}$ defines the smooth variety $\widetilde{U}$ as a complete intersection, and $\widetilde{f}_{1}, \hdots, \widetilde{f}_{q}$ give \'etale coordinates on $\widetilde{U}$. It follows that the matrix $\mathbf{B}$ appearing in \autoref{linsolBprop} has an invertible $\sigma \times \sigma$ minor, so we may conclude by applying \autoref{linsolBprop}, \autoref{linsolBprop3}, \autoref{radiusconvestcor} and \autoref{Asoltauone} to our situation.
\end{proof}

Let us now write $E_{i} = D(h_{i}) \cap E$. By our previous reasoning, \autoref{solconstr} has constructed maps $\eta_{i} : \Spf \varprojlim_{r} \mathcal{O}_{\mathbb{A}^{p} \times E_{i}} / (u_{1}, \hdots, u_{p})^{r} \xrightarrow{\sim} \mathcal{E} \cap D(h_{i})$, and the uniqueness statement in \autoref{solconstr} verifies there is a unique such $\eta_{i}$. As we could have done the same argument with $h_{i}$ replaced by $h_{i} h_{i'}$, one also concludes there is a unique map $\eta_{ii'} : \Spf \varprojlim_{r} \mathcal{O}_{\mathbb{A}^{j} \times (E_{i} \cap E_{i'})} / (u_{1}, \hdots, u_{p})^{r} \xrightarrow{\sim} \mathcal{E} \cap D(h_{i}) \cap D(h_{i'})$ satisfying the properties in \autoref{tubeconstrprop}(A); in particular, the maps $\eta_{i}$ for varying $i$ glue to a unique map $\eta : \Spf \mathcal{O}_{\mathbb{A}^{p} \times E} / (u_{1}, \hdots, u_{p})^{r} \xrightarrow{\sim} \mathcal{E}$.

\paragraph{Proof of \autoref{definingpowerseriesadelicprop}:} \autoref{solconstr} has constructed, for each index $i$, power series $A^{i}_{1}, \hdots, A^{i}_{m+1}$ which are power-adelic with coefficients in the coordinate ring of $D(h_{i}) \cap E$. From the uniqueness in \autoref{tubeconstrprop}(A), it follows that in each case the power series $A^{i}_{1}, \hdots, A^{i}_{m}$ are just the restrictions of a common set of power series $G_{1}, \hdots, G_{m}$. Since the power series $A^{i}_{1}, \hdots, A^{i}_{m}$ are adelic by \autoref{solconstr}(2), it follows from \autoref{normsmallerthanreponsomeopen} that the power series $G_{1}, \hdots, G_{m}$ are adelic. 

To verify the additional power-boundedness claim we may now use \autoref{adelicsufftocheck}. In particular it suffices to show that the coefficients $g_{\mathbf{o}}$ with $\mathbf{o} = (o_{1}, \hdots, o_{p})$ of some fixed $G = G_{?}$ admit representatives whose $v$-adic size (resp. degree) grow at most exponentially (resp. linearly) in $o := |\mathbf{o}|$. Fix, for some $i$, representatives $\widetilde{g}_{i,\mathbf{o}}$ for the coefficients of $A^{i}_{?}$ with $v$-adic size (resp. degree) growing at most exponentially (resp. linearly) in $o$. Then using \autoref{getfracreplem} we obtain for each $\widetilde{g}_{i,\mathbf{o}}$ representatives $e_{i,\mathbf{o}} / h^{n_{i,\mathbf{o}}}_{i}$ with $e_{i,\mathbf{o}} \in K[z_{1}, \hdots, z_{m}]$, and such that the $v$-adic size of $e_{i,\mathbf{o}}$ (resp. the degree of $e_{i,\mathbf{o}}$ and the number $n_{i,\mathbf{o}}$) grows at most exponentially (resp. linearly) in $o$. Since $e_{i,\mathbf{o}} / h^{n_{i,\mathbf{o}}}_{i} \in K[z_{1}, \hdots, z_{m}]$ (because it is equivalent to $g_{\mathbf{o}}$) it follows that we can factor $e_{i,\mathbf{o}} = h^{n_{i,\mathbf{o}}}_{i} b_{i,\mathbf{o}}$ with $b_{i,\mathbf{o}} \in K[z_{1}, \hdots, z_{m}]$. Then the $b_{i,\mathbf{o}}$ have degree bounded linearly in $o$ and norm bounded exponentially in $o$, so give the desired representatives for $g_{\mathbf{o}}$.

\paragraph{Proof of (B):} Using \autoref{adelictubezarcover}, this statement can be checked locally in the Zariski topology on $U$ (i.e., after replacing $U$ with $D(h_{i}) \cap U$ and $E$ with $E \cap D(h_{i})$). We check that the functions $(A_{1}, \hdots, A_{m+1})$ converge on some standard adelic tube $\widetilde{E}\langle p \rangle$. This follows immediately by combining (2) and (3) in \autoref{solconstr}.

The open embedding claim is part of the proof of (C), which we turn to next. 

\paragraph{Proof of (C):} We consider an adelic tube $\mathcal{T} = \mathcal{T}(\iota, \alpha, \rho, \overline{f})$ associated to the pair $(E, U)$ in the explicit fixed coordinates; here we understand $\overline{f} = (f_{1}, \hdots, f_{p})$, and $\iota$ to be the fixed embedding $\iota : U \hookrightarrow \mathbb{A}^{m}$. Our goal is to compare $\mathcal{T}^{v}$ to the image of $\eta^{v}$. 

We recall that $\mathcal{T}^{v}$ is defined by inequalities 
\begin{equation}
\label{tubedefeqs3}
|f^{\alpha_{1}}_{i} z_{r}^{\alpha_{2}}| < |\rho| \hspace{3em}    1 \leq r \leq \ell, \hspace{1em} 1 \leq i \leq p , \hspace{1em} \alpha_{1} + \alpha_{2} \leq \alpha , \hspace{1em} \alpha_{1} \geq 1, \hspace{1em} \alpha_{2} \geq 0 .
\end{equation}
To check that $E \langle p \rangle^{v}$ maps into $v$, one has to check that these inequalities hold on $E \langle p \rangle$ after replacing $f_{i}$ with $u_{i}$ and $z_{r}$ with $A_{r}$, where $A_{r} \in \mathcal{O}(E \langle p \rangle^{v})$ is the image of the coordinate $z_{i}$ under the pullback map. After possibly refining the tube $E\langle p \rangle$, we can use \autoref{adelictubezarcover} to reduce this to the same statement after intersecting with $D(h_{i})$. In this case one uses the $\alpha'$-adelicity (for some $\alpha'$) from \autoref{solconstr}(2) and \autoref{solconstr}(3) to show that $|u^{\alpha_{1}}_{i} A^{\alpha_{2}}_{r}| < |\rho|$ for all $i,r, \alpha_{1}$ and $\alpha_{2}$ as in (\ref{tubedefeqs3}) on $E\langle p \rangle$ after possibly shrinking $E \langle p \rangle$. 

\vspace{0.5em}

To produce $\mathcal{T}'$, we now attempt to construct an inverse to $\eta^{v}$ on $\mathcal{T}^{v}$; this will not quite succeed, but it will work up to refinement, in a precise sense. After possibly refining $\mathcal{T}$ and $E \langle p \rangle$, we can assume that the retraction maps $\beta^{v} : \mathcal{T}^{v} \to E^{v}$ from \autoref{retextendscor} are defined on each $\mathcal{T}^{v}$. Together with the functions $f_{1}, \hdots, f_{p}$, we obtain a map $\gamma^{v} : \mathcal{T}^{v} \to \mathbb{A}^{p,v} \times E^{v}$. We obtain a composition
\[ \gamma^{v,-1}(E\langle p \rangle^{v}) \xrightarrow{\gamma^{v}} E\langle p \rangle^{v} \xrightarrow{\eta^{v}} \mathcal{T}^{v} \]
which is evidently identified with the natural inclusion (this can be verified for instance by formally completing along $E$ and using that $\gamma^{v}$ is inverse to $\eta^{v}$ at the formal level). Since the natural inclusion is an open embedding, so are the two intermediate arrows. It follows that

\begin{lem}
The map $\eta^{v}$ is an open embedding. \qed
\end{lem}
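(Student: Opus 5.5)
The plan is to realize $\eta^{v}$ as a factor of an open embedding and then use only general facts about rigid spaces. Write $W := \gamma^{v,-1}(E\langle p \rangle^{v}) \subset \mathcal{T}^{v}$. This is an open subspace of $\mathcal{T}^{v}$, being the preimage of an open under the analytic map $\gamma^{v} = (f_{1}, \hdots, f_{p}, \beta^{v})$. The composition $\eta^{v} \circ \restr{\gamma^{v}}{W}$ is the inclusion $W \hookrightarrow \mathcal{T}^{v}$. I would also prove the reverse identity $\gamma^{v} \circ \eta^{v} = \mathrm{id}$ on $E\langle p \rangle^{v}$, after possibly refining the tubes. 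Together these show that $\eta^{v}$ is an isomorphism from $E\langle p \rangle^{v}$ onto the open subspace $W$, and hence an open embedding.

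\textbf{Step 1.} I would check $\gamma^{v} \circ \eta^{v} = \mathrm{id}_{E\langle p \rangle^{v}}$. Its components are the functions $f_{i} \circ \eta^{v}$ for $i \leq p$ and $\beta^{v} \circ \eta^{v}$. On the formal completion along $\{0\} \times E$, the conditions (\ref{cond2}) give $f_{i} \circ \eta = u_{i}$. Likewise, \autoref{formalretractexists} together with the fact that $\eta$ sends the flat functions ($\mathcal{B} \cong \mathcal{O}_{E}$) to functions independent of $u$ gives $\beta \circ \eta = \mathrm{pr}_{E}$. Indeed, the $p$-flat functions are exactly those whose expansion in $f_{p+1}, \hdots, f_{q}$ has no $f_{1}, \hdots, f_{p}$ dependence, and (\ref{cond3}) shows that their pullbacks under $\eta$ lie in $\mathcal{O}_{E}$. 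Both sides of the identity are analytic maps defined by convergent power series with coefficients in $\mathcal{O}(E)$. Since $E\langle p \rangle^{v}$ is connected over each connected affinoid piece of $E^{v}$ and contains $\{0\}\times E^{v}$, two analytic functions that agree formally along $\{0\} \times E^{v}$ agree everywhere, by the identity principle for power series in $u$ with affinoid coefficients. This requires that the image of $\eta^{v}$ lies where $\beta^{v}$ is defined, which is exactly the first part of (C) after refining $E\langle p\rangle$.

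\textbf{Step 2.} From Step 1, the image of $\eta^{v}$ lies in $W$, since $\gamma^{v}(\eta^{v}(x)) = x \in E\langle p \rangle^{v}$. Thus $\eta^{v}$ factors through $W$, and the maps $\eta^{v} : E\langle p \rangle^{v} \to W$ and $\restr{\gamma^{v}}{W} : W \to E\langle p \rangle^{v}$ are mutually inverse. One composite is the identity by Step 1. For the other, $\eta^{v} \circ \gamma^{v} = \mathrm{id}_{W}$ follows in the same way from the formal identity $\eta \circ \gamma = \mathrm{id}$ on $\mathcal{E}$ and the identity principle on $W$; alternatively, it is the claim already stated before the lemma. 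An isomorphism onto an open subspace is an open embedding.

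The main obstacle is the identity principle in Step 2 on $W$. This space need not obviously be "connected along $E$", so agreement of formal completions along $E^{v}$ may not force global agreement. To get around this, I would first try to avoid needing $\eta^{v}\circ\gamma^{v} = \mathrm{id}$ on all of $W$. Since $\gamma^{v}\circ\eta^{v}=\mathrm{id}$, the map $\eta^{v}$ is injective with an analytic left inverse. Moreover, $\eta^{v}$ is étale: its Jacobian along $E$ is invertible by the independence of the differentials $dg_{t}$, $d\widetilde{f}_{t}$ used in \autoref{solconstr}, and this persists on the tube after refinement. An injective étale map with a left inverse is an isomorphism onto its image, which is open. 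If that route stalls, I would instead shrink $W$ to $\gamma^{v,-1}$ of a standard tube whose fibres over $E^{v}$ are polydisks. There the identity principle applies fibrewise, using the retraction $\beta^{v}$ to reduce to power series in $f_{1}, \hdots, f_{p}$ over affinoid pieces of $E^{v}$.
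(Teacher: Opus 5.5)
Your argument is correct in its fallback form, and it follows a different logical route from the paper's. The paper uses only the composite $W\xrightarrow{\gamma^{v}}E\langle p\rangle^{v}\xrightarrow{\eta^{v}}\mathcal{T}^{v}$. It asserts that this composite is the inclusion, justified by formal completion along $E$, and concludes at once that both arrows are open embeddings. You instead prove $\gamma^{v}\circ\eta^{v}=\mathrm{id}$ on $E\langle p\rangle^{v}$, and then use \'etaleness. On $E\langle p\rangle^{v}$ the identity principle really does apply: after refining to a tube built from a product embedding of $\mathbb{A}^{p}\times E$, it is a union of products of affinoids in $E^{v}$ with polydisks centred at $0$.

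Your route buys two things. First, the paper's deduction tacitly needs $\gamma^{v}|_{W}$ to be onto $E\langle p\rangle^{v}$, and that is exactly what your Step 1 supplies. Second, your worry about $W$ is justified. Take $U=\mathbb{A}^{1}\setminus\{1,-\tfrac{1}{N-1}\}$ embedded by $(a,(a-1)^{-1},(a+\tfrac{1}{N-1})^{-1})$, with $f_{1}=a(a-1)^{-N}$, $N\geq 3$, $E=\{a=0\}$ and $\alpha=2$. Then $|f_{1}|$, $|f_{1}^{2}|$ and $|f_{1}z_{r}|$ all tend to $0$ as $|a|\to\infty$. So $W$ contains a region $\{|a|>R\}$, cut off from $E^{v}$ by the Gauss point of the unit disk (where $|f_{1}|=1$). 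On that region $\eta^{v}\circ\gamma^{v}$ is not the inclusion.

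So drop the first version of Step 2 and make the fallback your main argument. It can be tightened in two places. \'Etaleness needs no Jacobian computation or refinement: $d\gamma^{v}\circ d\eta^{v}$ is invertible, and both spaces are smooth of dimension $\dim U$. The last step is cleanest as ``$\eta^{v}$ is a section of the separated map $\gamma^{v}|_{W}:W\to E\langle p\rangle^{v}$, hence a closed immersion into $W$, and an \'etale closed immersion is open and closed.''

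The price is that you identify $\operatorname{im}\eta^{v}$ only with the union of the components of $W$ meeting $E^{v}$, not with all of $W$. That is enough for the lemma. However, the paper's subsequent construction of $\mathcal{T}'$ uses $W\subseteq\operatorname{im}\eta^{v}$, so with your route it would also have to keep $\mathcal{T}'^{v}$ inside those components.
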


Finally we explain the existence of the refinement $\mathcal{T}'$. It suffices to construct an adelic tube $\mathcal{T}'$ such that the restriction of $\gamma$ to $\mathcal{T}'^{v}$ maps to $E\langle p \rangle^{v}$; in particular, it suffices to compute the adelic neighbourhood ``$\gamma^{-1}(E\langle p \rangle)$'' and show it contains an adelic tube. Taking the inverse image of $E \langle p \rangle^{v}$ under $\gamma^{v}$ enforces conditions of the form 
\begin{equation}
\label{tubedefeqslim2}
\left|f_{i}^{\alpha_{1}} B_{r}^{\alpha_{2}}\right| \leq |\rho'| \hspace{3em}  1 \leq r \leq m, \hspace{1em} 1 \leq i \leq p , \hspace{1em} \alpha_{1} + \alpha_{2} \leq \alpha , \hspace{1em} \alpha_{1} \geq 1, \hspace{1em} \alpha_{2} \geq 0 
\end{equation}
where the $B_{r}$ are the power series $\delta(z_{i})$ defining the retraction, as in the proof of \autoref{retextendscor}. Using the fact that the $B_{r}$ are $\alpha''$-adelic for some $\alpha''$ (as shown in the proof of \autoref{retextendscor}) one can see that such equations hold after passing to some refinement $\mathcal{T}'$ of $\mathcal{T}$.

\section{Application to Surface Degenerations}

\subsection{Geometric Interpretations of $G$-functions}

\subsubsection{$G$-function background}

We consider a fixed vector $\mathbf{G} := (G_{1}, \hdots, G_{m})^{t} \in K[[s]]^{m}$ of formal power series.

\begin{defn}
\label{sysofGfuncdef}
The vector $\mathbf{G} = (G_{1}, \hdots, G_{m})^{t}$ is a called a \emph{system} of $G$-functions if there is a differential equation
\begin{equation} 
\label{Gfuncdiffsys}
b_{k} \frac{\partial^k G}{\partial s} + \cdots + b_{1} \frac{\partial G}{\partial s} + b_{0} G = 0
\end{equation}
with $b_{i} \in K(s)$ such that each $G_{i}$ is a solution to (\ref{Gfuncdiffsys}), and if moreover $\mathbf{G}$ has finite height (see \cite[\S1.3]{zbMATH08109702}). 
\end{defn}

\begin{defn}
\label{relevantplacesdef}
Given a point $\xi \in \overline{K}$ and a tuple $\mathbf{G} \in K[[s]]^{m}$, we say that a place $v \in \Sigma_{K(\xi)}$ is relevant for $(\xi, \mathbf{G})$ if $|\xi|_{v} < 1$ and the radius of convergence of $\mathbf{G}_{K(\xi)_{v}}$ is larger than $|\xi|_{v}$. 
\end{defn}

\begin{defn}
\label{holdvadicdef}
Given a vector $\mathbf{G} = (G_{1}, \hdots, G_{m})^{t} \in K[[s]]^{m}$, a homogeneous polynomial $P \in K[x_{1}, \hdots, x_{m}]$, a point $\xi \in \overline{K}$, and a relevant place $v \in \Sigma_{K(\xi)}$ for $(\xi, \mathbf{G})$, we say that $P$ holds $v$-adically at $\xi$ if one has $P(G_{1}(\xi), \hdots, G_{m}(\xi)) = 0$ after embedding $\xi$ and all coefficients into $K(\xi)_{v}$. 
\end{defn}

\begin{defn}
A homogeneous polynomial $P \in K[s]$ as in \autoref{holdvadicdef} is said to be a global relation at $\xi$ if it holds $v$-adically for each relevant place in $\Sigma_{K(\xi)}$. It is said to be strongly non-trivial if it does not arise as a factor of the specialization at $\xi$ of a relation $\widetilde{P} \in K[s][x_{1}, \hdots, x_{n}]$, homogeneous in $x_{1}, \hdots, x_{n}$, such that $\widetilde{P}(s, G_{1}, \hdots, G_{m}) = 0$ in $K[[s]]$. 
\end{defn}

\begin{rem}
There is also a weaker notion of a relation being \emph{non-trivial} (not necessarily strongly). We will have no need for it. By ``non-trivial'' we will always mean strongly non-trivial.
\end{rem}

\begin{thm}
\label{Gheightthm}
Let $\mathbf{G}$ be a system of $G$-functions. Write $\mathcal{S}(\delta) \subset \overline{K}$ for the set of points admitting a strongly non-trivial global homogeneous relation of degree $\delta$. Then there exists an effective constant $\kappa = \kappa(\mathbf{G})$, independent of $\delta$, and an exponent $b$ such that
\begin{equation}
\label{bombineq}
\theta(\xi) \leq \kappa \, \delta^{b}
\end{equation}
for all $\xi \in \mathcal{S}(\delta)$.
\end{thm}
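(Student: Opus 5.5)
This statement is Bombieri's ``Hasse principle'' height theorem for $G$-functions, in the form given by Andr\'e \cite[Ch.~VII]{zbMATH00041964}. I would not reprove it from scratch; instead I would reduce the present formulation to Andr\'e's. The plan is to follow the classical argument: construct an auxiliary polynomial by Siegel's lemma, extrapolate it, and close with the product formula.

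\emph{Step 1: reduction to a Laurent--Siegel setup.}
First I would replace $\mathbf{G}$ by the vector of all monomials of degree $\delta$ in $G_{1}, \hdots, G_{m}$. This reduces a homogeneous relation of degree $\delta$ to a linear relation among $M = \binom{m+\delta-1}{\delta}$ functions, which themselves form a system of $G$-functions. Their size and height are controlled polynomially in $\delta$, using the Chudnovsky theorem and the finite-height hypothesis of \autoref{sysofGfuncdef}.

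\emph{Step 2: the auxiliary polynomial.}
Fix $\xi \in \mathcal{S}(\delta)$ with a strongly non-trivial global relation $P$. By Siegel's lemma, applied with the height bounds on the coefficients coming from finite height, I would construct polynomials $Q_{i} \in \mathcal{O}_{K}[s]$ of degree $\leq N$ such that $R(s) := \sum_{i} Q_{i}(s) \, \mathbf{G}^{(i)}(s)$ vanishes to high order $T$ at $s = 0$. Here $\mathbf{G}^{(i)}$ runs over the monomials of Step 1, and the $Q_{i}$ are chosen compatibly with $P$: the combination is taken modulo the linear form defined by $P$.

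\emph{Step 3: extrapolation and the product formula.}
At each relevant place $v$, the relation $P$ holds $v$-adically. Hence the algebraic number $\gamma := Q(\xi)$, built from the coefficients that survive after eliminating via $P$, is $v$-adically small. The smallness comes from the high-order zero of $R$ together with Cauchy estimates within the radius of convergence; this is exactly where ``relevant'' is used. At the non-relevant places, $\gamma$ is bounded trivially in terms of $\max(1,|\xi|_{v})$ and $H(Q)$.

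Strong non-triviality ensures $\gamma \neq 0$. Concretely, $P$ is not a specialization of a functional relation, so by a zero estimate (Shidlovsky/Nesterenko type, via the differential system \eqref{Gfuncdiffsys}) some derivative or shift of the auxiliary form is non-zero at $\xi$ with controlled order. Applying the product formula to $\gamma$ then gives $h(\xi)\cdot(\text{gain from } T) \ll$ terms polynomial in $\delta$. Optimizing $N$ and $T$ in terms of $\delta$ yields $\theta(\xi) \leq \kappa\, \delta^{b}$.

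\emph{Main obstacle.}
The delicate point is the non-vanishing step: the zero estimate must be uniform in $\delta$ so that the exponent $b$ does not depend on $\xi$. I would take this from Andr\'e's treatment \cite[VII, \S5]{zbMATH00041964}, with the refinements of \cite{zbMATH08109702}, which state the theorem in exactly this homogeneous, strongly non-trivial form. So in practice I would cite that result after checking that the definitions of ``relevant place'' and ``strongly non-trivial'' agree with those used there.
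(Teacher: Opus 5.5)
Your proposal ends where the paper's proof does. The paper simply cites Andr\'e's Theorem~E \cite{zbMATH00041964}, and points to \cite[Lem.~2.6]{zbMATH08109702} for how this homogeneous, strongly non-trivial formulation with relevant places follows from \cite[Ch.~VII, \S5, Thm.~5.2]{zbMATH00041964}; that is exactly the definitional check you propose. Your Siegel-lemma/extrapolation outline is only a gloss on what lies behind that citation and is not needed, and its details should not be taken literally: in Bombieri's argument the auxiliary forms are built independently of $P$, and non-triviality of $P$ enters only at the non-vanishing step, via reduction to functions linearly independent over $K(s)$ where a Shidlovsky-type lemma applies.
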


\begin{proof}
This is \cite[Intro., Thm. E]{zbMATH00041964}. See also \cite[Lem. 2.6]{zbMATH08109702} for an explanation of how one can deduce \autoref{Gheightthm} from \cite[Ch. VII, \S5, Thm. 5.2]{zbMATH00041964}. 
\end{proof}

\subsubsection{$G$-functions from Geometry}
\label{Gfuncfromgeosec}

The G-functions we are interested in arise from semistable families $\overline{g} : \overline{X} \to \overline{S}$ of algebraic varieties over a number field $K$, with $\overline{S}$ a curve. (In fact for the applications in this paper, $\overline{g}$ is a relative surface, but for the moment we work more generally.) Let $S \subset \overline{S}$ be the locus where $\overline{g}$ is smooth, and write $g : X \to S$ for the restriction of $\overline{g}$ to $S$. Fix a point $s_{0} \in \overline{S} \setminus S$, and let $s$ be a uniformizing parameter at $s_{0}$. Write $\dR(\overline{X}/\overline{S})$ for the relative log de Rham cohomology complex of $\overline{g}$, and consider, for each $i$, the vector bundles $\mathcal{H}^{i} = R^{i} \overline{g}_{*} \dR(\overline{X}/\overline{S})$ with their Gauss-Manin connection $\nabla$. The residue of $\nabla$ at $s_{0}$ induces a $K$-linear operator $N : \mathcal{H}^{i}_{s_{0}} \to \mathcal{H}^{i}_{s_{0}}$. The following is a consequence of \cite[Ch. V, Appendix]{zbMATH00041964}:

\begin{thm}
\label{kerNareGfuncs}
Let $v_{0} \in \ker N$ be a vector. The vector $v_{0}$ extends uniquely to a section $\widetilde{v}$ of $\mathcal{H}^{i}$ in the formal neighbourhood of $s_{0}$ which is flat for $\nabla$. If one fixes algebraic trivializing sections $\omega_{1}, \hdots, \omega_{m}$ for $\mathcal{H}^{i}$ in some neighbourhood of $s_{0}$, the coordinates of $\widetilde{v}$ in this basis, viewed as power series in $s$, are $G$-functions.
\end{thm}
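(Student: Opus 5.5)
The plan is to reduce the statement to the local structure of the Gauss--Manin connection at $s_{0}$. Then existence and uniqueness become a linear recursion, and the $G$-function property comes from André's theory of $G$-operators.

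\emph{Step 1: local form of the connection.} Shrink to an affine neighbourhood of $s_{0}$ on which $\omega_{1}, \hdots, \omega_{m}$ trivialize $\mathcal{H}^{i}$. Because $\overline{g}$ is semistable, $\mathcal{H}^{i}$ is the log de Rham cohomology, and $\nabla$ has at most a logarithmic pole at $s_{0}$. In the basis $\omega_{1}, \hdots, \omega_{m}$ the operator $\nabla_{s\, d/ds}$ therefore has the form $s\frac{d}{ds} + \Gamma(s)$, with $\Gamma(s)$ an $m \times m$ matrix whose entries lie in $K(s)$ and are regular at $s = 0$. Its value $\Gamma(0)$ is the residue, i.e.\ the matrix of $N$ on $\mathcal{H}^{i}_{s_{0}}$ (up to a sign convention, which does not change $\ker N$). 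For a semistable degeneration the residue is nilpotent; this is the Katz/Steenbrink description of the log de Rham complex of a reduced normal crossings special fibre. Its only eigenvalue is therefore $0$.

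\emph{Step 2: existence and uniqueness of $\widetilde{v}$.} Write $\widetilde{v} = \sum_{n \geq 0} v_{n} s^{n}$ and expand $\Gamma(s) = \sum_{k \geq 0} \Gamma_{k} s^{k}$ with $\Gamma_{0} = N$. Flatness is equivalent to the equations
\[ (n + N) v_{n} = - \sum_{k=1}^{n} \Gamma_{k} v_{n-k}, \qquad n \geq 0 . \]
For $n = 0$ this is exactly the condition $N v_{0} = 0$. For $n \geq 1$ the operator $n + N$ is invertible because $N$ is nilpotent, so each $v_{n}$ is uniquely determined by $v_{0}, \hdots, v_{n-1}$. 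This gives a unique formal flat extension $\widetilde{v}$, and its coordinates visibly lie in $K[[s]]$.

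\emph{Step 3: the coordinates are $G$-functions.} The coordinates $\widetilde{v} = (G_{1}, \hdots, G_{m})^{t}$ satisfy a first-order system over $K(s)$. By the cyclic vector lemma each $G_{j}$ satisfies a scalar linear ODE with coefficients in $K(s)$, which is the differential condition in \autoref{sysofGfuncdef}. The remaining point, finite height, is where the real input lies and is the main obstacle. I would use that Gauss--Manin connections of algebraic families over number fields are $G$-operators: they satisfy the Galochkin condition, via Katz's theorem on nilpotent $p$-curvature combined with the Bombieri--André--Chudnovsky results.

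André's theory of $G$-operators at a regular singular point then gives a uniform solution matrix $Y(s) = Y_{0}(s)\, s^{\Gamma_{0}}$ in which the entries of $Y_{0}$ are $G$-functions; this is the content of \cite[Ch. V, Appendix]{zbMATH00041964}, possibly after a shearing transformation to make the residue non-resonant, which is automatic here since all exponents are $0$. Since $N v_{0} = 0$ we have $s^{\Gamma_{0}} v_{0} = v_{0}$, so by uniqueness $\widetilde{v} = Y_{0}(s) v_{0}$. Hence each coordinate of $\widetilde{v}$ is a $K$-linear combination of $G$-functions, and is therefore itself a $G$-function.
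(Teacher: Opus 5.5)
Your argument is correct in outline, and for existence and uniqueness it takes a different route from the paper.

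\textbf{Comparison with the paper.} The paper writes down Katz's candidate $\widetilde{v} = \sum_{j \geq 0} (-s)^{j} \nabla^{j} v / j!$ and justifies it transcendentally. It base-changes to $\mathbb{C}$, uses unipotence of the monodromy to identify $\ker N$ with the monodromy-invariant sections of $\mathbb{V}$ (those extending to flat sections of $\mathcal{H}^{i,\textrm{an}}_{\mathbb{C}}$), and gets uniqueness by feeding a second flat extension into the same formula. Your recursion $(n+N)v_{n} = -\sum_{k=1}^{n} \Gamma_{k} v_{n-k}$ is purely algebraic and never leaves $K$. It gives existence, uniqueness and $\widetilde{v} \in K[[s]]^{m}$ at once, and its only input is nilpotence of the residue (Steenbrink/Katz). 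The paper's route buys the link between $\widetilde{v}$ and the Betti local system, which it uses later (e.g.\ $\mathbb{Q}$-structures at archimedean places). For this statement, yours is cleaner. For the $G$-function claim both arguments rest on the same input, Andr\'e's appendix to Ch.~V. Your identification $\widetilde{v} = Y_{0}(s) v_{0}$ via $s^{\Gamma_{0}} v_{0} = v_{0}$ spells out a step the paper leaves implicit.

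\textbf{First correction: the coefficient field.} The entries of $\Gamma$ lie in $\mathcal{O}_{\overline{S}, s_{0}} \subset K(\overline{S})$, not in $K(s)$. Here $s$ is only a uniformizer at $s_{0}$, and $K(\overline{S})$ is in general a proper finite extension of $K(s)$. For the bare differential condition this is harmless, since a linear ODE over $K(\overline{S})$ descends to one over $K(s)$. The finite-height part is different: Andr\'e's theory needs a system over $K(s)$ that comes from geometry. This is exactly why the paper passes to $\overline{h} = s \circ \overline{g}$. It uses the Katz--Oda spectral sequence to regard $\mathcal{H}^{i}_{K(\overline{S})}$ as a $K(s)$-module with connection attached to $\overline{h}$, and refines $\omega_{1}, \hdots, \omega_{m}$ to a $K(s)$-basis $\omega_{ij}$. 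You need the same restriction of scalars.

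After it, the point $s=0$ also sees the other points of $s^{-1}(0)$. So in the basis $\omega_{ij}$ you can no longer assume a simple pole with nilpotent residue, and your remark that non-resonance is automatic no longer applies. You must invoke Andr\'e's statement in its general form (with shearing). You then recover the $\omega_{i}$-coordinates of $\widetilde{v}$ from its $\omega_{ij}$-coordinates using algebraic power series, which are themselves $G$-functions.

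\textbf{Second correction: attribution of the $G$-operator property.} The justification you sketch for the Galochkin condition, Katz's nilpotence of $p$-curvature combined with Chudnovsky-type results, is not how this is proved. Global nilpotence alone is not known to imply the Galochkin condition. What you need is Andr\'e's theorem that Gauss--Manin connections are $G$-operators, which you (like the paper) cite anyway.
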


\begin{proof}
First we explain the existence of $\widetilde{v}$. Write $\mathcal{S}^{\wedge}$ for the formal neighbourhood of $s_{0}$ in $\overline{S}$. If one takes the base-change $\mathcal{H}^{i}_{\mathcal{S}^{\wedge}}$ one obtains a vector bundle with log connection on the formal disk $\mathcal{S}^{\wedge} \cong \Spf K[[s]]$. One can extend $v_{0}$ to an arbitrary section $v$, not necessarily flat, of $\mathcal{H}^{i}_{\mathcal{S}^{\wedge}}$. Then a candidate for $\widetilde{v}$ is given by the formula (cf. \cite[Proof of Thm. 5.1]{katznil})
\[ \widetilde{v} = \sum_{j \in \mathbb{Z}_{\geq 0}} (-s)^{j} \frac{\nabla^{j}}{j!} v . \]
For the output of the formula to be well-defined one needs to know that the poles of the partial sums of the expression are uniformly bounded; in such a situation, one can prove by direct calculation that the result is independent of the extension $v$ of $\widetilde{v}$ chosen, and is flat for the connection. Therefore to show $\widetilde{v}$ exists we may replace $K$ with $\mathbb{C}$, and then analytify to reduce to the same situation for the associated complex analytic family $g^{\textrm{an}}$ in some complex analytic disk $\mathcal{B}$ in $\overline{S}(\mathbb{C})$ centred at $s_{0}$. In this case $N$ induces an endomorphism of the Betti local system $\mathbb{V} = R^{i} g^{\textrm{an}}_{*} \mathbb{Z}$ over $\mathcal{B}$ which agrees with the logarithm of the monodromy around $s_{0}$. Since $\overline{g}$ is semistable, the monodromy around $s_{0}$ is unipotent, and vectors in the kernel of $N$ correspond exactly to those sections of $\mathbb{V}$ over $\mathcal{B}$ which extend to flat sections of $\mathcal{H}^{i,\textrm{an}}_{\mathbb{C}}$. Existence of such a $\widetilde{v}$ follows, as does uniqueness by taking $v$ to be another flat extension of $v_{0}$ and observing that the formula implies $\widetilde{v} = v$.

Next we show the $G$-function claim. Without loss of generality we may assume that $s$ and the sections $\omega_{1}, \hdots, \omega_{m}$ are defined on $\overline{S}$. We may then consider the family $\overline{h} = s \circ \overline{g}$. The spectral sequence of log de Rham cohomology (cf. \cite{katz1968}, \cite[Rem. 3.3]{katznil}, \cite[Ch. IX, 4.3, pg. 188]{zbMATH00041964}) gives a degenerate spectral sequence with term $R^{0} s_{*} (\mathcal{H}^{k}, \nabla) \implies R^{k} \overline{h}_{*} \Omega^{\bullet}_{\overline{X}/\mathbb{A}^{1}}$. If we take the fibre of this spectral sequence above the generic point of $\mathbb{A}^1$, we obtain a $K(\overline{S})$-module with connection $(\mathcal{H}^{k}_{K(\overline{S})}, \nabla)$, which can be regarded as a $K(s)$-module with connection using the map $K(s) \to K(\overline{S})$ induced by $s$.

Set $\mathcal{H} = \mathcal{H}^{i}_{K(\overline{S})}$. If we consider the decomposition $\mathcal{H} \cong \mathcal{O}_{\overline{S}}^{\oplus m}$ corresponding to $\omega_{1}, \hdots, \omega_{m}$, then this decomposition can be further refined by a basis $\omega_{ij}$ for $\mathcal{H}$ over $K(s)$ such that $\omega_{i} = \omega_{i1}$ and $1 \leq j \leq [K(\overline{S}) : K(s)]$. Let $\Lambda$ be the connection matrix with respect to this basis, so that we obtain a differential equation $\frac{d}{ds} v = \Lambda v$ which comes from geometry in the sense of \cite{zbMATH00041964} (using that our $K(s)$-module with connection is associated to $\overline{h}$). Then $\widetilde{v}$ is a solution to this differential equation, and \cite[Ch. V, Appendix]{zbMATH00041964} completes the proof (cf. \cite[Ch. IX, \S4, 4.1]{zbMATH00041964}).
\end{proof}

\begin{cor}
\label{kerNareGfuncsdualversion} 
Write $\mathcal{H}^{i,\wedge}$ for the formal completion of $\mathcal{H}$ at $s_{0}$, and suppose we have a map of vector bundles $\alpha : \mathcal{H}^{i,\wedge} \to \mathcal{O}_{\mathcal{S}^{\wedge}}$ with log connection over $\mathcal{S}^{\wedge}$, where $\mathcal{S}^{\wedge}$ is regarded as a log formal scheme and where $\mathcal{O}_{\mathcal{S}^{\wedge}}$ is regarded as a vector bundle with trivial connection. Fix a basis $\omega_{1}, \hdots, \omega_{m}$ for $\mathcal{H}^{i}$ in some neighbourhood of $s_{0}$, and consider the tuple of functions $\alpha(\omega_{1}), \hdots, \alpha(\omega_{m})$. Then this tuple, viewed as power series in $s$, is a system of G-functions.
\end{cor}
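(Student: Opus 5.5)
The plan is to dualize and reduce to \autoref{kerNareGfuncs}. The map $\alpha$ is a horizontal map of log-connections into the trivial bundle, so it is a flat section of the dual bundle $\mathcal{H}^{i,\wedge,\vee} = \sheafhom(\mathcal{H}^{i,\wedge}, \mathcal{O}_{\mathcal{S}^{\wedge}})$ with its dual log connection. Its fibre $\alpha_{0}$ at $s_{0}$ is a linear functional on $\mathcal{H}^{i}_{s_{0}}$. Horizontality of $\alpha$, after taking residues at $s_{0}$, gives $\alpha_{0} \circ N = 0$. Thus $\alpha_{0}$ lies in the kernel of the residue $N^{\vee} = -N^{t}$ of the dual connection.

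Next I would identify the dual bundle geometrically. By relative log Poincar\'e duality for the semistable family $\overline{g}$ of relative dimension $d$, the cup product
\[ \mathcal{H}^{i} \otimes \mathcal{H}^{2d-i} \to \mathcal{H}^{2d} \cong \mathcal{O}_{\overline{S}} \]
is a perfect pairing of vector bundles with log connection near $s_{0}$. Hence $\mathcal{H}^{i,\vee} \cong \mathcal{H}^{2d-i}$ compatibly with $\nabla$, and the residue $N^{\vee}$ corresponds to the residue $N$ on $\mathcal{H}^{2d-i}_{s_{0}}$. The element $\alpha_{0}$ therefore becomes a vector $v_{0} \in \ker N \subset \mathcal{H}^{2d-i}_{s_{0}}$. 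By the uniqueness in \autoref{kerNareGfuncs}, the flat section $\alpha$ equals the unique flat extension $\widetilde{v}$ of $v_{0}$. That theorem then says the coordinates of $\widetilde{v}$ in an algebraic basis $\omega'_{1}, \hdots, \omega'_{m}$ of $\mathcal{H}^{2d-i}$ are $G$-functions.

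Finally,
\[ \alpha(\omega_{j}) = \langle \omega_{j}, \widetilde{v} \rangle = \sum_{k} \widetilde{v}_{k} \, \langle \omega_{j}, \omega'_{k} \rangle . \]
Here the pairing values $\langle \omega_{j}, \omega'_{k} \rangle$ are algebraic functions regular near $s_{0}$, i.e. power series in $s$ of rational or algebraic type. $G$-functions are stable under $K[s]$-linear combinations, and indeed under multiplication by algebraic power series that are regular at $0$. So the $\alpha(\omega_{j})$ are $G$-functions. They satisfy a common differential equation because they are coordinates of a flat section of a connection, which gives the system property. Finite height follows from the closure properties in \cite{zbMATH00041964}.

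The main obstacle is the duality step. One must know that the log cup-product pairing is perfect on the relative log de Rham bundles across $s_{0}$ and compatible with Gauss--Manin and residues. If that is awkward, I would avoid duality altogether. The dual connection $\mathcal{H}^{i,\vee}$ on $\overline{S}$ still comes from geometry, since duals of Picard--Fuchs systems are again $G$-operators by Andr\'e's results on $G$-operators. I would then rerun the proof of \autoref{kerNareGfuncs} verbatim with $\mathcal{H}^{i}$ replaced by its dual, using the Katz formula to produce the flat extension of $\alpha_{0} \in \ker N^{\vee}$.
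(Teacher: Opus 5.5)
Your proposal is correct and follows essentially the paper's route: Poincar\'e duality identifies $\alpha$ with a flat section of $\mathcal{H}^{2d-i,\wedge}$, whose fibre at $s_{0}$ lies in $\ker N$, and then \autoref{kerNareGfuncs} applies. The only difference is at the end: the paper takes the basis of $\mathcal{H}^{2d-i}$ Poincar\'e-dual to $\omega_{1}, \hdots, \omega_{m}$ (algebraic because the pairing is perfect), so the $\alpha(\omega_{j})$ are literally the coordinates of the flat section and your closure-property step for $G$-functions is unnecessary.
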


\begin{proof}
Observe via Poincar\'e duality that $\alpha$ corresponds to a flat section of $\mathcal{H}^{2n-i,\wedge}$, where $n$ is the relative dimension of $g$, and likewise $\omega_{1}, \hdots, \omega_{m}$ correspond to a dual coordinate frame. Now apply \autoref{kerNareGfuncs}. 
\end{proof}

\begin{lem}
\label{convradiusbiggerthan1}
Up to rescaling the parameter $s$ by some $\lambda \in K^{\times}$, the $v$-adic radius of convergence of the G-functions constructed by \autoref{kerNareGfuncs} and \autoref{kerNareGfuncsdualversion} is $\geq 1$ for all places $v$ of $K$.
\end{lem}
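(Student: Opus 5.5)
The plan is to control the radii place by place and then pick a single scalar $\lambda$. Write $\mathbf{G}$ for the system produced by \autoref{kerNareGfuncs} or \autoref{kerNareGfuncsdualversion}, and $R_{v}(\mathbf{G}) \in (0,\infty]$ for its $v$-adic radius of convergence. Substituting $s = \lambda^{-1} s'$ replaces $R_{v}$ by $|\lambda|_{v} R_{v}$. So the lemma amounts to two facts: each $R_{v}$ is positive, with $R_{v} \geq 1$ for all but finitely many $v$; and some $\lambda \in K^{\times}$ satisfies $|\lambda|_{v} \geq R_{v}^{-1}$ at the finitely many remaining places.

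\textbf{Step 1: positivity and almost-everywhere bounds.} I would use the finite-height condition in \autoref{sysofGfuncdef}, in Andr\'e's form: for a system of $G$-functions, $\sum_{v \in \Sigma_{K}} \log^{+}(1/R_{v}(\mathbf{G}))$ is bounded by the height, as in \cite[Ch.~I]{zbMATH00041964}. This gives $R_{v} > 0$ at every place and $R_{v} \geq 1$ outside a finite set $T \subset \Sigma_{K}$. For the archimedean places I would add the geometric description: after analytification, the entries of $\mathbf{G}$ are coordinates of a flat section of $\mathcal{H}^{i,\textrm{an}}$ that extends across $s_{0}$. This is the argument in the proof of \autoref{kerNareGfuncs}, where $v_{0} \in \ker N$ forces the monodromy around $s_{0}$ to act trivially on $\widetilde{v}$. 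Hence $R_{v}$ is at least the distance, in the coordinate $s$, from $s_{0}$ to the nearest other singular point of $\overline{g}$ or pole of the chosen basis $\omega_{1}, \hdots, \omega_{m}$. Both descriptions are uniform in $v$ once the finitely many singular points $s_{1}, \hdots, s_{r} \in \overline{K}$ are fixed.

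\textbf{Step 2: choosing $\lambda$.} The finite places in $T$ are handled by $\lambda_{f} = 1/M$, with $M$ a product of high powers of the rational primes below $T \cap \Sigma_{K,f}$. The archimedean places are handled by a large integer $N$. The candidate is $\lambda = N/M$, with $N$ coprime to the primes in $T$. Each such $N$ lowers $|\lambda|_{v}$ below $1$ at the places dividing $N$, so for those places I would need $R_{v}(\mathbf{G}) \geq |N|_{v}^{-1}$ already.

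\textbf{Main obstacle.} The product formula constrains Step 2. Any admissible $\lambda$ satisfies $\prod_{v} |\lambda|_{v} R_{v} = \prod_{v} R_{v}$, so $|\lambda|_{v} R_{v} \geq 1$ at every place forces $\prod_{v} R_{v}(\mathbf{G}) \geq 1$. Rescaling $s$ alone cannot create this, and my main effort would go into extracting it from geometry. I would first try to show the following, for the places $v$ dividing a suitably chosen $N$ and with the $s_{j}$ as in Step 1:
\[
R_{v}(\mathbf{G}) \geq \min\bigl(1, \min_{j} |s_{j}|_{v}\bigr) \cdot \bigl(\textrm{a positive constant independent of } v\bigr).
\]
At good-reduction places I would prove this by a Dwork-type transfer argument for the Gauss--Manin connection with nilpotent $p$-curvature. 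This is where the semistability of $\overline{g}$ and the integral-model arguments of \S\ref{adelicgeosec} would enter. If such an inequality can be made to produce enough room at the places dividing $N$ to absorb the archimedean deficit, the choice $\lambda = N/M$ finishes the proof.

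If this fails, the product-formula identity shows the lemma cannot hold in general in the form stated. For example, $\sum_{n} \binom{2n}{n}^{2} s^{n}$ has $R_{v} = 1$ at every finite place and $R_{\infty} = 1/16$, so $\prod_{v} R_{v} < 1$. In that case I would prove the statement for all finite places $v \in \Sigma_{K,f}$ via $\lambda = 1/M$, since only those places enter the adelic tube arguments. At the archimedean places I would establish only $R_{v} > 0$, and record the restriction explicitly.
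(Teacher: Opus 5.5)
Your finite-place argument is the paper's proof. The paper quotes Andr\'e [Ch.~V, Appendix] for the fact that $G$-functions coming from geometry have $v$-adic radius $1$ at almost all places, and then rescales; that rescaling is your choice $\lambda = 1/M$.

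Your product-formula objection is also correct, and it exposes an overstatement in the lemma rather than a defect in your plan. Since $s = \lambda^{-1}s'$ multiplies $R_{v}$ by $|\lambda|_{v}$, the product $\prod_{v} R_{v}$ is invariant under rescaling (with the normalized absolute values of the product formula). Your example $\sum_{n}\binom{2n}{n}^{2}s^{n} = F(\tfrac{1}{2},\tfrac{1}{2};1;16s)$ is, up to a constant, the period of $dx/y$ over the monodromy-invariant cycle at the nodal fibre $\lambda = 0$ of the Legendre family $y^{2} = x(x-1)(x-\lambda)$, with $\lambda = 16s$. So the obstruction really occurs for $G$-functions of the kind the lemma is about, and the archimedean part of the statement cannot be obtained by rescaling.

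The paper's one-line proof has the same blind spot, since rescaling by $1/M$ shrinks the archimedean radii. However, only the finite-place version is ever used. In the proof of \autoref{mainsurfaceapp1} the disks $\Delta^{v}_{\eta}$ are required to contain the component of $|s|<1$ only for finite $v$. At archimedean places one works on small disks of radius $r_{v}$, and \autoref{relevantplacesdef} already requires $|s(\xi)|_{v}$ to lie inside the radius of convergence. Your fallback is therefore the right statement with the right proof: the statement for $v \in \Sigma_{K,f}$ via $\lambda = 1/M$, together with only $R_{v} > 0$ at archimedean $v$.

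Two corrections are needed.

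First, in Step~1 the bound $\sum_{v}\log^{+}(1/R_{v}) < \infty$ gives $R_{v} > 0$ at every place. It does not give $R_{v} \geq 1$ outside a finite set, because a convergent sum is compatible with infinitely many places having small deficits. What you need is the input the paper cites: Andr\'e's theorem that $G$-functions coming from geometry have radius $1$ at almost all places. That theorem uses the geometric origin of the system, not merely finiteness of its height.

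Second, drop the ``main effort'' in Step~2. No Dwork-type estimate at the places dividing $N$ can overcome a rescaling-invariant quantity. Your own example shows that $\prod_{v} R_{v} < 1$ does occur, so that branch cannot succeed in general and should not be presented as the primary route.
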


\begin{proof}
In \cite[Ch. V, Appendix]{zbMATH00041964} Andr\'e demonstrates that the radius of convergence of G-functions coming from geometry is $1$ for almost all places $v$ of $K$, so the result follows by scaling the parameter. 
\end{proof}

In practice one is given a parameter value $s_{1} = s(\xi)$ in some neighbourhood of $0 \in \mathbb{A}^1$, with $\xi \in S(\overline{\mathbb{Q}})$, and one would like to produce algebraic relations on the values of the $G$-functions from \autoref{kerNareGfuncs} after substituting $s = s_{1}$. To do this one should try to understand to what extent the geometry of the degeneration influences the periods of the fibre $X_{\xi}$. However one could run into the issue that $\xi$ is not near the degeneration point $s_{0}$, even if $s_{1}$ is near $0$; the next lemma of Daw and Orr can be used to resolve this issue.

\begin{lem}
\label{daworrcoverlem}
Suppose that $C$ is a smooth projective algebraic curve over a characteristic zero field $K$ and $s_{0} \in C(K)$. Fix a rational function $u$ on $C$ with a zero at $s_0$ and nowhere else. Then after replacing $K$ by a finite extension, there exists an irreducible smooth projective algebraic curve $C_{4}$ over $K$, a finite covering $\nu : C_{4} \to C$, and a rational function $s : C_{4} \to \mathbb{P}^1$ with the following properties:
\begin{itemize}
\item[(i)] if $d$ is the order of vanishing of $u$ at $s_0$, then $\nu^{\sharp}(u) = s^d$; and
\item[(ii)] every zero of $s$ is simple, and $s : C_{4} \to \mathbb{P}^1$ is a Galois covering.
\end{itemize}
\end{lem}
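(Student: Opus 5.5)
We construct $C_{4}$ as a chain of cyclic covers followed by a Galois closure; the subscript in $C_{4}$ suggests roughly four stages. First, write $\mathrm{div}(u) = d\, s_{0} - D_{\infty}$, where $D_{\infty}$ is the polar divisor of degree $d$. After enlarging $K$ to contain the $d$-th roots of unity, let $C_{1}$ be the normalization of $C$ in the function field $K(C)(u^{1/d})$, and let $s_{1} = u^{1/d}$. Then $s_{1}^{d} = u$ holds on $C_{1}$, which gives (i) for $C_{1} \to C$. The extension $K(C) \subset K(C)(u^{1/d})$ is Kummer. Its local ramification above $s_{0}$ is governed by $\mathrm{ord}_{s_{0}} u = d$, which is divisible by $d$, so $C_{1} \to C$ is unramified above $s_{0}$. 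Consequently $C_{1}$ may be reducible. In that case we pass to an irreducible component, which cuts the degree down to a divisor $d'$ of $d$. We then adjust the construction by first choosing the decomposition of $u^{1/d}$ so that the chosen component still carries a function $s$ with $s^{d} = \nu^{\sharp}u$. This is automatic, since $u^{1/d}$ restricts to a function on each component.

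Second, we arrange simple zeros. On $C_{1}$ the zeros of $s_{1}$ lie only above $s_{0}$. Since $C_{1} \to C$ is unramified there, $\mathrm{ord}_{P}(s_{1}) = \mathrm{ord}_{s_{0}}(u)/d = 1$ at each such point $P$. So every zero of $s_{1}$ is already simple, and (ii) is reduced to the Galois requirement.

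Third, we make $s$ Galois. Let $L$ be the Galois closure of $K(C_{1})$ over $K(s_{1})$, and enlarge $K$ so that $L$ is geometrically irreducible, meaning $K$ is algebraically closed in $L$. Take $C_{4}$ to be the corresponding smooth projective curve, with $s$ the pullback of $s_{1}$ and $\nu$ the composite map. Condition (i) is preserved under pullback. The main obstacle is preserving simplicity of the zeros after taking the Galois closure, because the closure may ramify over $0 \in \mathbb{P}^{1}$. To control this, we replace $s_{1}$ by a modification before taking the closure. Specifically, the ramification indices of $C_{1} \to \mathbb{P}^{1}$ over $0$ are all $1$, so $0$ is not a branch point of $s_{1}$. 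The ramification of the Galois closure over $0$ is generated by the inertia at points above $0$ in $K(C_{1})/K(s_{1})$, which is trivial. Since the Galois closure is unramified wherever the original cover is (Abhyankar's lemma in characteristic zero, or more simply the compositum of unramified extensions is unramified), every zero of $s$ on $C_{4}$ remains simple. After the finite extension of $K$ that makes everything geometrically irreducible and splits the points above $0$, conditions (i) and (ii) hold. The intermediate curves $C_{2}$ and $C_{3}$ record the component choice and the base extension.
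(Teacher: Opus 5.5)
Your argument is correct and is essentially the construction behind the Daw--Orr lemma that the paper cites instead of proving. Adjoining $u^{1/d}$ with $d = \mathrm{ord}_{s_0} u$ makes the cover unramified over $s_0$, so $s_1$ has simple zeros. This persists in the Galois closure, which is a compositum of conjugates each unramified over $s_1 = 0$. It also persists after replacing $K$ by the constant field $K'$ of $L$: $K'(s_1)/K(s_1)$ is unramified, and $L/K'(s_1)$ is still Galois because $K'(s_1)$ is intermediate in $L/K(s_1)$. You should state this last point explicitly. The other blemishes are expository: the announced ``modification'' of $s_1$ is never needed, and possible reducibility of $C_1$ comes from $u$ being a power in $K(C)$, not from unramifiedness at $s_0$.
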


\begin{proof}
This is \cite[Lem. 5.1]{zbMATH08109702}. The lemma is in fact stated somewhat differently, but the proof there shows the statement we give; in particular, under our smoothness assumption $C = C_{1}$ in the notation there, and our $u$ is the function $x_1$ chosen at the beginning of the proof.  
\end{proof}

\subsubsection{Relationship with Special Fibre Components} 
\label{relspecfibcompsec}

We continue with the setup of \S\ref{Gfuncfromgeosec}, except in this section we base-change to $\mathbb{C}$ and analytify so that all objects are considered in the category of complex analytic spaces. This means we can replace $\overline{S}$ with an open ball $\mathcal{B}$ around $s_{0}$ which the parameter $s$ identifies with an open disk in $\mathbb{C}$. Then $\overline{X}$ deformation retracts onto $Y = \overline{X}_{s_{0}}$. We start by reviewing a construction of Steenbrink \cite{zbMATH03515611}. We define a filtration $W_{\bullet}$ on the log de Rham complex $\Omega^{\bullet}_{\overline{X}}(\log Y)$ of the total space via $W_{k} \Omega^{p}_{\overline{X}}(\log Y) = \Omega^{k}_{\overline{X}}(\log Y) \wedge \Omega^{p-k}_{\overline{X}}$. We then set $A^{pq}_{\overline{X}} := \Omega^{p+q+1}_{\overline{X}}(\log Y) / W_{q} \Omega^{p+q+1}_{\overline{X}}(\log Y)$ and define $d_{1} : A^{pq}_{\overline{X}} \to A^{p+1,q}_{\overline{X}}$ by restricting $d$ and $d_{2} : A^{pq}_{\overline{X}} \to A^{p,q+1}_{\overline{X}}$ by $d_{2}(\omega) = \theta \wedge \omega$ where $\theta = \frac{ds}{s}$. This gives a double complex $A^{\bullet\bullet}_{\overline{X}}$ with associated total complex $A^{\bullet}_{\overline{X}}$.

We also get an increasing filtration $W_{\bullet}$ on $\Omega^{\bullet}_{\overline{X}/\mathcal{B}}(\log Y)$ by taking the image of the filtration on $\Omega^{\bullet}_{\overline{X}}(\log Y)$. The complex $A^{\bullet}_{\overline{X}}$ also has a weight filtration defined by (\ref{Acompweightfil}) below.

\begin{lem}
\label{steenbrinklogdRreslemma}
For each $p$, the natural sequence
\[ 0 \to \Omega^{p}_{\overline{X}/\mathcal{B}}(\log Y) \otimes \mathcal{O}_{Y} \xrightarrow{d_{2}} A^{p0}_{\overline{X}} \xrightarrow{d_2} A^{p1}_{\overline{X}} \xrightarrow{d_2} \cdots  \]
is exact. Moreover the map $\widetilde{\theta} : \Omega^{\bullet}_{\overline{X}/\mathcal{B}}(\log Y) \otimes \mathcal{O}_{Y} \to A^{\bullet}_{\overline{X}}$ given by $\omega \mapsto (-1)^{p} \theta \wedge \omega$ on $\Omega^{p}_{\overline{X}/\mathcal{B}}(\log Y) \otimes \mathcal{O}_{Y}$ is a filtered (with respect to the weight filtrations) quasi-isomorphism.
\end{lem}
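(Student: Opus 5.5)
The plan is to check both claims locally on $\overline{X}$, working in the standard semistable local coordinates. Near a point lying on exactly $r$ components of $Y$, choose coordinates $x_{1}, \hdots, x_{n+1}$ with $s = x_{1} \cdots x_{r}$. Then $\Omega^{1}_{\overline{X}}(\log Y)$ is free on $\frac{dx_{1}}{x_{1}}, \hdots, \frac{dx_{r}}{x_{r}}, dx_{r+1}, \hdots, dx_{n+1}$, and $\theta = \sum_{i \leq r} \frac{dx_{i}}{x_{i}}$. This is Steenbrink's argument, and I would reproduce it in three steps.

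\emph{Step 1: the relative forms as a cokernel and a $d_{2}$-complex.} Wedging with $\theta$ gives $\Omega^{p}_{\overline{X}/\mathcal{B}}(\log Y) = \Omega^{p}_{\overline{X}}(\log Y)/\theta \wedge \Omega^{p-1}_{\overline{X}}(\log Y)$. Since $\theta$ is part of a local basis, the Koszul complex $(\Omega^{\bullet}_{\overline{X}}(\log Y), \theta \wedge)$ is exact. The key local computation is that $W_{q}$ is spanned by forms involving at most $q$ of the $\frac{dx_{i}}{x_{i}}$. From this, $x_{i} \cdot \frac{dx_{i}}{x_{i}} = dx_{i}$ drops weight, and one shows that $\mathrm{Gr}^{W}_{q}\Omega^{\bullet}_{\overline{X}}(\log Y)$ is supported on the $q$-fold intersections $Y^{(q)}$. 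In particular $W_{0} = \Omega_{\overline{X}}$, and $s \cdot \Omega(\log Y) \subset W_{0}$ up to the Koszul relations. With this, the map $\omega \mapsto \theta \wedge \omega$ gives a well-defined injection of $\Omega^{p}_{\overline{X}/\mathcal{B}}(\log Y) \otimes \mathcal{O}_{Y}$ into $A^{p0} = \Omega^{p+1}(\log Y)/W_{0}$.

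\emph{Step 2: exactness of the row.} I would filter everything by $W$ and reduce to exactness on graded pieces. Via Poincaré residue, $\mathrm{Gr}^{W}_{k}$ of $\Omega^{\bullet}(\log Y)$ is identified with $\Omega^{\bullet - k}_{Y^{(k)}}$ tensored with the appropriate orientation sheaf. Under this identification, $\theta \wedge$ becomes the alternating restriction/Gysin-type combinatorial map $\mathrm{Gr}_{k} \to \mathrm{Gr}_{k+1}$ on the terms $A^{pq} = \Omega^{p+q+1}/W_{q}$. The resulting complex in the $q$-direction is then a truncated Koszul complex on $\theta$ restricted to $Y$. Its exactness reduces to the exactness of $\wedge\theta$ on the exterior algebra of the span of the $\frac{dx_{i}}{x_{i}}$, which holds because $\theta \neq 0$ there. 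Augmenting by the cokernel from Step 1 yields the exact sequence $0 \to \Omega^{p}_{\overline{X}/\mathcal{B}}(\log Y)\otimes\mathcal{O}_{Y} \to A^{p0} \to A^{p1} \to \cdots$.

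\emph{Step 3: quasi-isomorphism.} Because the columns are exact, the standard double complex lemma shows that the augmentation into the total complex $A^{\bullet}$ is a quasi-isomorphism. The sign $(-1)^{p}$ is exactly what is needed for $\widetilde{\theta}$ to commute with the total differential $d_{1} + d_{2}$, using that $d\theta = 0$ and $d(\theta\wedge\omega) = -\theta \wedge d\omega$.

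For filteredness, $A^{\bullet}$ carries the filtration $W_{k}A^{pq} = $ the image of $W_{2q+k+1}$. I would check that $\widetilde{\theta}$ maps $W_{k}$ into $W_{k}$. Then I would repeat Steps 2 and 3 on each $\mathrm{Gr}^{W}_{k}$, where both sides become explicit direct sums of (shifted) de Rham complexes of the strata $Y^{(j)}$, and verify that $\mathrm{Gr}_{k}\widetilde{\theta}$ is a quasi-isomorphism. I expect the main obstacle to be this graded bookkeeping: matching the index shift $2q+k+1$ with the weight filtration on the relative complex, and confirming that the residue identifications are compatible with $\theta\wedge$. Failing a clean argument, I would cite \cite{zbMATH03515611} for that comparison.
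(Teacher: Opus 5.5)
\textbf{There is a genuine gap in Steps~1--2.} The paper gives no argument of its own here; it cites Steenbrink [Lem.~4.15, Cor.~4.16] and Peters--Steenbrink. So reproducing Steenbrink's local computation is a reasonable plan, and your Step~3 is fine: exact rows give a quasi-isomorphism to the total complex, and the sign $(-1)^p$ makes $\widetilde\theta$ a chain map. The problem is that Steps~1--2 skip exactly the content of Steenbrink's lemma.

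If you filter $A^{pq}$ by $W_{q+j}\Omega^{p+q+1}/W_q$, the $j$-th graded piece of the row is, via residues, the restriction complex $\Omega^{p+1-j}_{\widetilde Y^{[j]}}\to\Omega^{p+1-j}_{\widetilde Y^{[j+1]}}\to\cdots$. This is exact in positive degrees (Mayer--Vietoris), but \emph{not} at its first term. For $p=0$ the whole row is $\mathcal O_{\widetilde Y^{[1]}}\to\mathcal O_{\widetilde Y^{[2]}}\to\cdots$, whose kernel is $\mathcal O_Y$. So the graded pieces are not exact, and reducing to them only gives exactness at $A^{pq}$ for $q\geq 1$.

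What remains is the real content:
\begin{itemize}
\item[(i)] if $\theta\wedge\omega\in\Omega^{p+1}_{\overline X}$, then $\omega\in\theta\wedge\Omega^{p-1}_{\overline X}(\log Y)+s\,\Omega^{p}_{\overline X}(\log Y)$ (injectivity, which Step~1 only asserts);
\item[(ii)] if $\theta\wedge\omega\in W_1$, then $\omega\in\theta\wedge\Omega^p_{\overline X}(\log Y)+W_0$ (exactness at $A^{p0}$).
\end{itemize}
``Augmenting by the cokernel'' does not supply these. The row is not a truncation of the exact Koszul complex $(\Omega^\bullet_{\overline X}(\log Y),\theta\wedge)$, because a different $W_q$ is divided out in each degree. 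Also, $\Omega^m_{\overline X}(\log Y)/W_q$ does not have free coefficients: the coefficient of $dx_I/x_I$ lives modulo the ideal generated by products of $|I|-q$ of the $x_i$, $i\in I$. Your argument never uses $\otimes\mathcal O_Y$, and without it (i) is false, since $s\,\Omega^p_{\overline X}(\log Y)\subset W_0$.

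\textbf{How to close it.} Pass to completed local rings; this is allowed because the $W_q$ are monomial and $\theta\wedge$ is $\mathcal O$-linear. Then split by monomials $x^a$ in $x_1,\dots,x_r$ and by the degree $v$ in $dx_{r+1},\dots,dx_{n+1}$. Put $T=\mathrm{supp}(a)$, $m=p-v$ and $e_i=dx_i/x_i$.
\begin{itemize}
\item The $x^a$-part of $A^{pq}$ is spanned by the $e_I$ with $|I|=m+q+1$ and $|I\setminus T|\geq q+1$.
\item This is the part in degrees $\geq m+1$ of the complex $\Lambda^{\leq m}\langle e_i\rangle_{i\in T}\otimes\Lambda\langle e_i\rangle_{i\notin T}$ with differential $\theta\wedge$.
\item That complex is acyclic as soon as $\theta_{T^c}=\sum_{i\notin T}e_i\neq 0$, i.e.\ as soon as $s\nmid x^a$.
\item So its truncation has cohomology only at the first term, and this cohomology is exactly the injective image of $\Lambda^m\langle e_1,\dots,e_r\rangle/\theta\wedge\Lambda^{m-1}\langle e_1,\dots,e_r\rangle$, the $x^a$-part of $\Omega^p_{\overline X/\mathcal B}(\log Y)$.
\item Monomials divisible by $s$ vanish in every $A^{pq}$, and they are exactly what $\otimes\mathcal O_Y$ kills.
\end{itemize}
This is where ``$\theta\neq 0$'' is genuinely used: for the partial sum $\theta_{T^c}$, not for $\theta$ itself.

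\textbf{The filtered claim.} Your gradedwise plan cannot succeed with the filtrations you named, which are also the ones the statement refers to. The image of the naive $W$ on $\Omega^\bullet_{\overline X/\mathcal B}(\log Y)\otimes\mathcal O_Y$ has $W_{-1}=0$. But by \autoref{gradedquotientsofsteenbrinkseq}, $\mathrm{Gr}^W_{-1}A^\bullet_{\overline X}$ contains $(a_2)_*\Omega^\bullet_{\widetilde Y^{[2]}}[-1]$, which is not acyclic once $Y^{[2]}\neq\emptyset$. So with $W_kA^{pq}=W_{2q+k+1}/W_q$ you only get a filtered morphism that is a quasi-isomorphism, not a filtered quasi-isomorphism. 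The gradedwise statement does hold for the Hodge filtrations. It also holds for the filtration $W_{q+k+1}\Omega^{p+q+1}/W_q$ on $A^{pq}$, by the same monomial computation graded by $|I\setminus T|$. You need to decide which version you are proving; this is not index bookkeeping.
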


\begin{proof}
See \cite[Lem 4.15]{zbMATH03515611} and \cite[Cor 4.16]{zbMATH03515611}, as well as \cite[pg. 269]{zbMATH05233837} for the filtered version of the claim. 
\end{proof}

Now suppose that $Y$ is a simple normal crossing divisor and fix an integer $k$ and a locally closed subvariety $D$ of $Y^{[k]}$ such that $D \cap Y^{[k+1]}$ is empty. Then there exists an analytic neighbourhood $\mathcal{D}$ of $D$ in $\overline{X}$ with the property that $Y \cap \mathcal{D}$ is a union of irreducible components $Y_{1}, \hdots, Y_{k}$ such that $D = Y_{1} \cap \cdots \cap Y_{k}$, and such that we obtain an induced map $\mathcal{D} \to \mathcal{B}$. In particular, setting $Y_{D} = Y_{1} \cup \cdots \cup Y_{k}$, we may assume that the pair $(\mathcal{D}, Y_{D})$ is homotopic to $(\mathbb{A}^{k} \times D, V(x_{1}) \times D \cup \cdots \cup V(x_{k}) \times D)$, where $x_{1}, \hdots, x_{k}$ are the coordinates on $\mathbb{A}^{k}$. We consider the log de Rham complex $\Omega^{\bullet}_{\mathcal{D}}(\log Y_{D})$, as well as its relative variant $\Omega^{\bullet}_{\mathcal{D}/\mathcal{B}}(\log Y_{D})$. We also analogously define $A^{pq}_{\mathcal{D}}$, $A^{\bullet\bullet}_{\mathcal{D}}$ and $A^{\bullet}_{\mathcal{D}}$. We then get the following analogue of \autoref{steenbrinklogdRreslemma}:

\begin{lem}
\label{steenbrinkresolutiononE}
The claims in \autoref{steenbrinklogdRreslemma} continue to hold if one replaces $\overline{X}$ with $\mathcal{D}$ and $Y$ with $Y_{D}$. 
\end{lem}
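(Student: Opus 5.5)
The claims of \autoref{steenbrinklogdRreslemma} are local on the total space: exactness of a complex of sheaves and the filtered quasi-isomorphism property of $\widetilde{\theta}$ can both be checked on stalks. The plan is therefore to reduce to the local computation underlying Steenbrink's proof and check that it applies verbatim on $\mathcal{D}$.

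\textbf{Step 1 (the local model is unchanged).} First I would check that $(\mathcal{D}, Y_{D}) \to \mathcal{B}$ is itself a semistable degeneration. Since $\mathcal{D}$ is an open analytic subset of $\overline{X}$, the map $\mathcal{D} \to \mathcal{B}$ is the restriction of $\overline{g}$. Moreover $Y \cap \mathcal{D} = Y_{D}$ by construction. Hence for every point $x \in \mathcal{D}$ there are local coordinates $x_{1}, \hdots, x_{n+1}$ on $\overline{X}$ near $x$ in which $s = x_{1} \cdots x_{r}$, and the components of $Y_{D}$ through $x$ are exactly the $V(x_{i})$, $1 \leq i \leq r$. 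In particular the sheaves
\[ \Omega^{p}_{\mathcal{D}}(\log Y_{D}), \qquad \Omega^{p}_{\mathcal{D}/\mathcal{B}}(\log Y_{D}), \qquad W_{k}, \qquad A^{pq}_{\mathcal{D}}, \]
and the maps $d_{1}$, $d_{2}$, $\theta = ds/s$, are the restrictions to $\mathcal{D}$ of the corresponding objects on $\overline{X}$. The same holds for the relative weight filtration and for the weight filtration on $A^{\bullet}_{\mathcal{D}}$, since both are defined by the same formulas. In short, every object in the statement is obtained from the $\overline{X}$-version by the exact functor of restriction to the open subset $\mathcal{D}$.

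\textbf{Step 2 (conclude by restriction).} Restriction to an open subset preserves exactness of complexes of sheaves and preserves filtered quasi-isomorphisms, because both are detected on stalks and the graded pieces restrict compatibly. Applying this to the exact sequence and to the filtered quasi-isomorphism $\widetilde{\theta}$ of \autoref{steenbrinklogdRreslemma} on $\overline{X}$ gives the claims on $\mathcal{D}$ with $Y_{D}$ in place of $Y$. Alternatively, one can note that the proofs in \cite[Lem 4.15, Cor 4.16]{zbMATH03515611} and \cite[pg. 269]{zbMATH05233837} reduce to exactly this stalk-wise computation in the coordinates above: a Koszul-type computation showing that $\theta \wedge$ is exact on the graded pieces.

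\textbf{Main obstacle.} The only point needing care is whether Steenbrink's argument uses a global hypothesis, namely properness of $\overline{g}$ or $\overline{X}$ retracting onto $Y$. I expect it does not: properness enters only later, in the degeneration of spectral sequences and the comparison with hypercohomology, and not in the sheaf-level statements of the two lemmas cited. I would confirm this by inspecting the cited proofs. If some step there did use the retraction, I would instead use the homotopy $(\mathcal{D}, Y_{D}) \simeq (\mathbb{A}^{k} \times D, \bigcup_{i} V(x_{i}) \times D)$ assumed above, which supplies the analogous retraction for $\mathcal{D}$.
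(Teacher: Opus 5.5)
Your proposal is correct and matches the paper's proof. The paper offers the same two routes: rerun Steenbrink's local arguments, or restrict the sheaves of \autoref{steenbrinklogdRreslemma} along the open inclusion $\mathcal{D} \hookrightarrow \overline{X}$ and use exactness of restriction. Your Step 1 (using $Y_{D} = Y \cap \mathcal{D}$ to see that all sheaves, differentials and weight filtrations on $\mathcal{D}$ are restrictions of those on $\overline{X}$) is precisely the check that justifies the restriction argument.
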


\begin{proof}
Repeat the proofs of \cite[Lem 4.15]{zbMATH03515611}, \cite[Cor 4.16]{zbMATH03515611} and the arguments in \cite[\S11.2.5]{zbMATH05233837}, or base-change along $\mathcal{D} \hookrightarrow \overline{X}$ the sheaves involved in \autoref{steenbrinklogdRreslemma} and use that restriction is exact. 
\end{proof}
In order to understand the cohomological behaviour of the map $\Omega^{\bullet}_{\overline{X}/\mathcal{B}}(\log Y) \to \Omega^{\bullet}_{\mathcal{D}/\mathcal{B}}(\log Y_{D})$, we are therefore reduced to understanding $A^{\bullet}_{\overline{X}} \to A^{\bullet}_{\mathcal{D}}$. We note that both $A^{\bullet}_{\overline{X}}$ and $A^{\bullet}_{\mathcal{D}}$ come with weight filtrations, defined via 
\begin{align}
\label{Acompweightfil}
W_{r} A^{\bullet}_{\overline{X}} &= \bigoplus_{p,q \geq 0} W_{r} A^{pq} \hspace{2em} W_{r} A^{pq} = W_{2q+r+1} \Omega^{p+q+1}_{\overline{X}} (\log Y) / W_{q} \Omega^{p+q+1}_{\overline{X}} (\log Y) , \\
\label{Acompweightfil2}
W_{r} A^{\bullet}_{\mathcal{D}} &= \bigoplus_{p,q \geq 0} W_{r} A^{pq} \hspace{2em} W_{r} A^{pq} = W_{2q+r+1} \Omega^{p+q+1}_{\mathcal{D}} (\log Y_{D}) / W_{q} \Omega^{p+q+1}_{\mathcal{D}} (\log Y_{D}) .
\end{align}
The map $A^{\bullet}_{X} \to A^{\bullet}_{\mathcal{D}}$ evidently respects the weight filtration. For each $j$, write $\widetilde{Y}^{[j]}$ for the disjoint union of the components of $Y^{[j]}$, and $a_{j} : \widetilde{Y}^{[j]} \to \overline{X}$ for the natural map. Likewise for $\widetilde{Y}^{[j]}_{D}$ and $a_{j} : \widetilde{Y}^{[j]}_{D} \to \mathcal{D}$. 
\begin{lem}
\label{gradedquotientsofsteenbrinkseq}
We have
\begin{align*}
\textrm{Gr}^{W}_{r} A^{\bullet}_{\overline{X}} &= \bigoplus_{\substack{k \geq 0 \\ k \geq -r}} (a_{2k+r+1})_{*} \Omega^{\bullet}_{\widetilde{Y}^{[2k+r+1]}}[-r-2k] \\
\textrm{Gr}^{W}_{r} A^{\bullet}_{\mathcal{D}} &= \bigoplus_{\substack{k \geq 0 \\ k \geq -r}} (a_{2k+r+1})_{*} \Omega^{\bullet}_{\widetilde{Y}^{[2k+r+1]}_{D}}[-r-2k]
\end{align*}
\end{lem}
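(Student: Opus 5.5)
This is Steenbrink's computation of the graded pieces of the weight filtration on $A^{\bullet}$. The plan is to reduce it, via the Poincaré residue isomorphism for the weight filtration on log forms, to a bookkeeping of indices. Recall that for a simple normal crossing divisor $Y$ on the smooth space $\overline{X}$ the residue map gives an isomorphism of complexes
\[ \mathrm{Res} : \textrm{Gr}^{W}_{m} \Omega^{\bullet}_{\overline{X}}(\log Y) \xrightarrow{\sim} (a_{m})_{*} \Omega^{\bullet}_{\widetilde{Y}^{[m]}}[-m] , \]
compatible with the de Rham differential up to sign conventions (cf. \cite{zbMATH03515611}, \cite[\S4.2]{zbMATH05233837}).

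\textbf{Column-by-column computation.} By (\ref{Acompweightfil}),
\[ W_{r} A^{pq} = W_{2q+r+1} \Omega^{p+q+1}(\log Y) / W_{q} \Omega^{p+q+1}(\log Y), \]
so
\[ \textrm{Gr}^{W}_{r} A^{pq} = \textrm{Gr}^{W}_{2q+r+1} \Omega^{p+q+1}_{\overline{X}}(\log Y) \]
whenever $2q+r+1 > q$, i.e. $q \geq -r$; otherwise it vanishes. Write $k = q$. Then for each fixed $k \geq \max\{0,-r\}$, the residue identifies the column $\textrm{Gr}^{W}_{r} A^{\bullet k}$ (with differential $d_{1}$, induced by $d$) with $(a_{2k+r+1})_{*} \Omega^{\bullet}_{\widetilde{Y}^{[2k+r+1]}}$. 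This identification comes with a degree shift: a form in $\Omega^{p+k+1}$ of weight $2k+r+1$ has residue of degree $p+k+1-(2k+r+1) = p-k-r$. In the total complex the element sits in degree $p+k$, so the total degree exceeds the residue degree by exactly $r+2k$. This produces the shift $[-r-2k]$.

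\textbf{Vanishing of $d_{2}$ on the graded pieces.} Here $d_{2} = \theta \wedge -$ sends $W_{2q+r+1}\Omega^{\bullet}$ into $W_{2q+r+2}\Omega^{\bullet}$, because $\theta = \frac{ds}{s} = \sum \frac{df_{i}}{f_{i}}$ locally is a weight-one log form. The target piece in column $q+1$ is $W_{2(q+1)+r+1} = W_{2q+r+3}$, so $d_{2}$ maps $W_{r}A^{pq}$ into $W_{r-1}A^{p,q+1}$ and is therefore zero on $\textrm{Gr}^{W}_{r}$. Hence the total graded complex splits as the direct sum of its columns, which gives the first formula. Verifying that the residue isomorphism intertwines $d_{1}$ with the de Rham differential on $\widetilde{Y}^{[m]}$ is standard, with signs absorbed into the isomorphism.

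\textbf{The case of $\mathcal{D}$, and the main obstacle.} For $\mathcal{D}$ the same argument applies verbatim with $Y_{D}$ in place of $Y$. The residue isomorphism is local, and $Y_{D}$ is a simple normal crossing divisor in the smooth space $\mathcal{D}$ whose strata are the $\widetilde{Y}^{[j]}_{D}$. The main point needing care is the bookkeeping: matching the weight index $2q+r+1$ and the lower cutoff $W_{q}$ (which yields the condition $k \geq -r$) with the degree shift. The residue isomorphism itself I will cite from \cite{zbMATH03515611}/\cite{zbMATH05233837} rather than reprove.
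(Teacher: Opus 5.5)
Your proposal is correct and is essentially the argument the paper relies on: the paper just cites Steenbrink's Lemma 4.18, whose proof is exactly this residue computation. You identify $\textrm{Gr}^{W}_{r} A^{pq}$ with $\textrm{Gr}^{W}_{2q+r+1}\Omega^{p+q+1}(\log Y)$ for $q \geq \max\{0,-r\}$, observe that $d_{2}$ maps $W_{r}$ into $W_{r-1}$, and apply the Poincar\'e residue; for $\mathcal{D}$ the paper likewise says to mimic that proof with $Y_{D}$ in place of $Y$, or to restrict along $\mathcal{D} \hookrightarrow \overline{X}$.
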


\begin{proof}
The first statement is \cite[Lem 4.18]{zbMATH03515611}; for the second statement either mimic the proof of \cite[Lem 4.18]{zbMATH03515611} or argue by restriction along $\mathcal{D} \hookrightarrow \overline{X}$. 
\end{proof}
There is also a Betti version of the above story for $\overline{X}$. In particular if we write $A^{\bullet}_{\mathbb{C}} = A^{\bullet}_{\overline{X}}$, then by \cite[(4.19)]{zbMATH03515611} there exists a cohomological mixed Hodge complex $(A^{\bullet}_{\mathbb{Z}}, (A^{\bullet}_{\mathbb{Q}}, W), (A^{\bullet}_{\mathbb{C}}, F, W))$ on $Y$ (cf. \cite[(3.6)]{zbMATH03515611}) which extends $A^{\bullet}_{\mathbb{C}}$ with its weight filtration. In particular considering the spectral sequence induced by the weight filtration one obtains 
\begin{equation}
\label{steenspecsec}
E^{-r,q+r}_{1} = H^{q}(Y, \textrm{gr}^{W}_{r} A^{\bullet}_{?} ) \implies H^{q}(Y, A^{\bullet}_{?})
\end{equation}
where $?$ is either $\mathbb{Q}$ or $\mathbb{C}$. By \cite[Lem. 3.11, Thm. 3.18]{zbMATH05233837}] the edge maps and differentials of this spectral sequence are morphisms of (mixed) Hodge structures.

\subsubsection{Special Fibre Constraints From Weight-Monodromy}
\label{specialfibreconstrsec}
We continue with the setup of \S\ref{Gfuncfromgeosec}. As is well-known, the graded pieces $\textrm{gr}^{W}_{r}\, \mathcal{H}^{i}_{s_{0}}$ for the weight filtration can be non-trivial only if $r \in [0,2i]$, and one has $N W_{r} \subset W_{r-2}$. This in particular means that $N^{i+1} = 0$. 

The following is proven by Andr\'e in the course of proving \cite[IX \S4, Thm. 2]{zbMATH00041964}. 

\begin{prop}
\label{weightfildescviarestriction}
Suppose that $\dim \overline{X}_{s_{0}} = k$. Then $W_{2k-1} \mathcal{H}^{k}_{s_{0}}$ is exactly the kernel of the map
\[ \mathcal{H}^{k}_{s_{0}} = R^{k} \Gamma(\overline{X}_{s_{0}}, \dR(\overline{X}/\overline{S}) \otimes \mathcal{O}_{\overline{X}_{s_{0}}}) \to R^{k} \Gamma(\overline{X}^{[k+1]}_{s_{0}}, \dR(\overline{X}/\overline{S}) \otimes \mathcal{O}_{\overline{X}^{[k+1]}_{s_{0}}}) . \]
\end{prop}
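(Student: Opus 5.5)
The approach is to compute $\mathcal{H}^{k}_{s_{0}}$ through Steenbrink's resolution. Working over $\mathbb{C}$ as in \S\ref{relspecfibcompsec}, \autoref{steenbrinklogdRreslemma} identifies $\mathcal{H}^{k}_{s_{0}} = H^{k}(Y, \dR(\overline{X}/\overline{S}) \otimes \mathcal{O}_{Y})$ with $H^{k}(Y, A^{\bullet})$, and this identification respects weight filtrations. We then use the weight spectral sequence (\ref{steenspecsec}), whose $E_{1}$-terms are computed by \autoref{gradedquotientsofsteenbrinkseq}. Since $\dim Y = k$, the strata $\widetilde{Y}^{[j]}$ have dimension $k+1-j$, so $\widetilde{Y}^{[k+1]}$ is zero-dimensional; hence $H^{0}(\widetilde{Y}^{[k+1]})$ is its only cohomology. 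Once the statement is proven over $\mathbb{C}$, it descends to $K$, because both the kernel and $W_{\bullet}$ are defined algebraically and are compatible with base change.

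The key steps run as follows. First, we identify the top graded piece. The weight $2k$ part of $H^{k}$ comes from $E_{1}^{-r, k+r}$ with $r = k$, that is, from $H^{0}$ of $\textrm{gr}^{W}_{k} A^{\bullet}$. By \autoref{gradedquotientsofsteenbrinkseq}, the summand with $j = 2m + k + 1$ is $H^{k - k - 2m}(\widetilde{Y}^{[k+2m+1]})$. This is nonzero only for $m = 0$, which gives $H^{0}(\widetilde{Y}^{[k+1]})$. Since the spectral sequence degenerates at $E_{2}$ (Steenbrink), $\textrm{gr}^{W}_{2k} \mathcal{H}^{k}_{s_{0}}$ is the kernel of $d_{1}$ out of $E_{1}^{-k,2k}$. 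Second, we check that the resulting edge map $\mathcal{H}^{k}_{s_{0}} \to \textrm{gr}^{W}_{2k} \hookrightarrow H^{0}(\widetilde{Y}^{[k+1]})$ agrees, up to sign, with the restriction map in the statement. To do this, trace $\omega \mapsto \theta \wedge \omega$ through $\widetilde{\theta}$. On $A^{0,k}$-type components this lands in $W_{2k+1}\Omega^{k+1}/W_{k}$. Its projection to the top graded piece is the iterated Poincaré residue along the $(k+1)$-fold intersections, and $\theta = ds/s = \sum_{i} dx_{i}/x_{i}$ locally. Tracing this shows the residue computes the restriction of $\omega$ to the points $Y^{[k+1]}$, viewed through the quasi-isomorphism $\dR(\overline{X}/\overline{S}) \otimes \mathcal{O}_{Y^{[k+1]}} \simeq$ functions on those points. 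Third, we conclude. The kernel of the edge map is $W_{2k-1}$, and injectivity of $\textrm{gr}^{W}_{2k} \to H^{0}(\widetilde{Y}^{[k+1]})$ then shows that the kernel of the restriction is exactly $W_{2k-1}$.

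I expect the second step to be the main obstacle. It requires a careful local computation near a point of $Y^{[k+1]}$, where $s = x_{1} \cdots x_{k+1}$. One must also verify that the relative log complex restricted to a $0$-dimensional stratum has $R^{k}$ equal to $H^{0}$ of the point. This holds because the relative log forms $dx_{i}/x_{i}$ modulo $\theta$ form a rank-$k$ trivial logarithmic part, so the top wedge survives. Sign and normalization issues here are harmless, since only kernels matter.
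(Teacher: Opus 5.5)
Your argument is correct, but it is not the paper's route. The paper does no computation with Steenbrink's complex for this statement. It identifies $\mathcal{H}^{k}_{s_{0}}$ with Andr\'e's $H^{n}_{\textrm{DR lim}}$ and imports from the proof of \cite[IX \S4, Thm. 2]{zbMATH00041964} the equality $\mathbb{C}\langle T_{p}(0)\rangle^{\perp}_{p} = M_{2n-1}H^{n}_{\textrm{DR lim}}$. Here $T_{p}(0)$ restricts a section of $\mathcal{H}^{k}$ to a small ball around $p \in Y^{[k+1]}$, expands the resulting class in $s$, and evaluates at $s = 0$. The only thing the paper checks itself is that, after base change, $T_{p}(0)$ is the restriction to $R^{k}\Gamma(\{p\}, \dR(\overline{X}/\overline{S}) \otimes \mathcal{O}_{\{p\}})$.

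You instead reprove Andr\'e's equality inside the Steenbrink formalism. The top weight quotient embeds in $E^{-k,2k}_{1} = H^{0}(\widetilde{Y}^{[k+1]})$, and the edge map is the iterated residue of $\theta \wedge \omega$. The local identity $\theta \wedge f\frac{dx_{2}}{x_{2}} \wedge \cdots \wedge \frac{dx_{k+1}}{x_{k+1}} = f \frac{dx_{1}}{x_{1}} \wedge \cdots \wedge \frac{dx_{k+1}}{x_{k+1}}$ shows this is evaluation at the points of $Y^{[k+1]}$. Concretely, the two resulting chain maps $\dR(\overline{X}/\overline{S}) \otimes \mathcal{O}_{Y} \to \mathcal{O}_{Y^{[k+1]}}[-k]$ agree. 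Moreover, $\dR(\overline{X}/\overline{S}) \otimes \mathcal{O}_{Y^{[k+1]}}$ has zero differential, so projecting it onto its degree-$k$ term is an isomorphism on $R^{k}\Gamma$.

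Your version is more self-contained, and it is exactly the mechanism the paper itself uses later for $E^{-1,3}_{1}$ in the nilpotency-$2$ case. The citation is shorter. It also has the advantage that $T_{p}(0)$ is literally the $s = 0$ value of the analytic restriction maps that define the $G$-functions in the nilpotency-$3$ case.

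There are two indexing slips to fix:
\begin{itemize}
\item[-] The relevant term is $E^{-k,2k}_{1} = H^{k}(Y, \textrm{gr}^{W}_{k} A^{\bullet})$, not $H^{0}$ of it.
\item[-] $\widetilde{\theta}$ lands in $A^{k,0} = \Omega^{k+1}_{\overline{X}}(\log Y)/W_{0}$, whose weight-$k$ graded piece is $\textrm{gr}^{W}_{k+1} \Omega^{k+1}_{\overline{X}}(\log Y)$. The component $A^{0,k}$ you name is pure of weight $-k$ and feeds the weight-$0$ term $E^{k,0}_{1}$.
\end{itemize}
Note also that $W_{k} A^{\bullet} = A^{\bullet}$ because $\dim \overline{X} = k+1$. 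So the kernel of $H^{k}(A^{\bullet}) \to H^{k}(\textrm{gr}^{W}_{k} A^{\bullet})$ equals $\operatorname{im} H^{k}(W_{k-1} A^{\bullet}) = W_{2k-1}$ directly from the long exact sequence, and $E_{2}$-degeneration is not needed.
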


\begin{proof}
The first equality comes from proper base-change. Given this equality, the vector space $\mathcal{H}^{k}_{s_{0}}$ is just what is called ``$H^{n}_{\textrm{DR lim}}$'' in the proof of \cite[IX \S4, Thm. 2]{zbMATH00041964}, and the proof there \cite[pg. 190, 4.4.10]{zbMATH00041964} gives an equality ``$\mathbb{C} \langle T_{p}(0) \rangle^{\perp}_{p} = M_{2n-1} H^{n}_{\textrm{DR lim}}$''. Here $M_{2n-1}$ is what we are calling $W_{2k-1}$, and ``$\mathbb{C} \langle T_{p}(0) \rangle^{\perp}_{p}$'' is collection of elements of $\mathcal{H}^{k}_{s_{0}}$ which lie in the kernel of all maps $T_{p}(0) : H^{n}_{\textrm{DR lim}} \to \mathbb{C}$ constructed in the course of the proof. Here $p$ ranges over all points $p \in Y^{[k+1]}$ ($= Y^{[n]}$ for Andr\'e), and the maps $T_{p}(0)$ are constructed by taking sections $\omega$ of $\mathcal{H}^{k}$ defined in a neighbourhood of $0$, restricting them to a small ball in $\overline{X}$ around $p$, expressing the result in cohomology as a power series in the uniformizing parameter $s$, and evaluating at $0$. Applying base-change functors, and using that
\[ R^{k} \Gamma(\overline{X}^{[k+1]}_{s_{0}}, \dR(\overline{X}/\overline{S}) \otimes \mathcal{O}_{\overline{X}^{[k+1]}_{s_{0}}}) = \bigoplus_{p \in \overline{X}^{[k+1]}_{s_{0}}} R^{k} \Gamma(\{ p \}, \dR(\overline{X}/\overline{S}) \otimes \mathcal{O}_{\{ p \}}) \]
and one checks formally that the map $\mathcal{H}^{k}_{s_{0}} \to R^{k} \Gamma(\{ p \}, \dR(\overline{X}/\overline{S}) \otimes \mathcal{O}_{\{ p \}})$ is just $T_{p}(0)$.
\end{proof}

\subsection{Height Bounds}

In this section we prove the various results announced in \S\ref{mainappsec}. 

\subsubsection{Proof of \autoref{mainsurfaceapp1}}

\paragraph{Preliminary Calculations:} In this case the monodromy-weight filtration on $\mathcal{H}^2_{s_{0}}$ is given by
\begin{align*}
W_{4} = \mathcal{H}^2_{s_{0}}, \hspace{2em} W_{3} &= \ker N^2, \hspace{2em} W_{2} = \ker N^2 \cap N^{-1}(\im N^2) + \im N^2 \\
W_{1} &= N(\ker N^2) + \im N^2, \hspace{2em} W_{0} = \im N^2 .
\end{align*}
In particular the graded quotient $W_{4}/W_{3}$ is non-trivial, which by \autoref{weightfildescviarestriction} means that, after possibly replacing $K$ with a finite extension, there is a point $p \in \overline{X}^{[3]}_{s_{0}}$ such that the natural map 
\[ \mathcal{H}^{2}_{s_{0}} \to R^{2} \Gamma(\{ p \}, \dR(\overline{X}/\overline{S}) \otimes \mathcal{O}_{\{ p \}}) \]
is non-zero. 

\paragraph{G-function setup:} Take $D = \{ p \}$, let $\mathcal{g}^{\wedge} : \mathcal{D}^{\wedge} \to \mathcal{S}^{\wedge}$ be the obvious map, and fix a neighbourhood $U \subset \overline{X}$ of $p$ with \'etale coordinates $f_{1}, f_{2}, f_{3}$ such that:
\begin{itemize}
\item[-] the vanishing locus $V(f_{1} f_{2} f_{3})$ agrees with $Y \cap U$; 
\item[-] the product $s = f_{1} f_{2} f_{3}$ gives (the pullback to $U$ of) a uniformizing parameter $s$ at $s_{0}$ defined on a neighbourhood $T \subset \overline{S}$ of $s_{0}$; and
\item[-] we have $\{ p \} = V(f_{1}, f_{2}, f_{3})$.
\end{itemize}
We now apply \autoref{mainadelictubethm} in the following two situations:
\begin{itemize}
\item[(1)] to the data $(U = U, E = \{ p \}, f_{1}, f_{2}, f_{3})$; and
\item[(2)] to the data $(U = T, E = \{ s_{0} \}, s)$.
\end{itemize}
In the first case we obtain an isomorphism $\chi : \Spf K[[x_{1}, x_{2}, x_{3}]] \xrightarrow{\sim} \mathcal{D}^{\wedge}$ of formal schemes with $v$-adic realizations $\chi^{v} : \Delta^{v,3}_{\chi} \hookrightarrow \overline{X}^{v}$, with $\Delta_{\chi}$ some adelic disc, and in the second case we obtain a formal isomorphism $\eta : \Spf K[[y]] \xrightarrow{\sim} \mathcal{S}^{\wedge}$, with $v$-adic realizations $\eta^{v} : \Delta^{v}_{\eta} \hookrightarrow \overline{S}^{v}$, and with $\Delta_{\eta}$ another adelic disk. After scaling the disks and the parameter $s$ we may assume that $\overline{g}$ induces surjective maps $\Delta^{v,3}_{\chi} \to \Delta^{v}_{\eta}$ and that $\Delta^{v}_{\eta}$ contains a component of the locus $|s| < 1$ at each finite place $v$ of $K$. Write $\mathcal{D}^{v}$ for the image of $\chi^{v}$ and $\mathcal{S}^{v}$ for the image of $\eta^{v}$. We write $\mathcal{g}^{v} : \mathcal{D}^{v} \to \mathcal{S}^{v}$ for the induced map. 

To construct our system of $G$-functions we fix a basis $\omega_{1}, \hdots, \omega_{m}$ for $\mathcal{H}$ over $T$, shrinking $T$ if necessary, and apply \autoref{kerNareGfuncsdualversion} to the morphism 
\[ \mathcal{H}^{2,\wedge} \to \underbrace{R^{2} \left(\restr{\overline{g}}{\mathcal{D}^{\wedge}}\right)_{*} \dR(\mathcal{D}^{\wedge}/\mathcal{S}^{\wedge})}_{=: \mathfrak{H}^{\wedge}} \cong (\mathcal{O}_{\mathcal{S}^{\wedge}}, d) , \]
where the notation $\dR(\mathcal{D}^{\wedge}/\mathcal{S}^{\wedge})$ denotes the corresponding log de Rham complex, and the isomorphism is at the level of vector bundles with connection on the formal log scheme $\mathcal{S}^{\wedge}$. (To see that $\mathfrak{H}^{\wedge}$ is a rank $1$ vector bundle apply \cite[Lem 2.2]{zbMATH08109694} (cf. \cite[1.13]{zbMATH03515611}); the Gauss-Manin connection of $\mathcal{D}^{\wedge}/\mathcal{S}^{\wedge}$ is also easily seen to be trivial.) Call the resulting system of $G$-functions $\mathbf{G}$. 

Now for each $v \in \Sigma_{K}$ we obtain $v$-adic realizations of $\mathbf{G}$ as follows. When $v$ is finite, we consider the relative de Rham cohomology $\mathfrak{H}^{v} := R^{2} \mathcal{g}^{v}_{*} \dR(\mathcal{D}^{v}/\mathcal{S}^{v})$, where $\dR(\mathcal{D}^{v}/\mathcal{S}^{v})$ is the obvious relative log de Rham complex. Applying \cite[Lem 2.2]{zbMATH08109694} (cf. \cite[1.13]{zbMATH03515611}) we find that $\mathfrak{H}^{v}$ is a free rank $1$-vector bundle whose formal completion at $s_{0}$ agrees with $\mathfrak{H}^{\wedge} := \mathcal{g}^{\wedge}_{*} \dR(\mathcal{D}^{\wedge}/\mathcal{S}^{\wedge})$. The Gauss-Manin connection on $\mathfrak{H}^{v}$ (resp. $\mathfrak{H}^{\wedge}$) is easily verified to be the trivial one, with the class of $\frac{dx_{2}}{x_{2}} \wedge \frac{dx_{3}}{x_{3}}$ a global flat section. In the fixed bases the restrictions $\mathcal{H}^{v} \to \mathfrak{H}^{v}$ give the desired $v$-adic realizations of $\mathfrak{G}^{v}$ for $v$ finite. 

In the case of $v$ infinite, we may fix analytic neighbourhoods $\mathcal{D}^{v}$ and $\mathcal{S}^{v}$ of $p$ and $s_{0}$ respectively, which in the fixed coordinates $(x_{1}, x_{2}, x_{3})$ and $s$ are open balls of radius $\sqrt{3} r^{1/3}_{v}$ and $r_{v}$ respectively for some small $r_{v}$ depending on $v$, and apply \cite[Lem 2.2]{zbMATH08109694} (cf. \cite[1.13]{zbMATH03515611}) to learn that $\mathfrak{H}^{v} := R^2 \mathcal{g}^{v}_{*} \dR(\mathcal{D}^{v}/\mathcal{S}^{v})$ is trivial, where $\mathcal{g}^{v}$ and $\dR(\mathcal{D}^{v}/\mathcal{S}^{v})$ are constructed in the analogous fashion. Again we get a $v$-adic realization of $\mathbf{G}$ by considering the restrictions $\mathcal{H}^{v}_{\mathcal{S}^{v}} \to \mathfrak{H}^{v}$ in the fixed bases. Note that in the complex analytic case we also have, for each $s_{1} \in \mathcal{S}^{v} \setminus \{ 0 \}$, a map $\gamma^{*}_{s_{1}} : H^{2}(X_{s_{1}}, \mathbb{Z}(2)) \to H^{2}(\mathcal{D}_{s_{1}}, \mathbb{Z}(2)) \simeq (2\pi i)^2 \mathbb{Z}$ that agrees with the map $\mathcal{H}^{v}_{s_{1}} \to \mathfrak{H}^{v}_{s_{1}}$ after tensoring with $\mathbb{C}$. 

When $v$ is finite, for each $s_{1} \in \mathcal{S}^{v}(\overline{K_{v}})$, we find by following the proof of \cite[Thm 4.4(ii)]{zbMATH08109694} that there is a rigid-subvariety $\mathfrak{D}^{v}_{s_{1}} \subset \mathcal{D}^{v}_{s_{1},\mathbb{C}_{p}}$ which is a product of rigid-analytic annuli and such that the maps $H^{i}_{\textrm{dR}}(\mathcal{D}^{v}_{s_{1},\mathbb{C}_{p}}) \to H^{i}_{\textrm{dR}}(\mathfrak{D}^{v}_{s_{1}})$ of overconvergent de Rham cohomology groups are isomorphisms for each $i$. 

\paragraph{Relations and non-Triviality:} Given the above descriptions, and a point $s_{1} \in \overline{S}(\overline{K})$ we produce relations as follows.
\begin{itemize}
\item[(1)] In the case where $v$ is finite and $v$ is relevant for $(s(s_{1}), \mathbf{G})$ (recall \autoref{relevantplacesdef}) and $\mathcal{L}$ is any line bundle on $X_{s_{1}}$, one knows from \cite[Thm 3.7.2, Prop. 4.7.3]{zbMATH02043955} that the restriction $\restr{\mathcal{L}}{\mathfrak{D}_{s_{1}}}$ is trivial. In particular, if $c^{1}(\mathcal{L}) \in H^{2}_{dR}(X^{v}_{s_{1}})$ is the Chern class of $\mathcal{L}$, the image of $c^{1}(\mathcal{L})$ in $H^{2}_{\textrm{dR}}(\mathfrak{D}^{v}_{s})$ vanishes. Now write $c^{1}(\mathcal{L}) = \sum_{i} a_{i} \omega_{i,s}$. We learn that 
\begin{equation}
\label{linebundlerel}
\sum_{i} a_{i} \mathbf{G}_{i}(s_{1}) = 0 .
\end{equation}
The coefficients $a_{i}$ are defined over a finite field extension $K(\mathcal{L}, s_{1})$ of $K$, and we choose $\mathcal{L}$ such that the degree of this field is minimized.
\item[(2)] In the case where $v$ is infinite and $v$ is relevant for $(s(s_{1}), \mathbf{G})$, we have that $\gamma^{*}_{s_{1}}(\mathcal{L}) = M (2 \pi i)^2$, where $M$ is an integer. Given two such line bundles $\mathcal{L}_{1}$ and $\mathcal{L}_{2}$ we can find a linear combination $\mathcal{L} = M_{2} \mathcal{L}_{1} + M_{1} \mathcal{L}_{2}$ such that $\gamma^{*}_{s_{1}}(\mathcal{L}) = 0$. Once again we can write $c^{1}(\mathcal{L})  = \sum_{i} a_{i} \omega_{i,s}$ and obtain a relation defined over a finite field extension $K(\mathcal{L}, s_{1})$ of $K$. We choose $\mathcal{L}$ once again so that the degree of this extension is minimized.
\end{itemize}

\begin{lem}
The relations in (1) and (2) are non-trivial.
\end{lem}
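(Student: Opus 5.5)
Recall that ``non-trivial'' means strongly non-trivial: the linear form $P = \sum_{i} a_{i} x_{i}$ should not divide the specialization at $s_{1}$ of a homogeneous polynomial relation $\widetilde{P} \in K[s][x_{1}, \hdots, x_{m}]$ with $\widetilde{P}(s, \mathbf{G}) = 0$ identically in $K[[s]]$. The plan is to determine all such functional relations and show that $P$ is not a factor of any of their specializations.

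First I will describe the Zariski closure of the values of $\mathbf{G}$ over $K(s)$. By \autoref{kerNareGfuncsdualversion}, $\mathbf{G}$ records the coordinates of a single flat section of the dual local system in the basis $\omega_{i}$. Over $\mathbb{C}$ this section is the functional $\gamma^{*}$: pairing with the vanishing-cycle class $\lambda$ given by the torus $\mathcal{D}_{s_{1}}$. Since $\pi_{1}(S,s)$ acts $\mathbb{Q}$-simply on $V$, the polynomial relations of $\mathbf{G}$ over $\overline{K}(s)$ are those coming from the algebraic monodromy group. I will use the standard argument (cf. \cite{zbMATH08109694}, Andr\'e's relations theorem) that the $\overline{K(s)}$-Zariski closure of $\mathbf{G}$ is the orbit of the monodromy group acting on $\lambda$. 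Because $N^{2} \neq 0$ and $\lambda$ spans $W_{0} = \im N^{2}$ (it is a weight-$0$ class, since $W_{4}/W_{3}$ pairs with it), this orbit is either the isotropic quadric cone in $V^{\vee}$ cut out by $Q$, or a cone over it. In particular, the only relations are linear relations killing $\Pic(X_{\overline{\eta}})$, which are trivial because $\gamma^{*}$ vanishes on global line bundle classes, together with multiples of the quadric $Q(x,x) = 0$.

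Second, I will check that $P$ does not divide the specialization of such a relation. The linear relations vanish on $\Pic(X_{\overline{\eta}})$ coordinates, and we may arrange that $c^{1}(\mathcal{L})$ has nonzero component in $V_{s_{1}}$. Since $\rk \Pic(X_{s_{1}}) \geq \rk \Pic(X_{\overline{\eta}}) + 2$, the line bundles $\mathcal{L}$ we use can be chosen in $V \cap \Pic(X_{s_{1}})$. Their span is a rank $\geq 2$ sublattice, or rank $\geq 1$ after the combination step in case (2). A nonzero linear form can divide the specialized quadric $Q$ (non-degenerate of rank $\geq 3$ on $V$) only if $Q$ restricted to its image is degenerate in a way that is impossible when $\operatorname{rank} Q \geq 3$. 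A non-degenerate quadric of rank $\geq 3$ is irreducible, so no linear factor exists, and the specialization at $s_{1}$ keeps its rank. Hence $P$ is not a factor, unless $P$ vanishes on the whole image of the specialized closure. That would force $c^{1}(\mathcal{L})$ to annihilate every monodromy translate of $\lambda$, hence to lie in $V^{\perp}$ by simplicity, contradicting $c^{1}(\mathcal{L}) \in V$ nonzero.

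The main obstacle is the first step: pinning down the Zariski closure of $\mathbf{G}$ precisely, which requires $K(s)$-algebraic rather than merely complex monodromy information and a careful treatment of the Picard part. I would follow the corresponding argument of \cite{zbMATH08109694} almost verbatim, since the nilpotency-$3$ case was treated there.
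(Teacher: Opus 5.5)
Your last sentence contains the argument that works. The two steps you put before it do not hold up.

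\textbf{The Zariski closure.} Identifying the Zariski closure of $\mathbf{G}$ with the isotropic quadric cone, so that the only functional relations are linear forms killing $\Pic(X_{\overline{\eta}})$ and multiples of $Q$, requires the algebraic monodromy group to be essentially $\mathrm{SO}(V,Q)$. But \autoref{mainsurfaceapp1} assumes only $\mathbb{Q}$-simplicity of $V$; Zariski density in $\Aut(V,Q)$ is the hypothesis of \autoref{mainsurfaceapp2}. For instance, take a family $C_{1,t} \times C_{2,t}$ of products of curves of distinct genera, with independent big monodromy, both degenerating at $s_{0}$. Then $V = H^{1}(C_{1}) \otimes H^{1}(C_{2})$ is $\mathbb{Q}$-simple and $N^{2} = 2 N_{1} \otimes N_{2} \neq 0$. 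Yet the monodromy group is $\mathrm{Sp} \times \mathrm{Sp}$, and the orbit of the pure tensor $\lambda_{1} \otimes \lambda_{2}$ is a Segre cone, cut out by $2 \times 2$ minors and strictly smaller than the quadric cone.

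\textbf{The divisibility step.} Even granting your description, irreducibility of $Q$ does not show that $P$ is not a factor of a specialized relation. Your own list contains every multiple $Q \cdot \widetilde{R}$. Choosing $\widetilde{R} \in K[s][x_{1}, \hdots, x_{m}]$ whose specialization at $s_{1}$ is (a product of Galois conjugates of) $P$ gives a functional relation whose specialization is divisible by $P$. What must be verified is the condition your final sentence tests: $P$ does not vanish identically on the fibre over $s_{1}$ of the Zariski closure of $\mathbf{G}$.

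\textbf{What suffices.} No description of the closure or of the algebraic monodromy group is needed, only an inclusion. Work at one fixed complex place, which suffices. Any $\widetilde{P}$ with $\widetilde{P}(s, \mathbf{G}) = 0$ survives analytic continuation along loops in $S$. Hence the fibre over $s_{1}$ contains the coordinate vectors $(\langle g \lambda, \omega_{i,s_{1}} \rangle)_{i}$ for every $g$ in the monodromy group $\Gamma$.

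Suppose $P$ vanished on all of them. Then $c^{1}(\mathcal{L})$ would be orthogonal to the $\mathbb{Q}$-span of $\Gamma \lambda$. Since $\lambda$ is a rational class lying in $V$ (it pairs to zero with $\Pic(X_{\overline{\eta}})$), this span is a non-zero $\mathbb{Q}$-subrepresentation of $V$, hence all of $V$ by $\mathbb{Q}$-simplicity. So $c^{1}(\mathcal{L})$ would lie in the span of $\Pic(X_{\overline{\eta}})$, which the choice of $\mathcal{L}$ excludes; the rank hypothesis makes that choice possible. It is the rationality of $\lambda$ that makes $\mathbb{Q}$-simplicity enough, rather than absolute irreducibility.

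Making this the entire proof gives, in substance, the paper's argument. The paper reduces to a single complex place and invokes \cite[Prop. 6.8]{zbMATH08109694} together with the $\mathbb{Q}$-simplicity of $V$.
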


\begin{proof}
Note that it suffices to check non-triviality at some fixed complex place $v$ of $K$ (cf. the discussion in \cite[\S6.2.2]{zbMATH08109694}). The result then follows from \cite[Prop. 6.8]{zbMATH08109694} and the (assumed) $\mathbb{Q}$-simplicity of the local system underlying the transcendental part of the cohomology in degree two.
\end{proof}

Now to each point $s_{1} \in S(\overline{K})$ we associate a relation $R_{s_{1}} \in K[g_{1}, \hdots, g_{m}]$ which is the product over all of all Galois conjugates of all linear relations constructed in (1) and (2) above. Since each factor of this relation is non-trivial, so is the product (cf. \cite[pg. 140, Rem. 5.3]{zbMATH00041964} and \cite[pg. 138, Rem. 2]{zbMATH00041964}). 

\begin{lem}
The degree of $R_{s_{1}}$ is bounded by a uniform polynomial in $[K(s_{1}) : K]$.
\end{lem}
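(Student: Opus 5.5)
The degree of $R_{s_{1}}$ is the number of linear factors, counted over all Galois conjugates. The plan is to bound each quantity entering that count by a polynomial in $d := [K(s_{1}):K]$ and then multiply.

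First, the fields of definition. For a line bundle $\mathcal{L}$ on $X_{s_{1}}$, the field $K(\mathcal{L},s_{1})$ is contained in the field of definition of the Néron–Severi group $\mathrm{NS}(X_{s_{1},\overline{K}})$ over $K(s_{1})$. The Galois group of $K(s_{1})$ acts on this lattice through a finite subgroup of $\GL_{\rho}(\mathbb{Z})$, and it preserves both the intersection form and the ample class coming from the polarization. Here $\rho \leq b_{2}$, and $b_{2}$ is the second Betti number of the fibres, which is fixed across the family.

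By Minkowski's bound, the order of a finite subgroup of $\GL_{\rho}(\mathbb{Z})$ is bounded by a constant $c(b_{2})$. Hence every $\mathcal{L}$ in question is defined over an extension of $K(s_{1})$ of degree at most $c(b_{2})$. It follows that $[K(\mathcal{L},s_{1}):K] \leq c(b_{2})\, d$ for each relation produced in (1) and (2). In case (2), the line bundle $\mathcal{L}$ is an integer combination $M_{2}\mathcal{L}_{1}+M_{1}\mathcal{L}_{2}$, so it is again defined over the field of definition of $\mathrm{NS}$ and the same bound applies.

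Second, the number of distinct relations. For each place $w$ of $K(s_{1})$ that is relevant, we pick a single minimizing $\mathcal{L}$, so we get one linear form per relevant place. The only places that matter are those where $|s(s_{1})|_{w}<1$. However, the choice of $\mathcal{L}$ can be made uniform: fix one $\mathcal{L}$, defined over the common field, whose Chern class lies in the transcendental-complement direction relevant for all finite places. Indeed, at finite places every line bundle gives a relation, so one $\mathcal{L}$ serves all of them. At the infinite places we need one combination per archimedean place $w$, and there are at most $[K(s_{1}):\mathbb{Q}] \leq [K:\mathbb{Q}]\, d$ of these.

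So the number of distinct linear forms before taking conjugates is $O(d)$. Each form has coefficients in a field of degree at most $c(b_{2})\,d$ over $K$, so it has at most $c(b_{2})\,d$ Galois conjugates. The product $R_{s_{1}}$ therefore has degree $O(d^{2})$, with implied constants depending only on $K$ and $b_{2}$.

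The main obstacle is making precise that a single $\mathcal{L}$ with small field of definition works simultaneously for all finite relevant places. The key fact is the triviality of $\restr{\mathcal{L}}{\mathfrak{D}_{s_{1}}}$ for every $\mathcal{L}$, which means the only choice to be made is the uniform bound on the field of definition of $\mathrm{NS}(X_{s_{1}})$. If the Minkowski argument needs care because $\rho$ varies with $s_{1}$, I would instead bound $[K(\mathcal{L},s_{1}):K(s_{1})]$ by the order of the image of Galois in $\Aut(\mathrm{NS}\otimes\mathbb{Q},Q)$, using the compatibility of Chern classes with the $\ell$-adic Galois action.
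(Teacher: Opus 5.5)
Your proposal is correct and takes essentially the paper's route. The paper also uses one $v$-independent relation for all finite places and one relation per archimedean place (at most $[K(s_{1}):\mathbb{Q}]$ of these). It then uniformly bounds $[K':K(s_{1})]$, where $K'$ is the field of definition of the de Rham realization of the Picard lattice $\Lambda$, using the injection of $\mathrm{Gal}(K'/K(s_{1}))$ into $\GL(\Lambda)$. The paper gets this bound by following Papas, reducing modulo $3$ and citing Silverberg, which is exactly the Minkowski argument on finite subgroups of $\GL_{\rho}(\mathbb{Z})$ that you invoke.
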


\begin{proof}
Since the relations at the finite places are independent of $v$ and the number of infinite places to consider is bounded by $[K(s) : \mathbb{Q}]$, it suffices to show that the degree of the minimal field $K'/K(s_{1})$ over which the de Rham realization of the Picard lattice is defined is bounded by a uniform polynomial in $[K(s_{1}) : K]$. 

To show this we may follow the argument of \cite[Prop. 5.2]{papas2023unlikelyintersectionstorellilocusv1}, which demonstrates the same fact except for lattices of endomorphisms (the absolute Hodge assumptions in loc. cit. are satisfied here because all classes involved are algebraic). In particular one may assume that $K'$ is Galois, and then one argues as in loc. cit. that one has an injective map $\textrm{Gal}(K'/K(s_{1})) \to \GL(\Lambda)$, where $\Lambda$ is the Picard lattice. Because the elements in the image all have finite order, this map stays injective after reduction modulo $3$, and then the result follows from the bound \cite[Thm 3.2]{zbMATH00033093} of Silverberg.
\end{proof}

Note that the proof of the previous lemma gives in particular that:

\begin{lem}
\label{picardlatticeboundeddeg}
The de Rham field of definition of the Picard lattice of $X_{s_{1}}$ has degree over $K$ bounded by a polynomial in $[K(s_{1}) : K]$. 
\end{lem}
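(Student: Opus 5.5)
The approach is to make explicit the Galois-theoretic argument sketched in the proof of the preceding lemma, which the paper itself attributes to \cite[Prop. 5.2]{papas2023unlikelyintersectionstorellilocusv1} and Silverberg's bound. Write $\Lambda = \Pic(X_{s_{1}})$; it is a finitely generated free abelian group of rank $\rho \leq b_{2}$, where $b_{2}$ is the second Betti number of the fibres and hence independent of $s_{1}$. Let $K'$ be the smallest extension of $K(s_{1})$ over which every class of $\Lambda$ is represented by a line bundle. Since $c^{1}_{\textrm{dR}}$ commutes with base change, the de Rham realization of $\Lambda$ inside $H^{2}_{\textrm{dR}}(X_{s_{1}})$ is then defined over $K'$. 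After passing to the Galois closure, which at most raises the degree to the factorial of a bounded quantity in the relevant sense, we may assume $K'/K(s_{1})$ is Galois.

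First, $\textrm{Gal}(\overline{K}/K(s_{1}))$ acts on $\Lambda = \Pic(X_{s_{1},\overline{K}})$, which equals the Néron--Severi group up to torsion in our setting and which we view modulo torsion. The action preserves the intersection form and, via the Chern class, is compatible with the action on $\ell$-adic cohomology. The field $K'$ is the fixed field of the kernel of this action, possibly after adjoining the field over which torsion and representability issues are resolved. Because $X_{s_{1}}$ has a $K(s_{1})$-rational ample class obtained from a relative polarization of $g$, the Brauer obstruction to descending line bundles is killed by a bounded extension. This is where the reduction to $\textrm{Gal}(K'/K(s_{1})) \hookrightarrow \GL(\Lambda)$ is justified.

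Second, the image $\Gamma$ of this injection is finite, being a quotient of a profinite group acting through the discrete group $\GL(\Lambda)$. It also preserves a positive-definite form after the Hodge index twist. Minkowski's lemma shows that a finite subgroup of $\GL_{\rho}(\mathbb{Z})$ injects into $\GL_{\rho}(\mathbb{F}_{3})$, which is the reduction-mod-$3$ step of the earlier proof. Hence $|\Gamma| \leq |\GL_{b_{2}}(\mathbb{F}_{3})|$, a uniform constant; Silverberg's \cite[Thm 3.2]{zbMATH00033093} gives an explicit form of this bound. Therefore
\[ [K' : K] = [K':K(s_{1})]\,[K(s_{1}):K] \leq C\,[K(s_{1}):K], \]
which is linear, and in particular polynomial, in $[K(s_{1}):K]$.

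The main obstacle is the descent step: showing that the field of definition of the de Rham Picard classes coincides, up to a uniformly bounded extension, with the fixed field of the Galois action on $\Lambda$. Line bundles that are Galois-invariant need not descend, and one must also handle torsion in $\Pic$. I would handle this as follows. Passing to $\Lambda \otimes \mathbb{Q}$ suffices for de Rham classes, because torsion classes have vanishing $c^{1}_{\textrm{dR}}$. Moreover, the de Rham class of a Galois-invariant line bundle is a Galois-invariant element of $H^{2}_{\textrm{dR}}(X_{s_{1}})\otimes \overline{K}$, and hence is already defined over the fixed field by Galois descent for vector spaces. This second observation removes the Brauer issue entirely, since only de Rham classes, not the line bundles themselves, need to be defined over $K'$.
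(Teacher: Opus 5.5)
Your argument is correct and is essentially the paper's own: the paper likewise reduces to an injection $\mathrm{Gal}(K'/K(s_{1})) \hookrightarrow \GL(\Lambda)$, uses that finite-order elements survive reduction mod $3$, and invokes Silverberg's bound to get $[K':K(s_{1})]$ uniformly bounded. Your final paragraph, using Galois-equivariance of $c^{1}_{\textrm{dR}}$ together with Galois descent for vector spaces, is the right justification. If you define $K'$ from the outset as the fixed field of the kernel of the Galois action on $\Lambda = \mathrm{NS}/\mathrm{tors}$, then $K'$ is automatically Galois over $K(s_{1})$. The Galois-closure step can then be dropped, which is good because as written it is circular, since it presupposes the bound on $[K':K(s_{1})]$. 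The remarks about polarizations and the Brauer obstruction can be dropped as well.
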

Given all of the above, the desired result would now follow from \autoref{Gheightthm} if we knew that, whenever we had a place $v$ relevant for $(s(s_{1}), \mathbf{G})$, the point $s_{1}$ was in the component of the $v$-adic neighbourhood $|s| < 1$ containing $s_{0}$. This need not be true, but using \autoref{daworrcoverlem} one can replace $\overline{S}$ with a finite covering for which the zeros of the parameter $s$ correspond to degeneration points $s_{01}, \hdots, s_{0j}$, with the degenerations on each component of $|s| < 1$ for each $v$ all in the orbit of an automorphism of $\overline{S}$ commuting with $s : \overline{S} \to \mathbb{P}^1$. Carrying out the above construction with this new parameter $s$, one verifies that whenever we have a place $v$ relevant for $(s(s_{1}), \mathbf{G})$, there is a corresponding $s_{1}$ in $|s| < 1$ for which (1) or (2) as above apply; this reduction is standard (cf. \cite[\S5.5]{zbMATH08109702} \cite[\S6.3]{zbMATH08109694}). 

\subsubsection{Proof of \autoref{mainsurfaceapp2}}

\paragraph{Preliminary Calculations:} In this case the monodromy-weight filtration on $\mathcal{H}^2_{s_{0}}$ is given by
\begin{align*}
W_{4} = W_{3} = \mathcal{H}^2_{s_{0}}, \hspace{2em} W_{2} &= \ker N, \hspace{2em} W_{1} = \im N \hspace{2em} W_{0} = 0 .
\end{align*}
We can alternatively describe this weight filtration using the spectral sequence of Steenbrink, which is the spectral sequence for the complex $A^{\bullet}_{\overline{X}}$ from \S\ref{relspecfibcompsec} with its weight filtration. Using \autoref{gradedquotientsofsteenbrinkseq} this spectral sequence has the following terms contributing to $H^{2}(Y, A^{\bullet}_{\overline{X}})$. 
\begin{align}
\label{steenspecseqtermsH2}
E^{2,0}_{1} = H^{0}(\widetilde{Y}^{[3]}),& \hspace{2em} E^{1,1}_{1} = H^{1}(\widetilde{Y}^{[2]}) \hspace{2em} E^{0,2}_{1} = H^{2}(\widetilde{Y}^{[1]}) \oplus H^{0}(\widetilde{Y}^{[3]})(-1) \\
& E^{-1,3}_{1} = H^{1}(\widetilde{Y}^{[2]})(-1), \hspace{2em} E^{-2,4}_{1} = H^{0}(\widetilde{Y}^{[3]})(-2) .
\end{align}
Now by \cite[Cor. 4.20]{zbMATH03515611} this spectral sequence degenerates at the second page. Since $W_{4} = W_{3}$ the edge map $H^{2}(Y, A^{\bullet}_{\overline{X}}) \to E^{-2,4}_{2}$ is zero, and so we obtain a map $H^{2}(Y, A^{\bullet}_{\overline{X}}) \to E^{-1,3}_{2}$. The differential $d_{1}$ of the first page acts as $E^{-r,q+r}_{1} \to E^{-r+1,q+r}_{1}$, so in particular $E^{-1,3}_{2}$ is a subspace of $E^{-1,3}_{1}$ modulo the image of $E^{-2,3}_{1} \to E^{-1,3}_{1}$. On the other hand one computes, again from \autoref{gradedquotientsofsteenbrinkseq}, that $E^{-2,3}_{1} = H^{1}(\widetilde{Y}^{[3]})(2) = 0$, and so we obtain a non-zero map
\begin{equation}
\label{trueedgemap}
H^{2}(Y, A^{\bullet}_{\overline{X}}) \to E^{-1,3}_{1} = H^{1}(\widetilde{Y}^{[2]})(-1) .
\end{equation}
Now recall that $\widetilde{Y}^{[2]}$ is the disjoint union of the components $\overline{D}_{1}, \hdots, \overline{D}_{\ell}$ of $Y^{[2]}$, hence we have $H^{1}(\widetilde{Y}^{[2]})(-1) = \bigoplus_{i} H^{1}(\overline{D}_{i})(-1)$ and there exists $j$ such that the induced map $\rho : H^2(Y, A^{\bullet}_{\overline{X}}) \to H^1(\overline{D}_{j})(-1)$ is non-zero. 

For such a $j$ let $D \subset \overline{D}_{j} \setminus Y^{[3]}$ be an affine open curve, and define $\mathcal{D}, \widetilde{Y}_{D}$, etc., as in \S\ref{relspecfibcompsec}. We may choose $\mathcal{D}$ so that there exists \'etale coordinates $\overline{f} = (f_{1}, f_{2}, f_{3}) : \mathcal{D} \to \mathbb{C}^3$ such that $s = f_{1} f_{2}$ and $D_{i} = \mathcal{D} \cap Y_{i}$ is the vanishing locus of $f_{i}$. 

We claim that $\rho$ factors through a natural sequence
\begin{equation}
\label{factorrhoseq}
H^2(Y, A^{\bullet}_{\overline{X}}) \to H^2(\mathcal{D}, A^{\bullet}_{\mathcal{D}}) \to E^{-1,3}_{1}(A^{\bullet}_{\mathcal{D}}, W) = H^{1}(D)(-1) .
\end{equation}
Indeed, we may first observe that $E^{-2,4}_{1}(A^{\bullet}_{\mathcal{D}}, W) = H^{0}(\widetilde{Y}^{[3]}_{D})(-2) = 0$ and so we obtain an edge map $H^2(\mathcal{D}, A^{\bullet}_{\mathcal{D}}) \to E^{-1,3}_{2}(A^{\bullet}_{\mathcal{D}}, W)$. As before $E^{-1,3}_{2}(A^{\bullet}_{\mathcal{D}}, W)$ is a subquotient of $E^{-1,3}_{1}(A^{\bullet}_{\mathcal{D}}, W)$, where the quotient is by the image of $E^{-2,3}_{1}(A^{\bullet}_{\mathcal{D}}, W) = H^{1}(\widetilde{Y}^{[3]}_{D})(2) = 0$, and so we obtain an arrow $H^2(\mathcal{D}, A^{\bullet}_{\mathcal{D}}) \to E^{-1,3}_{1}(A^{\bullet}_{\mathcal{D}}, W)$. The passage from $\overline{X}$ to $\mathcal{D}$ is functorial, hence (\ref{factorrhoseq}) factors the composition of $\rho$ with the map $H^{1}(\overline{D}_{j})(-1) \to H^{1}(D)(-1)$. It is elementary that the latter map is injective; in particular, both maps in (\ref{factorrhoseq}) are non-zero.

\begin{lem}
\label{H1Dsummand}
The image of (\ref{factorrhoseq}) in $H^{1}(D)(-1)$ agrees with the image, under the natural map $H^{1}(\overline{D}_{j})(-1) \to H^{1}(D)(-1)$, of a subspace $W \subset H^{1}(\overline{D}_{j})(-1)$ corresponding to an isogeny factor the Jacobian of $\overline{D}_{j}$.
\end{lem}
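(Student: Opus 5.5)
The plan is to identify the image of $\rho$ Hodge-theoretically and then transport it along the injective restriction $H^{1}(\overline{D}_{j})(-1) \to H^{1}(D)(-1)$. Since (\ref{factorrhoseq}) factors the composite of $\rho$ with this restriction, it suffices to show that the image $W := \im(\rho) \subset H^{1}(\overline{D}_{j})(-1)$ corresponds to an isogeny factor of $\mathrm{Jac}(\overline{D}_{j})$. The image of (\ref{factorrhoseq}) is then the image of $W$ under the restriction.

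\textbf{Step 1: $W$ is a sub-Hodge structure.} By the discussion after (\ref{steenspecsec}), the edge maps of the weight spectral sequence for the cohomological mixed Hodge complex $A^{\bullet}$ are morphisms of mixed Hodge structures. Hence the map $H^{2}(Y, A^{\bullet}_{\overline{X}}) \to E^{-1,3}_{2}$ is one. By \cite[Cor. 4.20]{zbMATH03515611} this sequence degenerates at $E_{2}$, and $E^{-2,3}_{1} = 0$, so $E^{-1,3}_{2} \subset E^{-1,3}_{1}$ is the kernel of $d_{1} : E^{-1,3}_{1} \to E^{0,3}_{1}$. This $d_{1}$ is a combination of restriction and Gysin maps, hence a morphism of pure Hodge structures. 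Therefore $\im(\rho)$ is the image of a morphism of mixed Hodge structures into the pure weight-$3$ structure $\bigoplus_{i} H^{1}(\overline{D}_{i})(-1)$. Projecting to the $j$-th summand, which is also a morphism of Hodge structures, shows that $W$ is a rational sub-Hodge structure of $H^{1}(\overline{D}_{j}, \mathbb{Q})(-1)$.

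\textbf{Step 2: from sub-Hodge structure to isogeny factor.} Untwisting, $W(1) \subset H^{1}(\overline{D}_{j},\mathbb{Q})$ is a sub-Hodge structure of weight one. Such substructures are polarized by the restriction of the cup-product polarization. By the standard equivalence between polarizable weight-one $\mathbb{Q}$-Hodge structures and abelian varieties up to isogeny, $W(1)$ corresponds to an abelian subvariety of $\mathrm{Jac}(\overline{D}_{j})$. By Poincaré complete reducibility, this abelian subvariety is an isogeny factor.

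\textbf{Step 3: compatibility with $\mathcal{D}$.} It remains to check that the comparison of (\ref{factorrhoseq}) with $\rho$ is compatible with these Hodge structures. On the $\mathcal{D}$-side I do not want to use a mixed Hodge complex, since $\mathcal{D}$ is only an analytic neighbourhood. I will avoid needing one, because the restriction of $A^{\bullet}_{\overline{X}} \to A^{\bullet}_{\mathcal{D}}$ on graded pieces is, by \autoref{gradedquotientsofsteenbrinkseq}, just pullback along $D \hookrightarrow \overline{D}_{j}$. Consequently the image of (\ref{factorrhoseq}) is literally the image of $W$ under the restriction, and the description of Step 1 transfers.

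\textbf{Main obstacle.} The delicate point is Step 1: making sure the relevant edge map lands in $E_{2}$ and not merely a quotient, and that it is Hodge-theoretic. I would handle this using \cite[Lem. 3.11, Thm. 3.18]{zbMATH05233837} together with the vanishing $E^{-2,4}_{2} = 0$ forced by $W_{4} = W_{3}$.
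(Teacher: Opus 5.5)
Your proposal is correct and follows essentially the same route as the paper. The paper likewise observes that $\rho$ is a morphism of mixed Hodge structures (via the discussion around (\ref{steenspecsec})), so its image is a $\mathbb{Q}$-sub-Hodge structure (a summand) of $H^{1}(\overline{D}_{j})(-1)$, and it then invokes the Hodge conjecture for correspondences on $J(\overline{D}_{j})$ --- the same input as your equivalence between polarizable weight-one Hodge structures and abelian varieties up to isogeny together with Poincar\'e reducibility --- while the compatibility with $\mathcal{D}$ comes from the same functoriality of the weight spectral sequence that the paper records just before the lemma; your extra bookkeeping (the edge map landing in $E_{2}^{-1,3}\subset E_{1}^{-1,3}$, and $d_{1}$ being Hodge-theoretic) only makes explicit what the paper leaves implicit.
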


\begin{proof}
Recalling that the morphism $\rho$ extends to a map of (mixed) Hodge structures (recall the discussion around (\ref{steenspecsec})), it follows that $\textrm{im}(\rho)$ underlies the de Rham realization of a $\mathbb{Q}$-summand of the Hodge structure on $H^{1}(\overline{D}_{j})(-1)$. Since $\overline{D}_{j}$ is a smooth projective curve, the result follows from the correspondence Hodge conjecture for the Jacobian $J(\overline{D}_{j})$ of $\overline{D}_{j}$. 
\end{proof}

Now recalling the quasi-isomorphisms of \autoref{steenbrinklogdRreslemma} and \autoref{steenbrinkresolutiononE}, the sequence (\ref{factorrhoseq}) can be rewritten as a sequence 
\begin{equation}
\label{logversionofres}
\mathcal{H}^{2}_{s_{0}} \to R^{2} \Gamma \Omega^{\bullet}_{\mathcal{D}/\mathcal{B}}(\log Y_{D}) \otimes \mathcal{O}_{Y_{D}} \to H^{1}(D)(-1) .
\end{equation}
Because the quasi-isomorphism of \autoref{steenbrinkresolutiononE} is filtered, the second arrow in (\ref{logversionofres}) can also be understood as arising from the weight spectral sequence for the natural weight filtration on $\Omega^{\bullet}_{\mathcal{D}/\mathcal{B}}(\log Y_{D}) \otimes \mathcal{O}_{Y_{D}}$ obtained as the image of the one on $\Omega^{\bullet}_{\mathcal{D}}(\log Y_{D})$. To further analyze the cohomology one may shrink $\mathcal{D}$ so that the \'etale coordinates $f_{1}, f_{2}, f_{3}$ identify $\mathcal{D}$ with a tube $\mathcal{T}$ around the union $\Delta \times D \cup \Delta \times D$ along $D$, where the coordinates on the first $\Delta \times D$ are $(f_{1}, f_{3})$ and the coordinates on the second are $(f_{2}, f_{3})$. The fibre above $s_{0}$ of this tube is then identified with the union $\Delta \times D \cup \Delta \times D$ itself. One then has 

\begin{align}
\label{relweightfildecomp}
W_{0} \Omega^{1}_{\mathcal{D}/\mathcal{B}}(\log Y_{D}) \otimes \mathcal{O}_{Y_{D}} &= (f_{1} \mathcal{O}_{Y_{D}} + f_{2} \mathcal{O}_{Y_{D}}) \frac{df_{2}}{f_{2}} \oplus \mathcal{O}_{Y_{D}} df_{3} \\
\Omega^{1}_{\mathcal{D}/\mathcal{B}}(\log Y_{D}) \otimes \mathcal{O}_{Y_{D}} &= \mathcal{O}_{Y_{D}} \frac{df_{2}}{f_{2}} \oplus \mathcal{O}_{Y_{D}} df_{3} \\
W_{0} \Omega^{2}_{\mathcal{D}/\mathcal{B}}(\log Y_{D}) \otimes \mathcal{O}_{Y_{D}} &= (f_{1} \mathcal{O}_{Y_{D}} + f_{2} \mathcal{O}_{Y_{D}}) \frac{df_{2}}{f_{2}} \wedge df_{3} \\
\Omega^{2}_{\mathcal{D}/\mathcal{B}}(\log Y_{D}) \otimes \mathcal{O}_{Y_{D}} &= \mathcal{O}_{Y_{D}} \frac{df_{2}}{f_{2}} \wedge df_{3} 
\end{align}
where we use the relation 
\[ 0 = \frac{ds}{s} = \frac{df_{1}}{f_{1}} + \frac{df_{2}}{f_{2}} \]
to simplify the presentation of the modules, and where the other steps of the weight filtration are trivial. In particular this complex has exactly one non-trivial graded quotient, namely $W_{1}/W_{0}$, which has two non-zero terms concentrated in degree $1$ and $2$, given by $\mathcal{O}_{D} \frac{df_{2}}{f_{2}}$ and $\mathcal{O}_{D} \frac{df_{2}}{f_{2}} \wedge df_{3}$, respectively. 

By explicit integration-by-parts one observes that $H^{2}(W_{0} \Omega^{\bullet}_{\mathcal{D}/B}(\log Y_{D}) \otimes \mathcal{O}_{Y_{D}}) = 0$ and hence 
\[ H^{2}(\Omega^{\bullet}_{\mathcal{D}/B}(\log Y_{D}) \otimes \mathcal{O}_{Y_{D}}) = H^{2}(W_{1}/W_{0}) = H^{1}_{\textrm{dR}}(D)(-1); \]
in particular the second arrow in the sequence (\ref{logversionofres}) is an isomorphism. 

\paragraph{G-function Setup:} We now assume we have chosen $D$ as above defined over $K$, closed in some Zariski open subset $U \subset \overline{X}$, and such that we have $K$-algebraic \'etale coordinates $\overline{f} = (f_{1}, f_{2}, f_{3}) : U \to \mathbb{A}^3$ with the property that $s = f_{1} f_{2}$ and $D$ agrees with the vanishing locus $V(f_{1},f_{2})$ in $U$. Write $\mathcal{D}^{\wedge}$ for the formal neighbourhood of $D$ in $\overline{X}$, let $\mathcal{g}^{\wedge} : \mathcal{D}^{\wedge} \to \mathcal{S}^{\wedge}$ be the induced map. Let $Y_{D}$ be the formal scheme $\mathcal{D}^{\wedge} \cap Y := \mathcal{D}^{\wedge} \times_{\overline{X}} Y$.

Now let $\mathcal{H} = R^{2} \overline{g}_{*} \Omega^{\bullet}_{\overline{X}/\overline{S}}(\log Y)$, let $\mathcal{H}^{\wedge}$ be the restriction to $\mathcal{S}^{\wedge}$, and set $\mathfrak{H}^{\wedge}_{D} = R^{2} \mathcal{g}^{\wedge}_{*} \Omega^{\bullet}_{\mathcal{D}^{\wedge}/\mathcal{S}^{\wedge}}(\log Y_{D})$. 

\begin{lem}
As a flat bundle with log connection, $\mathfrak{H}^{\wedge}_{D}$ is trivial.
\end{lem}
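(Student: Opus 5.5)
We would reduce the statement to the explicit local model supplied by \autoref{tubeconstrprop}(A). Apply it to the data $(U, E = D, f_{1}, f_{2}, f_{3})$ with $p = 2$. This gives an isomorphism of formal schemes $\eta : D[2]^{\wedge} \xrightarrow{\sim} \mathcal{D}^{\wedge}$ sending $f_{1}, f_{2}$ to the coordinates $u_{1}, u_{2}$, and the map to $\mathcal{S}^{\wedge}$ becomes $s = u_{1} u_{2}$. Under $\eta$, the formal scheme $Y_{D}$ becomes $V(u_{1} u_{2})$. So it suffices to show the following: for $\pi : \widehat{\mathbb{A}}^{2} \times D \to \Spf K[[s]]$, $(u_{1},u_{2},e) \mapsto u_{1}u_{2}$, the sheaf $R^{2}\pi_{*} \Omega^{\bullet}(\log)$ is a trivial flat bundle with log connection.

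\textbf{Step 1: compute the cohomology.} Since $D$ is affine, $H^{2}$ is the cohomology of the global sections of the relative log de Rham complex. That complex is built from
\[ \Omega^{1} = \mathcal{O}\,\tfrac{du_{2}}{u_{2}} \oplus \Omega^{1}_{D}\otimes\mathcal{O}, \qquad \Omega^{2} = \tfrac{du_{2}}{u_{2}} \wedge (\Omega^{1}_{D}\otimes\mathcal{O}), \]
with the relation $du_{1}/u_{1} = -du_{2}/u_{2}$, and with $\mathcal{O} = \mathcal{O}(D)[[u_{1},u_{2}]]$ regarded as a $K[[s]]$-module. Decompose $\mathcal{O}$ by the $u$-weight $a - b$ of the monomials $u_{1}^{a}u_{2}^{b}$. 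For weight $k \neq 0$, the relative differential $d(u_{1}^{a}u_{2}^{b}) = (b-a)u_{1}^{a}u_{2}^{b}\,du_{2}/u_{2}$ is invertible (we are in characteristic $0$), so those pieces are acyclic. This is the integration-by-parts already used in the analytic computation of $H^{2}$ above.

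\textbf{Step 2: isolate the surviving piece and check freeness.} The weight $0$ part is $\mathcal{O}(D)[[s]]$. In the relevant degrees the complex reduces to $(\Omega^{\bullet}_{D}[[s]])\otimes \langle 1, du_{2}/u_{2}\rangle$, so
\[ H^{2} \cong H^{1}_{\dR}(D)\otimes_{K} K[[s]]\cdot \tfrac{du_{2}}{u_{2}}, \]
using $H^{2}_{\dR}(D)=0$ for the affine curve $D$. This is a free $K[[s]]$-module, with basis $[\omega_{i}\wedge du_{2}/u_{2}]$ for $\omega_{i}$ running over a basis of $H^{1}_{\dR}(D)$ represented by algebraic forms on $D$. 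Its fibre at $s_{0}$ recovers the computation $H^{2} = H^{1}_{\dR}(D)(-1)$ given above.

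\textbf{Step 3: the Gauss--Manin connection kills the basis.} Compute $\nabla$ via Katz--Oda. Lift $\omega_{i}\wedge du_{2}/u_{2}$ to the absolute log complex, where it is closed, because $\omega_{i}$ is a closed form pulled back from $D$ and $du_{2}/u_{2}$ is closed. Then $\nabla_{s\,d/ds}$ applied to its class is the class of the contraction of $d$(lift) $=0$. Hence these classes form a flat basis, with no residue, and the bundle is trivial.

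The main obstacle is making Steps 1 and 2 rigorous at the formal level. The acyclicity of the nonzero-weight pieces has to be shown compatibly with $u$-adic completion, and one must be careful with the $K[[s]]$-module structure: multiplication by $s$ preserves weight, so each weight piece is a $K[[s]]$-submodule, but the decomposition of $\mathcal{O}$ into weight pieces is a completed product rather than a direct sum. I would handle this by working modulo $(u_{1},u_{2})^{r}$ and passing to the limit. The Mittag-Leffler condition is needed to commute $R^{2}$ with $\varprojlim$; it holds because the transition maps on weight-$0$ cohomology are surjective. Alternatively, one could invoke \cite[Lem 2.2]{zbMATH08109694} in families over $D$, as was done for the point case.
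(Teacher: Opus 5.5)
Your proposal is correct and follows the paper's route. You reduce via the formal isomorphism of \autoref{tubeconstrprop}(A) to the split model $D \times \widehat{\mathbb{A}}^{2} \to \widehat{\mathbb{A}}^{1}$, $s = u_{1}u_{2}$, and your weight decomposition plus Katz--Oda check is an explicit version of the paper's K\"unneth reduction to the map $(x_{1},x_{2}) \mapsto x_{1}x_{2}$, whose log Gauss--Manin connection the paper says is ``easily seen'' to be trivial. The completion issue you flag is harmless and needs no Mittag-Leffler argument: a weight-$k$ monomial lies in $(u_{1},u_{2})^{|k|}$, so $\mathcal{O}(D)[[u_{1},u_{2}]]$ and the whole complex are literally the product of their weight pieces, and cohomology commutes with products.
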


\begin{proof}
By using \autoref{mainadelictubethm} and the fixed \'etale coordinates to trivialize the map $\mathcal{g}^{\wedge} : \mathcal{D}^{\wedge} \to \mathcal{S}^{\wedge}$, we are reduced to considering the same problem for the map $D[2]^{\wedge} \to s_{0}[1]^{\wedge}$ which acts by $y \mapsto x_{1} x_{2}$. This map is a product of a trivial map $D \to s_{0}$ with the formal completion of the obvious map $p : \mathbb{A}^2 \to \mathbb{A}^1$ given by $y \mapsto x_{1} x_{2}$. As the Gauss-Manin connection of the trivial map is trivial, one reduces by a Kunneth calculation to computing the logarithmic Gauss-Manin connection of the map $p$, which is easily seen to be trivial as well.
\end{proof}
\noindent We consider the map $\rho : \mathcal{H}^{\wedge} \to \mathfrak{H}^{\wedge}_{D}$, which is a morphism of flat vector bundles with log connections. The image of $\rho$ is then a flat coherent subsheaf of a trivial bundle with flat log connection, hence necessarily a vector subbundle. The fibre at $s_{0}$ of $\rho$ is nothing more than the map of (\ref{logversionofres}). It then follows from \autoref{H1Dsummand} that the image of $\rho$ is a flat vector subbundle $\mathfrak{F}^{\wedge} \subset \mathfrak{H}^{\wedge}_{D}$ of rank equal to the $2r$, where $r$ is the rank of the corresponding isogeny factor of $J(\overline{D}_{j})$. 

Our tuple $\mathbf{G}$ of $G$-functions we define as follows. We fix a frame $\omega_{1}, \hdots \omega_{m}$ for $\mathcal{H}^{\wedge}$, and write $\{ \omega_{I} \}$ for the induced basis of $\bigwedge^{2r} \mathcal{H}^{\wedge}$. We obtain an induced map $\alpha : \bigwedge^{2r} \mathcal{H}^{\wedge} \to \bigwedge^{2r} \mathfrak{F}^{\wedge} \cong (\mathcal{O}_{\mathcal{S}^{\wedge}}, d)$. Applying \autoref{kerNareGfuncsdualversion}, and using in particular that the bundle $\bigwedge^{2r} \mathcal{H}$ appears as a direct summand inside the cohomology of the $2r$-fold self-product 
\[ \underbrace{\overline{X} \times_{\overline{S}} \cdots \times_{\overline{S}} \overline{X}}_{2r\textrm{ times}} \to \overline{S} \]
to show that the functions in question arise as solutions of a geometric system of differential equations, one learns that $\mathbf{G}$ is a tuple of G-functions. 

Now we apply \autoref{mainadelictubethm} with $E = D$ and $U$ and $\overline{f}$ as chosen above. We obtain an open embedding $\chi : D\langle 2 \rangle \hookrightarrow \mathcal{T}$ of adelic tubes with realizations $\chi^{v} : D\langle 2 \rangle^{v} \hookrightarrow \mathcal{T}^{v} \subset U^{v}$. We may also apply \autoref{mainadelictubethm} to the pair $(T, \{ s_{0} \})$ with coordinate $s$, with $T \subset \overline{S}$ an open subvariety for which $V(s) = \{ s_{0} \}$, and hence obtain $v$-adic disks $\mathcal{S}^{v}$ corresponding to a common trivialization $\eta : s_{0}[1]^{\wedge} \xrightarrow{\sim} \mathcal{S}^{\wedge}$. Shrinking the adelic tube $D\langle 2 \rangle$ if necessary we have induced maps $D\langle 2 \rangle^{v} \to \mathcal{S}^{v}$. 

Now fix an adelic ball $\mathfrak{D}$ of $D$ as in \autoref{chooseadelicballcontainingcohom}; i.e., so that the maps $H^{i}_{\textrm{dR}}(D^{v}) \to H^{i}_{\textrm{dR}}(\mathfrak{D}^{v})$ of overconvergent de Rham cohomology groups are all injective. Apply \autoref{adelictubecontainsprodlem} to obtain an adelic disk $\Delta$ so that $\Delta^{v,2} \times \mathfrak{D}^{v} \subset D \langle 2 \rangle^{v}$ for each $v$, compatibly with the product decomposition $\mathbb{A}^2 \times D$. Write $\mathcal{D}^{v}$ for the image under $\chi^{v}$ of $\Delta^{v,2} \times \mathfrak{D}^{v}$. We may consider the relative overconvergent de Rham cohomology of $\mathcal{D}^{v}$ over $\mathcal{S}^{v}$ (with the overconvergent structure coming from the inclusion $\mathcal{D}^{v} \hookrightarrow \overline{X}^{v}_{\mathcal{S}^{v}}$), set $Y^{v}_{D} := \mathcal{D}^{v} \cap Y^{v}$, let $\mathcal{g}^{v} : \mathcal{D}^{v} \to \mathcal{S}^{v}$ be the map induced by $\overline{g}$, and define 
\begin{equation}
\label{hdvdef}
\mathfrak{H}^{v}_{D} := R^{2} \mathcal{g}^{v}_{*} \Omega^{\bullet}_{\mathcal{D}^{v}/\mathcal{S}^{v}} (\log Y^{v}_{D}) .
\end{equation}
By construction, we have an inclusion $\Delta^{v,2} \times \mathfrak{D}^{v} \hookrightarrow \Delta^{v,2} \times \overline{D}^{v}_{j}$ for each $v$; because $\chi^{v}$ is an open embedding this induces an inclusion $c : \mathcal{D}^{v} \hookrightarrow \Delta^{v,2} \times \overline{D}^{v}_{j} =: \overline{\mathcal{D}}^{v}$. Let $\overline{\mathcal{g}}^{v} : \overline{\mathcal{D}}^{v} \to \mathcal{S}^{v}$ be the natural composition $\Delta^{v,2} \times \overline{D}^{v}_{j} \to \Delta^{v}_{\eta} \xrightarrow{\eta^{v}} \mathcal{S}^{v}$, where the first map factors through the map $\Delta^{v,2} \to \Delta^{v}_{\eta}$ given by $y \mapsto x_{1} x_{2}$. 

Set $\mathfrak{H}^{v}_{\overline{D}} := R^{2} \overline{\mathcal{g}}^{v}_{*} \Omega^{\bullet}_{\overline{\mathcal{D}}^{v}/\mathcal{S}^{v}}(\log V(x_{1}x_{2}))$. By construction, we have, for each $v$, a natural map $\mathfrak{H}^{v}_{\overline{D}} \to \mathfrak{H}^{v}_{D}$. Let $e : H^{1}(\overline{D}_{j}) \to H^{1}(\overline{D}_{j})$ be the idempotent corresponding to the subspace $W$ of \autoref{H1Dsummand}. Then $e$ extends to an idempotent on $\mathfrak{H}^{v}_{\overline{D}}$ with image $\mathfrak{H}^{v}_{W}$. Write $\mathfrak{F}^{v}$ for the image of $\mathfrak{H}^{v}_{W}$ in $\mathfrak{H}^{v}_{D}$, which is a flat vector subbundle. Let $\mathfrak{H}^{v}$ be the restriction of $\mathcal{H}^{v}$ to $\mathcal{S}^{v}$. 

\begin{lem}
The image of $\mathfrak{H}^{v}$ in $\mathfrak{H}^{v}_{D}$ lands inside $\mathfrak{F}^{v}$. 
\end{lem}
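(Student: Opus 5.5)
The claim concerns flat log-connection bundles on $\mathcal{S}^{v}$, so I would reduce it to a statement at the formal level, where the corresponding fact is already known, and then propagate it by flatness and analytic continuation. Both $\mathfrak{H}^{v}$ and $\mathfrak{H}^{v}_{D}$ carry log Gauss--Manin connections, and the restriction map $r^{v} : \mathfrak{H}^{v} \to \mathfrak{H}^{v}_{D}$ is horizontal. As in the formal case, the argument giving triviality of $\mathfrak{H}^{\wedge}_{D}$ also shows that $\mathfrak{H}^{v}_{D}$ is a trivial flat bundle. One uses $\chi^{v}$ to identify $\mathcal{D}^{v}$ with $\Delta^{v,2} \times \mathfrak{D}^{v}$ and the map to $\mathcal{S}^{v}$ with $(x_{1},x_{2},d) \mapsto x_{1}x_{2}$. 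A Künneth computation, together with the log cohomology of $\Delta^{2} \to \Delta$, $y \mapsto x_{1}x_{2}$, then gives
\[ \mathfrak{H}^{v}_{D} \cong H^{1}_{\dR}(\mathfrak{D}^{v}) \otimes \mathcal{O}_{\mathcal{S}^{v}} \cdot \tfrac{dx_{2}}{x_{2}}, \]
with the trivial connection. The same computation applies to $\overline{\mathcal{D}}^{v}$. It gives $\mathfrak{H}^{v}_{\overline{D}} \cong H^{1}_{\dR}(\overline{D}^{v}_{j}) \otimes \mathcal{O}_{\mathcal{S}^{v}}$, and $\mathfrak{F}^{v}$ is the constant subbundle whose fibre is the image of $W$ under
\[ H^{1}_{\dR}(\overline{D}_{j}) \to H^{1}_{\dR}(D) \hookrightarrow H^{1}_{\dR}(\mathfrak{D}^{v}). \]
The second map is injective by \autoref{chooseadelicballcontainingcohom}. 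So $\mathfrak{F}^{v}$ is a flat subbundle of a trivial flat bundle, cut out by constant linear equations.

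To show $r^{v}(\mathfrak{H}^{v}) \subset \mathfrak{F}^{v}$, compose $r^{v}$ with the horizontal projection $\pi : \mathfrak{H}^{v}_{D} \to \mathfrak{H}^{v}_{D}/\mathfrak{F}^{v}$. The quotient is again a trivial flat bundle, so it suffices to show $\pi \circ r^{v} = 0$. Since $\mathcal{S}^{v}$ is a disk, a section of a free module over it vanishes once its formal expansion at $s_{0}$ vanishes. Hence it is enough to check the vanishing after formal completion at $s_{0}$, where it is a statement about the $K$-rational objects $\mathcal{H}^{\wedge}$ and $\mathfrak{H}^{\wedge}_{D}$.

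By construction of $\chi^{v}$ in \autoref{mainadelictubethm}, the formal completion of $r^{v}$ is the base change to $K_{v}$ of $\rho : \mathcal{H}^{\wedge} \to \mathfrak{H}^{\wedge}_{D}$, composed with the injective map induced by $H^{1}_{\dR}(D) \to H^{1}_{\dR}(\mathfrak{D}^{v})$. Likewise the formal completion of $\mathfrak{F}^{v}$ is the base change of the constant subbundle with fibre $\mathrm{im}(W \to H^{1}_{\dR}(D)(-1))$. We showed above that $\mathrm{im}(\rho) = \mathfrak{F}^{\wedge}$ is a flat subbundle whose fibre at $s_{0}$ is the image of (\ref{logversionofres}). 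By \autoref{H1Dsummand}, that fibre is the image of $W$.

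Two flat subbundles of a trivial flat log bundle on $\Spf K[[s]]$ with equal fibres at $s_{0}$ coincide. Flat sections of the trivial bundle are constants, and a flat subbundle is spanned by the flat extensions of its fibre, using the formula of \autoref{kerNareGfuncs}. So $\mathfrak{F}^{\wedge}$ equals the completion of $\mathfrak{F}^{v}$, and therefore $\pi \circ r^{v}$ vanishes formally, hence on all of $\mathcal{S}^{v}$.

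The main obstacle is the comparison step. One has to check that the identification of $\mathfrak{H}^{\wedge}_{D}$ with $H^{1}_{\dR}(D)(-1) \otimes \mathcal{O}$, which came from the Steenbrink resolution over $\mathbb{C}$ (\autoref{steenbrinkresolutiononE}), agrees with the algebraic/rigid Künneth trivialization used for $\mathfrak{H}^{v}_{D}$ and $\mathfrak{H}^{v}_{\overline{D}}$. In particular the subspace $W$ must match under both descriptions, and the comparison must be compatible with base change from $K$ to $\mathbb{C}$ and to $K_{v}$. I would first try to verify this on the explicit log forms $\tfrac{df_{2}}{f_{2}} \wedge \omega$, using (\ref{relweightfildecomp}): the second arrow of (\ref{logversionofres}) is $\omega \cdot \tfrac{df_{2}}{f_{2}} \mapsto \omega$, which is visibly defined over $K$ and commutes with the $v$-adic realizations.
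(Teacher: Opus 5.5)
Your proposal is correct and follows the same route as the paper. Horizontality, together with triviality of the connections on $\mathfrak{H}^{v}_{D}$ and $\mathfrak{F}^{v}$, reduces the claim to the fibre at $s_{0}$, where it becomes \autoref{H1Dsummand} plus the identification of (\ref{factorrhoseq}) with (\ref{logversionofres}). Your detour through formal completion, the identity theorem on the disk, and the formal subbundle $\mathfrak{F}^{\wedge}$ simply spells out the step the paper compresses into ``it suffices to show this at the fibre $s_{0}$'', and the compatibility of identifications you flag is exactly what the paper leaves to ``making all the natural identifications''. (A harmless slip: $\mathfrak{H}^{v}_{\overline{D}}$ also has a K\"unneth summand $H^{2}_{\dR}(\overline{D}^{v}_{j}) \otimes \mathcal{O}_{\mathcal{S}^{v}}$; it maps to zero in $\mathfrak{H}^{v}_{D}$ because $H^{2}_{\dR}(\mathfrak{D}^{v}) = 0$, so your description of $\mathfrak{F}^{v}$ is unaffected.)
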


\begin{proof}
Because all the vector bundles involved are flat bundles (with logarithmic connections), the morphisms of bundles are compatible with the connections, and the connections on the bundles $\mathfrak{H}^{v}_{D}$ and $\mathfrak{F}^{v}$ are trivial, it suffices to show this at the fibre $s_{0}$. After making all the natural identifications, this is just \autoref{H1Dsummand}, as well as the comparison between (\ref{factorrhoseq}) and (\ref{logversionofres}). 
\end{proof}
When $v$ is a finite place, the $v$-adic analogue $\alpha^{v}$ of $\alpha$ is constructed by taking the $2r$'th exterior power of the the map $\mathfrak{H}^{v} \to \mathfrak{F}^{v}$. We may identify the formal completion of $\alpha^{v}$ with $\alpha$. One once again obtains (the $v$-adic realization of) the vector $\mathbf{G}$ by expressing $\alpha^{v}$ with respect to the fixed bases.

\vspace{0.5em}

It remains to construct an appropriate $\alpha^{v}$ when $v$ is an infinite place of $K$. In this case, the complex analytic space $D^{v}$ is a surface of genus $1$ with several punctures. By removing a closed punctured disk around each such puncture we obtain an open neighbourhood $\mathfrak{D}^{v} \subset D^{v}$ which is relatively compact; note that the maps $H^{i}_{\textrm{dR}}(D^{v}) \to H^{i}_{\textrm{dR}}(\mathfrak{D}^{v})$ of analytic de Rham cohomology groups are isomorphisms. Recalling that we have fixed \'etale coordinates $f_{1}, f_{2}, f_{3}$ on $U^{v} \supset D^{v}$, we obtain a map $\overline{f} : U^{v} \to \mathbb{A}^{3}$ which sends $Y^{v} \cap U^{v}$ to the union $V(x_{1}) \cup V(x_{2})$ of coordinate axes. Because the \'etale map $\overline{f}$ is a local isomorphism, one obtains, by taking an inverse image, a tubular neighbourhood $\mathcal{T}^{v}$ of $D^{v}$ in $U^{v}$. Using that $\mathfrak{D}^{v}$ is relatively compact we can construct an open embedding $\Delta^{v,2} \times \mathfrak{D}^{v} \hookrightarrow \mathcal{T}^{v} \subset U^{v}$ such that the two copies of $\Delta^{v}$ are complex analytic balls around the $f_{1} = 0$ and $f_{2} = 0$ axes. Analogously to before we define $\mathcal{D}^{v}$ to be the image of this embedding, and set $\mathcal{g}^{v} : \mathcal{D}^{v} \to \mathcal{S}^{v}$ to be the restriction of $\overline{g}$ to $\mathcal{D}^{v}$, where $\mathcal{S}^{v} \subset \overline{S}^{v}$ is a disk around $s_{0}$, shrinking $\mathcal{D}^{v}$ as necessary. We then set $\mathfrak{H}^{v}$ to be the restriction of $\mathcal{H}^{v}$ to $\mathcal{S}^{v}$, and define $\mathfrak{H}^{v}_{D} = R^{2} \mathcal{g}^{v}_{*} \Omega^{\bullet}_{\mathcal{D}^{v}/\mathcal{S}^{v}}(\log Y^{v}_{D})$, where we define $Y^{v}_{D} := Y^{v} \cap \mathcal{D}^{v}$. The $v$-adic analogue $\alpha^{v}$ of $\alpha$ is then the map $\mathfrak{H}^{v} \to \mathfrak{F}^{v} \subset \mathfrak{H}^{v}_{D}$, where $\mathfrak{F}^{v}$ denotes the image of $\mathfrak{H}^{v} \to \mathfrak{H}^{v}_{D}$. 

Finally we note that, taking some point $s_{1} \in \mathcal{S}^{v}$ with $s_{1} \neq s_{0}$, the map $\mathfrak{H}^{v}_{s_{1}} \to \mathfrak{H}^{v}_{D,s_{1}}$ admits a $\mathbb{Q}$-structure, as it is identified with $H^{2}(X^{v}_{s_{1}}, \mathbb{Q}) \to H^{2}(\mathcal{D}^{v}_{s_{1}}, \mathbb{Q})$ under the Betti-de Rham isomorphism.

\paragraph{Relations and non-Triviality:} Now we consider a point $s_{1} \in \overline{S}(\overline{K})$ for which (\ref{rankNineq2}) holds and produce relations as follows. We let $d_{1}$ denote the rank of the Picard lattice of $s_{1}$, which by assumption satisfies 
\begin{equation}
\label{picardlatticerankassumption}
d_{1} \geq \rk \Pic(X_{\overline{\eta}}) + \dim \im(N) + 1 \geq \rk \Pic(X_{\overline{\eta}}) + 2r + 1 .
\end{equation} 
We write $L_{s_{1}}$ for the portion of the Neron-Severi lattice which is orthogonal to (the image of) $\Pic(X_{\overline{\eta}})$. Note that $\dim L_{s_{1}} \geq 2r + 1$. 

\begin{itemize}
\item[(1)] In the case where $v$ is finite and $v$ is relevant for $(s(s_{1}), \mathbf{G})$, we have by \autoref{Picardrankimagedimlem} below that the image of $L_{s_{1}}$ in $\mathfrak{H}^{v}_{D, s_{1}}$ spans a vector space of dimension at most $r$, or what is equivalent, the kernel of $\alpha^{v}_{s_{1}}$ intersects $L_{s_{1},K_{v}(s_{1})}$ in a subspace of dimension at least $r+1$. Then if we consider $\bigwedge^{2r} L_{s_{1}}$, any vector $w$ in this space is sent to zero under the map $\alpha^{v}_{s_{1}}$. We may fix such a vector $w$ and consider its coefficients $a_{I}$ with respect to the basis $\{ \omega_{I} \}$; these coefficients are defined over a number field $K(s_{1}, w)$. By evaluating $\alpha^{v}_{s_{1}}$ at $w$ we then obtain a relation $\sum_{I} a_{I} \mathbf{G}_{I}(s_{1}) = 0$. 

\item[(2)] In the case where $v$ is infinite and $v$ is relevant for $(s(s_{1}), \mathbf{G})$, we fix two linearly independent vectors $w, w' \in \bigwedge^{2r} L_{s_{1}}$. Then both $w$ and $w'$ induce classes in $\bigwedge^{2r} H^{2}(X^{v}_{s_{1}}, \mathbb{Z}(1))$, so the values $\alpha^{v}_{s_{1}}(w)$ and $\alpha^{v}_{s_{1}}(w')$ are $(2 \pi i)^{2r}$-multiples of integers $M$ and $M'$. It follows that $\alpha^{v}_{s_{1}}(M' w + M w) = 0$. Letting $a_{I}$ denote the coordinates of $M' w + M w$ with respect to the basis $\{ \omega_{I} \}$, we obtain a relation $\sum_{I} a_{I} \mathbf{G}_{I}(s_{1}) = 0$ which is defined over a finite extension $K(s_{1}, w, w')$ of $K(s_{1})$. 
\end{itemize}

\begin{lem}
\label{Picardrankimagedimlem}
The image of $L_{s_{1}}$ in $\mathfrak{H}^{v}_{D, s_{1}}$ spans a vector space of dimension at most $r$.
\end{lem}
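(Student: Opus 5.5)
We have to show that, at a finite relevant place $v$, the image of $L_{s_{1}}$ in $\mathfrak{H}^{v}_{D,s_{1}}$ spans at most an $r$-dimensional space. The plan is a weight argument. Algebraic classes are forced into a part of $\mathfrak{H}^{v}_{D,s_{1}}$ of fixed weight. That part, intersected with $\mathfrak{F}^{v}_{s_{1}}$, is a Lagrangian-type half of the $2r$-dimensional space $\mathfrak{F}^{v}_{s_{1}}$.

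\textbf{Step 1: describe $\mathfrak{H}^{v}_{D,s_{1}}$.} Since the connection on $\mathfrak{H}^{v}_{D}$ is trivial, the fibre at $s_{1}$ is identified with the fibre at $s_{0}$, namely $H^{1}_{\textrm{dR}}(\mathfrak{D}^{v})(-1)$. More geometrically, on the fibre $\mathcal{D}^{v}_{s_{1}}$ the equation $x_{1}x_{2}=s(s_{1})$ exhibits $\mathcal{D}^{v}_{s_{1}}$, via $\chi^{v}$, as $A \times \mathfrak{D}^{v}$. Here $A$ is a rigid annulus $\{|s(s_{1})| < |x_{2}| < 1\}$. Künneth then gives
\[
H^{2}_{\textrm{dR}}(\mathcal{D}^{v}_{s_{1}}) \supset H^{1}(A)\otimes H^{1}(\mathfrak{D}^{v}) \cong H^{1}_{\textrm{dR}}(\mathfrak{D}^{v}).
\]
The class $\frac{dx_{2}}{x_{2}}$ generates $H^{1}(A)$. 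Under this identification $\mathfrak{F}^{v}_{s_{1}}$ is $\frac{dx_{2}}{x_{2}}\wedge e H^{1}(\overline{D}_{j})$, which is $2r$-dimensional.

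\textbf{Step 2: Chern classes land in a weight-$2$ piece.} A line bundle $\mathcal{L}$ on $X_{s_{1}}$ has first Chern class $c^{1}(\mathcal{L})$ in the Hyodo--Kato cohomology of Colmez--Nizio\l{} for $X^{v}_{s_{1}}$. There Frobenius acts on it by $p$ (weight $2$, i.e.\ $\varphi = p$ up to the appropriate normalization), and its monodromy $N_{HK}$ vanishes. Model-free Hyodo--Kato cohomology is functorial for the restriction $X^{v}_{s_{1}} \supset \mathcal{D}^{v}_{s_{1}}$, and it is compatible with the Hyodo--Kato isomorphism to (overconvergent) de Rham cohomology. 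Hence the image of $L_{s_{1}}$ lies in the subspace of $H^{2}_{HK}(\mathcal{D}^{v}_{s_{1}})$ with slope $1$ (pure Frobenius weight $2$) and killed by $N_{HK}$.

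\textbf{Step 3: compute this subspace on the image.} Via Künneth in Hyodo--Kato cohomology, $H^{1}_{HK}(A)$ is spanned by $\frac{dx_{2}}{x_{2}}$, with Frobenius weight $2$ (Tate twist $(-1)$). The factor $eH^{1}_{HK}(\overline{D}_{j})$ comes from a smooth proper curve with good-reduction behaviour after finite extension; its crystalline Frobenius has weights $1$ and, taking the image through $H^{1}(\mathfrak{D}^{v})$, the weight-$1$ part injects by \autoref{chooseadelicballcontainingcohom}. So $\mathfrak{F}^{v}_{s_{1}}$ has Frobenius weight $3$ throughout when the curve factor has good reduction. In the bad-reduction case I would use the weight/monodromy filtration on $H^{1}_{HK}$ of the curve (the Jacobian factor's semistable reduction). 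There the pieces of weight $\le 1$ in $eH^{1}$ have dimension at most $r$: in the toric part of the Jacobian factor, weight $0$ and weight $2$ each have dimension equal to the torus rank, and $N$ pairs them. Classes of weight exactly $2$ in $\frac{dx_{2}}{x_{2}}\wedge eH^{1}$ correspond to weight-$0$ classes in $eH^{1}$, and there are at most $r$ of them. Being killed by $N_{HK}$ imposes no further loss. So the slope-$1$ part of the image has dimension at most $r$. Since $L_{s_{1}}$ maps into $\mathfrak{F}^{v}_{s_{1}}$ (by the preceding lemma), its span has dimension at most $r$.

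\textbf{Main obstacle.} The key difficulty is making Steps 2--3 rigorous on the non-proper rigid space $\mathcal{D}^{v}_{s_{1}}$. This requires functoriality of the Colmez--Nizio\l{} Hyodo--Kato cohomology for the open immersion into $X^{v}_{s_{1}}$, and a Künneth decomposition $A \times \mathfrak{D}^{v}$ compatible with $(\varphi,N)$. The weight bound on $eH^{1}(\mathfrak{D}^{v})$ must also be shown to hold uniformly, including at small primes where $\overline{D}_{j}$ has bad reduction.
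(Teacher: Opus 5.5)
Your proposal is correct and follows the paper's argument. Both pass to Colmez--Nizio\l{} Hyodo--Kato cohomology, observe that $L_{s_{1}}$ has $\varphi = p$ and $N = 0$, and push these classes through the $e$-summand $eH^{1}_{\textrm{HK}}(\overline{D}^{v}_{j})(-1)$; weight--monodromy for the curve then shows the relevant part has dimension equal to a toric rank, hence at most $r$. Two small corrections. First, it is the weight-$0$ part of $eH^{1}$ (where the linearized Frobenius has eigenvalue $1$, a condition on Weil weights rather than slopes) that is bounded by $r$, not the weight-$\le 1$ part, as your next sentence correctly uses. Second, only an upper bound on an image is needed, so you can factor through the proper curve $\overline{D}_{j}$ as the paper does; then no injectivity or Künneth statement on the affinoid $\mathfrak{D}^{v}$, and no uniformity in $v$, is required.
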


\begin{proof}
Using the comparison between overconvergent Hyodo-Kato and de Rham cohomology, we may replace $\mathfrak{H}^{v}_{s_{1}}$ with $R^2 \Gamma_{\textrm{HK}}(X^{v}_{s_{1}})$, $\mathfrak{H}^{v}_{D,s_{1}}$ with $R^2 \Gamma_{\textrm{HK}}(\mathcal{D}^{v}_{s_{1}})$, and regard the lattice $L_{s_{1}}$ inside $R^2 \Gamma_{\textrm{HK}}(X^{v}_{s_{1}})$. The lattice $L_{s_{1}}$ spans a subspace on which the Hyodo-Kato monodromy operator $N$ acts trivially and the Frobenius endomorphism $\varphi$ acts as $p$. Its image in $R^2 \Gamma_{\textrm{HK}}(\mathcal{D}^{v}_{s_{1}})$ is a subspace with the same properties. On the other hand, this image lies by construction inside the image of the Hyodo-Kato cohomology group corresponding to $\mathfrak{F}_{s_{1}}$, i.e., the image of the composition
\[ R^2 \Gamma_{\textrm{HK}}([\Delta^{v,2} \times \overline{D}_{j}^{v}]_{s_{1}}) \xrightarrow{e} R^2 \Gamma_{\textrm{HK}}([\Delta^{v,2} \times \overline{D}_{j}^{v}]_{s_{1}}) \to R^2 \Gamma_{\textrm{HK}}(\mathcal{D}^{v}_{s_{1}}) . \]
By construction such a subspace is, as an $(N, \varphi)$-module, isomorphic of a summand of $H^{1}_{\textrm{HK}}(\overline{D}^{v}_{j})(-1)$ of rank $2r$ corresponding to a factor of the Jacobian of $\overline{D}^{v}_{j}$. As a consequence of $p$-adic weight-monodromy, this weight $2$ subspace of $H^{1}_{\textrm{HK}}(\overline{D}^{v}_{j})(-1)$ has rank at most $r$.
\end{proof}
Now we explain the non-triviality of the relations. Since both cases (1) and (2) produce linear relations on $\alpha^{v}_{s_{1}}$, non-triviality follows from \cite[Prop. 6.8]{zbMATH08109694} assuming the representation of $\pi_{1}(S,s)$ on $\bigwedge^{2r}\, V$ is simple. From our assumption that the Zariski closure of the action of $\pi_{1}(S,s)$ is the orthogonal group $\textrm{Aut}(V,Q)$, this follows from \cite[Thm. 19.2(i)]{zbMATH00051906} assuming that $2r < \dim V/2$. Since in general we have $2r \leq \dim V/2$, it suffices to treat the case $2r = \dim V/2$. In this case, by \cite[Thm. 19.2(ii)]{zbMATH00051906}, the representation of $\pi_{1}(S,s)$ on $\bigwedge^{2r} V$ has exactly two summands $V_{1}$ and $V_{-1}$. By the remark following \cite[Thm. 19.2]{zbMATH00051906}, these summands admit the following description: there is a map 
\[ \tau : \bigwedge^{2r} V \to \bigwedge^{2r} V^{*} \to \bigwedge^{2r} V \]
induced by the two natural bilinear forms on $\bigwedge^{2r} V$ (the one induced by $Q$ and the one induced by exterior power). The map $\tau$ squares to the identity, and the decomposition into $1$ and $(-1)$ eigenspaces gives the desired subspaces $V_{1}$ and $V_{-1}$. Now in order to apply \cite[Prop. 6.8]{zbMATH08109694} to conclude non-triviality, it suffices to check that the vector $w$ (resp. $M' w + M w'$ considered in case (1) (resp. (2)) does not lie in either $V_{1}$ or $V_{-1}$. If it did, then the dual vector $Q(w, -)$ (resp. $Q(M' w + M w', -)$) would agree up to a sign with the dual vector $w \wedge (-)$ (resp. $(M' w + M w') \wedge (-)$). But the cup product-induced pairing is definite on the Hodge lattice, and $w \wedge w = 0$ (resp. $(M' w + M w') \wedge (M' w + M w') = 0$). 

Applying once again \autoref{picardlatticeboundeddeg}, the product of all Galois conjugates of all relations under consideration is once again a polynomial of degree bounded by a uniform polynomial in $[K(s_{1}) : K]$. The proof is then completed via an application of \autoref{daworrcoverlem}, as in the proof of \autoref{mainsurfaceapp1}. 

\subsubsection{Proof of \autoref{truefinitenessforK3s}}

Let $g : X \to S$ be a family of polarized K3 surfaces over a number field $K$ whose map to the associated moduli space is quasi-finite; if this map is constant there is nothing to show. Using the Kuga-Satake correspondence (cf. \cite[\S4, \S5]{zbMATH00931955}) one obtains from $g : X \to S$ a curve $T$ and a family $h : A \to T$ of polarized abelian varieties together with an embedding $\iota : \mathbb{V}_{\mathbb{Q}} \hookrightarrow p^{*} \textrm{End}(R^{1} h_{*} \mathbb{Q})$ of variations of polarized $\mathbb{Q}$-Hodge structures, where $\mathbb{V} = R^{2} g_{*} \mathbb{Z}$ and $p : S \to T$ is a quasi-finite map. We recall the following result of Masser-W\"ustholtz: 
\begin{thm}
\label{MWthm}
There exists constants $a, b > 0$, depending on integers $n$ and $\delta$, such that for every polarized abelian variety $A$ of dimension $n$ and polarization degree $\delta$, we have
\begin{equation}
\label{MWineq}
|\operatorname{Disc}(\operatorname{End}(A))| \leq a \max\{ h(A), [\mathbb{Q}(A) : \mathbb{Q}] \}^{b} 
\end{equation}
where $h(A)$ is the stable (logarithmic) Faltings height of $A$ and $\mathbb{Q}(A)$ denotes the minimal field of definition of the associated polarized abelian variety. 
\end{thm}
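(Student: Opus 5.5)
The plan is not to reprove this from scratch, but to deduce it from the Masser--W\"ustholz endomorphism estimates (the ``Endomorphism estimates for abelian varieties'' and ``Factorization estimates'' papers). Those estimates give, for an abelian variety $A$ over a number field $k$ of dimension $n$, a $\mathbb{Z}$-basis of $\operatorname{End}(A_{\overline{k}})$ whose elements have polynomially bounded ``size''. The input is as follows. After passing to a finite extension $k'/k$ of degree bounded in terms of $n$ over which all endomorphisms are defined (e.g.\ $k' = k(A[12])$, degree $\leq |\GL_{2n}(\mathbb{Z}/12)|$, by Silverberg's theorem already used via \cite[Thm 3.2]{zbMATH00033093} in \autoref{picardlatticeboundeddeg}), one has $d' := [k' : \mathbb{Q}] \ll_{n} [\mathbb{Q}(A) : \mathbb{Q}]$. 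Masser--W\"ustholz then produce generators $\phi_{1}, \hdots, \phi_{t}$ of $\operatorname{End}(A)$ with $t \leq 4n^{2}$ and degrees $\deg \phi_{i} \leq c(n) \max\{ h(A), d' \}^{\kappa(n)}$. The degree here is measured with respect to the fixed polarization $L$, of degree $\delta$, via the Rosati involution $\dagger$. The key point is that the size of an endomorphism is controlled by $\operatorname{Tr}(\phi^{\dagger}\phi)$, which is bounded above by a polynomial in the degree of its graph, and the graph's degree is what the transcendence / period method bounds.

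Granting these generators, the discriminant computation is elementary linear algebra. Take the positive-definite trace form $\langle \phi, \psi \rangle = \operatorname{Tr}_{\mathrm{red}}(\phi^{\dagger}\psi)$ on $\operatorname{End}(A) \otimes \mathbb{Q}$, where the trace is taken on $H_{1}(A, \mathbb{Q})$. Define $\operatorname{Disc}(\operatorname{End}(A))$ as the Gram determinant of a $\mathbb{Z}$-basis with respect to this form, or equivalently with respect to $\operatorname{Tr}(\phi\psi)$; the two differ by a factor bounded in terms of $n$ and $\delta$. Extract a $\mathbb{Z}$-basis from $\phi_{1}, \hdots, \phi_{t}$. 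Since the sublattice spanned by a subset of generators has Gram determinant at most the product of squared lengths (Hadamard), choose a basis among the $\phi_{i}$ of a finite-index sublattice. Its Gram determinant is at most $\prod_{i} \langle \phi_{i}, \phi_{i} \rangle \ll \max\{h(A), d'\}^{4n^{2}\kappa'}$. The discriminant of the full order divides this, because passing to a finite-index sublattice multiplies the discriminant by the square of the index. The bound $\langle \phi_{i}, \phi_{i}\rangle \ll_{n,\delta} \deg(\phi_{i})^{c}$ follows from positivity of Rosati together with the comparison between $\operatorname{Tr}(\phi^{\dagger}\phi)$ and the intersection number $(\phi^{*}L \cdot L^{n-1})$, which is itself bounded by the graph degree. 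Here the constants depend on $\delta$ only through $L$.

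The main obstacle is the first step, the polynomial bound on a generating set of endomorphisms. This is genuinely deep: it rests on Masser--W\"ustholz's period theorem (Baker-type lower bounds on abelian logarithms). I would invoke it as a black box rather than attempt it. The only care needed is that their height is the semistable Faltings height after the bounded base change, and that their bounds are uniform in the polarization degree $\delta$. For the latter, I would first replace $A$ by an isogenous principally polarized abelian variety using Zarhin's trick or a degree-$\delta$ isogeny. This changes $h$ by $O(\log \delta)$ and changes $\operatorname{Disc}$ by a factor bounded polynomially in $\delta$, which is absorbed into $a = a(n,\delta)$.
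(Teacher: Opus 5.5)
Your proposal is correct and takes essentially the paper's route. The paper's proof is only a citation: it imports the Masser--W\"ustholz estimates (\cite{zbMATH06348598}, with the deduction explained after \cite[Thm.~6.6]{zbMATH07608391}) and turns them into a discriminant bound, and your argument (a polynomial bound on $\operatorname{Tr}(\phi^{\dagger}\phi)$ for a generating set, then Hadamard's inequality on a finite-index sublattice) is the standard way to make that conversion. The differences are only bookkeeping: you quote the earlier endomorphism/factorization estimates rather than the polarization-estimates paper, and your passage to a principal polarization is harmless. In fact $|\operatorname{Disc}|$ does not depend on the polarization, because the Gram matrix of $\operatorname{Tr}(xy^{\dagger})$ is that of $\operatorname{Tr}(xy)$ times the matrix of the involution $\dagger$, which has determinant $\pm 1$.
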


\begin{proof}
This follows from the results of \cite{zbMATH06348598} in the manner explained following the statement of \cite[Thm. 6.6]{zbMATH07608391}. 
\end{proof}

In the above the discriminant may be computed with respect to the usual pairing on $D = \textrm{End}(A)_{\mathbb{Q}}$ given by $(x,y) \mapsto \textrm{Tr}_{D/\mathbb{Q}}(x y^{t})$, with $(-)^{t}$ the Rosati involution, and which we denote $P : D \times D \to \mathbb{Q}$. The polarization of $A$ also induces a bilinear form $Q : H^{1}(A,\mathbb{Q}) \otimes H^{1}(A,\mathbb{Q}) \to \mathbb{Q}$, and a similar such form on $\textrm{End}(H^{1}(A,\mathbb{Q}))$. Note that $D$ is naturally a subspace of $\textrm{End}(H^{1}(A,\mathbb{Q}))$. 

\begin{lem}
When restricted to $D$ the bilinear form $Q$ is bounded by, up to a uniform constant independent of $D$, the form $P$.
\end{lem}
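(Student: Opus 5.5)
Both forms live on the same finite-dimensional real vector space $\operatorname{End}(H^{1}(A,\mathbb{R}))$, so the plan is to write them in a common algebraic form and reduce to a comparison of two positive definite quadratic forms. Fix the symplectic polarization form $E$ on $H = H^{1}(A,\mathbb{Q})$, of dimension $2n$. Then $x^{t}$ is the $E$-adjoint of $x$, and this makes sense for every $x \in \operatorname{End}(H)$, not only for $x \in D$. The form on $\operatorname{End}(H) \cong H \otimes H^{*}$ induced by the polarization can be written as $Q(x,y) = c \cdot \textrm{tr}_{H}(x y^{t})$, for a constant $c$ depending only on normalizations. Indeed, under $H^{*} \cong H$ via $E$, the tensor product form $E \otimes E^{*}$ is exactly $(x,y) \mapsto \textrm{tr}(x y^{t})$ up to sign. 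This first step is a routine linear algebra computation.

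Next I compare $\textrm{tr}_{H}$ with the reduced trace $\textrm{Tr}_{D/\mathbb{Q}}$. Decompose $D$ into simple factors $D_{i}$ with centres $F_{i}$. Then $H$ is a $D$-module and, on each factor, $\textrm{tr}_{H}|_{D_{i}} = m_{i}\, \textrm{Tr}_{D_{i}/\mathbb{Q}}$, where $m_{i}$ is a positive integer multiplicity with $1 \le m_{i} \le 2n$. This follows because the representation of $D_{i}$ on its isotypic part of $H$ is a multiple of the reduced-norm representation, up to the reduced degree factor. The trace on $D$ used to define $P$ (reduced or ordinary) is likewise a positive combination of the $\textrm{Tr}_{D_{i}/\mathbb{Q}}$, with coefficients lying in a finite set bounded in terms of $n$. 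Now $x \mapsto \textrm{Tr}_{D_{i}}(x x^{t})$ is positive definite by Rosati positivity, and $D = \bigoplus D_{i}$ is orthogonal for both $P$ and $Q$, since $D_{i} D_{j} = 0$ and the involution preserves each factor. It follows that $|Q(x,x)| \le C_{n} P(x,x)$, with $C_{n}$ depending only on $n = \dim A$.

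To pass from quadratic forms to the bilinear statement, I use Cauchy–Schwarz for the positive form $P$, together with the fact that $Q(x,y) = c\,\textrm{tr}_{H}(x y^{t})$ is itself a positive combination of the $P$-type forms restricted to $D$. This gives $|Q(x,y)| \le C_{n} \sqrt{P(x,x)P(y,y)}$, which is the sense in which ``$Q$ is bounded by $P$.'' For discriminants, it gives $|\det Q|_{\Lambda}| \le C_{n}^{\dim D} |\det P|_{\Lambda}|$ on any lattice $\Lambda \subset D$, and $\dim D \le (2n)^{2}$ is bounded.

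I expect the main obstacle to be the precise identification of $\textrm{tr}_{H}|_{D_{i}}$ with a bounded multiple of the trace used in $P$, particularly in the type IV or quaternion cases. My approach there is to avoid explicit classification: I would use only that $H \otimes \mathbb{C}$ decomposes into irreducible $D_{i}\otimes\mathbb{C}$-modules, each of dimension at least $1$, with total dimension $2n$. This bounds all multiplicities by $2n$ uniformly.
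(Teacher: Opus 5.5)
Your proposal is correct. It follows the paper's outer structure: write the induced form as $Q(a,b)=\mathrm{Tr}(ab^{t})$ on $\mathrm{End}(H^{1}(A,\mathbb{Q}))$, split $D=D_{1}\times\cdots\times D_{k}$ into simple factors, and show that on each factor $Q$ is a positive multiple of $P$, with multiplier bounded in terms of $\dim A$. Where you differ is in how you get proportionality on a single factor.

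\textbf{The paper's route.} It calls a form twisted-associative if $F(xy,z)=F(x,zy^{t})$. It notes that a symmetric such form is $F(x,y)=T(yx^{t})$ for a functional $T$ with $T(xy^{t})=T(yx^{t})$. It then asserts, via the Albert classification and reduction to matrix algebras with transpose, that such functionals form a one-dimensional space on each $D_{r}$. Finally it compares $Q(e_{r},e_{r})=\dim e_{r}H$ with $P(e_{r},e_{r})$ at the central idempotents.

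\textbf{Your route.} You use only that $e_{i}H$ is a direct sum of copies of the simple $D_{i}$-module. Hence $\mathrm{tr}_{H}|_{D_{i}}$ is a positive integer multiple of $\mathrm{Tr}_{F_{i}/\mathbb{Q}}\circ\mathrm{trd}$, and so is the trace defining $P$.

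\textbf{What your route buys.} It is more direct, and it supplies exactly what the paper's uniqueness step glosses over.
\begin{itemize}
\item The condition $T(xy^{t})=T(yx^{t})$ only says that $T$ is invariant under the involution. On $M_{2}(\mathbb{Q})$ with transpose (e.g.\ $A=E\times E$ with $E$ non-CM) this already allows $T(z)=z_{11}$, giving a symmetric twisted-associative form not proportional to $\mathrm{Tr}(xy^{t})$.
\item One also needs the left identity $F(xy,z)=F(y,x^{t}z)$, which $Q$ and $P$ do satisfy and which makes $T$ a genuine trace.
\item Even then, the $\mathbb{Q}$-valued involution-invariant trace functionals on $D_{r}$ form a space of dimension $[F_{r}^{+}:\mathbb{Q}]$, not $1$.
\end{itemize}
So the fact that $Q/P$ is a bounded rational constant on each factor comes precisely from your observation that both forms are traces of $D_{r}$-modules defined over $\mathbb{Q}$.

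\textbf{Three small repairs.}
\begin{itemize}
\item The Rosati involution is an anti-automorphism, so a priori it only permutes the simple factors. Preservation of each $D_{i}$ follows from positivity: if it swapped $D_{i}$ and $D_{j}$ with $i\neq j$, then $\mathrm{Tr}(xx^{t})=0$ for $x\in D_{i}$.
\item In your last paragraph, bounding multiplicities in $H\otimes\mathbb{C}$ does not by itself give proportionality when $F_{i}\neq\mathbb{Q}$. The multiplicities are equal across the embeddings $F_{i}\hookrightarrow\mathbb{C}$ because $H$ is defined over $\mathbb{Q}$, which is the rational isotypic decomposition you already invoke.
\item ``Reduced-norm representation'' should read ``simple module''; its trace is $d_{i}$ times the reduced trace.
\end{itemize}
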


\begin{proof}
The form $Q$ on $\textrm{End}(H^{1}(A,\mathbb{Q}))$ is symmetric. Suppose that, when restricted to $D$, the form $Q$ were to satisfy the additional identity $Q(xy, z) = Q(x, zy^{t})$; call a bilinear form $F$ on $D$ with such a property twisted-associative, and observe that $P$ is twisted-associative. Then given a symmetric twsited-associative form $F$ on $D$ we have $F(x,y) = F(1, yx^{t})$ identically, and from this one sees that the space of symmetric twisted-associative forms on $D$ is equivalent to the space of twisted trace functionals: linear maps $T : D \to \mathbb{Q}$ such that $T(xy^{t}) = T(yx^{t})$ identically. Since $D$ is a semisimple $\mathbb{Q}$-algebra with simple decomposition $D = D_{1} \times \cdots \times D_{k}$, the space of such functionals is the direct sum of the corresponding spaces of such functionals on each $D_{r}$. Using the Albert classification, each $D_{r}$ is a central simple algebra over a field $L$, and moreover isomorphic to a matrix algebra $\mathbb{M}_{n_{r}}(L')$ over some extension $L'$ of $L$ in such a way so that the involution becomes the usual transpose. One easily computes that the space of such functionals is compatible with scalar extension and (consequently by reducing to the matrix case) one-dimensional. It follows that both $B$ and $P$ are determined by their values $Q(e_{r}, e_{r})$ and $P(e_{r}, e_{r})$ on the minimal central idempotents $e_{1}, \hdots, e_{k}$ of the algebra $D = D_{1} \times \cdots \times D_{k}$. One observes that $P(e_{r}, e_{r}) = n_{r}$ is an integer bounded by $\dim_{\mathbb{Q}} D$. On the other hand we know that $Q(e_{r}, e_{r})$ agrees with the dimension over $\mathbb{Q}$ of the image of $e_{r} : H^{1}(A, \mathbb{Q}) \to H^{1}(A, \mathbb{Q})$. The result follows.

It remains to show that form induced on $\textrm{End}(H^{1}(A, \mathbb{Q}))$ by $Q$ is twisted-associative. Recall from general linear algebra that if we have a non-degenerate bilinear form $Q$ on a vector space $V$, and we write $(-)^{t} : V \xrightarrow{\sim} V^{*}$ for the induced identification between $V$ and its dual, then $Q(v, w) = \textrm{Tr}(v w^{t})$. Setting $V = H^{1}(A, \mathbb{Q})$, the form on $V \otimes V^{*} = \textrm{End}(H^{1}(A, \mathbb{Q}))$ is obtained by using $(-)^{t}$ to identify $V \otimes V^{t}$ with $V \otimes V$ and then applying the usual tensor product form. Then given two pure tensors $v w^{t}, u z^{t} \in \textrm{End}(V)$, we have that
\[ Q(v w^{t}, u z^{t}) = Q(v, u) Q(w, z) = Q(w,z) \textrm{Tr}(v u^{t}) = \textrm{Tr}(v w^{t} \circ (u z^{t})^{t}) . \]
It follows by linearity that $Q(a, b) = \textrm{Tr}(a b^{t})$, for all $a, b \in \textrm{End}(V)$. Then $Q(ab, c) = \textrm{Tr}(abc^{t}) = \textrm{Tr}(a(c b^{t})^{t}) = Q(a, cb^{t})$ where $a, b, c \in \textrm{End}(V)$. 
\end{proof}
\noindent In particular, \autoref{MWthm} also bounds, up to adjusting the constant $a$, the discriminant of $\textrm{End}(A)$ viewed inside $\textrm{End}(H^{1}(A,\mathbb{Z}))$ and regarded as a lattice under $Q$.

Let $\mathbb{L}_{\mathbb{Q}} \subset p^{*} \textrm{End}(R^{1} h_{*} \mathbb{Q})$ be the image of $\iota$. Using the semisimplicity of the category of polarizable pure $\mathbb{Q}$VHS, we can find a complementary summand $\mathbb{W}_{\mathbb{Q}}$ such that $p^{*} \textrm{End}(R^{1} h_{*} \mathbb{Q}) = \mathbb{L}_{\mathbb{Q}} \oplus \mathbb{W}_{\mathbb{Q}}$. Let $e_{\mathbb{L}}$ and $e_{\mathbb{W}}$ be the two corresponding idempotent projectors. Fix integral subsystems $\mathbb{L} \subset \mathbb{L}_{\mathbb{Q}}$ and $\mathbb{W} \subset \mathbb{W}_{\mathbb{Q}}$ such that $\mathbb{L}_{\mathbb{Q}}$ is the $\mathbb{Q}$-scalar extension of $\mathbb{L}$ (resp. $\mathbb{W}_{\mathbb{Q}}$ is the $\mathbb{Q}$-scalar extension of $\mathbb{W}$). Fix integers $M$ and $N$ such that $M e_{\mathbb{L}} (\textrm{End}(R^{1} h_{*} \mathbb{Z})) \subset \mathbb{L}$ (resp. $N e_{\mathbb{W}} (\textrm{End}(R^{1} h_{*} \mathbb{Z})) \subset \mathbb{W}$. For each $s \in S(\mathbb{C})$ we can consider the lattice $M e_{\mathbb{L}}(\textrm{End}(A_{p(s)}))$, whose discriminant under $Q$ is bounded by a polynomial in $h(A_{p(s)})$ and $[\mathbb{Q}(A_{p(s)}) : \mathbb{Q}]$. It follows that the discriminant of the Hodge sublattice of $\mathbb{L}$ is likewise bounded. Fix an integer $R$ such that $\iota(R \mathbb{V}) \subset \mathbb{L} \subset p^{*} \textrm{End}(R^{1} h_{*} \mathbb{Z})$.  Pulling back under $R \iota$, we obtain that:

\begin{cor}
\label{MWcor}
There exists constants $a, b > 0$, depending on $\delta$, such that for every polarized K3 surface $X$ with polarization of degree $\delta$, we have
\begin{equation}
\label{Picineq}
|\operatorname{Disc}(\Pic(X_{s}))| \leq a \max\{ \theta(s), [\mathbb{Q}(X_{s}) : \mathbb{Q}] \}^{b} 
\end{equation}
where $\theta : S(\overline{K}) \to \mathbb{R}_{\geq 0}$ is a logarithmic Weil height and $\mathbb{Q}(X_{s})$ denotes the minimal field of definition of the polarized K3 surface.
\end{cor}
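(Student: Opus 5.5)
The bound will be transported from the abelian side to the K3 side through the Kuga--Satake embedding $R\iota : R\mathbb{V} \hookrightarrow \mathbb{L} \subset p^{*}\textrm{End}(R^{1}h_{*}\mathbb{Z})$ constructed just above, combined with \autoref{MWthm} and the preceding lemma comparing $Q$ and $P$ on $D = \textrm{End}(A)_{\mathbb{Q}}$.

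The first step is to fix $s$ and note that $\Pic(X_{s}) \otimes \mathbb{Q}$ is the space of Hodge classes in $\mathbb{V}_{s,\mathbb{Q}}$, by the Lefschetz $(1,1)$ theorem. Since $\iota$ is a morphism of polarized $\mathbb{Q}$-VHS, its image consists of Hodge classes in $\mathbb{L}_{p(s)}$. Conversely, the Hodge classes of $\mathbb{L}_{p(s)}$ all lie in $\iota(\mathbb{V}_{s,\mathbb{Q}})$, because $\iota$ is an isomorphism onto $\mathbb{L}_{\mathbb{Q}}$. The Hodge classes in $\textrm{End}(H^{1}(A_{p(s)},\mathbb{Q}))$ are exactly $D$. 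Therefore $R\iota(\Pic(X_{s}))$ is a finite-index sublattice of the Hodge sublattice $\Lambda_{s}$ of $\mathbb{L}_{p(s)}$. Moreover $\Lambda_{s}$ contains $Me_{\mathbb{L}}(\textrm{End}(A_{p(s)}))$, and that lattice in turn contains $M\Lambda_{s}$, since $e_{\mathbb{L}}$ is the identity on $\mathbb{L}$. Since $\iota$ is compatible with polarizations up to a fixed scalar, the discriminant of $\Pic(X_{s})$ under cup product differs from the $Q$-discriminant of $Me_{\mathbb{L}}(\textrm{End}(A_{p(s)}))$ by a factor bounded by $(RM)^{O(22)}$ times a fixed constant, in either direction. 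The rank is at most $22$ throughout, so these factors are uniform.

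The second step bounds the discriminant of $Me_{\mathbb{L}}(\textrm{End}(A))$. The $Q$-discriminant of $\textrm{End}(A)$ is bounded polynomially by \autoref{MWthm} together with the preceding lemma. Applying $e_{\mathbb{L}}$, which is orthogonal for $Q$ because the decomposition $\mathbb{L}_{\mathbb{Q}} \oplus \mathbb{W}_{\mathbb{Q}}$ is one of polarized VHS, and scaling by $M$ changes the discriminant by at most a bounded factor. I would check this by comparing $Me_{\mathbb{L}}\textrm{End}(A) \oplus Ne_{\mathbb{W}}\textrm{End}(A)$ with $MN\,\textrm{End}(A)$ and using that the $Q$-discriminant of the Hodge sublattice of $\mathbb{W}$ is bounded below by $1$ up to a constant, being integral.

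The third step, which I expect to be the main obstacle, is converting $\max\{h(A_{p(s)}), [\mathbb{Q}(A_{p(s)}):\mathbb{Q}]\}$ into $\max\{\theta(s), [\mathbb{Q}(X_{s}):\mathbb{Q}]\}$. For the degree, $p$ and the Kuga--Satake family are defined over a fixed number field $K$, so $[\mathbb{Q}(A_{p(s)}):\mathbb{Q}] \leq c[K(s):\mathbb{Q}]$. Also $K(s)$ has degree bounded by a multiple of the degree of $\mathbb{Q}(X_{s})$, because $S \to$ moduli is quasi-finite. For the height, I would use Faltings' comparison: the stable Faltings height is bounded by $c(\theta'(p(s))+\log\theta'(p(s))+1)$ for a Weil height $\theta'$ on a Baily--Borel or toroidal model of $T$ (Faltings/Chai). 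Then functoriality of heights under the finite map $p$ gives $\theta'(p(s)) \ll \theta(s)+1$. Combining the three steps and absorbing constants gives (\ref{Picineq}).
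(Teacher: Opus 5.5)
Your proposal is correct and follows essentially the paper's route: Masser--W\"ustholz combined with the comparison of $Q$ and $P$ on $\textrm{End}(A)$, transfer through $Me_{\mathbb{L}}$ and $R\iota$ to $\Pic(X_{s})$, and Faltings' comparison of the stable Faltings height with a Weil height pulled back along the quasi-finite map $p$. Your Steps 1--2 spell out lattice bookkeeping that the paper leaves implicit, but two points need fixing. First, an arbitrary complementary summand $\mathbb{W}_{\mathbb{Q}}$ need not be $Q$-orthogonal, so choose $\mathbb{W}_{\mathbb{Q}} := \mathbb{L}_{\mathbb{Q}}^{\perp}$; this is a complementary sub-VHS because $Q$ is nondegenerate on $\mathbb{L}_{\mathbb{Q}}$, and orthogonality is exactly what your Step 2 uses. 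Second, the uniform bound on the index $[\Lambda_{s} : R\iota(\Pic(X_{s}))]$ comes from a fixed integer $R'$ with $R'\mathbb{L} \subset \iota(R\mathbb{V})$, not merely from the index being finite.
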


\begin{proof}
Our reasoning above shows that the above corollary holds except with $\theta(s)$ replaced by $h(A_{p(s)})$, and $\mathbb{Q}(X_{s})$ replaced by $\mathbb{Q}(A_{p(s)})$. Here $h(-)$ is, as before, the logarithmic stable Faltings height. Using that $p$ descends to a number field (because the Kuga-Satake correspondence does), we may replace $\mathbb{Q}(A_{p(s)})$ with $\mathbb{Q}(X_{s})$. The height function $h(-)$ is equivalent, up to a multiplicative constant, to any logarithmic Weil height on $T$ \cite[p. 356]{zbMATH03944027} so we obtain the result after pulling such a height back along the quasi-finite map $p$. 
\end{proof}

Now we specialize to the situation where $g : X \to S$ is as in the statement of \autoref{truefinitenessforK3s}. By carrying out reasoning analogous to that which appears in \cite[\S8.1]{zbMATH08109694}, it suffices to handle the case where $g$ is defined over a number field $K \subset \mathbb{C}$. Combining \autoref{K3cor} with \autoref{MWcor} it follows that the points of the set (\ref{truefinitenessset}), which in this case lie in $S(\overline{K})$, satisfy $\textrm{Disc}(\Pic(X_{s})) = O([K(s) : K]^{b})$ for some $b > 0$. For each $s$ in the set (\ref{truefinitenessset}), choose a maximal linearly independent set $\{ b_{1,s}, \hdots, b_{m_{s},s} \}$ of elements of $\Pic(X_{s})$ such that $Q(b_{i,s}, b_{i,s})$ is polynomially bounded by $\textrm{Disc}(\Pic(X_{s}))$ for each $i$; this can be done using Minowski's bounds for the successive minima of a lattice. Take two vectors $v_{s}$ and $w_{s}$ from such a basis that are not in the $\mathbb{Q}$-span of $\Pic(X_{\overline{\eta}})$. Then the Hodge tensor $u_{s} := v_{s} \otimes w_{s} - w_{s} \otimes v_{s} \in \bigwedge^2 H^{2}(X_{s}, \mathbb{Z}) \subset \bigotimes^2 H^{2}(X_{s}, \mathbb{Z})$, and we have $Q(u_{s}, u_{s}) = O([K(s) : K]^{b})$ for some $b > 0$.

Observe that $u_{s}$ atypically defines the point $s \in S(\mathbb{C})$ in the sense of \cite[Def. 1.8, Def. 1.9]{urbanik2025complexityatypicalspecialpoints}. Then by \cite[Thm. 1.10]{urbanik2025complexityatypicalspecialpoints}, the number $N_{q}$ of points of (\ref{truefinitenessset}) defined by some such $u$ with $Q(u,u) \leq q$ is $O(q^{\ep})$ for any $\ep > 0$. In particular, it follows that $N_{q} = O(d^{\ep})$, where $d$ is the largest degree over $K$ of a point $s$ in the set (\ref{truefinitenessset}) defined atypically by some $u_{s}$ with $Q(u_{s},u_{s}) \leq q$. 

Suppose now that (\ref{truefinitenessset}) is infinite. Take $\ep = 1/2$ and $q$ (and hence $d$) large. Then $N_{q} \leq c d^{1/2}$ on one hand for  some $c < d^{1/2}$, but on the other hand by taking Galois conjugates we see that $N_{q} \geq d$, giving a contradiction.

\appendix

\section{Hyodo-Kato Cohomology}
\label{hyodokatosec}

In their paper \cite{zbMATH07243787}, Colmez and Nizio\l{} develop a Hyodo-Kato theory for smooth rigid-analytic varieties $X$ over a complete algebraically closed non-archimedean field $C$ of mixed characteristic $(0,p)$. Here $C$ is assumed to arise as the $p$-adic completion of the algebraic closure of a finite extension $L/\mathbb{Q}_{p}$. This theory assigns to each such rigid-analytic $X$ an object $R \Gamma_{\textrm{HK}}(X)$ in the derived category of the category of $(\varphi, N)$ modules over $\breve{F}$, where $F$ is the fraction field of $W(k)$, with $k$ the residue field of $L$. Here $\varphi$ and $N$ are two endomorphisms satisfying $N \varphi = p \varphi N$. In \cite{bosco2023rationalpadichodgetheory}, the above Hyodo-Kato theory is generalized to possibly singular rigid varieties, and a verification that the rigid-analytic theory agrees with the more classical algebraic Hyodo-Kato theory is given in \cite{arXiv:2502.13315}. 

The theory has both a regular and an overconvergent variant; we will typically use the overconvergent one. Aside from standard functorality properties, what we will need is the following.
\begin{itemize}
\item[(1)] The theory admits a comparison with the (overconvergent) de Rham cohomology of the rigid variety $X$ over $C$. In particular, one has a natural functorial isomorphism
\[ R \Gamma_{\textrm{HK}}(X) \otimes C \xrightarrow{\sim} R \Gamma_{\textrm{dR}}(X) \]
where on the right one has the (overconvergent) de Rham cohomology of $X$. Here the tensor product should either be completed and derived in the case when one works ordinary $(\varphi, N)$-modules (as in the work of Colmez-Nizio\l{}), or it should be the solid tensor product if one works with condensed $(\varphi, N)$-modules (as in the work of Bosco). In either case, when the underlying cohomology groups involved are finite-dimensional, one recovers a comparison $H^{i}_{\textrm{HK}}(X) \otimes C \xrightarrow{\sim} H^{i}_{\textrm{dR}}(X)$ where the tensor product is interpreted in the ordinary sense, which is what we will use. See \cite[(5.17)]{zbMATH07243787} and \cite[Thm. 3.14(iii)]{bosco2023rationalpadichodgetheory}. 
\item[(2)] In the non-overconvergent case, the theory admits a comparison with the log crystalline cohomology, i.e., whenever $X$ admits a semistable model $\mathcal{X}$, the Hyodo-Kato cohomology of $X$ agrees with the log-crystalline cohomology of $\mathcal{X}$. More precisely, the formal scheme $\mathcal{X}$ has a canonical log structure (cf. \cite[\S1.6]{zbMATH07109551}), giving rise to a log scheme denoted by $\mathcal{X}^{0}$. Using the site of log-divided power thickenings as in \cite{beilinson2013crystalline} (cf. \cite[\S3.1]{bosco2023rationalpadichodgetheory}), we can define the log crystalline cohomology of $\mathcal{X}^{0}_{\mathcal{O}_{C}/p}$ over $\mathcal{O}^{0}_{\breve{F}}$ by taking derived global sections on the log-crystalline topos. There is then a natural comparison isomorphism (see \cite[Thm. 3.14]{bosco2023rationalpadichodgetheory})
\[ R\Gamma_{\textrm{cris}}(\mathcal{X}^{0}_{\mathcal{O}_{C}/p} / \mathcal{O}^{0}_{\breve{F}})_{\mathbb{Q}_{p}} \xrightarrow{\sim} R \Gamma_{\textrm{HK}}(X) . \]
In the overconvergent case a similar statement holds \cite[Thm. 3.28]{bosco2023rationalpadichodgetheory} but where the log crystalline cohomology is replaced by the log-rigid cohomology, as developed in \cite[\S1]{zbMATH02204422} (cf. \cite[\S3.1.2]{zbMATH07160161}).
\end{itemize}
In practice one computes the Hyodo-Kato cohomology using either the log-crystalline or log rigid cohomology when one has a semi-stable model, and then uses functoriality properties to deduce the correct output for the theory in situations where no such model is available.

\bibliography{hodge_theory}

\begin{thebibliography}{KKMSD73}

\bibitem[And89]{zbMATH00041964}
Yves Andr{\'e}.
\newblock {\em G-functions and geometry}, volume E13 of {\em Aspects Math.}
\newblock Wiesbaden etc.: Friedr. Vieweg \&| Sohn, 1989.

\bibitem[And96]{zbMATH00931955}
Yves Andr{\'e}.
\newblock On the {Shafarevich} and {Tate} conjectures for hyperk{\"a}hler
  varieties.
\newblock {\em Math. Ann.}, 305(2):205--248, 1996.

\bibitem[Bei13]{beilinson2013crystalline}
Alexander Beilinson.
\newblock On the crystalline period map.
\newblock {\em Cambridge Journal of Mathematics}, 1(1):1--51, 2013.

\bibitem[BGR84]{zbMATH03857279}
Siegfried Bosch, Ulrich G{\"u}ntzer, and Reinhold Remmert.
\newblock {\em Non-{Archimedean} analysis. {A} systematic approach to rigid
  analytic geometry}, volume 261 of {\em Grundlehren Math. Wiss.}
\newblock Springer, Cham, 1984.

\bibitem[Bos14]{zbMATH06255263}
Siegfried Bosch.
\newblock {\em Lectures on formal and rigid geometry}, volume 2105 of {\em
  Lect. Notes Math.}
\newblock Cham: Springer, 2014.

\bibitem[Bos23]{bosco2023rationalpadichodgetheory}
Guido Bosco.
\newblock Rational $p$-adic hodge theory for rigid-analytic varieties, 2023.

\bibitem[Buc61]{zbMATH03187700}
D.~A. Buchsbaum.
\newblock Some remarks on factorization in power series rings.
\newblock {\em J. Math. Mech.}, 10:749--753, 1961.

\bibitem[BW93]{zbMATH00217454}
Thomas Becker and Volker Weispfenning.
\newblock {\em Gr{\"o}bner bases: a computational approach to commutative
  algebra. {In} cooperation with {Heinz} {Kredel}}, volume 141 of {\em Grad.
  Texts Math.}
\newblock New York: Springer-Verlag, 1993.

\bibitem[CDN20]{zbMATH07160161}
Pierre Colmez, Gabriel Dospinescu, and Wies{\l}awa Nizio{\l}.
\newblock Cohomology of {{\(p\)}}-adic {Stein} spaces.
\newblock {\em Invent. Math.}, 219(3):873--985, 2020.

\bibitem[{\v{C}}K19]{zbMATH07109551}
K{e}stutis {\v{C}}esnavi{\v{c}}ius and Teruhisa Koshikawa.
\newblock The {{\(A_{\text{inf}}\)}}-cohomology in the semistable case.
\newblock {\em Compos. Math.}, 155(11):2039--2128, 2019.

\bibitem[CN20]{zbMATH07243787}
Pierre Colmez and Wies{\l}awa Nizio{\l}.
\newblock On {{\(p\)}}-adic comparison theorems for rigid analytic varieties:
  {I}.
\newblock {\em M{\"u}nster J. Math.}, 13(2):445--507, 2020.

\bibitem[CS96]{zbMATH00870188}
G.~M. Constantine and T.~H. Savits.
\newblock A multivariate {Faa} di {Bruno} formula with applications.
\newblock {\em Trans. Am. Math. Soc.}, 348(2):503--520, 1996.

\bibitem[DO21]{zbMATH07481643}
Christopher Daw and Martin Orr.
\newblock Unlikely intersections with $e \times$ cm curves in
  {{\(\mathcal{A}_2\)}}.
\newblock {\em Ann. Sc. Norm. Super. Pisa, Cl. Sci. (5)}, 22(4):1705--1745,
  2021.

\bibitem[DO22]{zbMATH07608391}
Christopher Daw and Martin Orr.
\newblock Quantitative reduction theory and unlikely intersections.
\newblock {\em Int. Math. Res. Not.}, 2022(20):16138--16195, 2022.

\bibitem[DO25]{zbMATH08109702}
Christopher Daw and Martin Orr.
\newblock Zilber-pink in a product of modular curves assuming multiplicative
  degeneration.
\newblock {\em Duke Math. J.}, 174(13):2877--2926, 2025.

\bibitem[DOP25]{daw2025newcaseszilberpinky13}
Christopher Daw, Martin Orr, and Georgios Papas.
\newblock Some new cases of zilber-pink in $y(1)^3$, 2025.

\bibitem[Fal83]{zbMATH03944027}
G.~Faltings.
\newblock Finiteness theorems for abelian varieties over number fields.
\newblock {\em Invent. Math.}, 73:349--366, 1983.

\bibitem[FH91]{zbMATH00051906}
William Fulton and Joe Harris.
\newblock {\em Representation theory. {A} first course}, volume 129 of {\em
  Grad. Texts Math.}
\newblock New York etc.: Springer-Verlag, 1991.

\bibitem[FvdP04]{zbMATH02043955}
Jean Fresnel and Marius van~der Put.
\newblock {\em Rigid analytic geometry and its applications}, volume 218 of
  {\em Prog. Math.}
\newblock Boston, MA: Birkh{\"a}user, 2004.

\bibitem[GK04]{zbMATH02133479}
Elmar Gro{\ss}e-Kl{\"o}nne.
\newblock de {Rham} cohomology of rigid spaces.
\newblock {\em Math. Z.}, 247(2):223--240, 2004.

\bibitem[GK05]{zbMATH02204422}
Elmar Grosse-Kl{\"o}nne.
\newblock Frobenius and monodromy operators in rigid analysis, and
  {Drinfel}'d's symmetric space.
\newblock {\em J. Algebr. Geom.}, 14(3):391--437, 2005.

\bibitem[Kat70]{katznil}
Nicholas Katz.
\newblock Nilpotent connections and the monodromy theorem : applications of a
  result of {Turrittin}.
\newblock {\em Publications Math\'ematiques de l'IH\'ES}, 39:175--232, 1970.

\bibitem[Ked25]{kedlayabook}
Kiran Kedlaya.
\newblock {\em Weil cohomology in practice}.
\newblock Self-published, 2025.

\bibitem[KKMSD73]{zbMATH03425769}
G.~Kempf, F.~Knudsen, D.~Mumford, and Bernard Saint-Donat.
\newblock {\em Toroidal embeddings. {I}}, volume 339 of {\em Lect. Notes Math.}
\newblock Springer, Cham, 1973.

\bibitem[KO68]{katz1968}
Nicholas~M. Katz and Tadao Oda.
\newblock On the differentiation of {D}e {R}ham cohomology classes with respect
  to parameters.
\newblock {\em J. Math. Kyoto Univ.}, 8(2):199--213, 1968.

\bibitem[MW14]{zbMATH06348598}
David Masser and Gisbert W{\"u}stholz.
\newblock Polarization estimates for abelian varieties.
\newblock {\em Algebra Number Theory}, 8(5):1045--1070, 2014.

\bibitem[Pap22]{papas2023unlikelyintersectionstorellilocusv1}
Georgios Papas.
\newblock Unlikely intersections in the torelli locus and the g-functions
  method, version 1, 2022.

\bibitem[Pap23]{papas2023unlikelyintersectionstorellilocus}
Georgios Papas.
\newblock Unlikely intersections in the torelli locus and the g-functions
  method, 2023.

\bibitem[Pap24]{zbMATH07931113}
Georgios Papas.
\newblock Some cases of the {Zilber}-{Pink} conjecture for curves in
  {{\(\mathcal{A}_g\)}}.
\newblock {\em Int. Math. Res. Not.}, 2024(5):4160--4206, 2024.

\bibitem[PS08]{zbMATH05233837}
Chris A.~M. Peters and Joseph H.~M. Steenbrink.
\newblock {\em Mixed {Hodge} structures}, volume~52 of {\em Ergeb. Math.
  Grenzgeb., 3. Folge}.
\newblock Berlin: Springer, 2008.

\bibitem[Sha25]{arXiv:2502.13315}
Xinyu Shao.
\newblock Hyodo-kato cohomology in rigid geometry: some foundational results.
\newblock Preprint, {arXiv}:2502.13315 [math.{AG}] (2025), 2025.

\bibitem[Sil92]{zbMATH00033093}
A.~Silverberg.
\newblock Fields of definition for homomorphisms of abelian varieties.
\newblock {\em J. Pure Appl. Algebra}, 77(3):253--262, 1992.

\bibitem[SS20]{zbMATH07286305}
Stefan Schreieder and Andrey Soldatenkov.
\newblock The {Kuga}-{Satake} construction under degeneration.
\newblock {\em J. Inst. Math. Jussieu}, 19(6):2165--2182, 2020.

\bibitem[{Sta}20]{stacks-project}
The {Stacks project authors}.
\newblock The stacks project.
\newblock \url{https://stacks.math.columbia.edu}, 2020.

\bibitem[Ste73]{zbMATH03515611}
J.~H.~M. Steenbrink.
\newblock Limits of {Hodge} structures.
\newblock Report 73-04. {Amsterdam}, {The} {Netherlands}: {Department} of
  {Mathematics}, {University} of {Amsterdam}. 12 p. (1973)., 1973.

\bibitem[Urb25a]{zbMATH08109694}
David Urbanik.
\newblock Geometric {{\(G\)}}-functions and atypicality.
\newblock {\em Duke Math. J.}, 174(12):2425--2512, 2025.

\bibitem[Urb25b]{urbanik2025complexityatypicalspecialpoints}
David Urbanik.
\newblock On the complexity of atypical special points, 2025.

\bibitem[Yve92]{Andre1992}
Andr\'e Yves.
\newblock Mumford-{T}ate groups of mixed {H}odge structures and the theorem of
  the fixed part.
\newblock {\em Compositio Mathematica}, 82(1):1--24, 1992.

\end{thebibliography}
\bibliographystyle{alpha}

\end{document}